\documentclass[11pt]{jfm}
\usepackage{verbatim}
\usepackage[usenames,dvipsnames,svgnames,table]{xcolor}
\usepackage{amsfonts}
\usepackage{fancyhdr}
\usepackage[parfill]{parskip}
\usepackage{graphicx}
\usepackage{amssymb}
\usepackage{amsmath}
\usepackage{epstopdf}
\usepackage{natbib}
\usepackage{bm,footnpag}
\usepackage{caption}
\usepackage{float}
\usepackage{xcolor}
\usepackage{epstopdf}
\usepackage{ifthen}
\usepackage{pgf}
\usepackage{parskip}
\usepackage{csquotes}
\usepackage[nodisplayskipstretch]{setspace}
\usepackage[toc]{appendix}
\usepackage[section]{placeins}
\usepackage[latin1]{inputenc}

\ifCUPmtlplainloaded \else
  \checkfont{eurm10}
  \iffontfound
   \IfFileExists{upmath.sty}
      {\typeout{^^JFound AMS Euler Roman fonts on the system,
                   using the 'upmath' package.^^J}%
       \usepackage{upmath}}
      {\typeout{^^JFound AMS Euler Roman fonts on the system, but you
                   dont seem to have the}%
       \typeout{'upmath' package installed. JFM.cls can take advantage
                 of these fonts,^^Jif you use 'upmath' package.^^J}%
      }
  \else
  \fi
\fi

\ifCUPmtlplainloaded \else
  \checkfont{msam10}
  \iffontfound
    \IfFileExists{amssymb.sty}
      {\typeout{^^JFound AMS Symbol fonts on the system, using the
                'amssymb' package.^^J}%
       \usepackage{amssymb}%
       \let\le=\leqslant  \let\leq=\leqslant
       \let\ge=\geqslant  \let\geq=\geqslant
      }{}
  \fi
\fi

\ifCUPmtlplainloaded \else
  \IfFileExists{amsbsy.sty}
    {\typeout{^^JFound the 'amsbsy' package on the system, using it.^^J}%
     \usepackage{amsbsy}}
    {\providecommand\boldsymbol[1]{\mbox{\boldmath $##1$}}}
\fi

\providecommand\bnabla{\boldsymbol{\nabla}}
\providecommand\bcdot{\boldsymbol{\cdot}}

\newsavebox{\astrutbox}
\sbox{\astrutbox}{\rule[-5pt]{0pt}{20pt}}

\newtheorem{lemma}{Lemma}[section]
\newtheorem{corollary}[lemma]{Corollary}
\allowdisplaybreaks[1]
\newtheorem{theorem}[lemma]{Theorem}
\newtheorem{proposition}[lemma]{Proposition}
\title[$n$-tuple-periodic Navier--Stokes Solutions: Existence and Obstructions]{$n$-tuple-periodic Solutions of the Navier--Stokes Equations: Existence and Obstructions}
\author[R. K. Michael Thambynayagam]%
{R. K. Michael Thambynayagam%
  \thanks{email: michael.thambynayagam@gmail.com}}
\affiliation{Managing Director (Retired), Schlumberger Cambridge Research
\includegraphics[width=0.5cm]{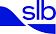}}
\newif\iffullAppendixA
\fullAppendixAtrue
\numberwithin{equation}{section}
\begin{document}
\maketitle
\begin{abstract}
We introduce a phase-angle framework for constructing and classifying $n$-tuple-periodic solutions
of the incompressible Navier--Stokes equations, and use it to determine, dimension by dimension, when
such solutions exist and why they fail when they do not.

The velocity field is a difference of two $n$-fold cyclic trigonometric products, made
divergence-free for every phase and every $n\ge3$ by a telescoping identity. The problem then
reduces to two explicit conditions on the $n$ phase angles: a \emph{reduction condition}, necessary
and algebraic, and a \emph{pressure-integrability condition}, sufficient and differential. Separating
them is what allows a failed construction to be located rather than merely recorded. Unlike the
stream-function route on which earlier tri-periodic solutions rest, the framework is not confined to
three dimensions, and it does not presuppose the Beltrami property.

For $3\le n\le8$ we prove that unforced solutions exist at $n=3$ and $n=4$ and nowhere else. In
three dimensions the admissible phases form exactly two families, mirror images distinguished by the
sign of the helicity. These two families are known; what is established here is that they are the
only ones the construction admits, a completeness question raised and left open when they were
first obtained. In four dimensions integrability reduces a one-parameter curve to a single
family, so the quarter-period offset is forced, not assumed. The obstructions divide by parity:
arithmetic at $n=5,7$, where every admissible phase vector annihilates the field; differential at
$n=6,8$, where the field survives but its convective term is not a gradient.

Every obstructed dimension nevertheless carries an exact forced solution in closed form. These are
not mere self-balancing: the distinguished phases are those at which strain and rotation are in exact
pointwise balance, so that no pressure is generated at all. What obstructs the unforced problem
becomes, in the forced problem, an exactly prescribable input.
\end{abstract}
\protect{\bfseries{2020 Mathematics Subject Classification:}} 35Q30, 76D05, 35C05
\keywords{Navier-Stokes equations, exact solutions, incompressible viscous flow, Taylor-Green flow, higher-dimensional flows, integrability obstructions}\\\\
\protect{\bfseries{ORCID:}} https://orcid.org/0000-0002-1778-7327
\vspace*{-0.15 in}
\section{Introduction, Problem Statement, and Literature Review}
\label{sec:intro}

\subsection{Introduction}

Exact analytical solutions of the incompressible Navier--Stokes equations occupy
a special place in fluid mechanics. Beyond providing insight into the nonlinear
structure of the governing equations, they serve as rigorous benchmarks for
validating numerical algorithms and assessing the accuracy and stability of
computational fluid dynamics solvers. While numerous exact solutions have been
developed in two and three spatial dimensions, comparatively little attention
has been devoted to the systematic construction and analysis of exact solutions
in higher-dimensional spaces.

Among the best-known analytical solutions is the periodic flow introduced by
\citet{tay}, commonly known as the Taylor--Green vortex. This work demonstrated how carefully chosen
trigonometric velocity fields can satisfy both the incompressibility condition
and the Navier--Stokes equations, leading to exact closed-form solutions.
Subsequent investigations extended this general philosophy to increasingly
complex flow configurations. Of particular relevance to the present work is the
fully three-dimensional triple-periodic solution introduced by
\citet{ant}, which established the existence of a non-trivial cyclic
velocity field exhibiting dependence on all spatial coordinates.

Despite these advances, two fundamental questions remain largely unexplored.
First, can the phase angles governing periodic (Taylor-type) velocity fields be
determined systematically rather than through ad hoc construction or
case-specific analysis? Second, do analogous solutions within such periodic
constructions persist in arbitrary spatial dimensions, or do the compatibility
conditions impose dimension-dependent obstructions?

The present work addresses both questions. Building upon the framework
developed in~\cite{tha4}, we develop a systematic methodology for determining
admissible phase angles directly from the pressure-integrability conditions
associated with the nonlinear inertial terms. The resulting framework reduces
the search for exact solutions to the determination of a finite set of
phase-angle parameters, provides explicit analytical expressions for both the
velocity and pressure fields whenever admissible phase angles exist, and
establishes a unified $n$-dimensional approach for investigating periodic solutions.

Application of the methodology reveals a striking dimension-dependent behaviour. The procedure
recovers the known exact three-dimensional solutions and determines completely the admissible phase
sets within the periodic constructions considered here, in the two solvable dimensions. In three
dimensions the admissible phases form exactly two families, mirror images of one another and
distinguished by the sign of the helicity. These are the two families of~\citet{ant}; within the
present construction, the classification establishes that no others occur---a question that work
raised and left open. In four dimensions the reduction condition admits a one-parameter curve of
phase vectors, of which pressure-integrability retains exactly one non-trivial member, so that the
quarter-period offset is forced rather than assumed. The pressure is obtained in closed form in
each case.

Beyond four dimensions the construction becomes obstructed, but the obstruction is itself
constructive: it identifies the part of the nonlinear momentum flux that cannot be balanced by the
reconstructed pressure and, when retained as a body force, yields an exact forced solution. What
distinguishes these solutions from the formal cancellation of the convective term by an equal body
force is the Helmholtz character of that term. For the fields constructed here it is entirely
longitudinal in the dimensions admitting an unforced solution; at the selected odd-dimensional
forced phases it is entirely transverse, so that no pressure gradient is required and the whole
convective term becomes the body force; at the symmetric even-dimensional obstructed phases both
longitudinal and transverse parts survive. The principal outcome is therefore a dimension-dependent
solvability theory, within the periodic constructions studied here, in which existence,
obstruction, and obstruction-forced solutions arise from the same framework.

The higher dimensions, as will be seen, prove less accommodating than their
three- and four-dimensional predecessors; fortunately, they are quite explicit
about their objections.

The principal contributions of this paper, in the order in which they are developed below, are
therefore:
(i)~the development of a unified $n$-dimensional analytical framework for constructing and
analysing periodic solutions of the incompressible Navier--Stokes
equations;
(ii)~a systematic phase-angle methodology that reduces the search for exact
solutions from the nonlinear partial differential equations to a finite system of
trigonometric equations in the phase angles, equivalently to a finite polynomial
system in their sine and cosine variables. Two tests arise. The \emph{reduction
condition} requires that the $x_i$-independent Fourier component of the $i$th
convective component vanish identically; it is algebraic and readily applied, but
only necessary. The \emph{pressure-integrability condition} requires in addition
that the whole convective field be a gradient, so that it can be balanced by a
single-valued periodic pressure. We characterise both conditions and show that the
first is in general not sufficient for the second;

(iii)~explicit analytical velocity and pressure solutions in three and four
dimensions, including the complete admissible phase classification within the
alternating sine--cosine product construction in those dimensions, together with a
second four-dimensional unit-periodic family whose pressure is likewise obtained in
closed form;

(iv)~the identification of two distinct obstruction mechanisms in the higher
dimensions investigated here: an arithmetic obstruction at $n=5,7$ and a
differential, pressure-integrability obstruction at $n=6,8$;

(v)~a dimension-dependent classification, within the periodic constructions
considered here, separating the solvable cases $n=3,4$ from the obstructed cases
$n=5,\ldots,8$ and identifying the distinct mechanism responsible for each
obstruction; and

(vi)~for every obstructed dimension in the range studied, $5\leq n\leq8$, the
construction of an exact solution of the corresponding forced Navier--Stokes
equations, with the body force identified explicitly as the remainder left by the
pressure-reconstruction procedure applied to the advective momentum flux. The
resulting force is a zero-mean, non-zero, closed-form quantity in each obstructed
dimension considered. These forced solutions are not merely a by-product of the
obstruction analysis: they provide exact high-dimensional benchmark fields with
velocity, pressure and body force all available in closed form, and are therefore
well suited to the validation of high-dimensional numerical solvers.

By the \emph{forced} equations we mean the incompressible Navier--Stokes
equations~\eqref{1.1} carrying a prescribed external body force
$\boldsymbol{f}(\boldsymbol{x},t)$ on the right-hand side, as opposed to the
\emph{unforced} equations in which $\boldsymbol{f}\equiv\boldsymbol{0}$. The
forced solutions constructed here are of a particular kind, which we call
\emph{obstruction-forced}: the prescribed force is not chosen freely but is
exactly the object that remains unbalanced by the pressure reconstruction in the
corresponding unforced construction---the remainder that procedure leaves in the
convective field. In this sense the
obstruction and the forcing are one and the same quantity, seen first as a
barrier to existence and then as the source term that an exact solution
requires.

The distinction between the reduction condition and the full
pressure-integrability condition is central to the present work. A phase vector
satisfying the reduction condition is a \emph{candidate}, not an exact
solution, and in six and eight dimensions the symmetric candidates that satisfy
the reduction condition fail the complete integrability requirement. This distinction explains why the obstructed even-dimensional
cases $n=6,8$ satisfy the reduction condition exactly while nevertheless admitting no
scalar pressure whose gradient can balance the full convective field in the unforced equations. The precise relationship between these two
conditions is established in
Subsection~\ref{subsec:nec-suf}, where the pressure-integrability condition is
given its Fourier characterisation.
\subsection{Problem Statement}
The objective of the present work is to investigate the existence of exact periodic
solutions of the incompressible Navier--Stokes equations within a unified $n$-dimensional framework. Particular attention is given to a class of periodic velocity fields for which both the velocity and pressure fields may be determined analytically. The problem is formulated in an unbounded domain \( \mathbb{R}^n \) and serves as the foundation for the phase-angle determination methodology developed in the subsequent sections.

We consider the Navier--Stokes equations governing the motion of an incompressible, viscous fluid in an unbounded domain \( \mathbb{R}^n \), for arbitrary spatial dimension \( n \geq 2 \). The governing equations express conservation of momentum and mass, and take the form
\begin{equation}
\label{1.1}
\frac{\partial \boldsymbol{v}}{\partial t} + \boldsymbol{g} = \kappa \Delta \boldsymbol{v} - \frac{1}{\rho} \nabla p + \boldsymbol{f}, \qquad \text{for } \boldsymbol{x} \in \mathbb{R}^n, \, t \geq 0,
\end{equation}
where \( \boldsymbol{x} = (x_1, x_2, \dots, x_n) \in \mathbb{R}^n \) denotes the spatial coordinate vector. The vector fields are defined as follows:
\begin{itemize}
    \item \( \boldsymbol{v}(\boldsymbol{x}, t) = (v_1, \dots, v_n) \) is the velocity field;
    \item \( \boldsymbol{g}(\boldsymbol{x}, t) = (g_1, \dots, g_n) \) is the nonlinear inertial term;
    \item \( \boldsymbol{f}(\boldsymbol{x}, t) = (f_1, \dots, f_n) \) is an externally applied body force per unit mass;
    \item \( p(\boldsymbol{x}, t) \) is the scalar pressure field.
\end{itemize}
The constants \( \rho > 0 \) and \( \kappa > 0 \) denote the fluid density and kinematic viscosity, respectively. The Laplacian operator in \( \mathbb{R}^n \) is given by \( \Delta = \sum_{i=1}^n \partial^2 / \partial x_i^2 \).

The nonlinear inertial term is given componentwise by:
\begin{equation}
\label{1.2}
g_i(\boldsymbol{x}, t) = \sum_{j=1}^n v_j(\boldsymbol{x}, t) \frac{\partial v_i(\boldsymbol{x}, t)}{\partial x_j}, \qquad i = 1, \dots, n.
\end{equation}
For constant density, conservation of mass reduces to the incompressibility condition:
\begin{equation}
\label{1.3}
\nabla \cdot \boldsymbol{v} = \sum_{i=1}^n \frac{\partial v_i(\boldsymbol{x}, t)}{\partial x_i} = 0, \qquad \boldsymbol{x} \in \mathbb{R}^n, \, t \geq 0.
\end{equation}
The system evolves from a smooth, solenoidal initial velocity field:
\begin{equation}
\label{1.4}
\boldsymbol{v}(\boldsymbol{x}, 0) = \boldsymbol{v}^0(\boldsymbol{x}), \qquad \boldsymbol{x} \in \mathbb{R}^n,
\end{equation}
which is assumed to be spatially periodic. The corresponding inertial term at \( t = 0 \) is:
\begin{equation}
\label{1.5}
g_i(\boldsymbol{x}, 0) = g_i^0(\boldsymbol{x}) = \sum_{j=1}^n v_j^0(\boldsymbol{x}) \frac{\partial v_i^0(\boldsymbol{x})}{\partial x_j}, \qquad i = 1, \dots, n.
\end{equation}
When a body force is present we shall likewise write
\[
\boldsymbol{f}^0(\boldsymbol{x})=\boldsymbol{f}(\boldsymbol{x},0),
\]
so that the superscript $0$ denotes evaluation at $t=0$. The time dependence of the body force,
where required, is determined subsequently as part of the construction of the corresponding exact
solution.

We further require that the kinetic energy over one periodic cell $\Omega=[0,L]^n$ remain finite,
\begin{equation}
\label{1.6}
\int\limits_{\Omega}\left|\boldsymbol{v}(\boldsymbol{x},t)\right|^2\,d\boldsymbol{x} < E
\end{equation}
for some finite constant $E$, on the time interval under consideration.

The primary existence problem considered first is the unforced case
\( \boldsymbol{f}(\boldsymbol{x}, t) = \boldsymbol{0} \), with the velocity field
\( \boldsymbol{v}(\boldsymbol{x}, t) \) smooth and spatially periodic. The
Navier--Stokes system~\eqref{1.1}--\eqref{1.5} is then solved forward in time
\( t \geq 0 \), with a prescribed solenoidal and periodic initial velocity field defined
over \( \mathbb{R}^n \). The forced problem is considered subsequently: when an obstruction to the unforced construction
arises, we investigate whether that obstruction can be retained as an explicitly prescribed
source, so that the corresponding velocity field solves the forced equations exactly.
\subsection{Literature Review}
An encompassing perspective on analytical solutions to the Navier--Stokes equations is provided in the works of \cite{wan}, \cite{oka}, and \cite{lan}. Two broad classes of exact solution are particularly relevant to the present work. The
first class encompasses solutions in which the nonlinearity is either reduced or completely eliminated from the solution structure, as discussed in \cite{fox}. Notable instances of this simplified analysis are the \cite{cou} and \cite{poi} flows. The second class pertains to the Beltrami and generalized-Beltrami solutions, in which the nonlinear convective term is retained but reduces to a gradient and can therefore be absorbed
into the pressure, so that no separate integrability condition for the convective term is required
\citep{bel}. The cyclic fields constructed in the present work are, in general, \emph{not} of this type: their convective term is not a gradient \emph{a priori}, and the phase angles must be chosen to make it one, a distinction developed in Section~\ref{sec:solutions-3D-4D}. An early unsteady viscous example of the Beltrami type is the decaying Beltrami flow of \cite{trk}, whose construction underlies the unsteady Beltrami
solutions considered below. Two-dimensional solutions in $\mathbb{R}^2$ to the Navier--Stokes equation belonging to the class of generalized Beltrami flows---those for which the convective term reduces to a gradient, in the sense of \cite{wan2}---for incompressible flows were developed by \cite{tay}:
\begin{eqnarray}
\label{1.7}
\frac{v_1}{v_r} = \sin \left( \alpha x_1 \right)\cos \left( \alpha x_2 \right)e^{-2 \alpha^2 \kappa t}
\end{eqnarray}
\begin{eqnarray}
\label{1.8}
\frac{v_2}{v_r} = -\cos \left( \alpha x_1 \right)\sin \left( \alpha x_2 \right)e^{-2 \alpha^2 \kappa t}
\end{eqnarray}
\begin{eqnarray}
\label{1.9}
p = \frac{\rho v_r^2 e^{-4 \alpha^2 \kappa t}}{4} \left[ \cos \left( 2 \alpha x_1 \right) + \cos \left( 2 \alpha x_2 \right) \right]
\end{eqnarray}
where $\alpha = \frac{2\pi}{L}$ is the wave number and $L$ is the wavelength, and $v_r$ is the reference velocity.

\citet{tay} arrived at this solution through a physical rather than a purely formal argument. He considered a doubly-periodic array of counter-rotating vortices and cast the problem in the vorticity--stream-function form, which removes the pressure. For this array the stream function is a Laplacian eigenfunction, $\nabla^2\psi=-2\alpha^2\psi$, so the vorticity is constant along streamlines and the nonlinear convective term reduces to a
gradient, which is absorbed into the pressure. The vorticity equation then reduces to linear diffusion, and the whole field decays exponentially. This is the mechanism that the present
paper investigates in higher dimensions: whether a periodic velocity field can be constructed
whose self-advection is a gradient and can therefore be absorbed into the pressure, leaving a
purely diffusive decay of the velocity field.

A word on nomenclature: although the two-dimensional solution~\eqref{1.7}--\eqref{1.9} is commonly called the \emph{Taylor--Green} vortex, and we follow that usage here, the exact two-dimensional solution is due to \citet{tay} alone. The compound name derives from the later \citet{taygre}, who introduced the \emph{three-dimensional} vortex as an initial condition for studying the production of small eddies from large ones, without an exact solution. Strictly, then, the ``Green'' attribution belongs to the three-dimensional problem.

A three-dimensional steady solution in $\mathbb{R}^3$ was initially introduced by \cite{arn}, the \emph{unit-periodic} solution, widely known as the Arnold--Beltrami--Childress (ABC) flow. Unsteady analytical solutions encompassing all three velocity components in Cartesian coordinates were subsequently presented by \cite{eth}; a related class of exact unsteady solutions, combining vortex stretching, convection and viscous diffusion, was given by \cite{kam}. Building upon \cite{eth}'s methodology, an analytical solution was formulated by \cite{bab}. Furthermore, \cite{tha1}, employing an alternative approach, extended \cite{arn}'s steady solution to an unsteady scenario, ultimately yielding the same outcome as that established by \cite{bab}:
\begin{eqnarray}
\label{1.10}
\frac{v_1}{v_r} = \left[ {a\sin \left( {\alpha x_3 } \right) - c\cos \left( {\alpha x_2 } \right)}\right]e^{ - \alpha ^2 \kappa t}
\end{eqnarray}
\begin{eqnarray}
\label{1.11}
\frac{v_2}{v_r}   = \left[ b\sin \left( {\alpha x_1 } \right)- {a\cos \left( {\alpha x_3 } \right)}\right]e^{ - \alpha ^2 \kappa t}
\end{eqnarray}
\begin{eqnarray}
\label{1.12}
\frac{v_3}{v_r}   = \left[ {c\sin \left( {\alpha x_2 } \right) - b\cos \left( {\alpha x_1 } \right)}\right]e^{ - \alpha ^2 \kappa t}
\end{eqnarray}
\begin{eqnarray}
\label{1.13}
p \!= \! - \rho v_r^2 e^{ - 2\alpha ^2 \kappa t}\! \left[ {bc\cos \left( {\alpha x_1 } \right)\sin \left( {\alpha x_2 } \right) + ab\cos \left( {\alpha x_3 } \right)\sin \left( {\alpha x_1 } \right) + ac\cos \left( {\alpha x_2 } \right)\sin \left( {\alpha x_3 } \right)} \right]\nonumber\\
\end{eqnarray}
where $a$, $b$ and $c$ are arbitrary constants. As usual for incompressible flow, the pressure is
determined only up to an additive function of time, which has been set to zero here. Unlike the fully three-dimensional periodic
construction considered below, each velocity component of this unit-periodic solution depends on
only two of the three spatial coordinates.

An advancement on Ethier and Steinman's work was achieved by  \cite{ant}, who introduced a triple-periodic, fully three-dimensional analytical solution. This solution involves velocity components that exhibit non-trivial dependence along all coordinate directions. Antuono's solution configuration closely resembles that presented in \cite{eth}, in which
the temporal derivatives $\partial \boldsymbol{v}/\partial t$ are balanced against
the viscous terms in the momentum equation. The velocity field is expressed through a stream
function $\boldsymbol{\psi}$ ($\boldsymbol{v}=\nabla\times\boldsymbol{\psi}$) and the advective term
$\left(\boldsymbol{v} \cdot \nabla\right)\boldsymbol{v}$ as the gradient of a scalar, so that the
momentum equation separates into a linear equation for the stream function and a nonlinear one
for the pressure. The linear stream-function equation is then solved by separation of variables.
On the torus $\mathbb{T}^3=\left[0,L\right]^3$ this yields not one solution but two, distinguished
by the sign of the helicity: both are Beltrami flows, in which the vorticity
$\boldsymbol{\omega}=\nabla\times\boldsymbol{v}$ is everywhere parallel to the velocity, here
satisfying $\boldsymbol{\omega}=\pm\sqrt3\,\alpha\boldsymbol{v}$, and each carries a free parameter that
\citet{ant} identifies as a translation along the diagonal $(1,1,1)$. The positive-helicity
solution is given by:
\begin{eqnarray}
\label{1.14}
\frac{v_1}{v_r}& = &\frac{4\sqrt{2}}{3\sqrt{3}}\left[\sin { \left(\alpha x_1-\frac{5\pi}{6} \right) } \cos {\left(\alpha x_2-\frac{\pi}{6}\right) }\sin { \left(\alpha x_3\right) }\right.- \nonumber\\
&-&\left.\cos  { \left(\alpha x_3-\frac{5\pi}{6} \right) }\sin  {\left( \alpha x_1-\frac{\pi}{6}\right) }  \sin  {\left( \alpha x_2\right) }\right]e^{ - 3\alpha ^2 \kappa t}
\end{eqnarray}
\begin{eqnarray}
\label{1.15}
\frac{v_2}{v_r}& = &\frac{4\sqrt{2}}{3\sqrt{3}}\left[\sin { \left(\alpha x_2-\frac{5\pi}{6} \right) } \cos {\left(\alpha x_3-\frac{\pi}{6}\right) }\sin { \left(\alpha x_1\right) }\right.- \nonumber\\
&-&\left. \cos  { \left(\alpha x_1-\frac{5\pi}{6} \right) }\sin  {\left( \alpha x_2-\frac{\pi}{6}\right) } \sin  {\left( \alpha x_3\right) }\right]e^{ - 3\alpha ^2 \kappa t}
\end{eqnarray}
\begin{eqnarray}
\label{1.16}
\frac{v_3}{v_r}& = &\frac{4\sqrt{2}}{3\sqrt{3}}\left[ \sin { \left(\alpha x_3-\frac{5\pi}{6} \right) } \cos {\left(\alpha x_1-\frac{\pi}{6}\right) }\sin { \left(\alpha x_2\right) }\right.- \nonumber\\
&-&\left. \cos  { \left(\alpha x_2-\frac{5\pi}{6} \right) }\sin  {\left( \alpha x_3-\frac{\pi}{6}\right) } \sin  {\left( \alpha x_1\right) }\right]e^{ - 3\alpha ^2 \kappa t}
\end{eqnarray}
\citet{ant} gives the corresponding pressure in the Bernoulli form~\eqref{1.17} below. For a
Beltrami flow that expression is exact and complete, since
\[
(\boldsymbol{v}\bcdot\nabla)\boldsymbol{v}=\nabla\!\left(\frac{\lVert\boldsymbol{v}\rVert^2}{2}\right),
\]
so that the momentum balance gives $p=p_0-\rho\lVert\boldsymbol{v}\rVert^2/2$. It is not, however, resolved into harmonics,
and in that form the structure of the field is not visible. The explicit trigonometric
expansion was first given by~\citet{tha4}, as equation~\eqref{1.21} below; it separates the
single-frequency terms, which carry a phase that moves with the choice of origin, from the
difference-frequency cosines and sines, the latter carrying the sign that distinguishes the two
families. That resolved form is also what the present construction produces directly, and it is
what the higher dimensions require, since from $n=5$ onward the fields are not Beltrami and no
Bernoulli form is available.
The velocity components are normalized using a reference velocity $v_r$, so that the mean kinetic
energy per unit mass is $v_r^2/2$ at $t=0$; this is the normalisation adopted by~\citet{ant}, and
direct integration over a periodic cell confirms it for the field~\eqref{1.14}--\eqref{1.16}. The
corresponding pressure is
\begin{eqnarray}
\label{1.17}
p=p_0-\rho\frac{\lVert\boldsymbol{v}\rVert^2}{2},
\end{eqnarray}
where $p_0$ is an arbitrary reference pressure. The pressure~\eqref{1.17} is available in this closed Bernoulli form for a specific reason: the
solutions of~\citet{ant}, like those of~\citet{eth} before them, are \emph{Beltrami} flows, in
which the vorticity is everywhere parallel to the velocity. For such flows the convective term
$(\boldsymbol{v}\!\cdot\!\nabla)\boldsymbol{v}$ reduces to the gradient $\nabla(\lvert\lvert\boldsymbol{v}\rvert\rvert^2/2)$,
so it is absorbed into the pressure automatically and no separate integrability question arises.
This is a genuine and useful property, but it is also a restriction: the velocity fields it admits
are those whose self-advection happens to be a gradient from the outset. The construction developed
in the present paper does not impose the Beltrami condition. Its cyclic fields are, in general,
\emph{not} Beltrami, so their convective term is not a gradient \emph{a priori}, and the existence
of a single-valued pressure becomes a question to be settled rather than a property granted in
advance. Settling that question is what yields both the closed-form pressures in the dimensions
where they exist and the exact identification of the dimensions where no pressure exists---a
distinction that does not arise for Beltrami constructions, because for them the pressure is never
in doubt. In this sense the present work and the Beltrami solutions are complementary: the latter
exhibit particular flows for which the pressure is immediate, while the former determines, across
dimensions, precisely when a pressure can and cannot be found.

\medskip
A recent approach, introduced by \cite{tha4}, reformulates the Navier--Stokes equations in a manner that permits the direct construction of analytical velocity and pressure fields. Unlike previous approaches, the methodology yields closed-form pressure expressions and provides a systematic framework that can be extended to higher-dimensional periodic flows. The approach successfully reproduces the classical Taylor solution, the Arnold-Beltrami family of solutions, and Antuono's triple-periodic velocity field while simultaneously yielding an analytical pressure distribution.

This approach involves reinterpreting the Navier--Stokes equation and breaking it down into three distinct components: linear viscous forces, inertial forces, and external forces acting on the fluid. Through this process, the modified Navier--Stokes equation can be
decomposed, in the unforced case, into a diffusion equation for the velocity field and a pressure
Poisson equation whose source is determined by the inertial field. Under specific conditions, when the external force applied to the fluid is zero, the velocity and pressure fields can be respectively expressed as solutions of the Cauchy diffusion equation and the Poisson equation. This technique provides a direct and straightforward approach to deriving closed-form solutions for the velocity and pressure fields within the context of the Navier--Stokes flow problem. To demonstrate its effectiveness, the method is applied to replicate the double-periodic solution presented by \cite{tay}, as well as the unit-periodic solutions by \cite{arn}, \cite{bab}, and \cite{tha1}. Furthermore, the method successfully extends to reproduce the triple-periodic solution of \cite{ant} for the velocity field in three dimensions. Additionally, the paper derives a closed-form analytical expression for the pressure field in three dimensions.

The three-dimensional solutions presented by \cite{tha4} are as follows:
\begin{eqnarray}
\label{1.18}
v_1& = &{v_r}\left[\sin { \left(\alpha x_1-\frac{\pi}{3} \right) } \cos {\left(\alpha x_2+\frac{\pi}{3}\right) }\sin { \left(\alpha x_3+\frac{\pi}{2}\right) }\right.- \nonumber\\
&-&\left.\sin  {\left( \alpha x_1+\frac{\pi}{3}\right) } \cos  { \left(\alpha x_3-\frac{\pi}{3} \right) } \sin  {\left( \alpha x_2+\frac{\pi}{2}\right) }\right]e^{ - 3\alpha ^2 \kappa t}
\end{eqnarray}
\begin{eqnarray}
\label{1.19}
v_2 & = &{v_r}\left[\sin { \left(\alpha x_2-\frac{\pi}{3} \right) } \cos {\left(\alpha x_3+\frac{\pi}{3}\right) }\sin { \left(\alpha x_1+\frac{\pi}{2}\right) }\right.- \nonumber\\
&-&\left.\sin  {\left( \alpha x_2+\frac{\pi}{3}\right) } \cos  { \left(\alpha x_1-\frac{\pi}{3} \right) } \sin  {\left( \alpha x_3+\frac{\pi}{2}\right) }\right]e^{ - 3\alpha ^2 \kappa t}
\end{eqnarray}
\begin{eqnarray}
\label{1.20}
v_3 &=&{v_r}\left[ \sin { \left(\alpha x_3-\frac{\pi}{3} \right) } \cos {\left(\alpha x_1+\frac{\pi}{3}\right) }\sin { \left(\alpha x_2+\frac{\pi}{2}\right) }\right.- \nonumber\\
&-&\left.\sin  {\left( \alpha x_3+\frac{\pi}{3}\right) } \cos  { \left(\alpha x_2-\frac{\pi}{3} \right) } \sin  {\left( \alpha x_1+\frac{\pi}{2}\right) }\right]e^{ - 3\alpha ^2 \kappa t}
\end{eqnarray}
The corresponding pressure field, associated with the velocity field~\eqref{1.18}--\eqref{1.20}, is given by
\begin{eqnarray}
\label{1.21}
p&=&-\frac{3\rho {v_r^2} }{16}\left[\cos\left(2\alpha x_1 \right)+\cos\left(2\alpha x_2 \right)+\cos\left(2\alpha x_3 \right)\right]e^{ - 6\alpha ^2 \kappa t}- \nonumber\\
&-&\frac{3\rho{v_r^2}}{64}\left[\cos 2\alpha\left(x_1-x_2\right)+\cos 2\alpha\left(x_3-x_1\right)+\cos 2\alpha\left(x_3-x_2\right)\right]e^{ - 6\alpha ^2 \kappa t}- \nonumber\\
&-&\frac{3\sqrt{3}\rho{v_r^2} }{64}\left[\sin 2\alpha\left(x_1-x_2\right)+\sin 2\alpha\left(x_3-x_1\right)+\sin 2\alpha\left(x_2-x_3\right)\right]e^{ - 6\alpha ^2 \kappa t}\qquad\quad
\end{eqnarray}
It is important to note that if the augmenting coefficient $\frac{4\sqrt{2}}{3\sqrt{3}}$ in equations $\left(\ref{1.14}\right)$-$\left(\ref{1.16}\right)$ is incorporated into the reference velocity $v_r$, then the velocity field described in \cite{ant} would correspond to that given by equations $\left(\ref{1.18}\right)$-$\left(\ref{1.20}\right)$, albeit with a uniform phase-angle shift of $\frac{\pi}{2}$ applied to each spatial coordinate.
Following the presentation of the three-dimensional solutions, Thambynayagam also introduces a quadruple-periodic solution in four dimensions in the same paper. These are described as follows:
\begin{eqnarray}
\label{1.22}
v_1 &=& v_r\left[\sin { \left(\alpha x_1-\frac{\pi}{4} \right) } \cos {\left(\alpha x_2+\frac{\pi}{4}\right) }\sin { \left(\alpha x_3+\frac{\pi}{4}\right) } \cos { \left(\alpha x_4+\frac{\pi}{4} \right) }\right.- \nonumber\\
&-&\left.\sin  {\left( \alpha x_1+\frac{\pi}{4}\right) }\sin  {\left( \alpha x_2+\frac{\pi}{4}\right) } \cos  { \left(\alpha x_3+\frac{\pi}{4} \right) } \cos  {\left( \alpha x_4-\frac{\pi}{4}\right) } \right]e^{ - 4\alpha ^2 \kappa t}\qquad
\end{eqnarray}
\begin{eqnarray}
\label{1.23}
v_2 &=&v_r \left[\sin { \left(\alpha x_2-\frac{\pi}{4} \right) } \cos {\left(\alpha x_3+\frac{\pi}{4}\right) }\sin { \left(\alpha x_4+\frac{\pi}{4}\right) } \cos { \left(\alpha x_1+\frac{\pi}{4} \right) }\right.- \nonumber\\
&-&\left.\sin  {\left( \alpha x_2+\frac{\pi}{4}\right) }\sin  {\left( \alpha x_3+\frac{\pi}{4}\right) } \cos  { \left(\alpha x_4+\frac{\pi}{4} \right) } \cos  {\left( \alpha x_1-\frac{\pi}{4}\right) } \right]e^{ - 4\alpha ^2 \kappa t}\qquad
\end{eqnarray}
\begin{eqnarray}
\label{1.24}
v_3&=&v_r\left[ \sin { \left(\alpha x_3-\frac{\pi}{4} \right) } \cos {\left(\alpha x_4+\frac{\pi}{4}\right) }\sin { \left(\alpha x_1+\frac{\pi}{4}\right) } \cos { \left(\alpha x_2+\frac{\pi}{4} \right) }\right.- \nonumber\\
&-&\left.\sin  {\left( \alpha x_3+\frac{\pi}{4}\right) } \sin  {\left( \alpha x_4+\frac{\pi}{4}\right) } \cos  { \left(\alpha x_1+\frac{\pi}{4} \right) }\cos  {\left( \alpha x_2-\frac{\pi}{4}\right) }\right]e^{ - 4\alpha ^2 \kappa t}\qquad
\end{eqnarray}
\begin{eqnarray}
\label{1.25}
v_4&=&v_r\left[ \sin { \left(\alpha x_4-\frac{\pi}{4} \right) } \cos {\left(\alpha x_1+\frac{\pi}{4}\right) }\sin { \left(\alpha x_2+\frac{\pi}{4}\right) } \cos { \left(\alpha x_3+\frac{\pi}{4} \right) }\right.- \nonumber\\
&-&\left.\sin  {\left( \alpha x_4+\frac{\pi}{4}\right) }\sin  {\left( \alpha x_1+\frac{\pi}{4}\right) }  \cos  { \left(\alpha x_2+\frac{\pi}{4} \right) }\cos  {\left( \alpha x_3-\frac{\pi}{4}\right) } \right]e^{ - 4\alpha ^2 \kappa t}\qquad
\end{eqnarray}
Pressure is given by
\begin{eqnarray}
\label{1.26}
p&=&\frac{\rho {v_r^2} }{4}\left[\sin  { \left(2\alpha x_1 \right) } \sin  {\left( 2\alpha x_3\right) }+\sin  { \left(2\alpha x_2 \right) } \sin  {\left( 2\alpha x_4\right) }\right]e^{ - 8\alpha ^2 \kappa t}
\end{eqnarray}
The Gaussian and Newtonian-kernel integral identities used in the evaluation of the velocity and pressure representations are collected in Appendix~\ref{app:integrals}.

The constructions of~\citet{eth} and~\citet{ant} are the immediate antecedents of this work, and it
is worth setting the two routes side by side before proceeding. Theirs represents the velocity through a stream function,
$\boldsymbol{u}=\nabla\times\boldsymbol{\psi}$, so that solenoidality is automatic, and then
imposes in one step the requirement that the convective term be a gradient,
$\nabla\times[\boldsymbol{u}\times(\nabla\times\boldsymbol{u})]=\boldsymbol{0}$; the resulting
nonlinear system is solved for the amplitude coefficients of the separated stream function.

Four differences bear on what can then be asked. First, the unknowns: that route solves for
amplitudes---six coefficients, in the three-dimensional case---whereas the construction below fixes
the amplitudes and solves for the $n$ phase angles, which is what makes the resulting system finite
and its solution set classifiable. Second, the conditions: that route imposes the gradient
requirement as a single demand, whereas the reformulation used here separates a
\emph{necessary} algebraic condition from a \emph{sufficient} differential one and applies them at
different stages, so that a construction which fails can be shown \emph{where} it fails. Third, the
class reached: the solutions so obtained are Beltrami, as discussed below, and for Beltrami flows the convective
term is already a gradient, so no separate integrability condition for this term is required; the fields constructed here are in general not Beltrami, so the
existence of a pressure is the question rather than the premise.

The fourth difference is the decisive one. In three dimensions a solenoidal field may be
represented by a vector potential, $\boldsymbol{u}=\nabla\times\boldsymbol{\psi}$, using the
ordinary vector-valued curl. This particular vector-calculus representation does not extend
unchanged to arbitrary dimension; the higher-dimensional analogues require antisymmetric tensor or
differential-form potentials. The construction below avoids that additional structure and secures
solenoidality instead by the telescoping identity of Lemma~\ref{lem:solenoidal}, which holds for
every $n\ge3$ and every phase vector. The earlier framework, whatever its merits in three
dimensions, cannot pose the question this paper answers.

The literature contains only a few references \cite{str1, str2, don} that examine
steady solutions of the Navier--Stokes equations in five and six dimensions,
particularly in domains extending beyond four dimensions. Building on the methodology
introduced in \cite{tha4}, the present study advances this direction of research well
beyond a single construction: it develops a dimension-dependent analysis of the
existence and obstruction of exact unforced periodic solutions, identifies the
arithmetic and differential mechanisms responsible for the obstructions encountered,
and constructs the associated obstruction-forced solutions as exact, closed-form
benchmarks in dimensions five through eight.
\section{Reformulation of the Navier--Stokes Equations in \( \mathbb{R}^n \)}
\label{sec:reformulation}

To enhance readability and continuity, we revisit and adapt key developments from our earlier work \cite{tha1}, in which the Navier--Stokes equations were recast in a form that isolates the nonlinear and external forcing contributions. This reformulation is particularly useful in the context of unbounded domains.

Taking the divergence of equation~\eqref{1.1}, using the incompressibility
condition~\eqref{1.3} and the commutation of spatial derivatives, yields the pressure
Poisson equation:
\begin{equation}
\label{2.1}
\Delta p(\boldsymbol{x}, t) = \rho \sum_{i=1}^n \frac{\partial}{\partial x_i}(f_i - g_i).
\end{equation}
This relation, widely known as the pressure Poisson equation, has been examined in the literature (e.g., \cite{gre}) for its utility in addressing incompressible flow problems. It is important to note that while equations $\left(\ref{1.1}\right)$ and $\left(\ref{1.3}\right)$ imply $\left(\ref{2.1}\right)$, the converse is not generally true. Equation~\eqref{2.1} is obtained by taking the divergence of the momentum equation and therefore imposes only a scalar compatibility condition; satisfaction of the pressure Poisson equation alone does not guarantee satisfaction of the original vector momentum equation. Consequently, the pressure must remain compatible with the solenoidal velocity field and the momentum balance.

Underlying this is the Helmholtz decomposition, which will recur throughout the analysis. Any
smooth periodic vector field $\boldsymbol{w}$ may be decomposed, after fixing the zero mode in the usual way,
into a longitudinal (curl-free) and a transverse (divergence-free) part,
\begin{equation}
\label{2.2}
\boldsymbol{w} = \nabla\phi + \boldsymbol{h}, \qquad \nabla\cdot\boldsymbol{h}=0,
\end{equation}
with $\phi$ determined by $\Delta\phi = \nabla\cdot\boldsymbol{w}$. In the momentum
balance~\eqref{1.1} the pressure gradient accounts for the longitudinal part of the combined field
$\boldsymbol{g}-\boldsymbol{f}$: solving~\eqref{2.1} determines that longitudinal contribution,
while the transverse remainder is retained in the velocity evolution. For the unforced class of
solutions considered first, the nonlinear term can be eliminated from the velocity evolution only
when the convective field is entirely longitudinal, that is, a pure gradient. The constructions
below are organised around determining when this condition holds.

For sufficiently smooth source fields with the required decay at spatial infinity, the solution
to~\eqref{2.1} in $\mathbb{R}^n$ admits the following Newtonian-potential representation~\citep{pol}:
\begin{equation}
\label{2.3}
\begin{aligned}
p(\boldsymbol{x}, t) &= -\frac{\rho}{2\pi} \int_{\mathbb{R}^2} \mathcal{P}( \boldsymbol{y}, t ) \ln \left( \frac{1}{\sqrt{\mathcal{P}_n(\boldsymbol{x}, \boldsymbol{y})}} \right) \, d\boldsymbol{y}, \quad &n = 2, \\
p(\boldsymbol{x}, t) &= -\frac{ \rho \, \Gamma(n/2) }{2(n - 2)\pi^{n/2}} \int_{\mathbb{R}^n} \frac{ \mathcal{P}(\boldsymbol{y}, t) }{ \mathcal{P}_n(\boldsymbol{x}, \boldsymbol{y})^{(n - 2)/2} } \, d\boldsymbol{y}, \quad &n \geq 3,
\end{aligned}
\end{equation}
where \( \Gamma(z) = \int_0^\infty e^{-u} u^{z - 1} \, du \) denotes the Gamma function for \( \Re(z) > 0 \), and
\begin{align}
\label{2.4}
\mathcal{P}(\boldsymbol{x}, t) &= \sum_{j=1}^n \frac{\partial}{\partial x_j}(f_j - g_j), \\
\label{2.5}
\mathcal{P}_n(\boldsymbol{x}, \boldsymbol{y}) &= \sum_{j=1}^n (x_j - y_j)^2.
\end{align}
Differentiating \eqref{2.3} with respect to \( x_i \) yields an expression for the pressure gradient:
\begin{equation}
\label{2.6}
\frac{\partial p}{\partial x_i} = \frac{ \rho \, \Gamma(n/2) }{ 2\pi^{n/2} } \int_{\mathbb{R}^n} \frac{(x_i - y_i) \, \mathcal{P}(\boldsymbol{y}, t)}{ \mathcal{P}_n(\boldsymbol{x}, \boldsymbol{y})^{n/2} } \, d\boldsymbol{y}, \qquad n \geq 2.
\end{equation}
For the periodic fields considered later, the whole-space integrals in~\eqref{2.3}
and~\eqref{2.6} are not invoked as absolutely convergent Newtonian integrals, since such fields
do not decay at spatial infinity. Their role is instead understood through the equivalent Fourier
representation of the Helmholtz projection on the periodic domain, with the zero mode treated
separately. Thus the integral formulas below apply directly under the stated decay assumptions,
while their periodic counterparts are interpreted mode-by-mode through the corresponding
longitudinal projection.

Substituting this into the momentum equation \eqref{1.1}, we arrive at a reformulated form of the Navier--Stokes equations:
\begin{equation}
\label{2.7}
\frac{\partial v_i}{\partial t} = \kappa \Delta v_i - \mathcal{U}_i + \mathcal{F}_i, \qquad \boldsymbol{x} \in \mathbb{R}^n, \, t \geq 0,
\end{equation}
where the nonlinear and forcing contributions are defined as:
\begin{align}
\label{2.8}
\mathcal{U}_i(\boldsymbol{x}, t) &= g_i(\boldsymbol{x}, t) - \frac{ \Gamma(n/2) }{ 2\pi^{n/2} } \int_{\mathbb{R}^n} \frac{ (x_i - y_i) \sum_{k=1}^n \frac{\partial g_k(\boldsymbol{y}, t)}{\partial y_k} }{ \mathcal{P}_n(\boldsymbol{x}, \boldsymbol{y})^{n/2} } \, d\boldsymbol{y}, \\
\label{2.9}
\mathcal{F}_i(\boldsymbol{x}, t) &= f_i(\boldsymbol{x}, t) - \frac{ \Gamma(n/2) }{ 2\pi^{n/2} } \int_{\mathbb{R}^n} \frac{ (x_i - y_i) \sum_{k=1}^n \frac{\partial f_k(\boldsymbol{y}, t)}{\partial y_k} }{ \mathcal{P}_n(\boldsymbol{x}, \boldsymbol{y})^{n/2} } \, d\boldsymbol{y}.
\end{align}
The right-hand side of \eqref{2.7} thus consists of three contributions: the viscous term \( \kappa \Delta v_i \), the nonlinear term \( \mathcal{U}_i \), and the external forcing \( \mathcal{F}_i \).

Comparing~\eqref{2.8} and~\eqref{2.9} with the Helmholtz decomposition~\eqref{2.2},
$\boldsymbol{\mathcal{U}}$ and $\boldsymbol{\mathcal{F}}$ are respectively the transverse,
divergence-free projections of $\boldsymbol{g}$ and $\boldsymbol{f}$: each field with its
longitudinal part subtracted. The Fundamental Lemma is then almost self-explanatory---if the
transverse projection of the nonlinear term vanishes, that term leaves the velocity evolution
entirely.

\subsection*{Fundamental Lemma}
The following result, established in \cite{tha1}, forms the basis of the present methodology. For
fields satisfying the decay assumptions above---or, for periodic fields, with the identity
understood through the corresponding Fourier Helmholtz projection---if the nonlinear term
satisfies:
\begin{equation}
\label{2.10}
g_i(\boldsymbol{x}, t) = \frac{ \Gamma(n/2) }{ 2\pi^{n/2} } \int_{\mathbb{R}^n} \frac{ (x_i - y_i) \sum_{k=1}^n \frac{\partial g_k(\boldsymbol{y}, t)}{\partial y_k} }{ \mathcal{P}_n(\boldsymbol{x}, \boldsymbol{y})^{n/2} } \, d\boldsymbol{y},
\end{equation}
then the Navier--Stokes equations reduce to a nonhomogeneous diffusion problem~\citep{car, tha2}:
\begin{equation}
\label{2.11}
\begin{aligned}
v_i(\boldsymbol{x}, t) ={}& \frac{1}{(2 \sqrt{\pi \kappa t})^n} \int_{\mathbb{R}^n} v_i^0(\boldsymbol{y}) \exp\left( -\frac{ |\boldsymbol{x} - \boldsymbol{y}|^2 }{ 4\kappa t } \right) \, d\boldsymbol{y} \\
&+ \frac{1}{(2 \sqrt{\pi \kappa})^n} \int_0^t \int_{\mathbb{R}^n} \frac{ \mathcal{F}_i(\boldsymbol{y}, \tau) \exp\left( -\frac{ |\boldsymbol{x} - \boldsymbol{y}|^2 }{ 4\kappa (t - \tau) } \right) }{ (t - \tau)^{n/2} } \, d\boldsymbol{y} \, d\tau.
\end{aligned}
\end{equation}
If the external force vanishes, that is \( f_i = 0 \), the second term in $\left(\ref{2.11}\right)$ disappears, and the solution reduces to that of the classical Cauchy diffusion equation:
\begin{equation}
\label{2.12}
v_i(\boldsymbol{x}, t) = \frac{1}{(2 \sqrt{\pi \kappa t})^n} \int_{\mathbb{R}^n} v_i^0(\boldsymbol{y}) \exp\left( -\frac{ |\boldsymbol{x} - \boldsymbol{y}|^2 }{ 4\kappa t } \right) \, d\boldsymbol{y}.
\end{equation}
At $t=0$, equation~\eqref{2.10} can be written as:
\begin{equation}
\label{2.13}
g_i^0(\boldsymbol{x}) = \frac{ \Gamma(n/2) }{ 2\pi^{n/2} } \int_{\mathbb{R}^n} \frac{ (x_i - y_i) \sum_{k=1}^n \frac{\partial g_k^0(\boldsymbol{y})}{\partial y_k} }{ \mathcal{P}_n(\boldsymbol{x}, \boldsymbol{y})^{n/2} } \, d\boldsymbol{y},
\end{equation}
For the $n$-tuple-periodic velocity fields considered below, which carry one factor of wavenumber
$\alpha$ in each of the $n$ coordinates and therefore satisfy
$\Delta\boldsymbol{v}^0=-n\alpha^2\boldsymbol{v}^0$, the diffusive solution~\eqref{2.12} yields the
temporal factor $e^{-n\alpha^2\kappa t}$; consequently the quadratic inertial term satisfies
$g_i(\boldsymbol{x},t)=g_i^0(\boldsymbol{x})\,e^{-2n\alpha^2\kappa t}$. In the unforced case, when
the nonlinear field satisfies~\eqref{2.10} and is therefore entirely longitudinal,
equation~\eqref{2.6} reduces to:
\begin{equation}
\label{2.14}
\frac{\partial p}{\partial x_i} = -\rho g_i(\boldsymbol{x},t)  = -\rho g_i^0(\boldsymbol{x})  \, e^{-2n \alpha^2 \kappa t}.
\end{equation}
Pressure may be obtained through the direct integration of equation $\left(\ref{2.14}\right)$ provided the solenoidal velocity field \( \boldsymbol{v}^0 \) admits an \( n \)-tuple-periodic representation and the following symmetry condition holds:
\begin{equation}
\label{2.15}
\frac{\partial g_i}{\partial x_j} = \frac{\partial g_j}{\partial x_i}, \qquad \forall i, j \in \{1, 2, \dots, n\}.
\end{equation}
On $\mathbb{R}^n$ this condition ensures that $-\boldsymbol{g}$ is a conservative vector field,
making $p$ a well-defined scalar potential recoverable by direct integration of~\eqref{2.14}. On
the periodic cell a curl-free field may in addition carry a constant harmonic component, which is
not the gradient of any periodic scalar, so~\eqref{2.15} must be supplemented by the vanishing of
that zero mode. For the fields considered here the supplement is automatic: since
$\boldsymbol{v}$ is solenoidal, $g_i=\partial_j(v_iv_j)$ is a divergence, and the mean of a
divergence over a period vanishes, so $\langle\boldsymbol{g}\rangle=\boldsymbol{0}$ identically.
Equation~\eqref{2.15} therefore constitutes the full pressure-integrability condition in the
present setting. It will be distinguished below from the reduced algebraic conditions used to identify candidate phase angles, which are necessary but not, in general, sufficient to guarantee~\eqref{2.15}.
\section{Construction of $n$-Tuple Periodic Solutions: General Framework}
\label{sec:construction}
In this section, we present a general framework for constructing $n$-tuple-periodic solutions to the Navier--Stokes equations in $n$ spatial dimensions. The cyclic construction is formulated for $n\ge 3$; the degenerate two-dimensional case is discussed separately in Subsection~\ref{subsec:2D}. The formulation begins with a solenoidal initial velocity vector field, constructed by introducing phase angle shifts, denoted by $\boldsymbol{\xi} = (\xi_1, \xi_2, \dots, \xi_n)$, into the spatial coordinates.

To ensure structural consistency across spatial dimensions, we adopt a coordinate convention
based on cyclic symmetry. Throughout, subscripts on the coordinates are read cyclically: for any
integer $m$ we write
\[
\langle m\rangle \;:=\; \left((m-1)\bmod n\right)+1 \;\in\; \{1,\dots,n\},
\]
so that $x_{\langle m\rangle}$ is always one of $x_1,\dots,x_n$, and $\langle m\rangle = m$
whenever $1\le m\le n$. With this notation the $i$th velocity component takes its arguments in
the cyclic order beginning at $x_i$,
\[
v_i^0(\boldsymbol{x}, \boldsymbol{\xi}) = v_i^0\!\left(
x_{\langle i\rangle},\; x_{\langle i+1\rangle},\; x_{\langle i+2\rangle},\; \dots,\;
x_{\langle i+n-1\rangle};\, \boldsymbol{\xi}
\right), \qquad i = 1, \dots, n .
\]
Every component therefore depends on all $n$ coordinates, differing only in the cyclic order in
which they are taken. The same convention applies to the nonlinear inertial term
\( g_i^0 \), thereby preserving the underlying coordinate symmetry in the formulation of both
\( v_i^0 \) and \( g_i^0 \).

The solenoidal initial velocity field for an $n$-tuple system is defined as the difference
of two $n$-fold cyclic trigonometric products,
\begin{equation}
\label{3.1}
v_i^0(\boldsymbol{x}, \boldsymbol{\xi}) \;:=\; v_r\left[\,\mathcal{T}1_i^0(\boldsymbol{x},\boldsymbol{\xi})
\;-\; \mathcal{T}2_i^0(\boldsymbol{x},\boldsymbol{\xi})\,\right], \qquad i = 1,\dots,n,
\end{equation}
where, with the cyclic index convention introduced above,
\begin{subequations}
\begin{align}
\label{3.2a}
\mathcal{T}1_i^0 &= \prod_{k=1}^{n}
\begin{cases}
\sin\!\left( \alpha x_{\langle i+k-1\rangle} + \xi_k \right), & k \text{ odd},\\[2pt]
\cos\!\left( \alpha x_{\langle i+k-1\rangle} + \xi_k \right), & k \text{ even},
\end{cases}\\[6pt]
\label{3.2b}
\mathcal{T}2_i^0 &= \sin\!\left( \alpha x_{\langle i\rangle} + \xi_2 \right)
\;\;\prod_{k=2}^{n-1}
\begin{cases}
\cos\!\left( \alpha x_{\langle i+k-1\rangle} + \xi_{k+1} \right), & k \text{ odd},\\[2pt]
\sin\!\left( \alpha x_{\langle i+k-1\rangle} + \xi_{k+1} \right), & k \text{ even},
\end{cases}
\;\;\cos\!\left( \alpha x_{\langle i+n-1\rangle} + \xi_1 \right).
\end{align}
\end{subequations}
In both products the factor in position $k$ acts on the coordinate
$x_{\langle i+k-1\rangle}$; in $\mathcal{T}1_i^0$ it carries the phase $\xi_k$, and in $\mathcal{T}2_i^0$ the
cyclically shifted phase $\xi_{(k\bmod n)+1}$. Note that $\mathcal{T}2_i^0$ is \emph{not} obtained from
$\mathcal{T}1_i^0$ by shifting the phases alone: its trigonometric parity is reversed in the interior
$2\le k\le n-1$, and the two end factors are fixed --- a sine in position $k=1$ and a cosine
in position $k=n$. This is not an arbitrary design choice; it is chosen precisely so that the
telescoping identity of Lemma~\ref{lem:solenoidal} holds.

\begin{lemma}[Solenoidality by telescoping]
\label{lem:solenoidal}
For every $n\ge 3$ and every $\boldsymbol{\xi}\in\mathbb{R}^n$, the two product terms
of~\eqref{3.1} satisfy the identity
\begin{equation}
\label{3.3}
\frac{\partial\, \mathcal{T}2_i^0}{\partial x_i}
\;=\;
\frac{\partial\, \mathcal{T}1_{i-1}^0}{\partial x_{i-1}},
\qquad i = 1,\dots,n \quad (\text{indices modulo } n).
\end{equation}
Consequently $\nabla\cdot\boldsymbol{v}^0 = 0$ identically, for every choice of phase angles.
\end{lemma}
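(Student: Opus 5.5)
The plan is to prove \eqref{3.3} by direct factor-by-factor comparison of the two products, and then to obtain solenoidality from a cyclic telescoping sum. The key observation is that in each of $\mathcal{T}1_i^0$ and $\mathcal{T}2_i^0$ the $n$ positions act on the $n$ \emph{distinct} coordinates $x_{\langle i\rangle},\dots,x_{\langle i+n-1\rangle}$. So differentiating with respect to one coordinate touches exactly one factor: it turns a $\sin$ into $\alpha\cos$ of the same argument and leaves the other $n-1$ factors unchanged.

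First I would compute $\partial\mathcal{T}1_{i-1}^0/\partial x_{i-1}$. In $\mathcal{T}1_{i-1}^0$ position $k$ acts on $x_{\langle i+k-2\rangle}$. Position $1$ carries $\sin(\alpha x_{i-1}+\xi_1)$, whose derivative is $\alpha\cos(\alpha x_{i-1}+\xi_1)$. The remaining positions $k=2,\dots,n$ act on $x_{\langle i\rangle},\dots,x_{\langle i+n-2\rangle}$. Each carries phase $\xi_k$, with $\sin$ for $k$ odd and $\cos$ for $k$ even.

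Next I would compute $\partial\mathcal{T}2_i^0/\partial x_i$. Only position $1$, namely $\sin(\alpha x_i+\xi_2)$, depends on $x_i$, and it becomes $\alpha\cos(\alpha x_i+\xi_2)$. The end factor $\cos(\alpha x_{\langle i+n-1\rangle}+\xi_1)=\cos(\alpha x_{i-1}+\xi_1)$ stays as it is.

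Then I would match the two resulting products coordinate by coordinate.
\begin{itemize}
\item On $x_{i-1}$ both products carry $\cos(\alpha x_{i-1}+\xi_1)$.
\item On $x_i$ both carry $\cos(\alpha x_i+\xi_2)$. In $\mathcal{T}1_{i-1}^0$ this is position $k=2$, which is even, hence a cosine.
\item On $x_{\langle i+m\rangle}$ with $1\le m\le n-2$, in $\mathcal{T}1_{i-1}^0$ the factor sits at position $k=m+2$ with phase $\xi_{m+2}$, and it is a sine exactly when $m$ is odd. In $\mathcal{T}2_i^0$ it sits at the interior position $k=m+1$ with phase $\xi_{m+2}$, and by the reversed interior parity it is a cosine when $m+1$ is odd. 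Hence it is a sine exactly when $m$ is odd.
\end{itemize}
Thus all $n$ factors agree, phases included, and \eqref{3.3} holds for every $\boldsymbol{\xi}$.

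The hypothesis $n\ge3$ enters here. It guarantees that $x_{i-1}$ and $x_i$ are distinct coordinates, and that the interior range $2\le k\le n-1$ of \eqref{3.2b} is nonempty and well defined, so the end factors do not collide. This index bookkeeping under the cyclic convention $\langle\cdot\rangle$ is the only delicate step; I would check it carefully at the wrap-around positions, and on the case $n=3$ explicitly.

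Finally, using \eqref{3.1} and \eqref{3.3},
\[
\nabla\cdot\boldsymbol{v}^0 = v_r\sum_{i=1}^n\left(\frac{\partial\mathcal{T}1_i^0}{\partial x_i}-\frac{\partial\mathcal{T}1_{i-1}^0}{\partial x_{i-1}}\right).
\]
This sum telescopes cyclically, with index $0$ identified with $n$, and so vanishes identically for all phase angles.
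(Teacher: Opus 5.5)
Your proposal is correct and is essentially the paper's own proof. In each product you differentiate the single factor that depends on the relevant coordinate, match the surviving factors coordinate by coordinate, and then telescope the cyclic sum. Your parity bookkeeping on $x_{\langle i+m\rangle}$ is also right.

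One small aside: the matching does not actually use $n\ge3$. The coordinates $x_{i-1}$ and $x_i$ are already distinct for $n\ge2$, and with an empty interior product the identity also holds at $n=2$, as the paper uses in Subsection~\ref{subsec:2D}.
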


\begin{proof}
Only the factor acting on $x_{i-1}$ depends on $x_{i-1}$ in $\mathcal{T}1_{i-1}^0$, and it is
$\sin(\alpha x_{i-1}+\xi_1)$ (position $k=1$). Differentiating replaces it by
$\alpha\cos(\alpha x_{i-1}+\xi_1)$ and leaves the remaining $n-1$ factors, which act on
$x_i, x_{i+1},\dots,x_{i+n-2}$ carrying the phases $\xi_2,\xi_3,\dots,\xi_n$ with the
trigonometric types $\cos,\sin,\cos,\dots$ respectively. Likewise, only the factor acting on
$x_i$ depends on $x_i$ in $\mathcal{T}2_i^0$, namely $\sin(\alpha x_i+\xi_2)$, whose derivative is
$\alpha\cos(\alpha x_i+\xi_2)$; the surviving factors are precisely the block
$\prod_{k=2}^{n-1}$ of~\eqref{3.2b} together with $\cos(\alpha x_{i+n-1}+\xi_1)$. Matching the
two products factor by factor on each coordinate gives~\eqref{3.3}. Summing over $i$,
\begin{equation}
\label{3.4}
\nabla\cdot\boldsymbol{v}^0
= v_r\sum_{i=1}^{n}\left(\frac{\partial \mathcal{T}1_i^0}{\partial x_i}-\frac{\partial \mathcal{T}2_i^0}{\partial x_i}\right)
= v_r\left(\sum_{i=1}^{n}\frac{\partial \mathcal{T}1_i^0}{\partial x_i}
- \sum_{i=1}^{n}\frac{\partial \mathcal{T}1_{i-1}^0}{\partial x_{i-1}}\right) = 0,
\end{equation}
the two sums being cyclic reindexings of one another.
\end{proof}

\noindent\textbf{Remark.} Equation~\eqref{3.3} fixes $\mathcal{T}2_i^0$ only up to an additive term
independent of $x_i$: it states that $\mathcal{T}2_i^0$ is an $x_i$-antiderivative of
$\partial \mathcal{T}1_{i-1}^0/\partial x_{i-1}$, and any function of the remaining coordinates may be
added without disturbing the telescoping. Taking that additive term to be zero, and requiring
the result to be itself a cyclic product of the same form, selects~\eqref{3.2b}. The second term
is therefore not an independent guess: it is the canonical cyclic-product antiderivative
singled out by the construction. We do not claim more than this --- in particular,
equation~\eqref{3.3} alone does not establish that~\eqref{3.2b} is the only two-term cyclic
product whose divergence telescopes, and no such uniqueness is needed in what follows.

Solenoidality holds for \emph{all} $\boldsymbol{\xi}$; the phase angles remain entirely free at
this stage and are fixed only later, by the reduction condition~\eqref{3.12} and the
pressure-integrability condition~\eqref{3.16}. Adopting~\eqref{3.1} at $n=3,4,5,6,7,8$
reproduces, up to cyclic reindexing of the coordinates, equations~\eqref{4.1}, \eqref{4.11},
\eqref{5.1}, \eqref{6.1}, \eqref{7.1} and~\eqref{8.1} respectively.

With the field and its solenoidality in hand, the time-dependent solution can be stated at once.
Once a phase vector $\boldsymbol{\xi}$ is admitted---that is, once the inertial field $\boldsymbol{g}^0$ associated with $\boldsymbol{v}^0$ satisfies the integrability relation~\eqref{2.13}, understood in the periodic Fourier sense made
explicit below---the Navier--Stokes problem reduces to the Cauchy
diffusion equation~\eqref{2.12}, and the velocity at all later times is simply the initial field
carried forward in time by the solution of~\eqref{2.12},
\begin{equation}
\label{3.5}
v_i(\boldsymbol{x}, \boldsymbol{\xi}, t) =  v_i^0(\boldsymbol{x}, \boldsymbol{\xi})\, e^{-n\alpha^2 \kappa t}, \qquad \text{for } i = 1, \dots, n.
\end{equation}
Each Fourier mode of $v_i^0$ is a product of $n$ single-coordinate factors of wavenumber
$\alpha$, so writing $\boldsymbol{k}$ for the wavevector of a mode, every mode has
$|\boldsymbol{k}|^2 = n\alpha^2$ and decays at the single rate
$e^{-n\alpha^2\kappa t}$; the associated pressure is obtained by integrating the
gradient relation~\eqref{2.14}. Since the convective term is quadratic in the velocity, the
pressure gradient, and hence the nonconstant part of the pressure, decays twice as fast, as
$e^{-2n\alpha^2\kappa t}$. The whole of the remaining analysis is thus
devoted to the one open question---which phase vectors $\boldsymbol{\xi}$, if any, satisfy the
pressure-integrability condition~\eqref{2.15} in each dimension.

\subsection{The two-dimensional case}
\label{subsec:2D}
It is worth pausing on the lowest dimension, $n=2$, both because it is the setting of the
classical solution recalled in the introduction and because the construction degenerates there in
an instructive way. Here it can be read off from~\eqref{3.1} directly.

The classical two-dimensional flow is the Taylor--Green vortex~\eqref{1.7}--\eqref{1.8}, with the
nonconstant pressure~\eqref{1.9}. There the convective term $\boldsymbol{g}^0
=(\boldsymbol{v}^0\!\cdot\!\nabla)\boldsymbol{v}^0$ does not vanish; its components are
\[
g_1^0=\tfrac{1}{2}\alpha v_r^2\sin 2\alpha x_1,
\qquad
g_2^0=\tfrac{1}{2}\alpha v_r^2\sin 2\alpha x_2,
\]
which together form an exact gradient, and it is absorbed entirely by~\eqref{1.9}. Two dimensions is solvable because that
convective field is purely a gradient---the situation that will hold again, for a different
reason, in three and four dimensions; in the higher dimensions considered below, either no
non-trivial candidate survives the reduction or the resulting convective field fails the gradient
condition.

Formally extending the two-term ansatz~\eqref{3.1} to $n=2$ organises the same trigonometry
differently, the interior product in~\eqref{3.2b} being interpreted as an empty product. The two
terms then each consist of two trigonometric factors, and the first velocity component is
\begin{equation}
\label{3.6}
v_1^0 = v_r\bigl[\sin(\alpha x_1+\xi_1)\cos(\alpha x_2+\xi_2)-\sin(\alpha x_1+\xi_2)\cos(\alpha x_2+\xi_1)\bigr],
\end{equation}
with $v_2^0$ given by the exchange $x_1\leftrightarrow x_2$. The two products differ only by the
interchange of the two phases, and a sum-to-product identity collapses the difference to a single
harmonic,
\begin{equation}
\label{3.7}
v_1^0 = v_r\sin(\xi_1-\xi_2)\,\cos\!\bigl(\alpha(x_1-x_2)\bigr),
\end{equation}
a diagonal shear mode whose amplitude is $v_r\sin(\xi_1-\xi_2)$. This is not the Taylor--Green
vortex: the antisymmetric two-term combination retains only the $x_1-x_2$ shear and discards the
$x_1+x_2$ part, so $v_1^0$ is a single diagonal shear mode rather than a cellular vortex. Applying
the exchange $x_1\leftrightarrow x_2$ to~\eqref{3.7} gives $v_2^0 = v_1^0$, so the two components
coincide and the field is directed uniformly along the diagonal $(1,1)$, with magnitude depending
only on the transverse coordinate $x_1-x_2$. This coincidence is not a degeneracy of the solution
but the geometric content of the two-dimensional case: the surviving mode is a single diagonal
shear, and the field is in effect one-dimensional. Because a flow that is constant along its own
direction of motion cannot advect itself, the convective term vanishes altogether,
\begin{equation}
\label{3.8}
\boldsymbol{g}^0=(\boldsymbol{v}^0\!\cdot\!\nabla)\boldsymbol{v}^0\equiv\boldsymbol{0},
\end{equation}
the pressure-integrability condition is met trivially, the pressure is constant, and
$(\boldsymbol{v}^0,\,p=\mathrm{const})$ is an exact steady solution of the Euler equations for
\emph{every} phase pair satisfying $\xi_1-\xi_2\neq 0 \pmod{\pi}$. Under viscous evolution the
corresponding Navier--Stokes solution is obtained by the diffusive factor
$e^{-2\alpha^2\kappa t}$, and is therefore not steady. The cyclic construction thus
degenerates in two dimensions: it does not recover the Taylor--Green vortex, but a simpler shear
flow with no convective interaction at all. The genuine content of the construction begins at
$n=3$, where the two products no longer collapse to a single harmonic.

\subsection{The general $n$-dimensional reduction}
The cyclic permutation of spatial arguments across velocity components ensures that no coordinate direction is privileged over the others. It is worth being explicit about the division of labour that
Lemma~\ref{lem:solenoidal} establishes, because it shapes the rest of the paper. Solenoidality
and periodicity are secured by the \emph{structure} of~\eqref{3.1} alone, for every phase
vector; the phase angles are not consumed in achieving them. The entire burden of solving the
Navier--Stokes equations therefore falls on one remaining requirement, the integral
relation~\eqref{2.13}, and it is that relation which ultimately determines whether the phase
construction yields an exact unforced solution.

For the initial velocity field $v_i^0(\boldsymbol{x}, \boldsymbol{\xi})$, equation $\left(\ref{1.5}\right)$ becomes
\begin{equation}
\label{3.10}
 g_i^0(\boldsymbol{x}, \boldsymbol{\xi}) = \sum_{j=1}^n v_j^0(\boldsymbol{x}, \boldsymbol{\xi}) \frac{\partial v_i^0(\boldsymbol{x}, \boldsymbol{\xi})}{\partial x_j}, \qquad i = 1, \dots, n.
\end{equation}
For the periodic fields considered here, it is convenient to interpret the integral relation~\eqref{2.13} through its equivalent Fourier representation. A Fourier mode of $g_i^0$ that is independent of $x_i$ has wavevector component $k_i=0$. The $i$th component of the longitudinal projection associated with~\eqref{2.13} is proportional to $k_i$, and therefore vanishes for such a mode. Consequently, a necessary condition for~\eqref{2.13} to hold is that the sum of all terms in $g_i^0$ that are independent of $x_i$ vanish identically. This Fourier interpretation is established explicitly in Proposition~\ref{prop:longitudinal} below.
\begin{lemma}[Phase-Angle Reduction Principle]
\label{lem:reduction}
Let \(g_i^0(\boldsymbol{x},\boldsymbol{\xi})\) be the inertial term associated with the solenoidal initial velocity field \(v_i^0(\boldsymbol{x},\boldsymbol{\xi})\), defined by equation~\eqref{3.10}. Suppose \(g_i^0\) is decomposed into
\begin{equation}
\label{3.11}
g_i^0(\boldsymbol{x}, \boldsymbol{\xi}) = \mathcal{V}_i^0(\boldsymbol{x}_{\setminus i}, \boldsymbol{\xi}) + \mathcal{W}_i^0(\boldsymbol{x}, \boldsymbol{\xi}), \qquad i = 1, 2, \dots, n,
\end{equation}
where \( \boldsymbol{x}_{\setminus i} \in \mathbb{R}^{n-1} \) denotes the set of all spatial coordinates excluding \( x_i \). The term $\mathcal{V}_i^0$ denotes the $x_i$-independent Fourier component of $g_i^0$,
equivalently the sum of all Fourier modes having $k_i=0$, while $\mathcal{W}_i^0$ contains the
remaining modes with $k_i\neq0$. Then a necessary condition for the integral relation~\eqref{2.13} to hold is
\begin{equation}
\label{3.12}
\mathcal{V}_i^0(\boldsymbol{x}_{\setminus i}, \boldsymbol{\xi}) \equiv 0, \qquad \text{for all } i = 1, 2, \dots, n.
\end{equation}
\end{lemma}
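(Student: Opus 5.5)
The plan is to read the integral relation~\eqref{2.13} in its periodic Fourier sense, where the Newtonian-kernel operator acting on $\boldsymbol{g}^0$ is the longitudinal (Helmholtz) projection. The argument then reduces to comparing Fourier coefficients mode by mode.

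The first step is to expand the inertial field. Since $\boldsymbol{v}^0$ is a finite trigonometric polynomial, so is $\boldsymbol{g}^0$:
\[
g_i^0(\boldsymbol{x},\boldsymbol{\xi})=\sum_{\boldsymbol{k}}\hat g_i(\boldsymbol{k})\,e^{\mathrm{i}\boldsymbol{k}\cdot\boldsymbol{x}},
\]
a finite sum over wavevectors $\boldsymbol{k}\in\alpha\mathbb{Z}^n$ with $|k_j|\le 2\alpha$. We have $\langle\boldsymbol{g}^0\rangle=\boldsymbol{0}$ because $g_i^0=\partial_j(v_i^0v_j^0)$, so the zero mode needs no separate treatment. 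The sum splits into modes with $k_i=0$ and modes with $k_i\neq0$. This is exactly the splitting~\eqref{3.11}, with $\mathcal{V}_i^0$ collecting the modes with $k_i=0$ and $\mathcal{W}_i^0$ the rest.

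The second step computes the right-hand side of~\eqref{2.13} mode by mode. The source $\sum_k\partial_kg_k^0$ has coefficients $\mathrm{i}\,\boldsymbol{k}\cdot\hat{\boldsymbol{g}}(\boldsymbol{k})$. Solving $\Delta\phi=\nabla\cdot\boldsymbol{g}^0$ on the torus, which the paper indicates is the periodic content of the kernel in~\eqref{2.6}, gives $\hat\phi=-\mathrm{i}\,\boldsymbol{k}\cdot\hat{\boldsymbol{g}}/|\boldsymbol{k}|^2$ for $\boldsymbol{k}\ne\boldsymbol{0}$. The $i$th component of $\nabla\phi$ therefore has coefficient
\[
\frac{k_i\,(\boldsymbol{k}\cdot\hat{\boldsymbol{g}}(\boldsymbol{k}))}{|\boldsymbol{k}|^2}.
\]
This coefficient carries the explicit factor $k_i$.

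The third step compares coefficients. If~\eqref{2.13} holds, then for every $\boldsymbol{k}$ with $k_i=0$ (and $\boldsymbol{k}\neq\boldsymbol{0}$) we get $\hat g_i(\boldsymbol{k})=0$. By uniqueness of Fourier coefficients of a trigonometric polynomial, this means $\mathcal{V}_i^0\equiv0$. This holds for each $i$, which is~\eqref{3.12}.

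The main obstacle is interpretive rather than computational. The whole-space integral in~\eqref{2.13} does not converge absolutely for periodic fields, so one must justify identifying it with the Fourier longitudinal projection. I would handle this by defining the operator on trigonometric polynomials through its Fourier multiplier, in keeping with the paper's stated convention and the forthcoming Proposition~\ref{prop:longitudinal}. This is consistent because on decaying fields the kernel in~\eqref{2.6} applied to $\nabla\cdot\boldsymbol{g}$ has symbol $k_ik_j/|\boldsymbol{k}|^2$. Once that convention is fixed, the remaining steps are routine coefficient comparison.
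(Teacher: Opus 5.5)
Your proposal is correct and follows essentially the paper's own route: the paper also reads \eqref{2.13} as the Fourier longitudinal projection $\nabla\Delta^{-1}\nabla\cdot$, whose $i$th component carries the factor $k_i$ (Proposition~\ref{prop:longitudinal}), so every coefficient $\widehat{g}_i(\boldsymbol{k})$ with $k_i=0$ must vanish (Proposition~\ref{prop:slice}). Your use of $g_i^0=\partial_j(v_i^0v_j^0)$ to dispose of the zero mode, and of the Fourier-multiplier convention in place of the non-convergent whole-space integral, likewise matches the paper's treatment.
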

The reduction principle narrows the search for admissible phase angles to the
finite-dimensional system generated by the vanishing of \(\mathcal V_i^0\). We stress that it
does no more than that: the condition is \emph{necessary and not sufficient}, and a phase
vector satisfying it is a candidate, not a solution. The distinction is the whole content of the higher-dimensional obstructions established in
Sections~\ref{sec:6D} and~\ref{sec:8D}, where phase vectors that satisfy
$\mathcal V_i^0\equiv0$ exactly nevertheless fail to solve~\eqref{2.13}.

The phase angles \( \boldsymbol{\xi} = (\xi_1, \xi_2, \dots, \xi_n) \) appear uniformly across all components of the velocity field \( v_i^0 \), the inertial term \( g_i^0 \), and their decompositions \( \mathcal{V}_i^0 \) and \( \mathcal{W}_i^0 \). Moreover, the functional structure of each \( \mathcal{V}_i^0(\boldsymbol{x}_{\setminus i}, \boldsymbol{\xi}) \) is identical up to a cyclic permutation of the spatial variables.
\begin{corollary}
\label{cor:cyclic}
By the cyclic symmetry of the velocity construction~\eqref{3.1}, the condition
$\mathcal{V}_1^0 \equiv 0$ implies $\mathcal{V}_i^0 \equiv 0$ for all $i$.
\end{corollary}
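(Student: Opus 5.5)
The plan is to make the cyclic symmetry explicit as an operator identity and then push it through the three operations that produce $\mathcal{V}_i^0$: forming $\boldsymbol{g}^0$, projecting onto Fourier modes, and selecting the modes with $k_i=0$.

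\textbf{Step 1 (the shift operator).} Let $\sigma:\mathbb{R}^n\to\mathbb{R}^n$ be the cyclic coordinate shift
\[
(\sigma\boldsymbol{x})_m = x_{\langle m+1\rangle}.
\]
By the definitions \eqref{3.2a}--\eqref{3.2b}, the products $\mathcal{T}1_i^0$ and $\mathcal{T}2_i^0$ are built from a fixed template evaluated on the coordinates $(x_{\langle i\rangle},x_{\langle i+1\rangle},\dots,x_{\langle i+n-1\rangle})$. The phases are tied to the position $k$ in the product, not to the coordinate index. From this I will read off
\[
v_{\langle i+1\rangle}^0(\boldsymbol{x},\boldsymbol{\xi}) = v_i^0(\sigma\boldsymbol{x},\boldsymbol{\xi}),
\]
and, by induction, $v_{\langle i+m\rangle}^0(\boldsymbol{x}) = v_i^0(\sigma^m\boldsymbol{x})$.

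\textbf{Step 2 (equivariance of the convective term).} I will transfer this relation to $g_i^0$ via \eqref{3.10}. By the chain rule,
\[
\partial_{x_j}\bigl[v^0_i(\sigma\boldsymbol{x})\bigr] = (\partial_{\langle j-1\rangle} v^0_i)(\sigma\boldsymbol{x}).
\]
Substituting this into $\sum_j v^0_j\,\partial_j v^0_{\langle i+1\rangle}$ and reindexing $j\mapsto\langle j+1\rangle$ gives
\[
g^0_{\langle i+1\rangle}(\boldsymbol{x}) = g^0_i(\sigma\boldsymbol{x}).
\]
This index bookkeeping is the only genuinely delicate point. The sum over $j$ is a full cyclic sum, so the reindexing is a bijection and no boundary terms arise.

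\textbf{Step 3 (compatibility with the Fourier splitting).} Composition with $\sigma$ maps the Fourier mode with wavevector $\boldsymbol{k}$ to a mode whose wavevector is the cyclic permutation of $\boldsymbol{k}$. In particular, a mode of $g_i^0$ with $k_i=0$ is carried to a mode of $g^0_{\langle i+1\rangle}$ whose $\langle i+1\rangle$ component vanishes. Since the decomposition \eqref{3.11} is unique (it is a partition of the Fourier modes), we obtain
\[
\mathcal{V}^0_{\langle i+1\rangle}(\boldsymbol{x}) = \mathcal{V}_i^0(\sigma\boldsymbol{x}).
\]
Iterating this identity from $i=1$ yields $\mathcal{V}_i^0(\boldsymbol{x}) = \mathcal{V}_1^0(\sigma^{i-1}\boldsymbol{x})$. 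Because $\sigma$ is a bijection of $\mathbb{R}^n$, the vanishing $\mathcal{V}_1^0\equiv0$ then implies $\mathcal{V}_i^0\equiv0$ for every $i$, which is the claim.

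The main obstacle I expect is Step 1, specifically confirming that the shift relation holds with the \emph{same} phase vector $\boldsymbol{\xi}$. If the phases instead travelled with the coordinates, the shift would also permute $\boldsymbol{\xi}$, and the corollary would only hold for cyclically symmetric phase vectors. I will check this directly against \eqref{3.2a}--\eqref{3.2b}. In both products the factor in position $k$ carries a phase that depends only on $k$ (namely $\xi_k$, or $\xi_{(k\bmod n)+1}$), while the coordinate it acts on is $x_{\langle i+k-1\rangle}$. Hence replacing $i$ by $i+1$ is exactly the substitution $\boldsymbol{x}\mapsto\sigma\boldsymbol{x}$, and $\boldsymbol{\xi}$ is untouched.
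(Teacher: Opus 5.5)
Your proposal is correct and follows the paper's own argument. The paper simply notes that $\mathcal{V}_i^0$ is obtained from $\mathcal{V}_1^0$ by the cyclic coordinate permutation $x_j\mapsto x_{\langle i+j-1\rangle}$ with the same phase vector $\boldsymbol{\xi}$; your Steps 1--3 make explicit the equivariance chain $\boldsymbol{v}^0\to\boldsymbol{g}^0\to\mathcal{V}^0$ that the paper leaves implicit. In Step 2, state explicitly that you also rewrite the advecting factor as $v_j^0(\boldsymbol{x})=v^0_{\langle j-1\rangle}(\sigma\boldsymbol{x})$ alongside the chain rule, since that substitution is what makes the reindexing close up.
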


\noindent Indeed, $\mathcal{V}_i^0$ is obtained from $\mathcal{V}_1^0$ by the cyclic coordinate
permutation $x_j\mapsto x_{\langle i+j-1\rangle}$ with the same phase vector $\boldsymbol{\xi}$;
an identity holding for $i=1$ is therefore carried into the corresponding identity for every $i$.
As a result, enforcing the single condition
\begin{equation}
\label{3.13}
\mathcal{V}_1^0(\boldsymbol{x}_{\setminus 1}, \boldsymbol{\xi}) \equiv 0
\end{equation}
is sufficient to ensure that \( \mathcal{V}_i^0(\boldsymbol{x}_{\setminus i}, \boldsymbol{\xi}) \equiv 0 \) for all \( i = 1, \dots, n \). Hence the candidate phase angles may be obtained by enforcing this one functional condition. We refer to~\eqref{3.12}---equivalently, by Corollary~\ref{cor:cyclic}, to its single-index form~\eqref{3.13}---as the \emph{reduction condition}: the requirement that the $x_i$-independent part $\mathcal{V}_i^0$ of the inertial term $g_i^0$ vanish for every $i$. Collecting $\mathcal{V}_1^0$ over its independent spatial modes turns this functional requirement
into a finite algebraic system in the phases alone; that separated form is set out
in Subsection~\ref{subsec:separated}, and it is the system solved dimension by dimension below. One caution
is worth recording here: an unreduced trigonometric expansion may contain linearly dependent
spatial terms, in which case its individual coefficient functions need not vanish separately.
Throughout the paper, ``the phases satisfy the reduction condition'' means exactly this. The decomposition and the resulting constraint equations are illustrated in detail for the three-dimensional case in Section~\ref{sec:solutions-3D-4D} below.

\subsection{What condition~\eqref{2.13} requires: necessary versus sufficient}
\label{subsec:nec-suf}
Before applying the reduction principle we record what the integral relation~\eqref{2.13}
actually asks of $\boldsymbol{g}^0$, and how much of it the vanishing of $\mathcal{V}_i^0$
tests. The answer explains both why the method succeeds in three and four dimensions and why
it can fail in higher ones without any error of computation.

For the periodic solenoidal fields considered here the zero-mean hypothesis of the following
proposition is automatic. Incompressibility gives
\[
g_i^0 = v_j^0\frac{\partial v_i^0}{\partial x_j}
      = \frac{\partial}{\partial x_j}\bigl(v_i^0 v_j^0\bigr),
\]
and the mean of a derivative of a periodic function vanishes, so
$\langle g_i^0\rangle=0$ for every $i$.

\begin{proposition}[Longitudinal characterisation of~\eqref{2.13}]
\label{prop:longitudinal}
Let $\boldsymbol{g}^0$ be a smooth periodic field with zero mean and Fourier
representation
\begin{equation}
\label{3.14}
g_i^0(\boldsymbol{x})=\sum_{\boldsymbol{k}\neq 0}\widehat{g}_i(\boldsymbol{k})\,
e^{\mathrm{i}\boldsymbol{k}\cdot\boldsymbol{x}},
\end{equation}
the exclusion of $\boldsymbol{k}=\boldsymbol{0}$ following from the zero-mean assumption.
Then, with relation~\eqref{2.13} understood through its periodic Fourier representation,~\eqref{2.13} holds
if and only if
\begin{equation}
\label{3.15}
\widehat{g}_i(\boldsymbol{k})
\;=\;
k_i\,\frac{\boldsymbol{k}\cdot\widehat{\boldsymbol{g}}(\boldsymbol{k})}{|\boldsymbol{k}|^{2}},
\qquad\text{for every }\boldsymbol{k}\neq 0
\text{ and every } i.
\end{equation}
The right-hand side of~\eqref{3.15} is the projection of
$\widehat{\boldsymbol{g}}(\boldsymbol{k})$ onto the direction of $\boldsymbol{k}$, so the
condition requires every Fourier coefficient to be parallel to its own wavevector; such a mode
is called \emph{longitudinal}.

Equivalently, the Jacobian of $\boldsymbol{g}^0$ is symmetric. This is the
pressure-integrability condition~\eqref{2.15} as it applies to the reference field
$\boldsymbol{g}^0$: since $\boldsymbol{g}(\boldsymbol{x},t)=\boldsymbol{g}^0(\boldsymbol{x})\,e^{-2n\alpha^2\kappa t}$,
the common exponential factor cancels from both sides and the symmetry of the
mixed derivatives is carried entirely by the time-independent field $\boldsymbol{g}^0$.
It is this reduced form that the harmonic analysis of the following sections operates on,
and to which candidate phase vectors are tested throughout; we therefore state it explicitly and refer to it as
\begin{equation}
\label{3.16}
\frac{\partial g_i^0}{\partial x_j} = \frac{\partial g_j^0}{\partial x_i},
\qquad \forall\, i, j \in \{1, 2, \dots, n\}.
\end{equation}
In $n$ dimensions the curl of a vector field is not itself a vector field but the antisymmetric
tensor $(\operatorname{curl}\boldsymbol{g}^0)_{ij}=\partial_j g_i^0-\partial_i g_j^0$,
equivalently the exterior derivative of the $1$-form $\sum_i g_i^0\,\mathrm{d}x_i$; for $n=3$
its three independent components are the familiar curl vector. Condition~\eqref{3.16} is the
statement that this tensor vanishes identically, and we refer to a field satisfying it as
\emph{curl-free} in that sense throughout.
\end{proposition}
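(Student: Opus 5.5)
The plan is to read relation~\eqref{2.13} mode by mode. By the convention stated after~\eqref{2.6}, the Newtonian-kernel operator on the right of~\eqref{2.13} is the longitudinal (Helmholtz) projection $\nabla\Delta^{-1}\nabla\cdot$, with the zero mode treated separately. First I will write this projection in Fourier variables. Take a single mode $\widehat{\boldsymbol g}(\boldsymbol k)e^{\mathrm i\boldsymbol k\cdot\boldsymbol x}$ with $\boldsymbol k\neq0$. Its divergence is $\mathrm i\,\boldsymbol k\cdot\widehat{\boldsymbol g}(\boldsymbol k)$. Inverting $\Delta$ divides by $-|\boldsymbol k|^2$. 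Taking $\partial_{x_i}$ multiplies by $\mathrm i k_i$. The net multiplier is therefore $k_i\,\boldsymbol k\cdot\widehat{\boldsymbol g}(\boldsymbol k)/|\boldsymbol k|^2$.

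The zero-mean hypothesis removes $\boldsymbol k=\boldsymbol 0$, so the expansion~\eqref{3.14} contains only modes on which this multiplier is defined. Smoothness gives rapid decay of $\widehat{\boldsymbol g}$, so the series converge and may be compared coefficient by coefficient. Uniqueness of Fourier coefficients then shows that~\eqref{2.13} is equivalent to~\eqref{3.15}. Geometrically, $\widehat{\boldsymbol g}(\boldsymbol k)$ equals its own projection onto $\boldsymbol k$, that is, $\widehat{\boldsymbol g}(\boldsymbol k)\parallel\boldsymbol k$.

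Next I will prove the equivalence with the symmetric-Jacobian condition~\eqref{3.16}. The Fourier coefficient of $\partial_j g_i^0-\partial_i g_j^0$ is $\mathrm i\,(k_j\widehat g_i-k_i\widehat g_j)$.

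\emph{Forward direction.} If $\widehat{\boldsymbol g}=\lambda\boldsymbol k$, then $k_j\widehat g_i-k_i\widehat g_j=\lambda(k_jk_i-k_ik_j)=0$, so~\eqref{3.16} holds.

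\emph{Converse.} Suppose $k_j\widehat g_i=k_i\widehat g_j$ for all $i,j$. Multiply by $k_j$ and sum over $j$: this gives $|\boldsymbol k|^2\widehat g_i=k_i(\boldsymbol k\cdot\widehat{\boldsymbol g})$, which is exactly~\eqref{3.15} because $\boldsymbol k\neq0$. Here the zero mode matters. A constant vector is curl-free but not longitudinal. It is excluded only because $\langle\boldsymbol g^0\rangle=0$, which holds since $g_i^0=\partial_j(v_i^0v_j^0)$, as noted before the proposition.

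Finally I will justify the time-factor remark. Since $\boldsymbol g=\boldsymbol g^0e^{-2n\alpha^2\kappa t}$, condition~\eqref{2.15} is the same as~\eqref{3.16}. The exterior-derivative reformulation is a restatement of the same antisymmetric tensor and needs no further proof.

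The main obstacle is the first step: giving rigorous meaning to the whole-space Newtonian integral for non-decaying periodic fields. I would not try to make those integrals converge. Instead I would take the paper's stated interpretation as the definition, namely the periodic Helmholtz projection computed by the multiplier above. With that choice, the rest is algebra on individual modes.
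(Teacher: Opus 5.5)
Your proposal is correct and follows the paper's own proof. Both identify the right-hand side of~\eqref{2.13} with the Fourier multiplier of $\nabla\Delta^{-1}\nabla\cdot$, obtain $k_i(\boldsymbol{k}\cdot\widehat{\boldsymbol{g}})/|\boldsymbol{k}|^2$ by comparing coefficients, and read~\eqref{3.15} as $\widehat{\boldsymbol{g}}(\boldsymbol{k})\parallel\boldsymbol{k}$, i.e.\ $k_j\widehat g_i=k_i\widehat g_j$, the Fourier form of~\eqref{3.16}; your explicit converse (contracting with $k_j$ and summing) and your remark on excluding the zero mode supply details the paper asserts without argument.
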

\begin{proof}
The operator represented by the right-hand side of~\eqref{2.13}, when interpreted on periodic
zero-mean fields through its Fourier multiplier, is, up to the stated normalisation, the $i$-th
component of $\nabla\Delta^{-1}$ applied to $\nabla\cdot\boldsymbol{g}^0$: the right-hand side
is precisely $\nabla\Delta^{-1}(\nabla\cdot\boldsymbol{g}^0)$, the longitudinal Helmholtz projection of
$\boldsymbol{g}^0$ onto gradient fields. In Fourier variables $\Delta^{-1}$ acts as
$-|\boldsymbol{k}|^{-2}$ and $\nabla$ as $\mathrm{i}\boldsymbol{k}$, so the $i$-th component of
the right-hand side has Fourier coefficient
$\mathrm{i}k_i\cdot(-|\boldsymbol{k}|^{-2})\cdot\mathrm{i}\,\boldsymbol{k}\cdot
\widehat{\boldsymbol{g}}(\boldsymbol{k})
= k_i(\boldsymbol{k}\cdot\widehat{\boldsymbol{g}})/|\boldsymbol{k}|^{2}$,
which gives~\eqref{3.15}. Condition~\eqref{3.15} states that
$\widehat{\boldsymbol{g}}(\boldsymbol{k})$ is parallel to $\boldsymbol{k}$ for every
$\boldsymbol{k}$, that is that the transverse (solenoidal) part of $\boldsymbol{g}^0$ vanishes;
this is the Fourier form of $k_i\widehat{g}_j-k_j\widehat{g}_i=0$, that is, of
$\partial_j g_i^0 = \partial_i g_j^0$.
\end{proof}

Proposition~\ref{prop:longitudinal} shows that~\eqref{2.13} and the curl-free
condition~\eqref{3.16} are two statements of one requirement, rather than a condition and a
corollary of it: the convective field must be an exact gradient, so that $-\rho^{-1}\nabla p$
can absorb it. We may now say exactly what the reduction principle tests.

\begin{proposition}[The reduction condition is the $k_i=0$ slice]
\label{prop:slice}
For each $i$, the Fourier coefficients of $\mathcal V_i^0$ are precisely the coefficients
$\widehat{g}_i(\boldsymbol{k})$ with $k_i=0$. Setting $k_i=0$ in~\eqref{3.15} makes the
right-hand side vanish, so~\eqref{3.15} forces $\widehat{g}_i(\boldsymbol{k})=0$ whenever
$k_i=0$. Hence
\[
\mathcal V_i^0 \equiv 0
\quad\Longleftrightarrow\quad
\text{condition~\eqref{3.15} restricted to the hyperplane } k_i = 0 .
\]
\end{proposition}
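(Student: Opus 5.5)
The plan has three steps. First, identify the Fourier coefficients of $\mathcal V_i^0$. Second, evaluate the longitudinal condition~\eqref{3.15} on the hyperplane $k_i=0$. Third, show that the two resulting statements coincide.

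For the first step, start from the zero-mean Fourier representation~\eqref{3.14} of $g_i^0$, which is legitimate because $g_i^0=\partial_j(v_i^0v_j^0)$. Since $v^0$ is a finite trigonometric polynomial (products of $n$ single-coordinate harmonics of wavenumber $\alpha$), $g_i^0$ is a trigonometric polynomial too. Its Fourier expansion is therefore a finite sum, and the splitting into modes with $k_i=0$ and $k_i\neq0$ is unique.

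By the definition in Lemma~\ref{lem:reduction}, $\mathcal V_i^0$ is the sum of the modes with $k_i=0$. Equivalently, it is the average of $g_i^0$ over $x_i$ across one period, $\mathcal V_i^0=\frac{1}{L}\int_0^L g_i^0\,dx_i$, since averaging kills exactly the modes $e^{\mathrm i k_i x_i}$ with $k_i\neq0$. Uniqueness of Fourier coefficients then gives $\widehat{\mathcal V_i^0}(\boldsymbol k)=\widehat g_i(\boldsymbol k)$ for $k_i=0$, and $\widehat{\mathcal V_i^0}(\boldsymbol k)=0$ for $k_i\ne0$. This establishes the first assertion.

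For the second step, substitute $k_i=0$ into~\eqref{3.15}. For $\boldsymbol k\neq0$ the denominator $|\boldsymbol k|^2$ is nonzero, so the right-hand side $k_i(\boldsymbol k\cdot\widehat{\boldsymbol g})/|\boldsymbol k|^2$ vanishes. The $i$th component of~\eqref{3.15} restricted to that hyperplane therefore reads exactly $\widehat g_i(\boldsymbol k)=0$. The case $\boldsymbol k=0$ lies in the hyperplane but is excluded, and this is harmless because $\widehat g_i(\boldsymbol 0)=0$ by the zero-mean property.

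For the third step, combine the two observations. $\mathcal V_i^0\equiv0$ holds iff all its Fourier coefficients vanish, which by the first step means $\widehat g_i(\boldsymbol k)=0$ for all $\boldsymbol k$ with $k_i=0$. By the second step, this is precisely the $i$th component of~\eqref{3.15} on $\{k_i=0\}$.

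The only point needing care, and the main obstacle, is what ``condition~\eqref{3.15} restricted to the hyperplane $k_i=0$'' means. I will read it as the $i$th component equation of~\eqref{3.15} imposed for all $\boldsymbol k\neq0$ with $k_i=0$. The other components $j\neq i$ on that hyperplane involve $\widehat g_j$ and are not controlled by $\mathcal V_i^0$. Taken over all $i$, the statement also makes precise the necessity in Lemma~\ref{lem:reduction}. Since~\eqref{3.15} (equivalently~\eqref{2.13}, by Proposition~\ref{prop:longitudinal}) implies its restriction to each slice, it implies $\mathcal V_i^0\equiv0$ for every $i$. Finally, I will remark that the slices $k_i=0$ omit all modes with every $k_i\neq0$, so sufficiency is not to be expected.
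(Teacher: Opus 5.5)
Your proof is correct and follows the paper's own argument. The first claim is just the definition of $\mathcal V_i^0$ in Lemma~\ref{lem:reduction}, and setting $k_i=0$ in the $i$th component of~\eqref{3.15} kills its right-hand side. You also make explicit two points the paper leaves implicit: the restricted condition must be read as the $i$th component only, and the $\boldsymbol{k}=\boldsymbol{0}$ mode is disposed of by the zero-mean property.
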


The reduction condition is therefore exactly one slice of the full requirement: the modes of
$g_i^0$ that carry no dependence on $x_i$. It is necessary, because those modes cannot be
reproduced by the right-hand side of~\eqref{2.13}; it is not sufficient, because it constrains the
modes with $k_i\neq0$ not at all. A phase vector that satisfies it is therefore a
\emph{candidate}, and must still be verified against~\eqref{3.16}.

The condition is nonetheless a substantial one. In every dimension $n\ge3$ treated by the cyclic
construction, $\mathcal V_i^0$ is a nontrivial function of the phases, so its vanishing is a genuine system of
equations in $\xi_1,\dots,\xi_n$: two independent equations in three dimensions, and in higher
dimensions direct expansion produces far more coefficient functions---thirty-five at $n=4$,
sixty-four at $n=5$, six hundred and sixty-four at $n=7$---though these unreduced counts need not
equal the number of independent constraints. Solving that system is the third step of the
procedure set out in Subsection~\ref{subsec:fourstep}; the fourth step is the verification
against~\eqref{3.16}, and it is there that the higher dimensions fail.

They fail in two distinct ways, separated by parity. In five and seven dimensions the phases that
satisfy the reduction condition force the two cyclic products of~\eqref{3.1} to coincide, so
$\boldsymbol{v}^0$ vanishes identically and no candidate is produced at all: the obstruction is
arithmetic, and is reached before the pressure question arises. In six and eight dimensions the
reduction system \emph{is} solved, by the symmetric assignment
$\boldsymbol{\xi}=(-\tfrac{\pi}{4},\tfrac{\pi}{4},\dots)$ exactly as at $n=4$, but the resulting
candidate fails~\eqref{3.16}: the obstruction is differential. Three and four dimensions clear both
tests. Distinguishing these two failures is what the criterion above makes possible, and
Sections~\ref{sec:5D}--\ref{sec:8D} establish each case in turn.

\medskip
\noindent\textbf{Two further decompositions.} Besides the coordinate split~\eqref{3.11} into
$\mathcal{V}^0$ and $\mathcal{W}^0$, which defines the reduction condition, two further splittings
of $\boldsymbol{g}^0$ are used below, each for one purpose.

The first is by \emph{interaction structure}. Expanded over the Fourier modes of the construction,
each term of $\boldsymbol{g}^0$ is a product of trigonometric factors; those built from two
factors form the \emph{bilinear part} $\boldsymbol{g}^{0b}$ and those built from more the
\emph{higher-order part} $\boldsymbol{g}^{0m}$,
\begin{equation}
\label{3.17}
\boldsymbol{g}^0 = \boldsymbol{g}^{0b} + \boldsymbol{g}^{0m} .
\end{equation}
Its single use is the even-dimensional dichotomy, Theorem~\ref{thm:even-dichotomy}, established in
Section~\ref{sec:6D} and applied again at $n=8$ in Section~\ref{sec:8D}: the bilinear part is a
gradient in every even dimension (Lemma~\ref{lem:even-bilinear}), so pressure integrability turns
entirely on the curl of $\boldsymbol{g}^{0m}$.

The second is by \emph{gradient compatibility}, and it is the one that carries the pressure. In
the constructions below a scalar $p^{*}$ is recovered from the components of
$-\rho\boldsymbol{g}^0$ by sequential integration, and we set
\begin{equation}
\label{eq:gGR}
\boldsymbol{g}^{0}_{G} := -\rho^{-1}\nabla p^{*},
\qquad
\boldsymbol{g}^{0}_{R} := \boldsymbol{g}^0-\boldsymbol{g}^{0}_{G},
\end{equation}
so that $\boldsymbol{g}^{0}_{G}$ is what the recovered pressure represents and
$\boldsymbol{g}^{0}_{R}$ the remainder it leaves. Where a pressure fails, that remainder becomes
the body force $\boldsymbol{f}^0=\boldsymbol{g}^{0}_{R}$ of the exact forced solutions constructed
in Sections~\ref{sec:5D}--\ref{sec:8D}.

The two splittings are related but not identical: $\boldsymbol{g}^{0b}$ is always contained in
$\boldsymbol{g}^{0}_{G}$, but $\boldsymbol{g}^{0}_{G}$ may also contain higher-order terms that
happen to be gradient-compatible, so in general
$\boldsymbol{g}^{0}_{G}\neq\boldsymbol{g}^{0b}$. Nor is $\boldsymbol{g}^{0}_{R}$ the transverse
Helmholtz component: as Section~\ref{sec:6D} shows, $p^{*}$ need not exhaust the longitudinal
content of $\boldsymbol{g}^0$, so the remainder may carry both parts.

Across the dimensions treated here the pattern is simple. At the admissible phases the reduction
always removes $\mathcal{V}^0$, so $\boldsymbol{g}^0=\mathcal{W}^0$ throughout. At $n=3,4$ the
remainder vanishes and the whole convective field is a gradient,
$\boldsymbol{g}^0=\boldsymbol{g}^{0}_{G}$. At $n=6,8$ the remainder is non-zero and is exactly what
obstructs the pressure. At $n=5,7$ no non-trivial field survives the reduction, so neither
splitting comes into play; in the forced construction of those dimensions, where the symmetric
phases are retained without imposing the reduction, nothing at all is
the sequential pressure reconstruction used here yields
$\boldsymbol{g}^{0}_{G}=\boldsymbol{0}$, and the whole convective field is retained as the body
force of the forced solution obtained there.

\subsection{The separated form of the reduction condition}
\label{subsec:separated}
When the reduction condition does carry information, its content is algebraic, and it is worth
setting out its structure before turning to the dimension-by-dimension analysis. The starting point is to
write $\mathcal{V}_1^0$ in separated form. Collecting the terms of $g_1^0$ that are independent
of $x_1$ and grouping them by their spatial Fourier mode gives
\begin{equation}
\label{3.19}
    \frac{\mathcal{V}_1^0(\boldsymbol{x}_{\setminus 1}, \boldsymbol{\xi})}{v_r^2} = \sum_{j=1}^m q_j^0(\boldsymbol{x}_{\setminus 1})\, u_j(\boldsymbol{\xi}),
\end{equation}
in which the $q_j^0(\boldsymbol{x}_{\setminus 1})$ are the distinct spatial Fourier modes
carried by the $x_1$-independent part of $g_1^0$, taken to be linearly independent and written
in real form as functions of $(x_2,\dots,x_n)$, and each $u_j(\boldsymbol{\xi})$ is the
phase-dependent amplitude of the corresponding mode. Here $m$ is the number of such independent
modes---equivalently, the dimension of the real vector space spanned by the $x_1$-independent
Fourier modes of $g_1^0$---so that $m$ counts exactly the independent constraints the reduction
condition imposes. Because the $q_j^0$ are
independent, the requirement $\mathcal V_1^0\equiv0$ holds for all $\boldsymbol{x}$ if and only
if every amplitude vanishes,
\begin{equation}
\label{3.20}
u_j(\boldsymbol{\xi}) = 0, \qquad j = 1,\dots,m .
\end{equation}
This is the algebraic system the reduction principle produces, and whether it admits a non-trivial
real solution is settled dimension by dimension in
Sections~\ref{sec:solutions-3D-4D} through~\ref{sec:8D}.

\subsection{The four-step procedure}
\label{subsec:fourstep}
\label{subsec:procedure}
We summarize the determination and verification of \( \boldsymbol{\xi} \) for an $n$-tuple-periodic solution as a four-step process.

\noindent (i) Express the inertial term $g_1^0(\boldsymbol{x}, \boldsymbol{\xi})$ using equation~\eqref{3.10}.

\noindent (ii) Decompose $g_1^0$ using equation~\eqref{3.11} into
$\mathcal{V}_1^0$ and $\mathcal{W}_1^0$, where $\mathcal{V}_1^0$ collects the
terms independent of $x_1$, and write $\mathcal{V}_1^0$ in the reduced
separated form~\eqref{3.19}. Enforcing $\mathcal V_1^0\equiv0$ then yields
the algebraic system~\eqref{3.20}, $u_j(\boldsymbol{\xi})=0$ for
$j=1,\dots,m$, where $m$ is the number of independent spatial modes.
In practice, the direct trigonometric expansion may contain a larger number
of coefficient functions associated with linearly dependent spatial terms,
as occurs in four dimensions. The size and dependency structure of the
resulting phase-angle problem therefore vary substantially with the ambient
dimension.

\noindent (iii) Solve the resulting system for $\boldsymbol{\xi}$, using
algebraic or symmetry-guided methods as appropriate to its dimension and
dependency structure, retaining only solutions for which the velocity field
is non-trivial. The phase vectors so obtained are \emph{candidates}.

\noindent (iv) \textbf{Verify each candidate against the integrability
condition~\eqref{3.16}.} By Proposition~\ref{prop:longitudinal} this is the full content
of~\eqref{2.13}, and by Proposition~\ref{prop:slice} steps (i)--(iii) have tested only the
$k_i=0$ slice of it. A candidate is a solution if and only if it passes this step. Step~(iv)
is not a formality: in six and eight dimensions every candidate produced by steps
(i)--(iii) fails it.

The procedure branches by parity of the dimension. In the solvable cases $n=3,4$ a
candidate is produced at step~(iii) and clears step~(iv). In the odd dimensions $n=5,7$
the procedure terminates at step~(iii): no phase vector satisfying the reduction system produces a
non-trivial velocity field, so no admissible candidate is generated and step~(iv) is never
reached. That is not the end of the matter, however: retaining the symmetric phases
\emph{without} imposing the reduction leaves the field non-trivial, and balancing its convective
term by a body force yields an exact forced solution in each of these dimensions, as
Sections~\ref{sec:5D} and~\ref{sec:7D} show. In the even dimensions $n=6,8$ a candidate
\emph{is} produced at step~(iii), by the symmetric assignment
$\boldsymbol{\xi}=(-\tfrac{\pi}{4},\tfrac{\pi}{4},\dots)$ exactly as in four dimensions, but
it then fails the integrability test at step~(iv). The two obstructions are thus located at
different steps---the odd one arithmetic, at step~(iii); the even one differential, at
step~(iv)---and each obstructed dimension nonetheless yields an exact \emph{forced} solution,
as shown in the corresponding sections.

Once a set of phase angles \( \boldsymbol{\xi} \) satisfying equation~\eqref{2.13} is determined --- that is, one which clears step~(iv) --- the velocity field defined by~\eqref{3.1} becomes an admissible $n$-tuple-periodic solution. The associated scalar pressure field is then obtained by integrating equation~\eqref{2.14}, which is possible precisely because the symmetry condition~\eqref{3.16} ensures that the convective field $\boldsymbol{g}^0$ admits a scalar potential. By Proposition~\ref{prop:longitudinal}, equation~\eqref{3.16} is not an additional requirement imposed alongside~\eqref{2.13}: for the periodic fields considered here, it is an equivalent expression of that condition.
The reformulation of Section~2 and the classical curl-free criterion are two expressions of a
single requisite, that the convective field admit a scalar potential. This equivalence is what
makes~\eqref{3.16} usable as the operational test, and it is the reason
condition~\eqref{3.16} plays the central role in the present work. Rather than treating it
solely as a compatibility condition for pressure reconstruction, we use it as the decisive
criterion for admissible phase angles in periodic velocity fields. As will be shown
in subsequent sections, the existence or failure of phase-angle configurations
satisfying~\eqref{3.16} governs the existence of exact unforced solutions within the cyclic
$n$-tuple-periodic family defined by~\eqref{3.1}.

In the next section we deploy this four-step procedure to develop analytical solutions. Applied in three dimensions, it constructs the admissible phase angles directly from the reduction and integrability conditions and generates not an isolated solution but a whole family of exact triple-periodic solutions, each with its pressure in closed form; the previously known solutions emerge as particular members of this family rather than being assumed at the outset. The same procedure applies unchanged in four dimensions and, again without modification, in
dimensions five through eight, where it determines with equal rigour that no unforced solution within this
cyclic family exists---though not without consolation: each obstructed dimension yields instead an exact
\emph{forced} solution, equally suited to the purpose of benchmarking.

\section{Exact Solutions in Three and Four Dimensions}
\label{sec:solutions-3D-4D}
\subsection{Triple-periodic solution in three dimensions}
\label{sec:3D}
\label{subsec:3D-triple}
In three spatial dimensions, the initial condition given by equation~\eqref{3.1} reduces to a solenoidal velocity vector field $v_i^0(\boldsymbol{x}, \boldsymbol{\xi})$ of triple-periodic form, with first component:
\begin{eqnarray}
\label{4.1}
\frac{v_1^0(\boldsymbol{x}, \boldsymbol{\xi})}{v_r}& = &\sin { \left(\alpha x_1+\xi_1 \right) } \cos {\left(\alpha x_2+\xi_2\right) }\sin { \left(\alpha x_3+\xi_3\right) }- \nonumber\\
&-&\sin  {\left( \alpha x_1+\xi_2\right) } \cos  { \left(\alpha x_3+\xi_1 \right) } \sin  {\left( \alpha x_2+\xi_3\right) }
\end{eqnarray}
and the components $v_2^0,\, v_3^0$ obtained by cyclic permutation of the spatial arguments as prescribed by~\eqref{3.1}.
Following the four-step methodology outlined above, we now apply this process to determine the phase angles for the triple-periodic solution in three spatial dimensions.
\subsubsection{Step 1: Expression of the inertial term}
For notational simplicity, we normalize the inertial term by $v_r^2$. The corresponding quantity $g_1^0(\boldsymbol{x}, \boldsymbol{\xi})/v_r^2$, obtained from equation~\eqref{3.10}, is given by:
\begin{eqnarray}
\label{4.2}
\frac{g_1^0(\boldsymbol{x}, \boldsymbol{\xi})}{v_r^2} &=& \frac{\alpha}{16}\Big[-\sin(2\alpha x_1-2\alpha x_2) -\sin(2\alpha x_1-2\alpha x_3)
\nonumber\\ && \qquad\qquad +2\,\sin(2\alpha x_1+2\xi_1) +2\,\sin(2\alpha x_1+2\xi_2)
\nonumber\\ && \qquad\qquad +2\,\sin(2\alpha x_1+2\xi_3) -\sin(2\alpha x_2+2\xi_1)
\nonumber\\ && \qquad\qquad -\sin(2\alpha x_2+2\xi_2) +\sin(2\alpha x_2+2\xi_3)
\nonumber\\ && \qquad\qquad -\sin(2\alpha x_3+2\xi_1) -\sin(2\alpha x_3+2\xi_2)
\nonumber\\ && \qquad\qquad +\sin(2\alpha x_3+2\xi_3) -\sin(2\alpha x_1-2\alpha x_2-2\xi_1+2\xi_2)
\nonumber\\ && \qquad\qquad -\sin(2\alpha x_1-2\alpha x_2-2\xi_2+2\xi_3) +\sin(2\alpha x_1-2\alpha x_2+2\xi_1-2\xi_3)
\nonumber\\ && \qquad\qquad -2\,\sin(2\alpha x_1-2\alpha x_2+2\xi_2-2\xi_3) -\sin(2\alpha x_1-2\alpha x_3-2\xi_1+2\xi_3)
\nonumber\\ && \qquad\qquad -\sin(2\alpha x_1-2\alpha x_3+2\xi_1-2\xi_2) -2\,\sin(2\alpha x_1-2\alpha x_3+2\xi_1-2\xi_3)
\nonumber\\ && \qquad\qquad +\sin(2\alpha x_1-2\alpha x_3+2\xi_2-2\xi_3) +2\,\sin(2\alpha x_1+2\xi_1+2\xi_2-2\xi_3)
\nonumber\\ && \qquad\qquad -\sin(2\alpha x_2-2\xi_1+2\xi_2+2\xi_3) -\sin(2\alpha x_2+2\xi_1-2\xi_2+2\xi_3)
\nonumber\\ && \qquad\qquad -\sin(2\alpha x_2+2\xi_1+2\xi_2-2\xi_3) -\sin(2\alpha x_3-2\xi_1+2\xi_2+2\xi_3)
\nonumber\\ && \qquad\qquad -\sin(2\alpha x_3+2\xi_1-2\xi_2+2\xi_3) -\sin(2\alpha x_3+2\xi_1+2\xi_2-2\xi_3)\Big]\nonumber\\
\end{eqnarray}
\subsubsection{Step 2: Extraction of the Reduction Term $\mathcal{V}_1^0$}

By collecting all terms in equation~\eqref{4.2} that are independent of $x_1$, we isolate:
\begin{eqnarray}
\label{4.3}
\frac{\mathcal{V}_1^0(\boldsymbol{x}_{\setminus 1}, \boldsymbol{\xi})}{v_r^2} &=& \frac{\alpha}{16}\Big[-\sin(2\alpha x_2+2\xi_1) -\sin(2\alpha x_2+2\xi_2)
\nonumber\\ && \qquad\qquad +\sin(2\alpha x_2+2\xi_3) -\sin(2\alpha x_3+2\xi_1)
\nonumber\\ && \qquad\qquad -\sin(2\alpha x_3+2\xi_2) +\sin(2\alpha x_3+2\xi_3)
\nonumber\\ && \qquad\qquad -\sin(2\alpha x_2-2\xi_1+2\xi_2+2\xi_3) -\sin(2\alpha x_2+2\xi_1-2\xi_2+2\xi_3)
\nonumber\\ && \qquad\qquad -\sin(2\alpha x_2+2\xi_1+2\xi_2-2\xi_3) -\sin(2\alpha x_3-2\xi_1+2\xi_2+2\xi_3)
\nonumber\\ && \qquad\qquad -\sin(2\alpha x_3+2\xi_1-2\xi_2+2\xi_3) -\sin(2\alpha x_3+2\xi_1+2\xi_2-2\xi_3)\Big]
\end{eqnarray}
Equation~\eqref{4.3} can be expressed in the structured form of equation~\eqref{3.19}, revealing its decomposition into a product of spatial and parametric components:
\begin{eqnarray}
\label{4.4}
\frac{\mathcal{V}_1^0(\boldsymbol{x}_{\setminus 1}, \boldsymbol{\xi})}{v_r^2} = q_1^0(\boldsymbol{x}_{\setminus 1})\, u_1(\boldsymbol{\xi})+q_2^0(\boldsymbol{x}_{\setminus 1})\, u_2(\boldsymbol{\xi})
\end{eqnarray}
where $q_1^0(\boldsymbol{x}_{\setminus 1})=\frac{\alpha}{16} \left[\sin(2\alpha x_2)+\sin(2\alpha x_3)\right]$ and $q_2^0(\boldsymbol{x}_{\setminus 1})=\frac{\alpha}{16} \left[\cos(2\alpha x_2)+\cos(2\alpha x_3)\right]$, and the scalar amplitudes $u_1(\boldsymbol{\xi})$ and $u_2(\boldsymbol{\xi})$ are
\begin{eqnarray}
\label{4.5}
u_1(\boldsymbol{\xi}) = &-&\cos(2\xi_1) - \cos(2\xi_2) - \cos(2\xi_1 - 2\xi_2 - 2\xi_3) - \cos(2\xi_1 + 2\xi_2 - 2\xi_3)+ \nonumber\\
&+&\cos(2\xi_3)- \cos(2\xi_1 - 2\xi_2 + 2\xi_3)
\end{eqnarray}
\begin{eqnarray}
\label{4.6}
u_2(\boldsymbol{\xi})= &-&\sin(2\xi_1) - \sin(2\xi_2) + \sin(2\xi_1 - 2\xi_2 - 2\xi_3) - \sin(2\xi_1 + 2\xi_2 - 2\xi_3)+ \nonumber\\
&+&\sin(2\xi_3)- \sin(2\xi_1 - 2\xi_2 + 2\xi_3)
\end{eqnarray}
Here $u_1$ and $u_2$ are the real and imaginary parts of the amplitudes carried by the surviving spatial modes. Every term of~\eqref{4.3} carries the same weight $\alpha/16$, so the two amplitudes share a single scale, which is what allows them to be made to vanish together. This separated form isolates the spatial dependence from the phase-angle dependence. To express this separation compactly, define the complex quantities
\[
Q=q_2^0+\mathrm{i}\,q_1^0,
\qquad
U=u_2+\mathrm{i}\,u_1 ,
\]
with an overbar denoting the complex conjugate.
Using the definitions of $q_1^0$ and $q_2^0$, the spatial quantity $Q$ becomes
\[
Q=\frac{\alpha}{16}\left(e^{\,\mathrm{i}2\alpha x_2}+e^{\,\mathrm{i}2\alpha x_3}\right).
\]
It follows that
\[
\frac{\mathcal{V}_1^0}{v_r^2}=q_1^0u_1+q_2^0u_2
=\operatorname{Re}\!\left(Q\,\overline{U}\right).
\]
Thus, the spatial dependence is contained in $Q$, whereas all dependence on the phase angles is contained in $U$.

The surviving modes $e^{\mathrm{i}2\alpha x_2}$ and $e^{\mathrm{i}2\alpha x_3}$ are the images,
under the cyclic shift $x_k\mapsto x_{k+1}$ of the construction~\eqref{3.1}, of a single base mode
carried around the three-cycle. Being distinct Fourier modes they are linearly independent, so
enforcing $\mathcal{V}_1^0\equiv0$ requires their phase-dependent amplitudes to vanish separately,
which is precisely the pair of conditions $u_1(\boldsymbol{\xi})=u_2(\boldsymbol{\xi})=0$. Every term of~\eqref{4.3} carries the same weight $\alpha/16$, giving a single
amplitude scale; the two reduction amplitudes $u_1(\boldsymbol{\xi})$ and $u_2(\boldsymbol{\xi})$
of~\eqref{4.5} and~\eqref{4.6} can accordingly be made to vanish together.
\subsubsection{Step 3: Solution of the underdetermined system}

Enforcing condition~\eqref{3.13}, $\mathcal{V}_1^0(\boldsymbol{x}_{\setminus 1}, \boldsymbol{\xi}) \equiv 0$, requires that both amplitudes in the separated form~\eqref{4.4} vanish. With $u_1(\boldsymbol{\xi})$ and $u_2(\boldsymbol{\xi})$ as given in~\eqref{4.5} and~\eqref{4.6}, this is the system
\begin{eqnarray}
\label{4.7}
u_1(\boldsymbol{\xi}) = 0\quad\text{and} \quad u_2(\boldsymbol{\xi})= 0
\end{eqnarray}
This system consists of two equations in three phase variables. As shown below, its common
phase-shift symmetry leaves one continuous parameter, and the remaining phase differences can be
solved exactly, giving a parametric family of solutions rather than isolated points.

\subsubsection{Step 4: The two solution families and their pressures}
The phase-angle reduction of Step~3 does not isolate a single phase vector. Because the two
equations~\eqref{4.7} constrain three phase angles, their real solutions form a one-parameter
family: fixing any one phase and solving~\eqref{4.7} for the remaining two yields, by
construction, a phase vector that satisfies the reduction condition exactly. That freedom is
not an abundance of distinct flows, however, but the translational symmetry of the problem
written in phase variables, as the classification below makes precise. Each candidate must still be tested
on the two further requirements of Section~\ref{sec:construction}: that it yield a
\emph{non-trivial} velocity field, and that it satisfy the full pressure-integrability condition
\[
\frac{\partial g_i^0}{\partial x_j}=\frac{\partial g_j^0}{\partial x_i},
\qquad i,j=1,2,3,
\]
under which, by Proposition~\ref{prop:longitudinal}, the convective field is a gradient and the
pressure follows in closed form.

\medskip
\noindent\textbf{Reduction to two invariants.} Every coordinate appears once in each product
of~\eqref{3.2a} and~\eqref{3.2b}, so adding a common $\delta$ to every $\xi_k$ replaces
$\alpha x_{\langle m\rangle}+\xi_k$ by $\alpha(x_{\langle m\rangle}+\delta/\alpha)+\xi_k$
throughout: a uniform phase shift is a rigid translation of the coordinates, and carries solutions
of~\eqref{4.7} to solutions. No generality is therefore lost by fixing $\xi_1=0$, and the invariant
content of a phase vector lies in the two differences
\[
a=\xi_2-\xi_1,\qquad b=\xi_3-\xi_2 .
\]
Writing $c_a=\cos2a$, $s_a=\sin2a$, $c_b=\cos2b$ and $s_b=\sin2b$, the system~\eqref{4.7} becomes
algebraic, and can be solved completely.

The tool we use for the exact solution, here and again in the higher dimensions treated below, is
a Gr\"obner basis, which may be unfamiliar outside computational algebra; we recall it briefly.
Given a system of polynomial equations, the set of all polynomial combinations of the left-hand
sides forms an \emph{ideal}, and every common zero of the original system is a common zero of the
whole ideal. A Gr\"obner basis is a particular finite generating set for that ideal, computed with
respect to a chosen ordering of monomials; it is the multivariate, nonlinear analogue of the
row-echelon form produced by Gaussian elimination on a linear system. With a lexicographic
monomial order the computed basis has an elimination structure that permits successive solution of
the phase relations: the generators can be ordered so that each introduces one further variable,
and the system is solved by back-substitution much as a triangular linear system is. Crucially the
computation is carried out in exact rational arithmetic, so the algebraic reduction itself
introduces no numerical approximation. Standard references are \citet{cox2015ideals}.

\begin{proposition}[Classification in three dimensions]
\label{prop:3Dclassification}
Modulo uniform phase shifts, the reduction system~\eqref{4.7}---the amplitude form taken by the
reduction condition of Corollary~\ref{cor:cyclic} in three dimensions---admits precisely three
solutions.
One is degenerate,
\[
(a,b)\equiv\left(0,\,\frac{\pi}{2}\right) \pmod\pi ,
\]
for which $\xi_2=\xi_1$ and the velocity field~\eqref{3.1} vanishes identically. The other two are
non-trivial,
\[
(a,b)=\left(\frac{\pi}{3},\,-\frac{\pi}{6}\right)
\qquad\text{and}\qquad
(a,b)=\left(-\frac{\pi}{3},\,\frac{\pi}{6}\right),
\]
which are negatives of one another.
\end{proposition}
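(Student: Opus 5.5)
The plan is to make the reduction system~\eqref{4.7} explicit in the two invariants and then solve it by factorisation rather than by a Gr\"obner computation. The factorisation settles the case split by hand; a lexicographic Gr\"obner basis in $(c_a,s_a,c_b,s_b)$ with $c_a^2+s_a^2=1=c_b^2+s_b^2$ would serve only as an independent check.

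By the uniform-shift argument above, I fix $\xi_1=0$, so that $\xi_2=a$ and $\xi_3=a+b$. Substituting into~\eqref{4.5}--\eqref{4.6}, the two $\sin2b$ terms in $u_2$ cancel, and the system becomes
\[
u_1=-1-\cos2a-\cos(4a+2b)-2\cos2b+\cos(2a+2b),\qquad
u_2=-\sin2a-\sin(4a+2b)+\sin(2a+2b).
\]
Both amplitudes depend only on $2a$ and $2b$. Solutions are therefore naturally counted modulo $\pi$ in each of $a$ and $b$, and this is the sense of ``precisely three''.

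The key step is that $u_2$ factors completely. Applying sum-to-product to $\sin(2a+2b)-\sin(4a+2b)$ gives $-2\sin a\cos(3a+2b)$, so
\[
u_2=-2\sin a\,[\cos a+\cos(3a+2b)]=-4\sin a\,\cos(2a+b)\,\cos(a+b).
\]
Hence $u_2=0$ splits into three branches, and on each branch $u_1=0$ becomes a one-variable trigonometric equation.

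\emph{Branch $\sin a=0$.} Here $a\equiv0$, and $u_1$ reduces to $-2-2\cos2b$. This forces $b\equiv\pi/2$, which is the degenerate point. Since $\xi_2=\xi_1$, the two products in~\eqref{3.1} coincide and $\boldsymbol v^0\equiv0$.

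\emph{Branch $\cos(a+b)=0$.} Here $2b\equiv\pi-2a \pmod{2\pi}$, and $u_1$ reduces to $-2+2\cos2a$. This gives $a\equiv0$ again, so the branch returns only the same degenerate point.

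\emph{Branch $\cos(2a+b)=0$.} Here $2b\equiv\pi-4a$, so $4a+2b\equiv\pi$ and $2a+2b\equiv\pi-2a$. Then $u_1=2\cos4a-2\cos2a$. Setting $\cos4a=\cos2a$ gives either $4a\equiv2a$ (degenerate again) or $6a\equiv0\pmod{2\pi}$, that is $a\equiv\pm\pi/3\pmod\pi$. The relation $b\equiv\pi/2-2a\pmod\pi$ then yields $(a,b)=(\pi/3,-\pi/6)$ and $(-\pi/3,\pi/6)$, which are negatives of one another.

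To finish, I would substitute $(\pi/3,-\pi/6)$ back into both $u_1$ and $u_2$ and confirm that each vanishes. I would then check non-triviality of $\boldsymbol v^0$ at the two non-degenerate points, for example by evaluating $v_1^0$ at a single point and finding a nonzero value.

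I expect the main obstacle to be bookkeeping rather than ideas. First, the reductions modulo $\pi$ versus $2\pi$ must be handled consistently, so that coincident solutions are neither double-counted nor lost across branches; in particular, the intersections of branches must be seen to give nothing new. Second, I need to confirm that the degenerate point $(0,\pi/2)$ is the only solution with $a\equiv0$, so that ``precisely three'' is exact.
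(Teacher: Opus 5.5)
Your argument is correct, and it reaches the result by a different route from the paper. The paper fixes $\xi_1=0$ and passes to the polynomial variables $(c_a,s_a,c_b,s_b)$ with the two Pythagorean relations. It then computes a lexicographic Gr\"obner basis whose eliminant is $s_b^4(4s_b^2-3)=0$, equation~\eqref{4.elim}, and lifts the three roots $s_b=0,\pm\sqrt3/2$ by back-substitution to the degenerate point and the two non-trivial points. You instead observe that $u_2$ factors completely as $-4\sin a\,\cos(2a+b)\,\cos(a+b)$. The zero set of $u_2$ on the $(a,b)$ torus (mod $\pi$) is therefore a union of three lines, and on each line $u_1$ collapses to a one-variable equation: $-2-2\cos2b$, $-2+2\cos2a$, or $2\cos4a-2\cos2a$. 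The substitution $\boldsymbol{\xi}=(0,a,a+b)$, the factorisation and the three restrictions all check out.

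Your route is hand-checkable, needs no computer algebra, and shows where each root comes from. The non-trivial points arise only from $\cos4a=\cos2a$ on the branch $2a+b\equiv\pi/2$. The degenerate point lies on all three branches, which is consistent with the repeated factor $s_b^4$ in the paper's eliminant. The paper's route is mechanical and not tied to three dimensions. It is the same elimination machinery used for the five- and seven-dimensional systems, where no such factorisation is available, and it also certifies that the ideal is zero-dimensional over $\mathbb{C}$.

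The two concerns you flag at the end are already settled by your case split. The solution set is simply the union of the three branch solution sets, and every point with $a\equiv0$ lies on the first branch.

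One small correction: $\xi_2=\xi_1$ alone does not make the two products coincide, as the paper notes explicitly. You also need $\xi_3\equiv\xi_1+\pi/2$, i.e.\ $b\equiv\pi/2$, which turns $\sin(\alpha x_3+\xi_3)$ in $\mathcal{T}1_1^0$ and $\sin(\alpha x_2+\xi_3)$ in $\mathcal{T}2_1^0$ into cosines. This holds at your degenerate point, so your conclusion stands.
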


\begin{proof}
Set $\xi_1=0$, so that $\boldsymbol{\xi}=(0,a,a+b)$. Substituting into~\eqref{4.7} and expanding the
compound angles $\cos(2a+2b)=c_ac_b-s_as_b$, $\sin(2a+2b)=s_ac_b+c_as_b$, together with
$\cos(4a+2b)=(2c_a^2-1)c_b-2s_ac_as_b$ and $\sin(4a+2b)=2s_ac_ac_b+(2c_a^2-1)s_b$, gives the pair
of polynomial equations
\begin{eqnarray}
\label{4.poly}
-2c_a^2c_b+c_ac_b+2c_as_bs_a-c_a-c_b-s_bs_a-1&=&0,\nonumber\\
-2c_a^2s_b-2c_ac_bs_a+c_as_b+c_bs_a+s_b-s_a&=&0,
\end{eqnarray}
to be solved together with $c_a^2+s_a^2=1$ and $c_b^2+s_b^2=1$. This system is zero-dimensional.
Computing a Gr\"obner basis in the lexicographic order eliminates $c_a$, $s_a$ and $c_b$ and leaves
the single univariate relation
\begin{equation}
\label{4.elim}
s_b^4\left(4s_b^2-3\right)=0 .
\end{equation}
If $s_b=0$ the second equation of~\eqref{4.poly} forces $s_a=0$, and the remaining generators then
give $c_a=1$ and $c_b=-1$, that is $a\equiv0$ and $b\equiv\tfrac{\pi}{2}$; this is the degenerate
case, in which $\xi_2=\xi_1$ and the velocity field vanishes. We note that $a\equiv0$ alone is not
sufficient for degeneracy: the vanishing requires $b\equiv\tfrac{\pi}{2}$ as well. Otherwise
$s_b^2=\tfrac{3}{4}$, and the basis determines the remaining coordinates uniquely in each case:
$(c_a,s_a,c_b,s_b)=\bigl(-\tfrac12,-\tfrac{\sqrt3}{2},\tfrac12,\tfrac{\sqrt3}{2}\bigr)$, giving
$a=-\tfrac{\pi}{3}$, $b=\tfrac{\pi}{6}$; and
$(c_a,s_a,c_b,s_b)=\bigl(-\tfrac12,\tfrac{\sqrt3}{2},\tfrac12,-\tfrac{\sqrt3}{2}\bigr)$, giving
$a=\tfrac{\pi}{3}$, $b=-\tfrac{\pi}{6}$. No other roots exist.
\end{proof}

\medskip
The scope of this statement should be read carefully. Proposition~\ref{prop:3Dclassification}
classifies the solutions of the reduction system~\eqref{4.7} \emph{within the class of fields
defined by~\eqref{3.1}}: cyclic alternating sine--cosine products carrying a single wavenumber
$\alpha$ common to every coordinate, whose only free parameters are the phases $\boldsymbol{\xi}$.
It asserts nothing about periodic solutions of other form---fields built on several wavenumbers,
superpositions of modes, or constructions with a different alternation pattern, of which the
Arnold--Beltrami--Childress flow~\eqref{1.10}--\eqref{1.13} is one example---and nothing about
solutions of~\eqref{1.1} in general. Within the stated class, however, it is exhaustive. Both
families, and the sign reversal induced by the integers $\ell_1$ and $\ell_2$, may be checked
directly against~\eqref{3.13} and~\eqref{3.16} with the script of Appendix~\ref{app:verify}.

\medskip
\noindent\textbf{The two solution families.} Restoring the phase $\theta=\xi_1$ removed by the
normalisation, and recording with integers $\ell_1,\ell_2$ the freedom to add $\pi$ to an
individual phase---which reverses the sign of the two factors carrying it, hence of
$\boldsymbol{v}^0$, while leaving the pressure unchanged---the two non-trivial points of
Proposition~\ref{prop:3Dclassification} correspond to the two families
\begin{eqnarray}
\label{4.sets}
\text{First solution family:}\quad
\boldsymbol{\xi}(\theta,\ell_1,\ell_2)&=&
\left(\theta,\;\theta-\frac{2\pi}{3}+\ell_1\pi,\;\theta+\frac{\pi}{6}+\ell_2\pi\right),
\nonumber\\
\text{Second solution family:}\quad
\boldsymbol{\xi}(\theta,\ell_1,\ell_2)&=&
\left(\theta,\;\theta+\frac{2\pi}{3}+\ell_1\pi,\;\theta+\frac{5\pi}{6}+\ell_2\pi\right),
\end{eqnarray}
with $\theta\in\mathbb{R}$ and $\ell_1,\ell_2\in\mathbb{Z}$; every phase vector
satisfying the reduction system~\eqref{4.7} non-trivially belongs to one of these two families. The parameter
$\theta$ generates the translation orbit: the members of a given family are one flow, viewed from
origins displaced by $\Delta\theta/\alpha$ along the diagonal. The second and first families correspond respectively to the positive- and negative-helicity
solutions of~\citet{ant}; the second contains the two phase vectors recorded in the literature
review.

\medskip
\noindent The velocity fields themselves follow at once: substituting either phase vector
of~\eqref{4.sets} into the construction~\eqref{3.1} and carrying the result forward in time
by~\eqref{3.5} gives $v_i(\boldsymbol{x},\boldsymbol{\xi},t)=v_i^0(\boldsymbol{x},
\boldsymbol{\xi})\,e^{-3\alpha^2\kappa t}$ for $i=1,2,3$.

\medskip
\noindent\textbf{The pressures.} A phase vector from either family satisfies the reduction
condition~\eqref{3.13} by construction, and satisfies the integrability condition~\eqref{3.16},
adapted to $n=3$, as direct substitution confirms. The convective term
$\boldsymbol{g}^0=(\boldsymbol{v}^0\bcdot\bnabla)\boldsymbol{v}^0$ is therefore the gradient of a
single-valued periodic scalar, and the momentum equation~\eqref{1.1} is satisfied with
$\bnabla p=-\rho\,\boldsymbol{g}^0e^{-6\alpha^2\kappa t}$. Integrating this along $x_1$ and
fixing the two functions of integration by the remaining two components gives the pressure in
closed form. For the first family it is
\begin{eqnarray}
\label{4.pI}
p&=&\frac{3\rho v_r^2}{16}\left[\cos\!\left(2\alpha x_1+2\theta+\frac{\pi}{3}\right)
+\cos\!\left(2\alpha x_2+2\theta+\frac{\pi}{3}\right)+\right.\nonumber\\
&&\left.\qquad\qquad+\cos\!\left(2\alpha x_3+2\theta+\frac{\pi}{3}\right)\right]
e^{-6\alpha^2\kappa t}-\nonumber\\
&-&\frac{3\rho v_r^2}{64}\left[\cos 2\alpha(x_1-x_2)+\cos 2\alpha(x_2-x_3)
+\cos 2\alpha(x_3-x_1)\right]e^{-6\alpha^2\kappa t}+\nonumber\\
&+&\frac{3\sqrt3\,\rho v_r^2}{64}\left[\sin 2\alpha(x_1-x_2)+\sin 2\alpha(x_2-x_3)
+\sin 2\alpha(x_3-x_1)\right]e^{-6\alpha^2\kappa t},
\end{eqnarray}
and for the second family
\begin{eqnarray}
\label{4.pII}
p&=&\frac{3\rho v_r^2}{16}\left[\cos\!\left(2\alpha x_1+2\theta+\frac{5\pi}{3}\right)
+\cos\!\left(2\alpha x_2+2\theta+\frac{5\pi}{3}\right)+\right.\nonumber\\
&&\left.\qquad\qquad+\cos\!\left(2\alpha x_3+2\theta+\frac{5\pi}{3}\right)\right]
e^{-6\alpha^2\kappa t}-\nonumber\\
&-&\frac{3\rho v_r^2}{64}\left[\cos 2\alpha(x_1-x_2)+\cos 2\alpha(x_2-x_3)
+\cos 2\alpha(x_3-x_1)\right]e^{-6\alpha^2\kappa t}-\nonumber\\
&-&\frac{3\sqrt3\,\rho v_r^2}{64}\left[\sin 2\alpha(x_1-x_2)+\sin 2\alpha(x_2-x_3)
+\sin 2\alpha(x_3-x_1)\right]e^{-6\alpha^2\kappa t}.
\end{eqnarray}
Both are independent of $\ell_1$ and $\ell_2$, as they must be, since those integers only reverse
the sign of $\boldsymbol{v}^0$. Within each family the difference terms in $x_1-x_2$, $x_2-x_3$ and
$x_3-x_1$ are fixed, and only the phase of the three single-frequency cosines moves, advancing by
$2\Delta\theta$---exactly what a translation by $\Delta\theta/\alpha$ does to a field of spatial
frequency $2\alpha$. A moving phase in the pressure is thus the signature of a shifted origin, not
of a new flow. Between the families the sign of the three sine terms is reversed, and it is this
that records the reflection relating them.

\medskip
\noindent\textbf{What the classification exposes.} The two flows themselves are those
of~\citet{ant}, described in Section~\ref{sec:intro}: the invariants of
Proposition~\ref{prop:3Dclassification} are exactly those of the reference phase vectors recorded
there, and the pressure~\eqref{4.pI} is the Bernoulli expression~\eqref{1.17} resolved into
harmonics. What the present formulation contributes is an account of the terrain around them.
Four features become visible that the earlier representation leaves implicit.

\emph{Completeness.} Earlier formulations leave open the possibility that additional non-trivial
coefficient choices may exist, since solving the nonlinear system for the six coefficients and
discarding degenerate cases does not by itself establish uniqueness.
Proposition~\ref{prop:3Dclassification} closes this question. In terms of the two invariants $a$
and $b$, the reduced system is zero-dimensional, and its elimination polynomial~\eqref{4.elim} has
exactly three roots. One corresponds to a degenerate case, while the remaining two generate
precisely the two families identified above. These therefore exhaust the admissible non-trivial
solutions within the class; no additional solutions exist.

\emph{The symmetry group, made explicit.} The solution set is not a pair of points but the pair of
orbits~\eqref{4.sets}, carrying a continuous parameter $\theta$ and two integers $\ell_1,\ell_2$.
The continuous parameter is the translation already identified; the integers are not, and they
account for the several apparently distinct phase vectors that recur in the literature and in
practice, every one of which proves to be a single flow up to a reversal of sign.

\emph{Which structure is fixed and which moves.} Resolving the pressure into
harmonics~\eqref{4.pI} separates the difference-frequency terms, which are invariant, from the
single-frequency terms, whose common phase advances as $2\theta$ under translation. The Bernoulli form~\eqref{1.17} conceals this, presenting the pressure as a single squared
magnitude. That form is available only because a Beltrami field carries its
convective term as the gradient of the kinetic energy; where the velocity and vorticity are not
parallel, no scalar of which the convective term is the gradient can be read off from
$\boldsymbol{v}$ in closed form, and the pressure must instead be assembled harmonic by harmonic,
as it is in every dimension treated below.

\emph{The relation between the families.} That the two admissible points are negatives of one
another is a statement about invariants, and it yields the global result established next.

\noindent\textbf{The two families are distinct, and form a chiral pair.} That the two admissible
points of Proposition~\ref{prop:3Dclassification} are negatives of one another means the flows of
the first and second families are related by the reflection $\boldsymbol{x}\mapsto-\boldsymbol{x}$,
under which $\boldsymbol{v}\mapsto-\boldsymbol{v}$ and $p(\boldsymbol{x})\mapsto p(-\boldsymbol{x})$;
this is a symmetry of~\eqref{1.1}, so each family is carried to the other. \citet{ant} records the
same opposition in a local form, observing that the two families differ only by the reversal of the
stable and unstable manifolds at their common stagnation points, the linearised matrices being
negatives of one another. The statement here is the global counterpart, and it has a consequence
that the local one does not display: the reflection is orientation-reversing, so no translation or
rotation carries one family to the other. What
separates them is the helicity $\mathcal{H}=\int_{\mathcal{C}}\boldsymbol{v}^0\bcdot
(\bnabla\times\boldsymbol{v}^0)\,\mathrm{d}V$, taken over one periodic cell
$\mathcal{C}=[0,2\pi/\alpha)^3$: a pseudoscalar, invariant under translations and rotations and
changing sign under reflection. Evaluating it on the two families gives
\begin{equation}
\label{4.helicity}
\mathcal{H}=-\,\frac{27\sqrt{3}}{4}\,\frac{\pi^3 v_r^2}{\alpha^2}\quad\text{(first family)},
\qquad
\mathcal{H}=+\,\frac{27\sqrt{3}}{4}\,\frac{\pi^3 v_r^2}{\alpha^2}\quad\text{(second family)},
\end{equation}
independently of $\theta$, $\ell_1$ and $\ell_2$, as translation invariance and the evenness of
$\mathcal{H}$ under $\boldsymbol{v}^0\mapsto-\boldsymbol{v}^0$ require. The value is non-zero and the two
differ in sign, so no \emph{orientation-preserving} rigid motion---in particular, no translation
and no proper rotation---carries a member of one family to a member of the other. They
are mirror images---a left- and a right-handed member of a single chiral pair---and in this sense
the three-dimensional construction yields two distinct solutions and no more.

\medskip
\noindent\textbf{Representatives and special cases.} The vector
\[
\boldsymbol{\xi}\!\left(\tfrac{\pi}{8},0,0\right)
=\left(\frac{\pi}{8},\,-\frac{13\pi}{24},\,\frac{7\pi}{24}\right),
\]
is produced by the construction on setting $\theta=\tfrac{\pi}{8}$ in the first family
of~\eqref{4.sets}. It is a non-trivial, curl-free field: its velocity is given by~\eqref{3.5} with
this vector substituted, and its pressure by~\eqref{4.pI} with
$2\theta+\tfrac{\pi}{3}=\tfrac{7\pi}{12}$. Every phase vector below arises in the same way, from
the classification rather than from any prior solution.

Two symmetries act within a family, and it is worth separating them. Members sharing the same pair
$(\ell_1,\ell_2)$ differ only in $\theta$, and are related by a pure translation---that is, by a
uniform shift of all three phases, which as noted above displaces the coordinates rigidly: thus
$\boldsymbol{\xi}(\tfrac{\pi}{10},0,0)=\bigl(\tfrac{\pi}{10},-\tfrac{17\pi}{30},\tfrac{4\pi}{15}\bigr)$
and
$\boldsymbol{\xi}(\tfrac{\pi}{14},0,0)=\bigl(\tfrac{\pi}{14},-\tfrac{25\pi}{42},\tfrac{5\pi}{21}\bigr)$
are the field of $\boldsymbol{\xi}(\tfrac{\pi}{8},0,0)$ viewed from origins displaced by
$-\pi/40\alpha$ and $-3\pi/56\alpha$, with pressures~\eqref{4.pI} at
$\tfrac{8\pi}{15}$ and $\tfrac{10\pi}{21}$. Members differing in $\ell_1$ or $\ell_2$ are related
by a translation \emph{composed with} the reversal
$\boldsymbol{v}^0\mapsto-\boldsymbol{v}^0$, and the difference of their phase vectors is then not
uniform. The vector
$\boldsymbol{\xi}(\tfrac{\pi}{12},1,0)=\bigl(\tfrac{\pi}{12},\tfrac{5\pi}{12},\tfrac{\pi}{4}\bigr)$
is of this kind: subtracting $\boldsymbol{\xi}(\tfrac{\pi}{8},0,0)$ from it gives
$\bigl(-\tfrac{\pi}{24},\tfrac{23\pi}{24},-\tfrac{\pi}{24}\bigr)$, a uniform
$-\tfrac{\pi}{24}$ together with an extra $\pi$ in the second phase alone. Written instead as
$\boldsymbol{\xi}(\tfrac{\pi}{12},0,0)=\bigl(\tfrac{\pi}{12},-\tfrac{7\pi}{12},\tfrac{\pi}{4}\bigr)$
the same root becomes a pure translate; the two forms differ by $\pi$ in $\xi_2$, carry velocity
fields of opposite sign, and share the pressure~\eqref{4.pI} at $\tfrac{\pi}{2}$. A non-uniform
difference between two admissible phase vectors is therefore no indication of distinct flows: it
records only which representative of the root has been written down.

The second family contains both solutions known in the literature. Taking
$\theta=-\tfrac{\pi}{3}$ in the second family gives
$\bigl(-\tfrac{\pi}{3},\tfrac{\pi}{3},\tfrac{\pi}{2}\bigr)$, the phase vector of the solution
of~\cite{tha4}; then $2\theta+\tfrac{5\pi}{3}=\pi$, and~\eqref{4.pII} reduces term by term to the
pressure~\eqref{1.21} recorded in the literature review. Taking instead
$\theta=-\tfrac{5\pi}{6}$ gives $\bigl(-\tfrac{5\pi}{6},-\tfrac{\pi}{6},0\bigr)$, the phase vector
of the solution of~\cite{ant}; then $2\theta+\tfrac{5\pi}{3}=0$, the single-frequency bracket
changes sign, and~\eqref{4.pII} gives
\begin{eqnarray}
\label{4.p4}
p&=&\frac{3\rho v_r^2}{16}\left[\cos 2\alpha x_1+\cos 2\alpha x_2+\cos 2\alpha x_3\right]e^{-6\alpha^2\kappa t}-\nonumber\\
 &-&\frac{3\rho v_r^2}{64}\left[\cos 2\alpha(x_1-x_2)+\cos 2\alpha(x_2-x_3)+\cos 2\alpha(x_3-x_1)\right]e^{-6\alpha^2\kappa t}-\nonumber\\
 &-&\frac{3\sqrt{3}\,\rho v_r^2}{64}\left[\sin 2\alpha(x_1-x_2)+\sin 2\alpha(x_2-x_3)+\sin 2\alpha(x_3-x_1)\right]e^{-6\alpha^2\kappa t}.
\end{eqnarray}
This is the Bernoulli pressure~\eqref{1.17} of~\citet{ant} resolved into harmonics, in the form
first given by~\citet{tha4}; the two differ only by an additive constant. The two known solutions are thus $\boldsymbol{\xi}(-\tfrac{\pi}{3},0,0)$ and
$\boldsymbol{\xi}(-\tfrac{5\pi}{6},0,0)$ of the second family. They share the pair
$(\ell_1,\ell_2)=(0,0)$, so here the difference \emph{is} uniform---a shift of
$\Delta\theta=-\tfrac{\pi}{2}$ in every phase, the shift noted in Section~\ref{sec:intro} when the
two literature fields were compared---and~\eqref{1.21} is the special case $\theta=-\tfrac{\pi}{3}$
of the single expression~\eqref{4.pII}.

We now verify these identifications by deriving the two literature phase vectors directly, fixing
$\xi_3$ rather than $\xi_1$.

\medskip
\noindent\textbf{First known solution ($\xi_3=\tfrac{\pi}{2}$).} Substituting $\xi_3=\tfrac{\pi}{2}$
into~\eqref{4.7} gives
\begin{eqnarray}
\label{4.8}
u_1(\boldsymbol{\xi}) &=& -\cos(2\xi_1) - \cos(2\xi_2) + 2\cos(2\xi_1 - 2\xi_2) + \cos(2\xi_1 + 2\xi_2) - 1 = 0
\end{eqnarray}
\begin{eqnarray}
\label{4.9}
u_2(\boldsymbol{\xi}) &=& -4\sin(\xi_1)\,\sin(\xi_2)\,\sin(\xi_1 + \xi_2) = 0 .
\end{eqnarray}
The factored form of~\eqref{4.9} vanishes when $\sin(\xi_1+\xi_2)=0$; taking $\xi_1 = -\xi_2$
and substituting into~\eqref{4.8} reduces it to
\[
2\left[\cos(4\xi_2) - \cos(2\xi_2)\right] = 0 ,
\]
whose non-trivial root is $\xi_2 = \tfrac{\pi}{3}$, giving $\xi_1 = -\tfrac{\pi}{3}$. Thus
\[
\boldsymbol{\xi}\!\left(-\tfrac{\pi}{3},0,0\right)
=\left(-\frac{\pi}{3},\,\frac{\pi}{3},\,\frac{\pi}{2}\right),
\]
which is the phase vector of the three-dimensional solution of \cite{tha4}, whose velocity
field~\eqref{1.18}--\eqref{1.20} and scalar pressure~\eqref{1.21} are as displayed in the
literature review. In the invariants of the classification above this vector gives
$(a,b)=\bigl(\tfrac{2\pi}{3},\tfrac{\pi}{6}\bigr)$, the second of the two admissible points.

\medskip
\noindent\textbf{Second known solution ($\xi_3=0$).} Fixing instead $\xi_3 = 0$ in~\eqref{4.7} gives
\begin{eqnarray}
\label{4.10}
u_1(\boldsymbol{\xi}) &=& -\cos(2\xi_1) - \cos(2\xi_2) - 2\cos(2\xi_1 - 2\xi_2) - \cos(2\xi_1 + 2\xi_2) + 1 = 0, \nonumber\\
u_2(\boldsymbol{\xi}) &=& -\sin(2\xi_1) - \sin(2\xi_2) - \sin(2\xi_1 + 2\xi_2) = 0 .
\end{eqnarray}
Writing the second as $2\sin(\xi_1+\xi_2)\left[\cos(\xi_1-\xi_2)+\cos(\xi_1+\xi_2)\right]=0$
and taking the branch $\xi_1+\xi_2=-\pi$, so that $\xi_1 = -\pi-\xi_2$, the first reduces to
$\cos(2\xi_2)+\cos(4\xi_2)=0$. With $c=\cos(2\xi_2)$ this is $2c^2+c-1=(2c-1)(c+1)=0$, whose
root $c=\tfrac{1}{2}$ gives $\xi_2 = -\tfrac{\pi}{6}$ and $\xi_1 = -\tfrac{5\pi}{6}$. Thus
\[
\boldsymbol{\xi}\!\left(-\tfrac{5\pi}{6},0,0\right)
=\left(-\frac{5\pi}{6},\,-\frac{\pi}{6},\,0\right),
\]
the phase vector of the solution of \cite{ant}, with velocity field~\eqref{1.14}--\eqref{1.16}.
This vector differs from the preceding one by $-\tfrac{\pi}{2}$ in every component---the uniform
phase shift already noted in Section~\ref{sec:intro} when the two literature fields were
compared---so by the argument given above the two known solutions are one flow, the second being
the first translated by $\pi/2\alpha$ along the diagonal. Both reduce to
$(a,b)=\bigl(\tfrac{2\pi}{3},\tfrac{\pi}{6}\bigr)$, and its pressure is~\eqref{4.p4}, obtained
above from~\eqref{4.pII}.

The relation between the two known pressures illustrates the same mechanism once more. The
uniform shift $\delta=-\tfrac{\pi}{2}$ advances the single-frequency phase by
$2\delta=-\pi$, which simply reverses the sign of the first bracket: \eqref{1.21} carries
$-\tfrac{3}{16}\rho v_r^2\sum_i\cos2\alpha x_i$ where~\eqref{4.p4} carries
$+\tfrac{3}{16}\rho v_r^2\sum_i\cos2\alpha x_i$, and the difference-frequency parts of the two
are identical. The two expressions therefore look different while describing one pressure field
viewed from origins a distance $\pi/2\alpha$ apart; both are instances of the single
expression~\eqref{4.pII}. The sine difference terms carry
$-\tfrac{3\sqrt3}{64}\rho v_r^2$ throughout the second family but
$+\tfrac{3\sqrt3}{64}\rho v_r^2$ throughout the first, as~\eqref{4.pI}
and~\eqref{4.pII} show: that sign is how the reflection relating the two families shows itself in
the pressure.

That the systematic construction recovers both known families from the reduction and integrability
conditions---rather than presupposing either---is the essential feature of the method, and the same procedure
is carried unchanged into dimensions five through eight.

\medskip
For every admissible phase vector, then, $\boldsymbol{g}^0$ is curl-free and admits a scalar
potential; by Proposition~\ref{prop:longitudinal} the integral condition~\eqref{2.13} is
satisfied, and substituting any such phase vector into equation~\eqref{3.1} yields an exact
triple-periodic solution of the Navier--Stokes equations~\eqref{1.1}.

It is worth situating this construction against the classical Arnold--Beltrami--Childress
flow~\eqref{1.10}--\eqref{1.13}. Both constructions yield Beltrami fields at their admissible
parameters, but they reach them from different ansatz classes. In the ABC flow the Beltrami
structure is built into the velocity representation from the outset, so its convective term is a
gradient and its pressure follows at once, with no phase-angle condition to satisfy. In the present
construction nothing of the kind is imposed: for a general phase vector the field is not Beltrami
and a single-valued pressure need not exist, and the admissible phases are fixed precisely by the
requirement that the convective field admit one. That the surviving members turn out to be
Beltrami, with $\boldsymbol{\omega}=\pm\sqrt3\,\alpha\boldsymbol{v}$, is a conclusion of the
classification rather than a hypothesis of it.
\subsection{Quadruple-periodic family of solutions in four dimensions}
\label{sec:4D}
In four spatial dimensions, the initial condition given by equation~\eqref{3.1} reduces to a solenoidal velocity vector field \( v_i^0(\boldsymbol{x}, \boldsymbol{\xi}) \) of quadruple-periodic form, with first component:
\begin{eqnarray}
\label{4.11}
\frac{v_1^0(\boldsymbol{x}, \boldsymbol{\xi})}{v_r} &=& \sin { \left(\alpha x_1+\xi_1 \right) } \cos {\left(\alpha x_2+\xi_2\right) }\sin { \left(\alpha x_3+\xi_3\right) } \cos { \left(\alpha x_4+\xi_4 \right) }- \nonumber\\
&-&\sin  {\left( \alpha x_1+\xi_2\right) }\sin  {\left( \alpha x_2+\xi_3\right) } \cos  { \left(\alpha x_3+\xi_4 \right) } \cos  {\left( \alpha x_4+\xi_1\right) }
\end{eqnarray}
and the components $v_2^0,\, v_3^0,\, v_4^0$ obtained by cyclic permutation of the spatial arguments as prescribed by~\eqref{3.1}.
\subsubsection{Structure of the Four-Dimensional Reduction}
In contrast to the three-dimensional case, the unreduced decomposition of
\(\mathcal{V}_1^0\) in four dimensions produces 35 phase-dependent coefficient functions (\iffullAppendixA Appendix~\ref{app:4D}\else
supplementary material\fi)
for the four unknown phase angles. Whether these can be made to cancel simultaneously is not
guaranteed and provides an early indication that the admissibility of periodic solutions may depend
strongly on the spatial dimension.

The existence of a non-trivial real solution satisfying this reduction condition is
one of the features that distinguishes the different dimensions. In five and
seven dimensions, the Gr\"{o}bner-basis analysis presented in
Sections~\ref{sec:5D} and~\ref{sec:7D} reduces the corresponding constraint systems to an
algebraic contradiction. In four dimensions, by contrast, the system admits
a solution, although this solution is necessarily highly constrained.

These 35 coefficient functions are not independent; the associated spatial terms contain linear
dependencies, so the condition $\mathcal{V}_1^0\equiv0$ need not require all 35 coefficients to
vanish separately. A detailed examination reveals an underlying phase-angle structure that permits
their collective cancellation. The resulting solution is exhibited in Step~3 below, and its
existence indicates the presence of a hidden symmetry in the quadruple-periodic velocity
construction.

Following the four-step methodology outlined in
Section~\ref{sec:construction}, we now determine the phase angles for the
quadruple-periodic family of solutions in four spatial dimensions. Whereas the
three-dimensional construction leads to a compact underdetermined system, the
four-dimensional problem produces a substantially larger reduction structure with
significant internal dependencies.
\subsubsection{Step 1: Expression of the inertial term}
The corresponding inertial term $g_1^0(\boldsymbol{x}, \boldsymbol{\xi})$, obtained from equation~\eqref{3.10} using the four-dimensional velocity components~\eqref{4.11}, yields an expression containing 183 distinct trigonometric terms. While the complete expression, given in Appendix~\ref{app:4D}, is extensive, the key insight lies in its subsequent decomposition rather than its explicit form.
\subsubsection{Step 2: Extraction of the Reduction Term $\mathcal{V}_1^0$}
By collecting all terms in $g_1^0$ that are independent of $x_1$, we isolate
$\mathcal{V}_1^0(\boldsymbol{x}_{\setminus 1}, \boldsymbol{\xi})$. Expanded directly, this function
takes the form
\begin{eqnarray}
\label{4.12}
\frac{\mathcal{V}_1^0(\boldsymbol{x}_{\setminus 1}, \boldsymbol{\xi})}{v_r^2} = \sum_{j=1}^{35}
q_j^0(\boldsymbol{x}_{\setminus 1}) \cdot u_j(\boldsymbol{\xi})
\end{eqnarray}
where the spatial functions $q_j^0(\boldsymbol{x}_{\setminus 1})$ involve
combinations of trigonometric functions in $(x_2, x_3, x_4)$, and the phase-dependent
functions $u_j(\boldsymbol{\xi})$ contain various combinations of the phase
angles $\xi_1,\ldots,\xi_4$.

It is important to note that the $35$-term representation in~\eqref{4.12} is the direct
trigonometric expansion obtained from $\mathcal{V}_1^0$; the spatial functions $q_j^0$ appearing
in this unreduced representation do not constitute a linearly independent Fourier basis.
Consequently, the vanishing of every coefficient function $u_j$ is sufficient, but not necessary,
for $\mathcal{V}_1^0\equiv0$: upon expansion into elementary Fourier modes, contributions from
different terms may combine and cancel. When identical Fourier modes are collected and a linearly
independent spatial basis is chosen, the resulting reduced representation has the form described
in~\eqref{3.19}.
\subsubsection{Step 3: Solution of the reduction condition}
Enforcing the condition $\mathcal{V}_1^0(\boldsymbol{x}_{\setminus 1}, \boldsymbol{\xi}) \equiv 0$
means requiring the separated sum~\eqref{4.12} to vanish identically in $\boldsymbol{x}$. One way
to guarantee this is to make every coefficient function vanish,
$$u_j(\xi_1, \xi_2, \xi_3, \xi_4) = 0, \qquad j = 1, 2, \ldots, 35,$$
a system of 35 nonlinear equations in four unknowns. This is sufficient but, as
Proposition~\ref{prop:4Dreduction} will show, more than is necessary: at the solution the sum
vanishes because twenty-one coefficient functions vanish and the remaining fourteen cancel collectively
against their spatial coefficients. We first record the system, and then exhibit the
configuration that annihilates the sum.

The coefficient functions $u_j(\boldsymbol{\xi})$ contain trigonometric terms of varying complexity:
\begin{itemize}
\item Single-phase angle terms: $\sin(2\xi_i)$, $\cos(2\xi_i)$
\item Two-phase angle combinations: $\sin(2\xi_i \pm 2\xi_j)$, $\cos(2\xi_i \pm 2\xi_j)$
\item Multi-phase angle combinations: $\sin(2\xi_i \pm 2\xi_j \pm 2\xi_k \pm 2\xi_l)$, $\cos(2\xi_i \pm 2\xi_j \pm 2\xi_k \pm 2\xi_l)$
\end{itemize}

The complete 183-term expression for $g_1^0(\boldsymbol{x}, \boldsymbol{\xi})$ and the derivation of all 35 coefficient functions are given
\iffullAppendixA in Appendix~\ref{app:4D}\else in the supplementary material, as described in
Appendix~\ref{app:4D}\fi.

As in three dimensions, the reduction condition does not isolate a single phase vector, and as
there the reason is that a uniform shift of all four phases is a rigid translation of the
coordinates. Setting $\xi_1=\theta$ and passing to the differences
\[
a=\xi_2-\xi_1,\qquad b=\xi_3-\xi_2,\qquad c=\xi_4-\xi_3,
\]
which the shift leaves fixed, the system~\eqref{4.12} can be solved completely. Its solution set is
not a finite collection of points, as in three dimensions, but a curve.

\begin{proposition}[Reduction condition in four dimensions]
\label{prop:4Dreduction}
The separated sum~\eqref{4.12} vanishes identically in $\boldsymbol{x}$ precisely on the
two-parameter set
\begin{equation}
\label{4.4Dset}
\boldsymbol{\xi}(\theta,a,\ell_1,\ell_2,\ell_3)=
\left(\theta,\;\theta+a+\ell_1\pi,\;\theta+\frac{\pi}{2}+\ell_2\pi,\;\theta+a+\ell_3\pi\right),
\end{equation}
with $\theta,a\in\mathbb{R}$ and $\ell_1,\ell_2,\ell_3\in\mathbb{Z}$; equivalently, on
$b\equiv\tfrac{\pi}{2}-a$ and $c\equiv a-\tfrac{\pi}{2}$ modulo $\pi$. The parameter $\theta$
generates the translation orbit and the integers $\ell_1,\ell_2,\ell_3$ reverse the sign of
$\boldsymbol{v}^0$ according to the parity of $\ell_1+\ell_2+\ell_3$, so that the only parameter
carrying genuinely different fields is $a$.
\end{proposition}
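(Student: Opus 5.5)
Following the three-dimensional proof of Proposition~\ref{prop:3Dclassification}, I will reduce the problem by symmetry, rewrite the condition in a linearly independent Fourier basis, and solve the resulting polynomial system by elimination.

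\textbf{Normalisation and symmetries.} A uniform phase shift is a rigid translation, so I set $\xi_1=\theta=0$ and work in the differences $(a,b,c)$. Adding $\pi$ to a single $\xi_k$ flips the sign of exactly two factors in each cyclic product, one in $\mathcal{T}1$ and one in $\mathcal{T}2$. I will first check that the net effect is a global sign change of $\boldsymbol{v}^0$ that depends only on the parity of the total number of $\pi$-shifts. Since $g_1^0$ is quadratic in $\boldsymbol{v}^0$, it is invariant under such shifts. The reduction condition therefore depends only on $(2a,2b,2c)$, and I may work modulo $\pi$ with variables $c_a=\cos2a$, $s_a=\sin2a$, and similarly for $b$ and $c$. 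This already explains the integers $\ell_1,\ell_2,\ell_3$ and their parity rule.

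\textbf{Passing to an independent basis.} The $35$ coefficient functions in \eqref{4.12} are attached to dependent spatial terms, so they must not be set to zero individually. I will expand every term of $\mathcal{V}_1^0$ into elementary exponentials $e^{\mathrm{i}(k_2x_2+k_3x_3+k_4x_4)}$ with $k_j\in\{0,\pm2\alpha\}$, which is the form \eqref{3.19}. I then collect each Fourier mode, using the symmetry $\boldsymbol{k}\leftrightarrow-\boldsymbol{k}$ to keep only half the modes, and read off the independent amplitudes $u_j(\boldsymbol{\xi})$. Each amplitude is a trigonometric polynomial in $2a,2b,2c$. Rewriting it in $(c_a,s_a,c_b,s_b,c_c,s_c)$ and adjoining the three circle relations gives a polynomial ideal, exactly as in \eqref{4.poly}.

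\textbf{Solving the system (the main step).} Unlike the three-dimensional case, I expect the variety to be one-dimensional. A lexicographic Gr\"obner basis will therefore not end in a univariate polynomial. Instead, I will order the variables so that $c_a,s_a$ are eliminated last. I expect the basis to express $(c_b,s_b)$ and $(c_c,s_c)$ polynomially in $(c_a,s_a)$, namely
\[
c_b=-c_a,\quad s_b=s_a,\quad c_c=-c_a,\quad s_c=-s_a ,
\]
which encodes $2b\equiv\pi-2a$ and $2c\equiv2a-\pi$. Any further components of the zero set will appear as separate primary components. My plan is to compute a primary decomposition of the radical and check that every extra component is either this curve again or lies outside the real circle product. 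Showing that no other real branches exist is where I expect the difficulty to lie.

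As a guide, and as an independent verification, I will substitute $\boldsymbol{\xi}=(0,a,\tfrac{\pi}{2},a)$ directly into $\mathcal{V}_1^0$. I expect $21$ coefficient functions to vanish outright and the remaining $14$ to cancel pairwise through shared Fourier modes, as the preamble states. Finally, I restore $\theta$ and the $\pi$-shifts to obtain \eqref{4.4Dset}. This step uses $\xi_3=\xi_1+a+b=\theta+\tfrac{\pi}{2}$ and $\xi_4=\xi_3+c=\theta+a$ modulo $\pi$.
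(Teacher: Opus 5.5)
Your proposal is correct and follows essentially the paper's route. You quotient by translations and single-phase $\pi$-shifts so that only $(2a,2b,2c)$ matter, solve the reduction system exactly in an independent Fourier basis rather than setting the 35 dependent coefficients to zero, and confirm the curve $\boldsymbol{\xi}=(0,a,\tfrac{\pi}{2},a)$ by substitution. The paper itself only asserts the complete solution, supported by the 21/14 cancellation and sampled checks, so your elimination is the part that actually has to be carried out.

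That elimination is lighter than a primary decomposition suggests. Since $\mathcal{V}_1^0=\sum_{j\ge2}\partial_j\langle v_1^0v_j^0\rangle_{x_1}$ and all three-coordinate modes cancel identically, the two $(x_2,x_4)$-mode equations alone force $\cos(\xi_1+\xi_2-\xi_3-\xi_4)=0$ and leave four candidate curves. The single $(2\alpha x_2-2\alpha x_3)$ mode then removes the three spurious curves, except for points already on~\eqref{4.4Dset}.
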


\noindent At the solution the sum vanishes not because every coefficient function vanishes but
because twenty-one of them are annihilated while the remaining fourteen cancel collectively
against their spatial coefficients; by Corollary~\ref{cor:cyclic}, $\mathcal{V}_i^0\equiv0$ for all
$i=1,\ldots,4$.

Four dimensions therefore differs from three at this stage in a way worth stating plainly. In
three dimensions the reduction condition already pinned the phase differences down to isolated
values, and both survived the tests that followed. Here it leaves a whole curve open, and the
question of which members are exact solutions is decided entirely at the next step.

\subsubsection{Step 4: Pressure-integrability selects a single solution}
Clearing the reduction condition is only step~(iii) of the procedure; a phase vector must also
satisfy the full pressure-integrability condition, which in four dimensions is the six mixed-partial
relations
\[
\frac{\partial g_i^0}{\partial x_j}=\frac{\partial g_j^0}{\partial x_i},
\qquad 1\le i<j\le 4.
\]
Substituting the set~\eqref{4.4Dset} into the convective field and evaluating these relations shows
that they are independent of $\theta$ and of $\ell_1,\ell_2,\ell_3$---as they must be, since those
parameters only translate the field or reverse its sign---and depend on $a$ alone. The dependence
can be exhibited in closed form. Expanding the mixed partials over the spatial Fourier modes, every
non-vanishing mode amplitude is, up to a phase factor of unit modulus, a constant multiple of
\[
e^{6\mathrm{i}a}-e^{4\mathrm{i}a}-e^{2\mathrm{i}a}+1
=\bigl(e^{2\mathrm{i}a}-1\bigr)^{2}\bigl(e^{2\mathrm{i}a}+1\bigr)
=-8\,\mathrm{e}^{3\mathrm{i}a}\sin^{2}\!a\,\cos a .
\]
The integrability conditions therefore hold if and only if
\begin{equation}
\label{4.4Dfactor}
\sin^{2}\!a\,\cos a = 0 ,
\end{equation}
that is, at exactly two values of $a$ in a period: $a\equiv0$ and $a\equiv\tfrac{\pi}{2}$ modulo
$\pi$. At $a\equiv0$ the velocity field~\eqref{3.1} vanishes identically and the solution is
degenerate; at $a\equiv\tfrac{\pi}{2}$ the conditions hold while the field remains non-trivial. No
other value of $a$ clears the test, and~\eqref{4.4Dfactor} establishes this for the whole curve
rather than at sampled points.

The curve of Proposition~\ref{prop:4Dreduction} therefore collapses to the single family
\begin{equation}
\label{4.4Dsol}
\boldsymbol{\xi}(\theta,\ell_1,\ell_2,\ell_3)=
\left(\theta,\;\theta+\frac{\pi}{2}+\ell_1\pi,\;\theta+\frac{\pi}{2}+\ell_2\pi,\;
\theta+\frac{\pi}{2}+\ell_3\pi\right),
\end{equation}
and by Proposition~\ref{prop:longitudinal} every member of it satisfies the pressure-integrability
condition in full. The selection can also be seen numerically: the script of Appendix~\ref{app:verify} evaluates both
conditions at sample points along the curve and finds the reduction condition satisfied at each,
while the integrability condition holds only at $a=\tfrac{\pi}{2}$---a check consistent
with~\eqref{4.4Dfactor}, though it is the factorisation and not the sampling that proves the
claim. The quadruple-periodic solution recorded in the literature review of
Section~\ref{sec:intro}, $\left(-\tfrac{\pi}{4},\tfrac{\pi}{4},\tfrac{\pi}{4},
\tfrac{\pi}{4}\right)$, is the member $\theta=-\tfrac{\pi}{4}$ with $\ell_1=\ell_2=\ell_3=0$.

Four dimensions thus supplies the concrete instance of the statement that the reduction condition
is necessary but not sufficient. In four dimensions a whole curve of phase vectors annihilates the reduction term, and
integrability discards all of it but one point. The quarter-period offset
$\xi_2=\xi_3=\xi_4=\xi_1+\tfrac{\pi}{2}$ is thus not an ansatz adopted for convenience: it is
forced.

Because $\boldsymbol{g}^0$ is a gradient for every member of~\eqref{4.4Dsol}, the pressure is
obtained, up to an arbitrary additive constant, by integrating
$\partial p/\partial x_i=-\rho\,g_i^0$. Carrying out the integration in closed form gives
\begin{eqnarray}
\label{4.p4d}
p&=&\frac{\rho v_r^2}{8}\left[\cos 2\alpha(x_1-x_3)+\cos 2\alpha(x_2-x_4)\right]e^{-8\alpha^2\kappa t}+\nonumber\\
 &&+\frac{\rho v_r^2}{8}\left[\cos\!\left(2\alpha(x_1+x_3)+4\theta\right)+\cos\!\left(2\alpha(x_2+x_4)+4\theta\right)\right]e^{-8\alpha^2\kappa t}.
\end{eqnarray}
The structure repeats what was seen in three dimensions: the difference-frequency terms
$\cos 2\alpha(x_1-x_3)$ and $\cos 2\alpha(x_2-x_4)$ are fixed, while the sum-frequency terms carry
a phase $4\theta$ that advances with the translation parameter. The pressure is independent of
$\ell_1,\ell_2,\ell_3$, since reversing the sign of $\boldsymbol{v}^0$ leaves the quadratic
convective term unchanged. The velocity decays as $e^{-4\alpha^2\kappa t}$ and the pressure as
$e^{-8\alpha^2\kappa t}$, the factors $n=4$ and $2n=8$ characteristic of the $n$-fold
construction.

The members of~\eqref{4.4Dsol} are not distinct flows. As in three dimensions, $\theta$ displaces
the origin and the integers reverse the sign of the velocity, both symmetries of~\eqref{1.1}; the
family is one flow, and the advancing phase $4\theta$ in~\eqref{4.p4d} records the shifted origin.
The solution recorded in Section~\ref{sec:intro} is recovered at $\theta=-\tfrac{\pi}{4}$, where
$4\theta=-\pi$ and the sum-frequency terms change sign; at that value~\eqref{4.p4d} reduces, by the
product-to-sum identities, to the form~\eqref{1.26} quoted there, the two expressions being
identical and not merely equal up to a constant.

Unlike three dimensions, four dimensions has no chiral partner. The point inversion
$\boldsymbol{x}\mapsto-\boldsymbol{x}$ has determinant $(-1)^n$, so it is orientation-reversing in
three dimensions but orientation-preserving in four, where $\det(-I)=(-1)^4=+1$. After centring a
member of the translation family at its natural origin, central inversion therefore does not
generate an oppositely handed partner of the kind found in three dimensions. A genuinely
orientation-reversing map, such as the reflection of a single coordinate, does not preserve the
cyclic form~\eqref{3.1} and carries the field outside the class. The particular chiral pairing
generated by point inversion in three dimensions therefore has no direct analogue under central
inversion in four.

The reason four dimensions admits an exact solution whereas six and eight do
not is not visible at the reduction stage---all three even dimensions satisfy the reduction
condition at their symmetric phases---but emerges
only at the integrability test of step~(iv). At the admissible four-dimensional phases the convective field $\boldsymbol{g}^0$ collapses to a
pure gradient and the higher-degree component $\boldsymbol{g}^{0m}$ vanishes identically, so
a single-valued pressure exists; in six and eight dimensions a non-gradient multilinear remainder
survives, so no scalar pressure can absorb the full convective field in the unforced equations.
This is the even-dimensional dichotomy established in
Section~\ref{sec:6D} (Theorem~\ref{thm:even-dichotomy}), of which four dimensions is the
unique solvable case among the even dimensions considered here.
\subsubsection{Four-dimensional unit-periodic cyclic flow}
The simplest alternative to the SCSC ansatz is a four-dimensional analogue of the classical
unit-periodic cyclic flow, the three-dimensional form of which was recalled in the introduction as
equations~\eqref{1.10}--\eqref{1.13}. Whereas the periodic construction of this paper
builds each velocity component from a \emph{product} of trigonometric factors, the unit-periodic
family builds it from a \emph{sum}; the two furnish structurally different routes to a
divergence-free field, and it is natural to ask whether the second survives in higher dimensions
alongside the first. As in three dimensions, a unit-periodic velocity field is constructed so that
each component $v_k$ is independent of its own coordinate $x_k$; this guarantees
$\partial v_k/\partial x_k = 0$ and hence $\nabla\cdot\boldsymbol{v}=0$ identically, with no
restriction on the amplitudes. Each component may therefore involve only the remaining three
coordinates $x_{k+1},x_{k+2},x_{k+3}$ (indices modulo $4$), with one Fourier mode contributed by each coordinate:
\begin{equation}
\label{4.13}
v_k = A\sin(\alpha x_{k+1}) + B\cos(\alpha x_{k+2}) + C\sin(\alpha x_{k+3}),
\qquad k = 1,\ldots,4,
\end{equation}
with constant amplitudes $A,B,C$ to be fixed by the pressure-integrability
condition~\eqref{3.16}. Computing the convective field
$g_i = \sum_j v_j\,\partial v_i/\partial x_j$ and collecting the Fourier coefficients of the
residuals $\partial_j g_i - \partial_i g_j$ yields the polynomial system $A^2 = C^2 = 0$
together with $AB = BC = 0$, which forces
\begin{equation}
\label{4.14}
A = C = 0
\end{equation}
over $\mathbb{R}$. The amplitude $B$, however, is left unconstrained. This is a structural
feature of the even dimension: the four-cycle splits into the antipodal coordinate pairs
$(x_1,x_3)$ and $(x_2,x_4)$, so that the $B$-terms separate into two decoupled
coordinate-pair subsystems, each integrable on its own. A nontrivial one-parameter flow therefore
persists,
\begin{equation}
\label{4.15}
v_k = B\cos(\alpha x_{k+2}), \qquad\qquad k=1,\dots,4
\end{equation}
This field is an exact steady solution of the Euler equations. The corresponding convective field is a pure gradient, $\boldsymbol{v}\cdot\nabla\boldsymbol{v} = \nabla\Phi$ with
$\Phi = -B^2\bigl[\sin(\alpha x_1)\sin(\alpha x_3)
+ \sin(\alpha x_2)\sin(\alpha x_4)\bigr]$,
so a single-valued pressure exists and is obtained, up to an arbitrary additive constant, by integrating
$\partial p/\partial x_i = -\rho\, g_i$:
\begin{equation}
\label{4.16}
p(\boldsymbol{x}) = \rho B^2\!\left[\sin(\alpha x_1)\sin(\alpha x_3)
+ \sin(\alpha x_2)\sin(\alpha x_4)\right]
\end{equation}
For the viscous problem the same field decays diffusively: since every mode satisfies
$\Delta v_k = -\alpha^2 v_k$, the time-dependent solution is
\begin{equation}
\label{4.17}
v_k(\boldsymbol{x},t) = B\,\cos(\alpha x_{k+2})e^{-\alpha^2\kappa t}
\end{equation}
with pressure
\begin{equation}
\label{4.18}
p(\boldsymbol{x},t) = \rho B^2 \!\left[\sin(\alpha x_1)\sin(\alpha x_3)
+ \sin(\alpha x_2)\sin(\alpha x_4)\right]e^{-2\alpha^2\kappa t}
\end{equation}
The four-dimensional unit-periodic ansatz therefore furnishes a second family of
exact four-dimensional flows, alongside the SCSC family constructed above. Although the SCSC and
unit-periodic constructions both yield exact four-dimensional solutions, they do so for
fundamentally different reasons. The SCSC family
relies on a delicate cancellation among many interacting Fourier modes,
whereas the unit-periodic solution survives because its dynamics decouple into two
independent two-dimensional subsystems associated with the coordinate pairs
$(x_1,x_3)$ and $(x_2,x_4)$. This contrast illustrates that exact solutions
in higher dimensions need not arise from a unique algebraic mechanism.

Its behaviour in higher dimensions is taken up in Sections~\ref{sec:5D} and~\ref{sec:7D}.

\subsection{Comparison of the Three- and Four-Dimensional Cases}
The application of the phase-angle methodology to the three- and four-dimensional Navier--Stokes
equations reveals two markedly different constraint structures. In three dimensions the reduction
leads directly to two independent phase conditions in three unknown phase angles. In four
dimensions the direct trigonometric expansion produces 35 phase-dependent coefficient functions in
four unknown phase angles; these are not all independent, and the four-dimensional cancellation
exploits linear dependencies among their associated spatial terms. The three-dimensional reduction
is thus compact and can be solved directly, whereas the four-dimensional expansion is considerably
larger and carries substantial internal dependencies.

The cancellation mechanisms differ accordingly. In three dimensions the two independent reduction
amplitudes can be set to zero simultaneously. In four dimensions, at the quarter-period phases,
twenty-one coefficient functions in the direct expansion vanish individually while the spatial
contributions associated with the remaining fourteen cancel collectively, so that the separated
sum~\eqref{4.12} vanishes as a whole. The decisive issue is therefore not the number of coefficient
functions but the structure relating them: although the four-dimensional expansion is much the
larger, its internal dependencies allow the reduction condition to be met without every
coefficient vanishing.

A marked contrast, however, lies in \emph{where} the phases are determined. Both dimensions are
invariant under the same symmetries---uniform phase shifts, which translate the coordinates
rigidly, and the addition of $\pi$ to an individual phase, which reverses the sign of
$\boldsymbol{v}^0$---so in each case the meaningful content of a phase vector lies in the
differences between phases. In three dimensions the reduction condition alone pins those
differences down: by Proposition~\ref{prop:3Dclassification} it admits, besides a degenerate
point, exactly two non-trivial solutions, and both of them go on to satisfy the
pressure-integrability condition~\eqref{3.16}. In four dimensions the reduction condition is far
less restrictive: by Proposition~\ref{prop:4Dreduction} it is met along an entire one-parameter
curve of phase differences, and it is the integrability condition that reduces the curve to the
single non-trivial family~\eqref{4.4Dsol}. The two dimensions are therefore both rigid, but the
rigidity is imposed at different stages---by the reduction in three dimensions, by integrability in
four---and four dimensions supplies the concrete demonstration that the reduction condition is
necessary and not sufficient.

They differ once more under reflection, and for a reason that is arithmetic rather than
hydrodynamic: central inversion $\boldsymbol{x}\mapsto-\boldsymbol{x}$ has determinant $(-1)^n$, so
it reverses orientation in three dimensions but preserves it in four. This explains why the
particular reflection relation of the present construction produces a chiral pair in the former but
not in the latter.

What the two dimensions ultimately share is the outcome: in each case the surviving phase vectors
satisfy the full pressure-integrability condition~\eqref{3.16}, the convective field reduces to a
pure gradient, and the pressure absorbs it entirely. That both the compact three-dimensional
reduction and the considerably larger four-dimensional expansion should close in precisely this way
naturally raises the question of whether the same structure persists in higher dimensions. The next
section provides the first---and revealing---answer: in five dimensions, it does not.
\section{Quintuple-periodic constructions in five dimensions: the first odd-dimensional
obstruction}
\label{sec:5D}
In five spatial dimensions, the general ansatz~\eqref{3.1} reduces to a solenoidal velocity
vector field $v_i^0(\boldsymbol{x}, \boldsymbol{\xi})$ of quintuple-periodic form. Writing
\[
v_i^0 = v_r\bigl(\mathcal{T}1_i^0 - \mathcal{T}2_i^0\bigr),
\]
where $\mathcal{T}1_i^0$ and $\mathcal{T}2_i^0$ denote the first and second product terms
respectively in~\eqref{3.1}, the first component is:
\begin{eqnarray}
\label{5.1}
\frac{v_1^0(\boldsymbol{x}, \boldsymbol{\xi})}{v_r}&=&
\sin { \left(\alpha x_1+\xi_1 \right) } \cos {\left(\alpha x_2+\xi_2\right) }
\sin { \left(\alpha x_3+\xi_3\right) } \cos { \left(\alpha x_4+\xi_4 \right) }
\sin {\left(\alpha x_5+\xi_5\right) }- \nonumber\\
&-&\sin  {\left( \alpha x_1+\xi_2\right) } \cos  { \left(\alpha x_3+\xi_4 \right) }
\sin  {\left( \alpha x_2+\xi_3\right) }\sin  {\left( \alpha x_4+\xi_5\right) }
\cos  { \left(\alpha x_5+\xi_1 \right) }
\end{eqnarray}
and the components $v_2^0,\dots,v_5^0$ obtained by cyclic permutation of the spatial arguments as prescribed by~\eqref{3.1}.
The objective is to find a set of phase angles $(\xi_1, \xi_2, \xi_3, \xi_4, \xi_5)$ that satisfies condition~\eqref{3.13} while yielding a non-trivial velocity field, that is, one for which $\mathcal{T}1_i^0 \not\equiv \mathcal{T}2_i^0$. Such a set would provide a candidate phase vector to be tested subsequently against the full pressure-integrability condition~\eqref{3.16}. As we now prove, however, no non-trivial phase vector even satisfies the reduction condition.
\subsection{The constraint system and initial investigation}
Following Step~2 of the methodology outlined in Section~3, the five-dimensional velocity
field~\eqref{5.1} generates a decomposition of $g_1^0$ into $x_1$-independent
and $x_1$-dependent parts according to equation~\eqref{3.13}. The $x_1$-independent
part $\mathcal{V}_1^0$ takes the form:
\begin{equation}
\label{5.2}
\frac{\mathcal{V}_1^0(\boldsymbol{x}_{\setminus 1}, \boldsymbol{\xi})}{v_r^2} = \sum_{j=1}^{64}
q_j^0(\boldsymbol{x}_{\setminus 1}) \cdot u_j(\boldsymbol{\xi}),
\end{equation}
where the spatial coefficient functions $q_j^0(\boldsymbol{x}_{\setminus 1})$ involve
trigonometric combinations in $(x_2, x_3, x_4, x_5)$, and the phase-dependent functions
$u_j(\boldsymbol{\xi})$ contain combinations of\\ $(\xi_1, \xi_2, \xi_3, \xi_4, \xi_5)$.
Equation~\eqref{5.2} is reproduced as~\eqref{B1} in Appendix~\ref{app:5D}, where the complete
set of $64$ pairs $\{q_j^0,\,u_j\}$ is recorded in the supplementary material;
each spatial coefficient carries the scale $\alpha/256$, and each $u_j$ is a sum of up to thirty sines or cosines of integer combinations of the five phases. Enforcing $\mathcal{V}_1^0 \equiv 0$ makes
the separated sum~\eqref{5.2} vanish identically; because the $q_j^0$ are linearly independent as
functions of $(x_2,\dots,x_5)$, this holds if and only if every amplitude vanishes,
$u_j(\boldsymbol{\xi}) = 0$ for $j=1,\dots,64$, an overdetermined system of $64$ nonlinear
equations in $5$ unknowns.

\noindent \textbf{Dimensional context.} The degree of over-determination of the reduction system grows sharply with dimension, and it is instructive to trace this progression:
\begin{itemize}
    \item \textbf{3D:} two equations in three unknowns leave the system formally
    underdetermined, but the remaining degree of freedom is only a uniform phase shift,
    corresponding to a translation of the spatial coordinates and therefore generating no
    genuinely new solution. Once that translational freedom and the symmetries of the
    construction are factored out, the admissible solutions are isolated.
    Proposition~\ref{prop:3Dclassification} establishes that there are exactly two non-trivial
    families, related by mirror symmetry, and that these exhaust the solutions the construction
    admits. They are represented by $\boldsymbol{\xi}(\tfrac{\pi}{8},0,0)
    =\bigl(\tfrac{\pi}{8},-\tfrac{13\pi}{24},\tfrac{7\pi}{24}\bigr)$ and
    $\boldsymbol{\xi}(-\tfrac{\pi}{3},0,0)=\bigl(-\tfrac{\pi}{3},\tfrac{\pi}{3},
    \tfrac{\pi}{2}\bigr)$.
    \item \textbf{4D:} 35 coefficient functions in 4 unknowns --- not all independent. Here the
    reduction condition is met along an entire one-parameter curve of phase differences
    (Proposition~\ref{prop:4Dreduction}), twenty-one coefficient functions vanishing individually
    while the remaining fourteen cancel collectively, so that the sum~\eqref{4.12} vanishes as a
    whole; the pressure-integrability condition then selects the quarter-period pattern, of which
    $(-\pi/4, \pi/4, \pi/4, \pi/4)$ is a member.
    \item \textbf{5D:} 64 equations in 5 unknowns (Appendix~\ref{app:5D},
    supplementary material) --- as proven below, no non-trivial solution exists.
\end{itemize}
As in the lower dimensions, a uniform shift of all five phases is a rigid translation of the
coordinates and carries solutions of~\eqref{3.13} to solutions, so nothing is lost by normalising
one phase; it is the four differences $\xi_{i+1}-\xi_i$, and not the phases themselves, that carry
invariant meaning.

An exploratory search over candidates whose phase differences are rational multiples of $\pi$,
covering the patterns suggested by the three- and four-dimensional solutions and their natural
generalisations, produced no admissible phase vector. Such a search is of course not conclusive:
it can only report that particular candidates fail, and no finite list of trials can establish
that none exists. We record it only as the observation that prompted a systematic treatment. What
follows settles the question completely, by reducing the trigonometric system to a polynomial one
and determining its entire real solution set, so that the conclusion rests on the algebra and not
on the extent of any search.

\subsection{Non-existence for the alternating sine--cosine ansatz}
\label{subsec:5Dscscs}
The velocity field~\eqref{3.1} is built from the alternating sine--cosine pattern described
in Section~\ref{sec:construction}; it is the primary periodic ansatz of this paper,
and for brevity we refer to it as the \emph{SCSCS ansatz} (sine--cosine--sine--cosine\ldots),
to distinguish it from the alternative periodic forms (unit-periodic and other two-term
products) examined later in this section.
\begin{lemma}[Non-existence of the SCSCS ansatz in five dimensions]
\label{lem:5D_SCSCS}
Let $v_i^0(\boldsymbol{x}, \boldsymbol{\xi})$ denote the quintuple-periodic velocity field
given by equations~\eqref{5.1}. The only real phase vectors
$\boldsymbol{\xi}\in\mathbb{R}^5$ satisfying condition~\eqref{3.13} are those for which
$v_i^0 \equiv 0$ identically.
\end{lemma}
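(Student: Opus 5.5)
The plan is to mirror the three-dimensional classification (Proposition~\ref{prop:3Dclassification}): convert the reduction condition into a zero-dimensional polynomial system in the phase differences, and show that every real root makes $\mathcal{T}1_i^0\equiv\mathcal{T}2_i^0$. First I would use the translation symmetry to fix $\xi_1=0$. The unknowns are then the four differences $a_k=\xi_{k+1}-\xi_k$, $k=1,\dots,4$. Every $u_j$ in~\eqref{5.2} is a sum of sines and cosines of integer combinations of $2\xi_k$. I would write $c_k=\cos 2a_k$ and $s_k=\sin 2a_k$, expand each $u_j$ by the compound-angle formulas as polynomials in these, and adjoin the four circle relations $c_k^2+s_k^2=1$. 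Since the $q_j^0$ are linearly independent, Lemma~\ref{lem:reduction} together with Corollary~\ref{cor:cyclic} says that~\eqref{3.13} is \emph{equivalent} to $u_j=0$ for $j=1,\dots,64$. So the real solution set of this polynomial ideal is exactly the set to be classified.

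Before any elimination, I would pin down the degeneracy locus precisely. The identity $\mathcal{T}1_1^0\equiv\mathcal{T}2_1^0$ in $\boldsymbol{x}$ holds iff each pair of factors acting on the same coordinate agrees up to sign. A sine factor matched with a cosine factor would require a phase relation of the type $\xi\equiv\xi'+\pi/2 \pmod\pi$. Comparing~\eqref{5.1} factor by factor gives
\[
\xi_2\equiv\xi_1,\quad \xi_3\equiv\xi_2+\tfrac{\pi}{2},\quad \xi_4\equiv\xi_3+\tfrac{\pi}{2},\quad \xi_5\equiv\xi_4+\tfrac{\pi}{2},\quad \xi_1\equiv\xi_5+\tfrac{\pi}{2}\pmod\pi,
\]
together with an overall sign condition. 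In the $(c_k,s_k)$ variables this is the point $(c_1,s_1)=(1,0)$, $(c_k,s_k)=(-1,0)$ for $k=2,3,4$. The target statement is therefore that the real variety of the reduction ideal consists of this point only, up to the sign/$\pi$-shift ambiguities. I would also check that every such point really gives $v^0\equiv0$ and not $2\mathcal{T}1$. If a sign mismatch does occur, I would show separately that it violates some $u_j$.

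The core step is a lexicographic Gr\"obner basis computation in exact rational arithmetic. It should eliminate $c_1,s_1,\dots$ down to a univariate polynomial in, say, $s_4$. I would then show its real roots are only $s_4=0$, ideally as a factor of the form $s_4^m\,r(s_4)$ with $r$ having no roots in $[-1,1]$. Back-substitution through the triangular basis, as in the proof of Proposition~\ref{prop:3Dclassification}, should then force all $s_k=0$ and the signs of the $c_k$ above. If instead the basis turns out to be $\{1\}$ after saturating by the degenerate component, that gives the contradiction directly. Any real root with $s_k\ne0$ would be a counterexample, so the sign-checking of real roots (Sturm sequences, or Descartes' rule on the eliminant) must be done rigorously.

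I expect the main obstacle to be computational size. There are 64 equations, each with up to thirty terms, in eight variables, and a naive lex basis may blow up. My first remedy is to preprocess with the cyclic structure: use only a well-chosen subset of the $u_j$, for example those attached to the single-frequency modes $e^{2\mathrm{i}\alpha x_k}$ analogous to $u_1,u_2$ in three dimensions. These already give a small, likely zero-dimensional system, and its finitely many real roots can then be tested against the remaining $u_j$. A second remedy is to compute first in graded reverse lexicographic order, then convert by FGLM. A third is to split into cases according to whether $s_1=0$, which separates the degenerate branch early.
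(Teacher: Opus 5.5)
Your plan is correct and is essentially the paper's argument: the paper also turns $u_j=0$ into a polynomial ideal in doubled-angle sine/cosine variables, eliminates with an exact Gr\"obner basis, and then checks that the surviving branch annihilates the field. Your normalisation $\xi_1=0$ to difference variables, and your a priori derivation of the point $(c_1,s_1)=(1,0)$, $(c_k,s_k)=(-1,0)$, repackage the paper's radical-membership relations \eqref{B66}--\eqref{B69}; in your variables those relations say exactly that the variety is that one point. The sign check you defer succeeds, so your contingency branch is never needed: at $\boldsymbol{\xi}=(0,0,-\tfrac{\pi}{2},0,-\tfrac{\pi}{2})$ both products equal $\sin\alpha x_1\prod_{k=2}^{5}\cos\alpha x_k$, as in \eqref{5.7}. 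A $\pi$-shift of any single phase flips both products, so that one check covers every real preimage of the point.
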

\begin{proof}
The reduction condition requires $u_j(\boldsymbol{\xi})=0$ for all $j=1,\ldots,64$, with the
$u_j$ as listed in Appendix~\ref{app:5D}. In the five-dimensional case these $64$ spatial functions
have already been collected into a linearly independent real Fourier basis---unlike the unreduced
four-dimensional expansion---so that $\mathcal{V}_1^0\equiv0$ if and only if every $u_j$ vanishes. We reduce this trigonometric system to a polynomial
one, solve the polynomial system exactly by the Gr\"obner-basis method described in
Section~\ref{sec:solutions-3D-4D}, and then show that its entire real solution set annihilates the
velocity field. As there, the proof is computer-assisted but exact: the elimination is carried out over
$\mathbb{Q}$, so no rounding is introduced, and the decisive elimination relations are reproduced
explicitly in Appendix~\ref{app:5D}.

We first reduce the trigonometric system to a polynomial one. Put $s_i=\sin 2\xi_i$ and $c_i=\cos 2\xi_i$. Expanding
every multiple angle, each $u_j$ becomes a polynomial in $(s_1,c_1,\ldots,s_5,c_5)$; adjoining
the five Pythagorean relations $s_i^2+c_i^2=1$ realises the system as an ideal
$I\subset\mathbb{Q}[s_1,c_1,\ldots,s_5,c_5]$. Its real variety corresponds to the set of phase
vectors satisfying the reduction condition. We compute the reduced Gr\"obner basis of $I$ for the
lexicographic order $s_1\succ c_1\succ\cdots\succ s_5\succ c_5$, in exact rational arithmetic
using SymPy~1.14 \citep{sympy2017}; the reduced Gr\"obner basis is the nonlinear analogue of the
row-echelon form, and the lexicographic order is chosen so that the computed basis has an
elimination structure, successive generators eliminating one variable at a time \citep{cox2015ideals}.

We now solve this system. The computed basis has the elimination structure noted above: its
generators can be ordered so that each eliminates one further phase, and we follow that order.
Because the polynomial variables record only the doubled angles $2\xi_i$, each generator constrains the phases modulo the ambiguity inherent in that substitution; the surviving real branches are then fixed by the remaining polynomial relations.

Eliminating the higher variables, the first generator reduces to a relation involving only
$\xi_1$ and $\xi_2$ through the combination $s_1c_2-c_1s_2$. Written back in trigonometric form,
this is
\begin{equation}
\label{5.3}
\sin(2\xi_1-2\xi_2)=0 \;\Longleftrightarrow\; \xi_2=\xi_1+\tfrac{k\pi}{2}, \quad k\in\mathbb{Z}.
\end{equation}
Of the branches admitted by~\eqref{5.3}, the offsets $\xi_2=\xi_1\pm\pi/2$ are removed by a
sharper relation in the ideal, $\cos(2\xi_1-2\xi_2)=1$ (Appendix~\ref{app:5D},
equation~\eqref{B67}); only $\xi_2=\xi_1$ (equivalently
$\xi_2=\xi_1+\pi$, which merely reflects the field) survives. The decisive polynomial relations
and the ideal (including the Pythagorean relations $s_i^2+c_i^2=1$) are recorded in
Appendix~\ref{app:5D}, where each relation is shown to be an exact consequence of the reduction
condition. The resulting family was also checked independently by direct numerical evaluation of
the reduction quantity~\eqref{3.13}.

Substituting $\xi_2=\xi_1$ into the following generators, the next two eliminations give
\begin{equation}
\label{5.4}
\xi_4=\xi_1,\qquad \xi_5=\xi_3,
\end{equation}
and the last generator fixes the offset between the two groups of phases through
\begin{equation}
\label{5.5}
\cos(\xi_1-\xi_3)=0 \;\Longleftrightarrow\; \xi_3=\xi_1\pm\tfrac{\pi}{2}.
\end{equation}
The two signs in~\eqref{5.5} are carried without loss, since they enter the velocity field only
through $\sin(\theta\mp\pi/2)=\mp\cos\theta$ and so differ by an overall sign. Writing
$a=\xi_1$ and taking the representative $\xi_3=\xi_1-\pi/2$, the real solution set of the
reduction condition is the one-parameter family
\begin{equation}
\label{5.6}
\boldsymbol{\xi} = \left(a,\ a,\ a-\tfrac{\pi}{2},\ a,\ a-\tfrac{\pi}{2}\right),
\qquad a\in\mathbb{R},
\end{equation}
the free parameter $a$ being the global phase shift that every cyclic construction admits, and
the sign branch of~\eqref{5.5} contributing only the reflection $a\mapsto a+\pi$.

It remains to show that this family annihilates the field. Substituting~\eqref{5.6} into the velocity field, the two products in~\eqref{3.1} differ only in which slots carry sines: in $\mathcal{T}1_i^0$ the
sines sit at the odd positions $k=1,3,5$, in $\mathcal{T}2_i^0$ at $k=1,2,4$. The offset $-\pi/2$
in~\eqref{5.6} falls on precisely the phases $\xi_3$ and $\xi_5$, which occupy the two slots
where the products disagree; since $\sin(\theta-\pi/2)=-\cos\theta$, each such factor turns from
a sine into a cosine at the cost of one sign. Two conversions occur in each product, so the two
signs cancel in each, and both collapse to the same expression,
\begin{equation}
\label{5.7}
\mathcal{T}1_i^0 = \mathcal{T}2_i^0 = \sin(\alpha x_i + a)\prod_{k=1}^{4}\cos(\alpha x_{i+k} + a),
\end{equation}
which we have verified directly for all five components in exact arithmetic. Hence
$v_i^0=v_r(\mathcal{T}1_i^0-\mathcal{T}2_i^0)\equiv 0$ for every $i$ and every $a$. Every real solution of the reduction condition therefore annihilates the field, and no non-trivial quintuple-periodic field of SCSCS type satisfies the reduction condition.
\end{proof}
The failure established by Lemma~\ref{lem:5D_SCSCS} raises a natural question. Why does the
phase-angle methodology succeed in three and four dimensions yet fail in five? The proof answers it
directly: the reduction condition, though it admits real solutions, admits only those that
annihilate the field. The complete characterisation of admissible dimensions rests on the explicit
constraint analysis of each dimension in turn, and is collected in
Subsection~\ref{subsec:solvable} (Theorem~\ref{thm:classification}).

\textbf{Physical interpretation.} The five-dimensional obstruction is structural, and not an
artefact of an incomplete search for phase angles. For an unforced solution the nonlinear
convective interaction $(\boldsymbol{v}^0\cdot\nabla)\boldsymbol{v}^0$ must yield a field
$\boldsymbol{g}^0$ from which a single-valued pressure can be recovered by integration
via~\eqref{2.14}; in three and four dimensions the phases can be chosen so that the reduction term
vanishes while the velocity field does not, and the surviving convective term is such a gradient.
In five dimensions no such choice exists: by Lemma~\ref{lem:5D_SCSCS} every real phase vector
meeting the reduction condition annihilates the velocity field itself, so the question of
integrability never arises.

\textbf{Reproducibility.} The verifications reported above, and the corresponding ones
for three and four dimensions, can be reproduced directly from the construction with the short
script listed in Appendix~\ref{app:verify}.

\subsection{An exact forced solution in five dimensions}
\label{subsec:5D-forced}
The obstruction just established concerns the unforced reduction mechanism of
Section~\ref{sec:construction}: the phases that satisfy the reduction condition annihilate the
field. If that condition is not imposed the field survives, and the nonlinear term it generates
may instead be balanced by an external body force. The same object that obstructs the unforced
construction therefore furnishes an exact solution of the \emph{forced} equations.

\medskip
\noindent\textbf{What would, and would not, be significant.} That much by itself carries no
content. For \emph{any} prescribed solenoidal velocity field the convective term
$\boldsymbol{g}=(\boldsymbol{v}\cdot\nabla)\boldsymbol{v}$ can, for a field whose prescribed time
evolution already satisfies the homogeneous diffusion equation, be balanced by setting
$\boldsymbol{f}=\boldsymbol{g}$, and a forced solution results. A forced solution is worth
recording only if the field it is built on possesses structure that the balance does not itself
supply, and it is worth being explicit at the outset about what that structure is here.

Recall the Helmholtz decomposition~\eqref{2.2}: any smooth periodic vector field splits into a
longitudinal part $\nabla\phi$, with $\Delta\phi=\nabla\cdot\boldsymbol{w}$, and a transverse
remainder that is divergence-free. In the unforced problem the pressure gradient cancels the longitudinal part of the inertial field;
more generally, in the forced balance it is the longitudinal part of
$\boldsymbol{g}-\boldsymbol{f}$ that the pressure absorbs. In either case the pressure is the
Lagrange multiplier enforcing incompressibility, returning the nonlinear acceleration to the divergence-free
space. Solving the pressure Poisson equation~\eqref{2.1} extracts $\nabla\phi$ and leaves the
transverse remainder to the unsteady and viscous terms. Two extreme cases are therefore
distinguished:
\begin{itemize}
\item[(a)] $\boldsymbol{g}^0$ \emph{entirely longitudinal}, that is, a pure gradient. The pressure
absorbs the whole of it, nothing is left over, and the \emph{unforced} equations close. This is
the content of the integrability condition~\eqref{3.16} and the mechanism of
Section~\ref{sec:solutions-3D-4D}.
\item[(b)] $\boldsymbol{g}^0$ \emph{entirely transverse}, that is, divergence-free. The pressure
has nothing whatever to absorb, so it carries no spatial dependence, and the whole of the
nonlinear term must be balanced by a body force.
\end{itemize}
Between these lies the generic situation, in which $\boldsymbol{g}^0$ has both parts and the
pressure takes the gradient portion while a forcing carries the remainder.

The five-dimensional construction realises case~(b) exactly, and this---not the availability of
the balance $\boldsymbol{f}=\boldsymbol{g}$---is what makes it worth recording. The flow advects
its own momentum without generating any pressure at all: the projection that ordinarily returns
the nonlinear acceleration to the divergence-free space acts here as the identity, because the
acceleration already lies in that space.

\medskip
\noindent\textbf{The distinguished phases.} Two phase vectors realise case~(b). Both are built
on the cyclic field~\eqref{5.1}, and neither satisfies the reduction condition, so in each case
$\boldsymbol{v}^0$ remains non-trivial. They are
\begin{equation}
\label{5.9}
\boldsymbol{\xi}^{\ast} = \Bigl(-\tfrac{\pi}{4},\ \tfrac{\pi}{4},\ \tfrac{\pi}{4},\
\tfrac{\pi}{4},\ \tfrac{\pi}{4}\Bigr),
\end{equation}
the symmetric assignment extending the four-dimensional pattern, and
\begin{equation}
\label{5.9b}
\boldsymbol{\xi}^{\ast\ast} = \Bigl(0,\ \tfrac{\pi}{2},\ \pi,\ \pi,\ \pi\Bigr).
\end{equation}
What distinguishes them is that the convective field they generate is divergence-free,
$\nabla\bcdot\boldsymbol{g}^0=0$, and it is worth seeing at once what that means.

\medskip
\noindent\textbf{Why these phases are special.} The property has a simple physical
interpretation, and it is what separates~\eqref{5.9} and~\eqref{5.9b} from arbitrary phase choices,
for which a forced solution could always be constructed. For any solenoidal field, let
\[
\boldsymbol{A}=\nabla\boldsymbol{v}^0,
\]
and decompose it into its symmetric and antisymmetric parts $\boldsymbol{S}$ and $\boldsymbol{R}$,
representing strain and rotation respectively, and let $\lvert\cdot\rvert$ denote the Frobenius
norm. Then
\begin{equation}
\label{5.strain}
\nabla\bcdot\boldsymbol{g}^0
=\frac{\partial v_i^0}{\partial x_j}\frac{\partial v_j^0}{\partial x_i}
=\operatorname{tr}\bigl(\boldsymbol{A}^2\bigr)
=\lvert\boldsymbol{S}\rvert^2-\lvert\boldsymbol{R}\rvert^2 ,
\end{equation}
where the second equality follows from $\nabla\bcdot\boldsymbol{v}^0=0$. Viewed as the inertial contribution to the unforced pressure Poisson equation~\eqref{2.1}, the
source therefore measures the local imbalance between strain and rotation,
\[
-\rho\,\nabla\bcdot\boldsymbol{g}^0
=\rho\bigl(\lvert\boldsymbol{R}\rvert^2-\lvert\boldsymbol{S}\rvert^2\bigr),
\]
and at the phases~\eqref{5.9} and~\eqref{5.9b} this source vanishes identically---before any
compensating body force is introduced---since there
\[
\lvert\boldsymbol{S}\rvert^2=\lvert\boldsymbol{R}\rvert^2
\qquad\text{at every point}.
\]
Strain and rotation are in exact local balance: the flow deforms and rotates fluid elements at
equal rates, so the nonlinear motion produces no pressure source, and the pressure is spatially
uniform, $p=p(t)$. This is an intrinsic property of the flow generated by these phases, not a
consequence of how the forcing is chosen.

The contrast with a generic phase vector is important. For an arbitrary $\boldsymbol{\xi}$ one can
always define a forcing that compensates the nonlinear term and thereby manufacture a forced
solution. In general, however, $\boldsymbol{g}^0$ has a longitudinal component, so a non-trivial
pressure must still be obtained from the pressure Poisson equation. At~\eqref{5.9}
and~\eqref{5.9b} no such pressure correction is required: the nonlinear term is already
divergence-free.

This balance is restrictive rather than automatic. After removing the irrelevant uniform phase
shift by fixing $\xi_1$, the construction retains four independent phase differences, and for
randomly chosen values of them $\lvert\nabla\bcdot\boldsymbol{g}^0\rvert$ is typically of order
$10^{-1}$. An exhaustive evaluation over the lattice of phase differences that are multiples of
$\tfrac{\pi}{2}$---a lattice of $256$ points---returns $48$ assignments for which
$\nabla\bcdot\boldsymbol{g}^0$ vanishes identically, of which $32$ leave the velocity field
non-trivial. Those $32$ fall into two classes under the $\pi$-shifts of~\eqref{3.1}, which only
reverse the sign of $\boldsymbol{v}^0$, and the classes are represented by~\eqref{5.9}
and~\eqref{5.9b}. The second is not related to the first by translation, phase reversal or point
inversion, and therefore carries a distinct flow with the same strain--rotation balance. A search
over general, non-lattice phase differences produced no further assignment, though we do not claim
an exhaustive classification over all of $\mathbb{R}^4$.

The significance of these phases is thus structural. They are distinguished points of the phase
family at which the nonlinear transport becomes divergence-free and requires no pressure
correction. The forcing is not compensating for an arbitrary residual or an incomplete pressure
construction; it balances the nonlinear transport generated by the cyclic velocity field itself,
and the relation $p=p(t)$ is the direct expression of the exact local balance between strain and
rotation.

\medskip
\noindent\textbf{The solutions.} Both phase vectors yield an exact forced solution, and the
expressions are common to the two.

\begin{proposition}[Exact forced solutions in five dimensions]
\label{prop:5D-forced}
Let $\boldsymbol{v}^0$ be the quintuple-periodic field~\eqref{5.1} at either of the
phases~\eqref{5.9} or~\eqref{5.9b}, and let
$\boldsymbol{g}^0=(\boldsymbol{v}^0\!\cdot\!\nabla)\boldsymbol{v}^0$. Then
\begin{enumerate}
\item[\textup{(i)}] $\boldsymbol{v}^0$ is non-trivial and solenoidal;
\item[\textup{(ii)}] $\nabla\bcdot\boldsymbol{g}^0=0$ identically, so the longitudinal part
of~\eqref{2.2} vanishes and $\boldsymbol{g}^0$ is purely transverse;
\item[\textup{(iii)}] $\boldsymbol{g}^0$ is nevertheless not a gradient, since
$\partial_j g_i^0-\partial_i g_j^0\not\equiv0$;
\item[\textup{(iv)}] consequently the pressure carries no spatial dependence: $p=p(t)$.
\end{enumerate}
Define the zero-mean body-force amplitude
\begin{equation}
\label{5.10}
\boldsymbol{f}^0 = \boldsymbol{g}^0
= \nabla\bcdot\bigl(\boldsymbol{v}^0\otimes\boldsymbol{v}^0\bigr).
\end{equation}
Then the velocity field
\begin{equation}
\label{5.forced-v}
\boldsymbol{v}(\boldsymbol{x},\boldsymbol{\xi},t)
=\boldsymbol{v}^0(\boldsymbol{x},\boldsymbol{\xi})\,e^{-5\alpha^2\kappa t},
\end{equation}
as in~\eqref{3.5}, together with the pressure
\begin{equation}
\label{5.forced-p}
p(\boldsymbol{x},t)=p(t),\qquad \nabla p=\boldsymbol{0},
\end{equation}
is an exact solution of the incompressible Navier--Stokes equations~\eqref{1.1} on $\mathbb{R}^5$
under the body force
\begin{equation}
\label{5.forced-f}
\boldsymbol{f}(\boldsymbol{x},t)=\boldsymbol{f}^0(\boldsymbol{x})\,e^{-10\alpha^2\kappa t}.
\end{equation}
\end{proposition}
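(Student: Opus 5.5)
Item (i) follows from Lemma~\ref{lem:solenoidal}, which gives solenoidality for every phase vector. For non-triviality it suffices to exhibit one Fourier mode of $v_1^0$ at~\eqref{5.9} and at~\eqref{5.9b} with non-zero coefficient. At $\boldsymbol{\xi}^{\ast}$ I would substitute $\sin(\theta\pm\pi/4)$ and $\cos(\theta+\pi/4)$ into each factor and expand in the basis $e^{\pm\mathrm{i}\alpha x_k}$. The two products of~\eqref{3.1} differ in their sine/cosine pattern at positions $2$ through $5$, so some monomial (for instance $\prod_k e^{\mathrm{i}\alpha x_k}$) receives unequal contributions from $\mathcal{T}1_1^0$ and $\mathcal{T}2_1^0$. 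At $\boldsymbol{\xi}^{\ast\ast}$ the factors collapse to $\pm\sin$ or $\pm\cos$ of $\alpha x_k$. Then $\mathcal{T}1_1^0\propto \sin\alpha x_1\,\sin\alpha x_2\,\sin\alpha x_3\,\cos\alpha x_4\,\sin\alpha x_5$ and $\mathcal{T}2_1^0\propto \cos\alpha x_1\,\sin\alpha x_2\,\cos\alpha x_3\,\sin\alpha x_4\,\cos\alpha x_5$. These are distinct product monomials, hence linearly independent, so $v_1^0\not\equiv0$.

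The core of the argument is (ii). I would use the identity~\eqref{5.strain}, $\nabla\bcdot\boldsymbol{g}^0=\partial_jv_i^0\,\partial_iv_j^0$, which holds for any solenoidal field. This replaces a cubic-in-derivatives computation by a quadratic form in the velocity gradient. Each $\partial_jv_i^0$ is a difference of two products of five trigonometric factors. I would expand $\operatorname{tr}(\boldsymbol{A}^2)$ over the Fourier basis with wavevectors $\boldsymbol{k}\in 2\alpha\{-1,0,1\}^5$, which is where the products of the $\alpha$-modes land, and show that every coefficient vanishes. Cyclic symmetry (Corollary~\ref{cor:cyclic} and the construction~\eqref{3.1}) reduces the bookkeeping: $\operatorname{tr}(\boldsymbol{A}^2)$ is invariant under the cyclic shift of coordinates, so it suffices to check one representative of each cyclic orbit of wavevectors. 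I expect this exact cancellation to be the main obstacle. No structural reason has been given for it, only the lattice search, so I would carry it out as a symbolic computation in exact arithmetic, as for Lemma~\ref{lem:5D_SCSCS} and with the script of Appendix~\ref{app:verify}, and exhibit the cancelling pairs mode by mode. At $\boldsymbol{\xi}^{\ast\ast}$ the computation is cleaner because the factors are pure sines and cosines of $\alpha x_k$; I would try that case first and look for a pairing of the terms $\partial_jv_i^0\partial_iv_j^0$ with $\partial_iv_j^0\partial_jv_i^0$ of opposite sign. Since $\langle\boldsymbol{g}^0\rangle=\boldsymbol{0}$, once $\nabla\bcdot\boldsymbol{g}^0=0$ holds every Fourier coefficient satisfies $\boldsymbol{k}\cdot\widehat{\boldsymbol{g}}=0$. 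The longitudinal projection in Proposition~\ref{prop:longitudinal} is therefore zero, and $\boldsymbol{g}^0$ is purely transverse.

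For (iii) a single non-vanishing curl component suffices. I would compute one Fourier coefficient of $\partial_2g_1^0-\partial_1g_2^0$, choosing a mode such as $e^{2\mathrm{i}\alpha(x_1-x_2)}$ that already appears in $g_1^0$, and check that it is non-zero. An alternative route: a field that is both divergence-free and curl-free, with zero mean, vanishes on the torus. So it is enough to show $\boldsymbol{g}^0\not\equiv\boldsymbol{0}$, which follows from one non-zero coefficient of $g_1^0$.

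For (iv) and the final claim I would substitute the proposed fields into~\eqref{1.1}. Every mode of $\boldsymbol{v}^0$ has $|\boldsymbol{k}|^2=5\alpha^2$, so $\partial_t\boldsymbol{v}=\kappa\Delta\boldsymbol{v}$. The convective term is $\boldsymbol{g}^0e^{-10\alpha^2\kappa t}=\boldsymbol{f}$. The equation therefore reduces to $\nabla p=\boldsymbol{0}$, and incompressibility holds by (i). This is consistent with the Poisson equation~\eqref{2.1}, whose right-hand side $\rho\nabla\bcdot(\boldsymbol{f}-\boldsymbol{g})$ vanishes identically, and a periodic solution of $\Delta p=0$ is constant in $\boldsymbol{x}$, giving $p=p(t)$. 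Finally, $\boldsymbol{f}^0$ has zero mean because it is the divergence of $\boldsymbol{v}^0\otimes\boldsymbol{v}^0$, and it is non-zero by (iii).
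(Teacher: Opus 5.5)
Your outline follows the paper's proof. Solenoidality comes from Lemma~\ref{lem:solenoidal}, items (ii)--(iii) come from exact evaluation of $\boldsymbol{g}^0$ at the two phase vectors, and (iv) comes from $\partial_t\boldsymbol{v}=\kappa\Delta\boldsymbol{v}$ together with $\Delta p=0$ on the periodic cell once $\boldsymbol{f}=\boldsymbol{g}$. Two details need repair.

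First, the mode you propose for non-triviality at $\boldsymbol{\xi}^{\ast}$ cancels for \emph{every} phase vector. Both products carry three sines and use each of $\xi_1,\dots,\xi_5$ exactly once, so the coefficient of $\prod_k e^{\mathrm{i}\alpha x_k}$ is $\tfrac{\mathrm{i}}{32}e^{\mathrm{i}(\xi_1+\cdots+\xi_5)}$ in each. The differing sine/cosine patterns alone prove nothing either, since $\mathcal{T}1_i^0=\mathcal{T}2_i^0$ on the family~\eqref{5.6} despite them. The mode $e^{\mathrm{i}\alpha(-x_1+x_2+x_3+x_4+x_5)}$ does work: at $\boldsymbol{\xi}^{\ast}$ its coefficients are $+\tfrac{\mathrm{i}}{32}e^{\mathrm{i}\pi/4}$ in $\mathcal{T}1_1^0$ and $-\tfrac{\mathrm{i}}{32}e^{\mathrm{i}\pi/4}$ in $\mathcal{T}2_1^0$, so they add in $v_1^0$. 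The paper argues differently: $\boldsymbol{v}^0\equiv\boldsymbol{0}$ would make $\mathcal{V}_1^0\equiv0$ trivially, and the reduction condition fails at both phases. Your $\boldsymbol{\xi}^{\ast\ast}$ computation is correct.

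Second, the pairing you suggest for (ii) is vacuous. The terms $\partial_jv_i^0\,\partial_iv_j^0$ and $\partial_iv_j^0\,\partial_jv_i^0$ are the same product, so any cancellation in $\operatorname{tr}(\boldsymbol{A}^2)$ must come from distinct index pairs. The exact symbolic expansion is therefore the actual proof, as in the paper.

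Your second route to (iii) is cleaner than the paper's separate curl evaluation. Given (ii) and $\langle\boldsymbol{g}^0\rangle=\boldsymbol{0}$, ``not a gradient'' is equivalent to $\boldsymbol{g}^0\not\equiv\boldsymbol{0}$. It can be shortened further. Since $\boldsymbol{v}^0\not\equiv\boldsymbol{0}$ by your part (i), Lemma~\ref{lem:5D_SCSCS} says these phases violate the reduction condition. Hence $\widehat{g}_1(\boldsymbol{k})\neq0$ for some $\boldsymbol{k}$ with $k_1=0$, which by Proposition~\ref{prop:slice} no gradient can have. So (iii) needs no further computation. Without the lemma, exhibiting one non-zero $k_1=0$ coefficient of $g_1^0$ is the natural choice, rather than a guessed mode such as $e^{2\mathrm{i}\alpha(x_1-x_2)}$ whose presence at these phases you have not checked.
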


\noindent Item~(ii) is the distinctive structural property. Item~(iii) is verified independently by
the non-vanishing curl, and item~(iv) follows from the forced pressure Poisson equation. A
divergence-free convective field is the exact opposite of the case treated in
Section~\ref{sec:solutions-3D-4D}: there $\boldsymbol{g}^0$ was a pure gradient and the pressure
absorbed all of it; here $\boldsymbol{g}^0$ has no gradient part whatever and the pressure absorbs
none of it.

\begin{proof}
\emph{(i)} Solenoidality is Lemma~\ref{lem:solenoidal}, and the field is non-trivial because
neither~\eqref{5.9} nor~\eqref{5.9b} satisfies the reduction condition.

\emph{Unsteady and viscous terms.} Each component of $\boldsymbol{v}^0$ is a product of five
trigonometric factors of argument $\alpha x_m+\xi_k$, so it is an eigenfunction of the Laplacian
with a single wavenumber:
\[
\nabla^2\boldsymbol{v}^0=-5\alpha^2\boldsymbol{v}^0 .
\]
With $\boldsymbol{v}=\boldsymbol{v}^0e^{-5\alpha^2\kappa t}$ this gives
\[
\frac{\partial\boldsymbol{v}}{\partial t}=-5\alpha^2\kappa\,\boldsymbol{v}^0e^{-5\alpha^2\kappa t},
\qquad
\kappa\nabla^2\boldsymbol{v}=-5\alpha^2\kappa\,\boldsymbol{v}^0e^{-5\alpha^2\kappa t},
\]
so that $\partial\boldsymbol{v}/\partial t-\kappa\nabla^2\boldsymbol{v}=\boldsymbol{0}$: the two
terms cancel exactly, and the momentum balance reduces to
$(\boldsymbol{v}\cdot\nabla)\boldsymbol{v}=-\rho^{-1}\nabla p+\boldsymbol{f}$.

\emph{(ii) and (iii).} Direct evaluation of $\boldsymbol{g}^0$ at either phase vector gives
$\nabla\cdot\boldsymbol{g}^0=0$ identically, while $\partial_jg_i^0-\partial_ig_j^0\not\equiv0$.
Hence $\boldsymbol{g}^0$ is divergence-free but not curl-free, that is, purely transverse in the
sense of~\eqref{2.2}.

\emph{(iv).} Taking the divergence of the momentum balance and using $\boldsymbol{f}=\boldsymbol{g}$
alone leaves $\Delta p=0$; property~(ii) is not required for that cancellation. On the periodic
domain the only harmonic functions are constants in space, so $\nabla p=\boldsymbol{0}$ and $p$ may
depend on time alone. What~(ii) adds is the stronger statement that the inertial field and the
forcing are each purely transverse.

\emph{The forcing.} Because $\boldsymbol{v}^0$ is solenoidal,
$(\boldsymbol{v}^0\!\cdot\!\nabla)\boldsymbol{v}^0=\nabla\cdot(\boldsymbol{v}^0\otimes
\boldsymbol{v}^0)$, which is~\eqref{5.10}; each component is a sum of trigonometric harmonics, all
of zero mean over a periodic cell, so $\langle\boldsymbol{f}^0\rangle=\boldsymbol{0}$. The
convective term is quadratic in $\boldsymbol{v}$ and therefore carries the factor
$e^{-10\alpha^2\kappa t}$, twice the decay rate of the velocity; the body force
$\boldsymbol{f}=\boldsymbol{f}^0e^{-10\alpha^2\kappa t}$ carries the same factor and balances it
identically. Every term in the momentum balance is thus accounted for.
\end{proof}

\medskip
\noindent\textbf{Energy budget.} The forcing has a clean energy interpretation. Taking the inner
product of $\boldsymbol{f}=(\boldsymbol{v}\cdot\nabla)\boldsymbol{v}$ with the velocity and using
incompressibility gives the pointwise identity
\begin{equation}
\label{eq:energy-flux}
\boldsymbol{v}\cdot\boldsymbol{f}
=\boldsymbol{v}\cdot(\boldsymbol{v}\cdot\nabla)\boldsymbol{v}
=(\boldsymbol{v}\cdot\nabla)\tfrac{1}{2}|\boldsymbol{v}|^2
=\nabla\cdot\!\left(\tfrac{1}{2}|\boldsymbol{v}|^2\,\boldsymbol{v}\right),
\end{equation}
the last step following from $\nabla\cdot\boldsymbol{v}=0$. The local rate of working of the force
is thus a pure flux---the divergence of the kinetic-energy current
$\tfrac12|\boldsymbol{v}|^2\boldsymbol{v}$---and integration over a periodic cell gives
$\int\boldsymbol{v}\cdot\boldsymbol{f}\,\mathrm{d}V=0$. The forcing does \emph{no net work} on the
flow: locally it redistributes kinetic energy, but it neither adds to nor removes energy from the
global budget, which decays solely through viscosity, here with the factor
$e^{-10\alpha^2\kappa t}$. The admissibility requirement~\eqref{1.6} is therefore met in the
strongest form available: the cell energy is not merely bounded but monotonically decreasing, for
all $t\ge0$. This zero-net-work property holds for any periodic solenoidal field
driven by its own convective term and is not peculiar to five dimensions. What \emph{is}
distinctive is that the same nonlinear term is also divergence-free, so that the forcing~\eqref{5.forced-f} is
simultaneously energy-neutral in the global sense and entirely transverse in its Helmholtz
decomposition: it supplies neither a net kinetic-energy input nor a spatial pressure source.

\medskip
\noindent\textbf{Use as a benchmark.} The velocity~\eqref{5.forced-v}, pressure~\eqref{5.forced-p}
and forcing~\eqref{5.forced-f} are all known in closed form for either phase vector, the forcing
amplitude~\eqref{5.10} being the convective field~\eqref{1.5} evaluated on~\eqref{5.1} at the
phases concerned; the pressure gradient vanishes identically, and the nonlinear term is itself
divergence-free. A numerical scheme can therefore be tested on its ability to reproduce nonlinear
advection, viscous decay and a prescribed solenoidal forcing without the additional complication
of reconstructing a non-trivial pressure field. In particular a correct projection step should
return \emph{zero} spatial pressure correction, so that any spurious pressure gradient, loss of
incompressibility, or incorrect nonlinear balance can be measured directly against the exact
solution.

\subsection{Five-dimensional unit-periodic cyclic flow}
\label{subsec:exhaustive}
The obstruction of Lemma~\ref{lem:5D_SCSCS} applies to the SCSCS ansatz specifically. A
natural question is whether some other trigonometric form might succeed where the SCSCS ansatz
fails. The most established alternative in three dimensions is the unit-periodic Arnold--Beltrami--Childress (ABC) flow,
whose components are sums rather than products; it is the obvious first candidate, and we treat
its five-dimensional cyclic generalisation here. Since the ABC flow is a Beltrami field specific to
three dimensions, the higher-dimensional field is an analogue built by the same cyclic recipe rather
than an ABC flow in the strict sense. It too collapses, as we now show. The broader
question---whether \emph{any} two-term cyclic product, of which the SCSCS pattern is one member
whose unrestricted sine/cosine assignments number $2^{10}=1024$ before solenoidality and symmetry
are imposed, or any field built directly from the five-fold symmetry, can succeed---lies
beyond the scope of the present paper; here we establish the obstruction for the two principal
constructions. Throughout, $\mathcal{T}1_k$ and $\mathcal{T}2_k$ denote the
first and second $n$-fold cyclic trigonometric product terms in the general
ansatz~\eqref{3.1}, so that $v_k = v_r(\mathcal{T}1_k - \mathcal{T}2_k)$.

The simplest alternative to the SCSCS ansatz is the five-dimensional generalisation of the
classical unit-periodic Arnold--Beltrami--Childress flow. In $n$ spatial dimensions, a unit-periodic
velocity field is constructed so that each component $v_k$ is \emph{independent} of its
own coordinate $x_k$. This single requirement guarantees divergence-freedom, since
$\partial v_k/\partial x_k = 0$ for every $k$, and hence
$\nabla\cdot\boldsymbol{v} = \sum_k \partial v_k/\partial x_k = 0$ identically, with no
restriction on the phase angles. As a consequence, $v_k$ may only involve the remaining
$n-1$ coordinates: in five dimensions these are $x_{k+1}, x_{k+2}, x_{k+3}, x_{k+4}$
(indices modulo 5), giving precisely \emph{four} terms per component --- not five --- since a
fifth term in $x_k$ itself would spoil the term-by-term cancellation
$\partial v_k/\partial x_k=0$ on which this construction relies. The five-dimensional unit-periodic flow
is therefore:
\begin{equation}
\label{5.12}
v_k = A\sin(\alpha x_{k+1}) + B\cos(\alpha x_{k+2}) + C\sin(\alpha x_{k+3}) +
D\cos(\alpha x_{k+4}), \qquad k = 1,\ldots,5,
\end{equation}
where indices are taken modulo $5$ and the amplitudes $A,B,C,D$ are constants to be fixed by the
pressure-integrability condition~\eqref{3.16}. Forming the convective field
$g_i=\sum_j v_j\,\partial_j v_i$ and imposing $\partial_j g_i-\partial_i g_j=0$, each distinct
spatial harmonic must vanish separately; the coefficient equations drawn from a single pair,
$(i,j)=(1,2)$, already suffice. In exact arithmetic they include
\begin{equation}
\label{5.13}
B^2=0,\quad C^2=0,\quad CD=0,\quad AB=0,\quad D^2-AC=0,\quad BD-A^2=0 .
\end{equation}
These determine $(A,B,C,D)$ completely and in an elementary order. The first two give $B=0$ and
$C=0$. The last relation, $BD-A^2=0$, then reads $A^2=0$, so $A=0$; and $D^2-AC=0$ becomes
$D^2=0$, so $D=0$. Thus every amplitude vanishes, and the vanishing of $A$ in particular is
forced---not assumed---by $BD=A^2$ once $B=0$. A Gr\"obner basis of the full ten-condition system
confirms that its only real solution is $(A,B,C,D)=(0,0,0,0)$.

\begin{theorem}[Non-existence for the unit-periodic cyclic flow in five dimensions]
\label{thm:5D-unit}
No non-trivial exact solution to the five-dimensional steady incompressible Euler equations
exists for the cyclic unit-periodic velocity field~\eqref{5.12}. The pressure-integrability
condition forces $A=B=C=D=0$. As in seven dimensions, the single-wavenumber
field~\eqref{5.12} satisfies $\Delta\boldsymbol{v}^0=-\alpha^2\boldsymbol{v}^0$, so under the
diffusive scaling of Section~\ref{sec:construction} the viscous case reduces to the same
integrability system and the conclusion covers it as well.
\end{theorem}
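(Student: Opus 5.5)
The plan is to compute everything explicitly and reduce the pressure-integrability condition~\eqref{3.16} to a finite polynomial system in the amplitudes $A,B,C,D$.

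\emph{Setting up the problem.} First record why integrability is the right test. For the steady Euler equations $\boldsymbol{g}=-\rho^{-1}\nabla p$, so $\boldsymbol{g}$ must be curl-free in the sense of~\eqref{3.16}. For the viscous case, every term of~\eqref{5.12} is a single harmonic of wavenumber $\alpha$, so $\Delta\boldsymbol{v}^0=-\alpha^2\boldsymbol{v}^0$. The ansatz $\boldsymbol{v}=\boldsymbol{v}^0e^{-\alpha^2\kappa t}$ then cancels the unsteady and viscous terms exactly, as in the proof of Proposition~\ref{prop:5D-forced}. The unforced momentum balance therefore reduces to $\boldsymbol{g}^0 e^{-2\alpha^2\kappa t}=-\rho^{-1}\nabla p$. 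After the time factor cancels, this is the same condition~\eqref{3.16} on $\boldsymbol{g}^0$, so both cases are covered by one computation.

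\emph{Structure of the convective field.} Next I would describe the structure of $g_i=\sum_j v_j\,\partial_j v_i$. Since $v_i$ contains exactly one harmonic in each coordinate $x_j$ with $j\neq i$, the derivative $\partial_j v_i$ is a single harmonic in $x_j$, with coefficient $A,B,C$ or $D$ according to the cyclic offset $j-i \pmod 5$. The factor $v_j$ is a sum of four harmonics in the coordinates other than $x_j$. Hence every term of $g_i$ is a product of one harmonic in $x_j$ and one harmonic in a different coordinate $x_m$. Its coefficient is a quadratic monomial in $A,B,C,D$, determined by the two offsets $j-i$ and $m-j$.

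\emph{Extracting the coefficient equations.} Then I would form $\partial_2 g_1-\partial_1 g_2$ and expand it in the basis of products of the form $\{\sin,\cos\}(\alpha x_m)\,\{\sin,\cos\}(\alpha x_{m'})$ with $m\neq m'$. These products are linearly independent on the torus, so each coefficient must vanish separately. Collecting them, by hand with the offset bookkeeping above or in exact arithmetic, should yield the relations~\eqref{5.13}. For example, $B^2=0$ should come from the term $\cos(\alpha x_3)\cos(\alpha x_4)$-type products generated by the offset-$2$ factors. Cross terms such as $D^2-AC$ and $BD-A^2$ arise because two distinct offset pairs produce the same spatial product, so their contributions combine.

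\emph{Solving the system.} The solution is elementary. From $B^2=0$ and $C^2=0$ we get $B=C=0$. Then $BD-A^2=0$ gives $A=0$, and $D^2-AC=0$ gives $D=0$. Only the $(1,2)$ pair is needed, since any further conditions merely confirm the result. By cyclic symmetry there is no need to examine other pairs, which would only give the cyclic images of the same relations.

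\emph{Main obstacle.} The hard step is the bookkeeping in the coefficient extraction. One must make sure that every contribution to a given spatial product has been collected, so that identities like $BD=A^2$ are correct and no spurious cancellation is missed. To control this, I would index each term by its offset pair. I would also cross-check the final list against a Gröbner-basis computation of the full ten-condition system, which should return the ideal generated by $A,B,C,D$ (up to radical).
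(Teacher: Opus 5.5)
Your proposal is correct and is essentially the paper's own argument: you read off the coefficient relations~\eqref{5.13} from the single pair $(1,2)$, solve them in the order $B=C=0$, then $A=0$ from $BD=A^2$ and $D=0$ from $D^2=AC$, cross-check with a Gr\"obner basis of the ten-pair system, and handle the viscous case through $\Delta\boldsymbol{v}^0=-\alpha^2\boldsymbol{v}^0$. Two of your asides are inaccurate but harmless: the $B^2$ relation comes from the $\sin(\alpha x_1)\sin(\alpha x_4)$ harmonic, since every term of $\partial_2 g_1-\partial_1 g_2$ depends on $x_1$ or $x_2$ and so no $\cos(\alpha x_3)\cos(\alpha x_4)$ product occurs; and offset-two pairs such as $(1,3)$ are not cyclic images of $(1,2)$, which does not matter because the $(1,2)$ relations alone already force $A=B=C=D=0$.
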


\subsection{Main non-existence result}
Lemma~\ref{lem:5D_SCSCS} and Theorem~\ref{thm:5D-unit} establish non-existence for the two principal
five-dimensional constructions---the SCSCS product ansatz and the unit-periodic cyclic flow.
Here we record the combined result for these two cases.

\begin{lemma}[Non-existence for the principal five-dimensional constructions]
\label{lem:5D_main}
For the SCSCS product ansatz~\eqref{5.1}, no non-trivial phase vector satisfies the
reduction condition~\eqref{3.13}; every real phase vector satisfying that condition
annihilates the velocity field, as established in Lemma~\ref{lem:5D_SCSCS}. For the
unit-periodic flow~\eqref{5.12}, the full pressure-integrability condition~\eqref{3.16}
forces $A=B=C=D=0$, so no non-trivial unforced Euler or viscously decaying Navier--Stokes
solution exists within that construction. Thus both principal five-dimensional
constructions fail, although through different mechanisms: for the SCSCS ansatz at the reduction
stage, for the unit-periodic flow at the integrability stage.
\end{lemma}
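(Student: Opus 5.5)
The lemma combines two results already stated, so the plan is to assemble them and to check that the mechanism attributed to each construction is the correct one.

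\emph{First half (SCSCS ansatz).} I will invoke Lemma~\ref{lem:5D_SCSCS} directly. Every real phase vector satisfying~\eqref{3.13} lies in the family~\eqref{5.6}, up to the $\pi$-shifts and the sign branch of~\eqref{5.5}. On that family the identity~\eqref{5.7} gives $\mathcal{T}1_i^0=\mathcal{T}2_i^0$, so $\boldsymbol{v}^0\equiv\boldsymbol{0}$. Because Proposition~\ref{prop:slice} makes~\eqref{3.13} necessary for~\eqref{2.13}, no non-trivial candidate exists. The failure therefore occurs at step~(iii) of the procedure in Subsection~\ref{subsec:fourstep}, before integrability is tested.

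\emph{Second half (unit-periodic flow).} I will invoke Theorem~\ref{thm:5D-unit}. The pair $(i,j)=(1,2)$ of~\eqref{3.16} yields the harmonic coefficient relations~\eqref{5.13}, which are solved in order:
\[
B^2=0\Rightarrow B=0,\qquad C^2=0\Rightarrow C=0,\qquad BD=A^2\Rightarrow A=0,\qquad D^2=AC\Rightarrow D=0.
\]
This covers the steady Euler problem.

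For the viscous problem I will use the following argument. Every mode of~\eqref{5.12} satisfies $\Delta\boldsymbol{v}^0=-\alpha^2\boldsymbol{v}^0$, so the time-dependent ansatz $\boldsymbol{v}^0e^{-\alpha^2\kappa t}$ cancels the unsteady and viscous terms identically. The unforced momentum equation then requires $\boldsymbol{g}^0e^{-2\alpha^2\kappa t}$ to be a gradient. The common exponential factor drops out, exactly as in Proposition~\ref{prop:longitudinal}, leaving the same system~\eqref{3.16}. Any viscously decaying solution of this form is therefore also forced to the trivial amplitudes. In this construction the failure occurs at the integrability stage: divergence-freedom holds for all amplitudes, and there is no reduction step that eliminates them.

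\emph{Main obstacle.} The step I expect to be hardest is the claim that~\eqref{5.13} genuinely follows from~\eqref{3.16}. Each relation must be the coefficient of a distinct, linearly independent spatial harmonic in $\partial_2g_1-\partial_1g_2$. My approach would be to expand that residual explicitly. The convective products $v_j\,\partial_jv_1$ involve products of single-coordinate harmonics in pairs of distinct coordinates. After differentiation, I would group the terms by the pair of coordinates and the sine/cosine type, and confirm that no two of the monomials behind~\eqref{5.13} coincide. Independence then follows from orthogonality of distinct Fourier modes on the periodic cell.

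The SCSCS half carries no new difficulty beyond what the exact Gröbner elimination in Lemma~\ref{lem:5D_SCSCS} already certifies.
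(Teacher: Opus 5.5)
Your proposal is correct and follows the paper's route: the lemma is Lemma~\ref{lem:5D_SCSCS} combined with the computation behind Theorem~\ref{thm:5D-unit}, with~\eqref{5.13} solved in the same order and the viscous case handled by the same diffusive-scaling remark. Your planned expansion does deliver~\eqref{5.13}: writing $s_k=\sin\alpha x_k$, $c_k=\cos\alpha x_k$, the $AD$ terms in $\partial_2g_1-\partial_1g_2$ cancel, and the eight surviving products $s_2s_3$, $s_2s_5$, $s_2c_4$, $c_2c_4$, $c_1c_3$, $s_1s_4$, $c_1c_5$, $c_1s_4$ are distinct, giving~\eqref{5.13} together with $BD=AC$.

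One caution concerns your remark that no reduction step eliminates the unit-periodic amplitudes. If read as a mathematical claim, it is not so. The $x_1$-independent part of $g_1$ already carries the distinct products $c_2s_3$, $s_3c_5$, $s_2c_4$, $c_4s_5$, with coefficients proportional to $A^2-BD$, $-B^2$, $C^2$, $AC-D^2$. The reduction condition alone therefore forces $A=B=C=D=0$. The ``integrability stage'' attribution thus records which condition the computation applies, not an intrinsically different obstruction.
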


\medskip
\medskip
\noindent\textbf{Looking ahead to the even dimensions.} Two features will distinguish the forced
solutions of Sections~\ref{sec:6D} and~\ref{sec:8D} from the one obtained here. First, the phases~\eqref{5.9} are
\emph{not} the reduction-condition phases: in six and eight dimensions the forced solution is
carried by the very phases that satisfy the reduction condition, and the forcing measures the
residual differential obstruction, whereas here those phases annihilate the field and the forced
solution must be built on others. Second, there is in five dimensions no pressure to recover and no split to perform, which makes the odd-dimensional
obstruction-forced solutions the cleanest members of the family: the forcing is the closed-form,
zero-mean divergence of the advective momentum flux~\eqref{5.10} in its entirety.

\section{Sextuple-periodic constructions in six dimensions: the first even-dimensional obstruction}
\label{sec:6D}
The five-dimensional obstruction of Section~\ref{sec:5D} is arithmetic in character
associated with the prime cyclic structure $\mathbb{Z}/5\mathbb{Z}$.
In particular, unlike the even-dimensional case, the fifth roots of
unity contain no antipodal pairs, since $-1$ is not a fifth root of unity.
It is therefore natural to expect the even dimensions to behave differently. For even $n$,
$\omega^{n/2}=-1$, so the roots fall naturally into antipodal pairs
$\{\omega^k,-\omega^k\}$. This pairing is an important structural feature of the
even-dimensional cyclic construction and is present in the successful four-dimensional case
of Subsection~\ref{sec:4D}. Six is the next even dimension, and one might therefore anticipate
that the four-dimensional construction extends to it directly. It does not. In what follows
we show that the symmetric cyclic construction fails to produce a non-trivial
sextuple-periodic solution. The failure is fundamentally different from that in five
dimensions: the symmetric phase configuration satisfies the necessary reduction condition,
so the reduction stage of the construction succeeds, but the resulting convective field fails
the pressure-integrability condition required for a single-valued pressure. The
six-dimensional obstruction encountered here is therefore differential rather than
arithmetic: the candidate survives the reduction stage but fails the mixed-derivative
symmetry required for pressure integrability.

\subsection{The six-dimensional ansatz and candidate phases}
In six spatial dimensions the initial condition~\eqref{3.1} reduces to a solenoidal velocity
field $v_i^0(\boldsymbol{x},\boldsymbol{\xi})$ of sextuple-periodic form, with first component
\begin{eqnarray}
\label{6.1}
\frac{v_1^0(\boldsymbol{x}, \boldsymbol{\xi})}{v_r}&=& \sin { \left(\alpha x_1+\xi_1 \right) } \cos {\left(\alpha x_2+\xi_2\right) }\sin { \left(\alpha x_3+\xi_3\right) } \cos { \left(\alpha x_4+\xi_4 \right) }\sin { \left(\alpha x_5+\xi_5\right) }\times\nonumber\\ &\times&\cos { \left(\alpha x_6+\xi_6 \right) }- \nonumber\\
&-&\sin  {\left( \alpha x_1+\xi_2\right) }\sin  {\left( \alpha x_2+\xi_3\right) } \cos  { \left(\alpha x_3+\xi_4 \right) } \sin  {\left( \alpha x_4+\xi_5\right) } \cos  { \left(\alpha x_5+\xi_6 \right) }\times\nonumber\\ &\times&\cos  { \left(\alpha x_6+\xi_1 \right) }
\end{eqnarray}
and the components $v_2^0,\dots,v_6^0$ obtained by cyclic permutation of the spatial arguments
as prescribed by~\eqref{3.1}. The divergence-free condition~\eqref{1.3} holds identically for every choice of $\boldsymbol{\xi}$, by Lemma~\ref{lem:solenoidal}.

Following the methodology of Section~\ref{sec:construction}, and in direct analogy with the
four-dimensional case, the symmetric phase assignment
\begin{equation}
\label{6.2}
(\xi_1, \xi_2, \xi_3, \xi_4, \xi_5, \xi_6) = \left(-\tfrac{\pi}{4},\, \tfrac{\pi}{4},\, \tfrac{\pi}{4},\, \tfrac{\pi}{4},\, \tfrac{\pi}{4},\, \tfrac{\pi}{4}\right)
\end{equation}
is the natural candidate: it extends the four-dimensional pattern $(-\tfrac{\pi}{4},\tfrac{\pi}{4},\tfrac{\pi}{4},\tfrac{\pi}{4})$ that yields the quadruple-periodic solution. It is worth noting at once that the symmetric assignment plays the
opposite role here to the one it plays in five and seven dimensions. There it is chosen because it
\emph{violates} the reduction condition, leaving the velocity field alive so that a forced solution
can be built on it; here it is chosen because it \emph{satisfies} that condition, and
non-generically so. The two obstructions therefore arise at different stages: in the odd dimensions
no candidate is produced at all, whereas here a candidate is produced and then fails the
integrability test. The residual convective field differs accordingly---purely transverse in the
odd case, carrying both a longitudinal and a transverse part in the even one---which is the
contrast drawn in Table~\ref{tab:helmholtz} of Section~\ref{sec:conclusion}. We show below that this candidate satisfies the reduction
condition but fails the full pressure-integrability test; the subsequent analysis
establishes non-existence throughout the discrete symmetric phase family and provides strong
numerical evidence that no real phase assignment within the construction yields an unforced
sextuple-periodic solution.

\subsection{The convective field and its reduction}
Substituting~\eqref{6.2} into the definition~\eqref{3.10} and reducing to multiple-angle form
gives the first component of the convective field exactly as
\begin{eqnarray}
\label{6.3}
g_1^0(\boldsymbol{x}) &=& \frac{\alpha v_r^2}{8}\,\cos(2\alpha x_1)\Big[
-\sin(2\alpha x_3)-\sin(2\alpha x_5)
-\sin(2\alpha x_2)\sin(2\alpha x_4)\sin(2\alpha x_6)\nonumber\\
&&\quad+\,\sin(2\alpha x_2)\sin(2\alpha x_3)\sin(2\alpha x_5)
+\sin(2\alpha x_3)\sin(2\alpha x_4)\sin(2\alpha x_5)\nonumber\\
&&\quad+\,\sin(2\alpha x_3)\sin(2\alpha x_5)\sin(2\alpha x_6)\Big],
\end{eqnarray}
with $g_2^0,\dots,g_6^0$ following by cyclic permutation. Every term in~\eqref{6.3} carries the
factor $\cos(2\alpha x_1)$; that is, $g_1^0$ factors as
\begin{equation}
\label{6.4}
g_1^0(\boldsymbol{x}) = \tfrac{\alpha v_r^2}{8}\,\cos(2\alpha x_1)\,
H_1(\boldsymbol{x}_{\setminus 1}),
\end{equation}
where the cofactor $H_1$, obtained by removing the common factor $\cos(2\alpha x_1)$
from~\eqref{6.3}, is
\begin{eqnarray}
\label{6.5}
H_1(\boldsymbol{x}_{\setminus 1}) &=&
-\sin(2\alpha x_3)-\sin(2\alpha x_5)
-\sin(2\alpha x_2)\sin(2\alpha x_4)\sin(2\alpha x_6)\nonumber\\
&&+\,\sin(2\alpha x_2)\sin(2\alpha x_3)\sin(2\alpha x_5)
+\sin(2\alpha x_3)\sin(2\alpha x_4)\sin(2\alpha x_5)\nonumber\\
&&+\,\sin(2\alpha x_3)\sin(2\alpha x_5)\sin(2\alpha x_6),
\end{eqnarray}
a function of $(x_2,\dots,x_6)$ alone and therefore independent of $x_1$. Since every term in~\eqref{6.4} contains the non-zero $x_1$-wave number $k_1=\pm2\alpha$, the
Fourier representation~\eqref{3.14} of $g_1^0$ has no component on the hyperplane $k_1=0$. By
Proposition~\ref{prop:slice} the coefficients of $\mathcal{V}_1^0$ are precisely those
$\widehat{g}_1(\boldsymbol{k})$ with $k_1=0$, so the $x_1$-independent part isolated by the
Phase-Angle Reduction Principle vanishes identically,
\begin{equation}
\label{6.6}
\mathcal{V}_1^0(\boldsymbol{x}_{\setminus 1}, \boldsymbol{\xi}) \equiv 0,
\end{equation}
which is the single-index condition~\eqref{3.13}. By Corollary~\ref{cor:cyclic} the same holds for
every component, so the necessary condition~\eqref{3.12} is satisfied identically.

It is essential to read~\eqref{6.6} correctly. In three dimensions $\mathcal{V}_1^0$ is a non-trivial combination $q_1^0u_1(\boldsymbol{\xi})+q_2^0u_2(\boldsymbol{\xi})$ whose vanishing represents a genuine phase cancellation. The same is true at the reduction stage in six dimensions: $\mathcal{V}_1^0(\boldsymbol{x}_{\setminus1},\boldsymbol{\xi})$ is a non-trivial function of the phases, and a generic $\boldsymbol{\xi}$ does not annul it. Thus the assignment~\eqref{6.2} is a genuine, non-generic solution of the reduction condition, just as the corresponding symmetric assignment is in four dimensions. At that assignment the cancellation of the $x_1$-independent part is
complete: by~\eqref{6.4} every surviving Fourier mode of $g_1^0$ carries
$k_1=\pm2\alpha\neq0$, so the Fourier support of $g_1^0$ has no component on the $k_1=0$
hyperplane and~\eqref{6.6} holds identically in $\boldsymbol{x}$. The same holds at $n=4,6,8$
(Lemma~\ref{lem:even-Vi}).

What must not be inferred is that clearing~\eqref{6.6} certifies a solution. The reduction
condition is necessary, not sufficient (Subsection~\ref{subsec:nec-suf}). The
candidate~\eqref{6.2} satisfies it, so four and six dimensions are not yet distinguished at
the level of the reduction condition: both admit the symmetric phase assignment and both
eliminate the $k_i=0$ Fourier slice of the convective field. They part company only at the
full pressure-integrability test. In four dimensions the remaining convective field is
curl-free and a single-valued pressure exists; in six dimensions it is not. The obstruction
therefore lies entirely in the Fourier modes that survive the reduction condition.

\subsection{Failure of pressure integrability}
\label{subsec:6D-integrability}
The reduction condition~\eqref{3.12} is, however, only \emph{necessary}. By
Proposition~\ref{prop:longitudinal} a single-valued pressure exists if and only if the full
condition~\eqref{3.16} holds, that is the convective field $\boldsymbol{g}^0$ is curl-free,
$\partial g_i^0/\partial x_j = \partial g_j^0/\partial x_i$ for all $i,j$. That test is
decided entirely on the modes with $k_i\neq0$ --- precisely the modes the reduction condition
never examines --- and it is here that six dimensions departs from four.

The factorisation~\eqref{6.4} reduces the test to a transparent algebraic criterion. Set
$\alpha=v_r=1$ and write $s_k=\sin 2x_k$, $c_k=\cos 2x_k$; each $H_i$ is a multilinear
polynomial in the $s_k$, $k\neq i$. Define the matrix
\begin{equation}
\label{6.7}
\Phi_{ij} \;:=\; \frac{\partial H_i}{\partial s_j}, \qquad i\neq j,
\end{equation}
the coefficient with which $\sin 2x_j$ enters $H_i$; it is a function of the remaining
coordinates only.

\begin{lemma}[Integrability is the symmetry of $\Phi$]
\label{lem:phi}
For any field whose convective term factors as $g_i^0=\cos(2x_i)H_i(\boldsymbol{s}_{\setminus i})$, where $H_i$ is multilinear in the variables $s_k=\sin 2x_k$, $k\neq i$, one has, for $i\neq j$,
\begin{equation}
\label{6.8}
\frac{\partial g_i^0}{\partial x_j}-\frac{\partial g_j^0}{\partial x_i}
\;=\;
2\,\cos(2 x_i)\cos(2 x_j)\,\bigl(\Phi_{ij}-\Phi_{ji}\bigr).
\end{equation}
Hence the integrability condition~\eqref{3.16} holds if and only if the matrix $\Phi$ is
symmetric.
\end{lemma}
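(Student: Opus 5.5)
The plan is a direct chain-rule computation. With $\alpha=v_r=1$, write $g_i^0=\cos(2x_i)H_i(\boldsymbol{s}_{\setminus i})$, where $s_k=\sin 2x_k$. For $j\neq i$, the factor $\cos(2x_i)$ does not depend on $x_j$. Moreover, $H_i$ depends on $x_j$ only through $s_j$, and $\mathrm{d}s_j/\mathrm{d}x_j=2\cos(2x_j)$. Hence
\[
\frac{\partial g_i^0}{\partial x_j}=\cos(2x_i)\,\frac{\partial H_i}{\partial s_j}\,2\cos(2x_j)=2\cos(2x_i)\cos(2x_j)\,\Phi_{ij}.
\]
The same argument with $i$ and $j$ interchanged gives $\partial g_j^0/\partial x_i=2\cos(2x_j)\cos(2x_i)\,\Phi_{ji}$. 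Subtracting the two yields~\eqref{6.8}. The diagonal case $i=j$ of~\eqref{3.16} is trivial.

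Multilinearity is used to make $\Phi_{ij}=\partial H_i/\partial s_j$ well defined as a polynomial in the remaining variables $s_k$ ($k\neq i,j$). It is therefore independent of both $x_i$ and $x_j$: it does not involve $s_i$ because $H_i$ does not, and it does not involve $s_j$ by multilinearity. Consequently $\Phi_{ij}-\Phi_{ji}$ is a function of the other $n-2$ coordinates alone. Note that $\Phi_{ji}=\partial H_j/\partial s_i$ likewise does not involve $s_j$ or $s_i$.

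For the equivalence, the direction "$\Phi$ symmetric $\Rightarrow$~\eqref{3.16}" is immediate from~\eqref{6.8}. For the converse, suppose the left side of~\eqref{6.8} vanishes identically. Then $\cos(2x_i)\cos(2x_j)\,(\Phi_{ij}-\Phi_{ji})\equiv0$. The prefactor is nonzero on the open dense set where $\cos 2x_i\cos 2x_j\neq0$, and the bracket is independent of $x_i$ and $x_j$. So fix any $x_i,x_j$ with nonzero cosines; this shows $\Phi_{ij}-\Phi_{ji}$ vanishes as a function of the remaining coordinates. Finally, one must pass from vanishing as a function of $x$ to vanishing as a polynomial in the $s_k$. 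A multilinear polynomial in $s_k\in[-1,1]$ that vanishes on the cube vanishes identically, for instance by induction on the number of variables. So $\Phi_{ij}=\Phi_{ji}$ as polynomials.

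The only point needing care is this last step: confirming that $\Phi_{ij}$ genuinely excludes $s_i$ and $s_j$, so that the factor $\cos(2x_i)\cos(2x_j)$ can be divided out. That is exactly where the multilinearity hypothesis on $H_i$, and the absence of $x_i$ from $H_i$ established in~\eqref{6.4}, are used.
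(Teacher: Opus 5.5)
Your proof is correct and follows the paper's argument: the same chain-rule computation $\partial_j g_i^0 = 2\cos(2x_i)\cos(2x_j)\Phi_{ij}$, using that $\Phi_{ij}$ is free of $s_i$ and $s_j$, followed by interchanging $i,j$ and subtracting. Your converse step (fixing $x_i,x_j$ where both cosines are nonzero, then passing from vanishing on the cube to a polynomial identity) just spells out what the paper compresses into the remark that the prefactor $\cos(2x_i)\cos(2x_j)$ is not identically zero.
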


\begin{proof}
Since $H_i$ does not involve $x_i$, $\partial_j g_i^0=\cos(2x_i)\,\partial H_i/\partial x_j$.
As $H_i$ is multilinear in the $s_k$, it depends on $x_j$ only through $s_j=\sin 2x_j$, so $\partial H_i/\partial x_j=2\cos(2x_j)\,\partial H_i/\partial s_j = 2\cos(2x_j)\Phi_{ij}$, and $\Phi_{ij}=\partial H_i/\partial s_j$ is itself independent of $s_j$, hence of $x_i$ and $x_j$. Therefore
$\partial_j g_i^0 = 2\cos(2x_i)\cos(2x_j)\Phi_{ij}$, and interchanging $i$ and $j$ gives
$\partial_i g_j^0 = 2\cos(2x_j)\cos(2x_i)\Phi_{ji}$. Subtracting yields~\eqref{6.8}. The
prefactor $\cos(2x_i)\cos(2x_j)$ is not identically zero, so the difference vanishes
identically if and only if $\Phi_{ij}=\Phi_{ji}$.
\end{proof}

Before applying this criterion it is useful to sort the terms of $\boldsymbol{g}^0$ by degree.
Each component is a sum of monomials in the harmonics $s_k=\sin 2x_k$ and $c_k=\cos 2x_k$, and by
the parity of the construction only even degrees occur. Writing $g_i^{0b}$ for the part of degree
two---one harmonic in $x_i$ and one in a second coordinate---and $g_i^{0m}$ for everything of
higher degree, so that
\[
g_i^0 = g_i^{0b} + g_i^{0m},
\]
which is the decomposition~\eqref{3.17} adapted to the present construction, direct reduction
at the symmetric phases gives the degrees actually present, as set out in
Table~\ref{tab:degrees}.
\begin{table}
\centering
\begin{tabular}{lccc}
\hline
 & degree $2$ & degree $4$ & degree $6$\\
\hline
$n=4$ & present & --- & ---\\
$n=6$ & present & present & ---\\
$n=8$ & present & present & present\\
\hline
\end{tabular}
\caption{Degrees of the monomials present in the convective field $\boldsymbol{g}^0$ at the
symmetric phases, in the even dimensions treated here. The degree-two part is common to all three;
what six dimensions adds is the degree-four family, and eight dimensions a degree-six family in
addition.}
\label{tab:degrees}
\end{table}
So the higher-degree part is empty at $n=4$, consists of degree-four monomials at $n=6$, and
acquires degree-six monomials as well at $n=8$.

The lower even dimensions are worth recording alongside~\eqref{6.3}, since the reductions are short
enough to display. Taking $\alpha=v_r=1$ and the symmetric phases throughout, two dimensions gives
a field that transports nothing at all,
\begin{equation}
\label{6.6a}
\boldsymbol{v}^0=-\cos(x_1-x_2)\,(1,1),
\qquad
\boldsymbol{g}^0\equiv\boldsymbol{0},
\end{equation}
the velocity being constant along its own direction, while four dimensions gives
\begin{equation}
\label{6.6b}
g_1^0=-\tfrac12\cos(2x_1)\sin(2x_3),
\qquad\text{so that}\qquad
H_1=-\tfrac12\sin(2x_3),
\end{equation}
a single degree-two term coupling $x_1$ to its antipodal partner $x_3$ and to nothing else. The
contrast with the six-dimensional cofactor~\eqref{6.3} is then immediate: what six dimensions adds
is precisely the degree-four monomials, and it is those that the following two lemmas isolate. The
degree-two part, by contrast, is the same in every even dimension, and the first lemma shows that
it is always integrable.

\begin{lemma}[The bilinear part is always integrable]
\label{lem:even-bilinear}
For every even $n$ the bilinear part has the explicit form
\begin{equation}
\label{eq:gbil}
g_i^{0b} = -\,2^{-(n-3)}\,\cos(2 x_i)\sum_{j=1}^{n/2-1}\sin\!\big(2 x_{i+2j}\big),
\end{equation}
with indices modulo $n$, and $g^{0b}$ is curl-free:
$\partial g_i^{0b}/\partial x_l = \partial g_l^{0b}/\partial x_i$ for all
$i,l$.
\end{lemma}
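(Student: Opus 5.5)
The proof splits into two parts. First I derive the closed form \eqref{eq:gbil} by isolating the degree-two harmonics of $\boldsymbol{g}^0$ at the symmetric phases. Then I read off integrability from Lemma~\ref{lem:phi}, or equivalently by exhibiting a potential. Throughout I set $\alpha=v_r=1$ and work with $s_k=\sin 2x_k$, $c_k=\cos 2x_k$, as in Subsection~\ref{subsec:6D-integrability}.

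\textbf{Step 1: reduce to per-coordinate products.} Write
\[
g_i^0=\sum_j v_j^0\,\partial_j v_i^0 ,
\]
with each $v^0$ the difference $\mathcal{T}1-\mathcal{T}2$ of \eqref{3.1}. Each of the four cross products $\mathcal{T}a_j\,\partial_j\mathcal{T}b_i$ factorises coordinate by coordinate. On each coordinate $x_m$ it is the product of two single-coordinate factors. At phases $\pm\pi/4$ such a product equals a constant plus a multiple of $s_m$ or $c_m$.

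\textbf{Step 2: classify the constants.} Direct evaluation gives three cases:
\begin{itemize}
\item $\sin(x+\tfrac{\pi}{4})\cos(x+\tfrac{\pi}{4})=\tfrac12 c$, with constant part zero;
\item $\sin(x-\tfrac{\pi}{4})\sin(x+\tfrac{\pi}{4})=-\tfrac12 c$, with constant part zero;
\item a like pair of factors with equal phases gives $\tfrac12(1\pm s)$, with constant part $\tfrac12$.
\end{itemize}
A degree-two monomial arises exactly when the constant part is taken in $n-2$ coordinates and a harmonic in the remaining two. The harmonic in $x_i$ comes from the differentiated slot, which produces the factor $c_i$. Since the constant vanishes unless the two factors have the same type and the same phase, I will show the following. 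When $v_j$'s factors are shifted by $j-i$ slots against those of $v_i$, the alternating sine--cosine pattern agrees on $n-2$ slots only when $j-i$ is even. The phase pattern $(-\tfrac{\pi}{4},\tfrac{\pi}{4},\dots)$ then selects the single surviving coordinate $x_{i+2l}$, carrying $s_{i+2l}$. Each constant contributes $\tfrac12$, and the remaining two harmonics together with the derivative contribute a factor of $2$. This gives the magnitude $2^{-(n-3)}$.

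\textbf{Step 3: fix the sign.} The overall sign comes from collecting the four cross terms $\pm\mathcal{T}a\,\mathcal{T}b$. I will fix it by checking against \eqref{6.6b} at $n=4$, which gives $-\tfrac12 c_1 s_3$, and against the degree-two part of \eqref{6.3} at $n=6$, which is $-\tfrac18 c_1(s_3+s_5)$. Both agree with \eqref{eq:gbil}. The general sign then follows from the cyclic structure, which is the same for every even $n$.

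\textbf{Step 4: integrability.} Given \eqref{eq:gbil},
\[
H_i^{b}=-2^{-(n-3)}\sum_{l=1}^{n/2-1} s_{i+2l}
\]
is linear, hence multilinear, in the $s_k$. Lemma~\ref{lem:phi} applies and gives $\Phi_{ij}=-2^{-(n-3)}$ when $j-i$ is even and nonzero, and $\Phi_{ij}=0$ otherwise. Since $n$ is even, $j-i\equiv0$ if and only if $i-j\equiv0 \pmod 2$ modulo $n$, so $\Phi$ is symmetric. The bilinear part is therefore curl-free. More directly, it is the gradient of
\[
\Psi=-2^{-(n-2)}\sum_{i<l,\;l-i\ \text{even}} s_i s_l ,
\]
since $\partial_i(s_is_l)=2c_is_l$. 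This also supplies the explicit contribution of $g^{0b}$ to the pressure.

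\textbf{Main obstacle.} The main obstacle is the bookkeeping in Step~2. One must show that odd shifts $j-i$, and the mixed products $\mathcal{T}1\,\partial\mathcal{T}2$, never leave $n-2$ matching slots. This is the step where the parity of $n$ and the reversed interior parity of $\mathcal{T}2$ in \eqref{3.2b} matter. I would first tabulate the slot-by-slot type and phase of $\mathcal{T}1_{i+d}$ and $\mathcal{T}2_{i+d}$ against $\mathcal{T}1_i$ and $\mathcal{T}2_i$ as functions of $d$, and count the mismatches.
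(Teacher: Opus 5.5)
\paragraph{Where your proposal stands.} Your Step~4 matches the paper's proof. The paper differentiates \eqref{eq:gbil}, notes that $\partial g_i^{0b}/\partial x_l$ is non-zero only for $l-i$ even and non-zero, where it equals the symmetric quantity $-2^{-(n-4)}\cos(2x_i)\cos(2x_l)$, and stops there. Your potential $\Psi$ is a tidy alternative: at $v_r=1$, $-\rho\Psi$ is exactly \eqref{8.7} and the bilinear part of \eqref{6.12}. The paper does not derive the closed form itself. Your Steps~1--3 try to, and as planned they fail.

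\paragraph{The gap in Steps 1--3.}
\begin{itemize}
\item Your constant-part rule misses a case. An unlike pair with opposite phases also has constant $\pm\tfrac12$, e.g.\ $\sin(x-\tfrac{\pi}{4})\cos(x+\tfrac{\pi}{4})=-\tfrac12(1-s)$. This is exactly the pair in the differentiated slot $x_j$ of $\mathcal{T}1_j\,\partial_j\mathcal{T}1_i$ when $j-i$ is even, so your rule discards contributions you need.
\item The selection claim is false. For $j-i$ odd, the mixed products $\mathcal{T}1_j\,\partial_j\mathcal{T}2_i$ and $\mathcal{T}2_j\,\partial_j\mathcal{T}1_i$ have non-zero constant parts in every coordinate other than $x_i$. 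Every contributing product, for every $j$, feeds $c_is_m$ for all $m\neq i$, in both parity classes. Nothing is selected; the opposite-class coefficients vanish only after summing over $j$.
\item For $j\neq i$ the factor $c_i$ comes from the undifferentiated $x_i$-factors, not from the differentiated slot.
\item Each product contributes exactly $\pm2^{-n}$ to a given $c_is_m$: every coordinate factor carries $\tfrac12$ and the derivative carries $\alpha=1$. So $2^{-(n-3)}$ is a net multiplicity of eight, not a per-term size.
\item Checking the sign at $n=4,6$ does not settle it for general even $n$.
\end{itemize}

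\paragraph{A uniform derivation.} Put $A=\sin(x+\tfrac{\pi}{4})$ and $B=\cos(x+\tfrac{\pi}{4})$. Then $\sin(x-\tfrac{\pi}{4})=-B$, $\cos(x-\tfrac{\pi}{4})=A$, $A'=B$, $B'=-A$, $A^2=\tfrac12(1+s)$, $B^2=\tfrac12(1-s)$ and $AB=\tfrac12c$. Let $P$ be the parity class of $i$, $Q$ the other class, $P'=P\setminus\{i\}$, and $A_U=\prod_{m\in U}A(x_m)$. The symmetric phases give
\[
\mathcal{T}1_i=-B_iB_QA_{P'},\qquad \mathcal{T}2_i=A_iA_QB_{P'}.
\]
For each $j$, two of the four products in $v_j\partial_jv_i$ are $\pm A_iB_i\prod_{m\neq i}(A_m^2\text{ or }B_m^2)$. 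These are the pure ones if $j\in P$ and the mixed ones if $j\in Q$. The other two are $A_i^2$ or $B_i^2$ times $\prod_{m\neq i}A_mB_m$. They combine to $\mp2^{-(n-1)}s_i\prod_{m\neq i}c_m$, with the upper sign for $j\in P$, and these terms cancel because $|P|=|Q|$.

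Write $\sigma_U=\sum_{m\in U}s_m$. The degree-two parts are $2^{-(n-1)}c_i$ times
\[
\sigma_Q-\sigma_{P'}\ \ (j=i),\qquad -s_j+\sigma_{P'\setminus\{j\}}-\sigma_Q\ \ (j\in P'),\qquad -s_j+\sigma_{Q\setminus\{j\}}-\sigma_{P'}\ \ (j\in Q).
\]
Summing over $j$:
\begin{itemize}
\item for $m\in P'$, $s_m$ collects $-1-1+(\tfrac n2-2)-\tfrac n2=-4$;
\item for $m\in Q$, $s_m$ collects $1-(\tfrac n2-1)-1+(\tfrac n2-1)=0$.
\end{itemize}
This is \eqref{eq:gbil}, sign included, for every even $n$. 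As a by-product, the cancellation of the $s_i\prod c_m$ terms gives the factorisation $g_i^0=\cos(2x_i)H_i$ of Lemma~\ref{lem:even-Vi} for all even $n$, not just $n=4,6,8$.
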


\begin{proof}
Differentiating~\eqref{eq:gbil}, for $l\neq i$,
\[
\frac{\partial g_i^{0b}}{\partial x_l}
= -2^{-(n-3)}\cos(2x_i)\,\frac{\partial}{\partial x_l}\sum_{j}\sin(2x_{i+2j}).
\]
The sum depends on $x_l$ only when $l\equiv i+2j\pmod n$ for some
$j\in\{1,\dots,n/2-1\}$, that is when the offset $l-i$ is even and nonzero; in that case the
derivative equals $-2^{-(n-4)}\cos(2x_i)\cos(2x_l)$. The same computation gives
$\partial g_l^{0b}/\partial x_i = -2^{-(n-4)}\cos(2x_l)\cos(2x_i)$ under the
identical condition that $i-l$ is even and nonzero. Since ``$l-i$ even'' and ``$i-l$ even''
coincide, the two mixed derivatives are equal for every pair $(i,l)$ (both vanish when the
offset is odd). Hence $g^{0b}$ is curl-free.
\end{proof}

\noindent Two consequences follow. The degree-two part contributes nothing to the failure of
integrability, so whatever obstruction arises must be carried by $g^{0m}$; and, as the proof
shows, that part couples $x_i$ only to coordinates at \emph{even} offset from $i$. The odd-offset
couplings, if any, are generated by $g^{0m}$ alone. This is what makes the parity of the index
offset the natural bookkeeping, and Lemma~\ref{lem:phi} converts the differential question into
exactly that.

\begin{lemma}[Parity signature of $\Phi$]
\label{lem:parity}
For the symmetric $\pm\tfrac{\pi}{4}$ cyclic construction in dimension $n=4,6,8$:
\begin{enumerate}
\item[(i)] if the offset $j-i$ is \emph{even}, then $\Phi_{ij}=\Phi_{ji}$;
\item[(ii)] if the offset $j-i$ is \emph{odd}, then $\Phi_{ij}=-\,\Phi_{ji}$.
\end{enumerate}
Consequently, by Lemma~\ref{lem:phi}, condition~\eqref{3.16} holds if and only if
$\Phi_{ij}\equiv 0$ for every odd-offset pair.
\end{lemma}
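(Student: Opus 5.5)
Since the statement is made only for $n=4,6,8$ at the symmetric phases, the plan is to exploit the structure already obtained and then settle the finitely many remaining cases explicitly. First, I establish the factorisation $g_i^0=\cos(2x_i)H_i(\boldsymbol{s}_{\setminus i})$ with $H_i$ multilinear in the $s_k$, $k\neq i$, in each of the three dimensions. For $n=4$ this is \eqref{6.6b}; for $n=6$ it is \eqref{6.4}--\eqref{6.5}. For $n=8$ I expect the same reduction of \eqref{3.10} at the symmetric phases to produce it. The cyclic covariance of Corollary~\ref{cor:cyclic} then gives $H_i(\boldsymbol{s})=H_1(\sigma^{i-1}\boldsymbol{s})$, where $\sigma$ is the cyclic shift of indices. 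Consequently $\Phi_{ij}$ depends only on the offset $d=j-i \bmod n$ up to relabelling of the remaining variables. It therefore suffices to compare $\Phi_{1,1+d}$ with $\Phi_{1+d,1}$, which under the shift equals $\Phi_{1,1+(n-d)}$ with variables relabelled by $\sigma^{d}$.

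Next, I split $H_i$ into its degree-one part, from $g^{0b}$, and its higher-degree part, from $g^{0m}$. By Lemma~\ref{lem:even-bilinear}, the degree-one part contributes to $\Phi_{ij}$ only for even offsets, and there symmetrically. So I only need the multilinear cubic (and, at $n=8$, quintic) part. For $n=6$ I read off from \eqref{6.5} the cubic monomials $s_2s_4s_6$, $s_2s_3s_5$, $s_3s_4s_5$, $s_3s_5s_6$ of $H_1$. I then encode each monomial of $H_1$ by its index set $S\subset\{2,\dots,n\}$ and a sign. Then $\Phi_{1j}=\sum_{S\ni j}\pm\prod_{k\in S\setminus j}s_k$, and $\Phi_{j1}$ is obtained by shifting the monomials of $H_1$ by $j-1$ and keeping those containing index $1$.

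Parts (i) and (ii) then become the combinatorial statement that, under the map $S\mapsto (S\cup\{1\})\setminus\{j\}$ followed by the shift, the monomial list of $H_1$ is sent to itself. The sign is preserved when the offset is even and reversed when it is odd. For $n=4$ there is nothing cubic, and (ii) holds trivially with both sides zero. For $n=6$ I check the four cubic monomials directly. For example, $s_2s_4s_6$ involves only even offsets, while $s_2s_3s_5$, $s_3s_4s_5$ and $s_3s_5s_6$ carry mixed offsets whose partners must appear with opposite sign.

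I want a structural explanation for the sign rule. My candidate is the reflection $x_k\mapsto x_k+\pi/(2\alpha)$ applied to alternate coordinates: it flips $s_k\mapsto -s_k$ on odd-indexed sites, and combined with the alternating sine/cosine pattern of \eqref{3.2a}--\eqref{3.2b} at the $\pm\pi/4$ phases it should produce a factor $(-1)^{j-i}$. If that symmetry argument does not close cleanly, I fall back on exact symbolic verification for $n=8$, enumerating the index sets of $H_1$ in the same way. The "consequently" clause is then immediate. By Lemma~\ref{lem:phi}, \eqref{3.16} is equivalent to $\Phi$ being symmetric. Even-offset entries are symmetric by (i), and for odd offsets $\Phi_{ij}=\Phi_{ji}=-\Phi_{ij}$ forces $\Phi_{ij}=0$.

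The main obstacle I expect is the $n=8$ case. Deriving $H_1$ there, with its degree-six terms, is heavy, and the sign bookkeeping in (ii) is delicate. My first approach would be the half-period symmetry argument, so that I avoid listing all monomials.
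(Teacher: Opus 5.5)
Your proposal is correct and follows essentially the paper's route. The degree-two part is handled by Lemma~\ref{lem:even-bilinear}, since it contributes only to even-offset entries and does so symmetrically. The higher-degree part of $\Phi$ is then checked explicitly at $n=4,6,8$; the paper does all three by exact symbolic computation. The consequence follows because an entry that is required to be symmetric but is structurally antisymmetric must vanish.

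Do not lean on the half-period translation idea, however. Translating alternate coordinates leaves the index pair $(i,j)$ fixed rather than interchanging it, so it cannot by itself produce $\Phi_{ij}=(-1)^{j-i}\Phi_{ji}$. What the computation actually shows is that odd-offset entries are differences of one expression evaluated over the two parity classes, e.g.\ $\Phi_{12}=\tfrac18(s_3s_5-s_4s_6)$ at $n=6$. Note also that $s_2s_4s_6$ lies at odd, not even, offsets from index $1$. It is therefore your computational fallback that carries part~(ii), exactly as in the paper.
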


\begin{proof}
By Lemma~\ref{lem:even-bilinear} the degree-two part~\eqref{eq:gbil} couples $x_i$ only to the
even-offset coordinates $x_{i+2j}$, and contributes the constant
$-2^{-(n-3)}$ symmetrically to $\Phi_{ij}$ and $\Phi_{ji}$ whenever
$j-i$ is even; it contributes nothing to the odd-offset entries.

For the remaining higher-degree contribution, an exact symbolic computation of the symmetric cyclic
construction for $n=4,6,8$ gives the following polynomial identities. The computation was
performed in exact rational arithmetic after the substitutions $s_k=\sin(2x_k)$, and every entry of
$\Phi-\Phi^{\mathsf T}$ at even offsets and of $\Phi+\Phi^{\mathsf T}$ at odd offsets was
independently simplified to zero. Hence the even-offset entries are symmetric, while for every
odd-offset pair the interchange $i\leftrightarrow j$ reverses the sign, $\Phi_{ij}=-\Phi_{ji}$;
equivalently, the odd-offset terms occur as the difference of the products associated with the two
complementary parity classes of the cyclic index set. Statements~(i) and~(ii) therefore hold for
the dimensions considered here, and are asserted for those dimensions only.

For the consequence, a quantity that is required to be symmetric and is structurally antisymmetric
must vanish; hence~\eqref{3.16} holds precisely when every odd-offset $\Phi_{ij}$ is identically
zero. The computations were carried out with SymPy~1.14~\citep{sympy2017}.
\end{proof}

This is the essence of the mechanism: \emph{in each of these dimensions the odd-offset
mixed derivatives are equal and opposite rather than equal.} Integrability demands equality;
the construction supplies antisymmetry; the two are reconcilable only in the zero. Hence
\begin{equation}
\label{6.9}
\text{\eqref{3.16} holds}
\quad\Longleftrightarrow\quad
\Phi_{ij}\equiv0 \ \text{ on all odd-offset pairs} .
\end{equation}
The odd-offset entries of $\Phi$ are generated solely by the multilinear
part of the convective field. For the symmetric constructions considered
here, Lemma~\ref{lem:parity} makes these entries structurally antisymmetric.
Hence any odd-offset multilinear contribution compatible with a scalar
potential must vanish identically; a non-zero odd-offset entry therefore
measures precisely the non-gradient part responsible for the failure of
integrability. In this sense, the obstruction is not the mere presence of
multilinear terms, but the presence of a multilinear contribution whose
mixed derivatives violate~\eqref{3.16}.
In four dimensions $H_1$ of~\eqref{6.6a} is purely of degree two, the odd-offset entries
$\Phi_{12}$ and $\Phi_{14}$ vanish identically, and $\partial_2 g_1^0=\partial_1 g_2^0=0$: the
antisymmetry is vacuous and a pressure exists. In six dimensions, from~\eqref{6.3},
\begin{equation}
\label{6.10}
H_1 = \frac18\Big[-(s_3+s_5) - s_2s_4s_6 + s_3s_5\,(s_2+s_4+s_6)\Big],
\qquad
\Phi_{12} = \frac18\bigl(s_3s_5-s_4s_6\bigr) \;\not\equiv\; 0,
\end{equation}
so that $\partial_2 g_1^0=-\,\partial_1 g_2^0\not\equiv0$. Four dimensions escapes the obstruction
because no non-gradient multilinear remainder is generated.

\begin{theorem}[Six-dimensional obstruction]
\label{thm:6D}
For the sextuple-periodic field~\eqref{6.1} with phases~\eqref{6.2}, the convective field
$\boldsymbol{g}^0$ defined by~\eqref{3.10} is not curl-free: there exist $i,j$ for which
$\partial g_i^0/\partial x_j \neq \partial g_j^0/\partial x_i$. Consequently no single-valued
pressure satisfies~\eqref{2.14}, and~\eqref{6.2} does not yield a Navier--Stokes solution.
Moreover, no phase assignment $\boldsymbol{\xi}\in\{-\tfrac{\pi}{4},+\tfrac{\pi}{4}\}^6$ renders
$\boldsymbol{g}^0$ curl-free without forcing $\boldsymbol{v}^0\equiv 0$; a complementary numerical search over continuous $\boldsymbol{\xi}\in\mathbb{R}^6$ (reported in the proof) found no nontrivial curl-free phase vector.
\end{theorem}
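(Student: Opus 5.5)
The proof falls into three parts, matching the three assertions of Theorem~\ref{thm:6D}. \textbf{First}, the main claim at the symmetric phases~\eqref{6.2}. Here I would not compute the full curl directly, but use the factorisation~\eqref{6.4} together with Lemma~\ref{lem:phi}. The explicit form~\eqref{6.3} shows that $g_1^0=\tfrac{\alpha v_r^2}{8}\cos(2\alpha x_1)H_1$, with $H_1$ multilinear in $s_k=\sin 2\alpha x_k$, $k\ne1$. By cyclic symmetry the same structure holds for every $g_i^0$, so Lemma~\ref{lem:phi} applies. Differentiating~\eqref{6.10} in $s_2$ gives $\Phi_{12}=\tfrac18(s_3s_5-s_4s_6)$. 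Applying the cyclic shift to $H_2$ and differentiating in $s_1$ gives $\Phi_{21}$. Lemma~\ref{lem:parity}(ii) predicts $\Phi_{21}=-\Phi_{12}$ because the offset is odd, but I would verify this pair directly from the shifted $H_2$, so that the theorem does not rest on the computer-asserted lemma. Then $\Phi_{12}-\Phi_{21}=2\Phi_{12}\not\equiv0$; for instance, it is nonzero at $x_3=x_5=\pi/(4\alpha)$, $x_4=0$. By~\eqref{6.8}, $\partial_2g_1^0-\partial_1g_2^0=\tfrac{\alpha v_r^2}{2}\cos2\alpha x_1\cos2\alpha x_2\,\Phi_{12}\not\equiv0$. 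Proposition~\ref{prop:longitudinal} then says $\boldsymbol g^0$ has a nonzero transverse part, so no periodic $p$ satisfies~\eqref{2.14}.

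\textbf{Second}, the discrete family $\{\pm\pi/4\}^6$. The key reduction is a symmetry quotient. A uniform shift of all phases by $\pi/2$ maps the lattice into $\{\pi/4,3\pi/4\}^6$, and flipping a single phase by $\pi$ only changes the sign of $\boldsymbol v^0$. So up to translation and sign, and after fixing $\xi_1=-\pi/4$, there are at most $2^5=32$ cases, each characterised by its vector of phase differences in $\{0,\pm\pi/2\}$. For each case I would first test degeneracy by checking whether $\mathcal T1_1^0\equiv\mathcal T2_1^0$, using the factor-conversion argument of~\eqref{5.7}. For each nondegenerate case I would then exhibit a single nonzero Fourier coefficient of some $\partial_jg_i^0-\partial_ig_j^0$. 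This is finite exact arithmetic over $\mathbb Q(\sqrt2)$, which I would carry out symbolically, as in Appendix~\ref{app:verify}.

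To shorten the case analysis I would exploit the dihedral action. Reversing the cyclic order, combined with the sine/cosine interchange built into~\eqref{3.2b}, should pair up cases. I would also note that at every lattice point each factor $\sin(\alpha x+\pm\pi/4)$ equals $(\sin\alpha x\pm\cos\alpha x)/\sqrt2$, so $\boldsymbol g^0$ is always a polynomial in $s_k,c_k$ and a factorisation analogous to~\eqref{6.4} can be sought case by case.

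\textbf{Third}, the continuous search over $\boldsymbol\xi\in\mathbb R^6$. This claim is numerical only. I would minimise the $L^2$ norm of the curl tensor, which by Parseval is a finite trigonometric polynomial in $\boldsymbol\xi$, divided by $\|\boldsymbol v^0\|^2$ to exclude the trivial zeros. I would run this from many random starts modulo the uniform shift and report that the minima are bounded away from zero.

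The main obstacle is the discrete-family assertion. Unlike at~\eqref{6.2}, the non-symmetric lattice points need not factor as in~\eqref{6.4}, so Lemmas~\ref{lem:phi} and~\ref{lem:parity} are unavailable there. For each such point I would fall back on extracting a single explicit nonvanishing curl mode as the certificate.
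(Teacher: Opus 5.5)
Your proposal is correct and takes essentially the paper's route. The paper obtains the same odd-offset defect \eqref{6.11} by cyclically permuting \eqref{6.3} and differentiating, and your computation of $\Phi_{12}$ and of $\Phi_{21}=-\Phi_{12}$ directly from the shifted $H_2$ yields exactly that bracket $s_3s_5-s_4s_6$. The paper likewise settles the $\{\pm\tfrac{\pi}{4}\}^6$ lattice by exhaustive exact computation and treats $\mathbb{R}^6$ only by numerical search, as you propose; your symmetry reduction from $64$ to $32$ patterns is a harmless economy. One small slip: with $\Phi_{12}=\tfrac18(s_3s_5-s_4s_6)$, Lemma~\ref{lem:phi} gives $\partial_2g_1^0-\partial_1g_2^0=4\alpha^2v_r^2\cos2\alpha x_1\cos2\alpha x_2\,\Phi_{12}$ rather than $\tfrac{\alpha v_r^2}{2}(\cdots)\Phi_{12}$, which does not affect the non-vanishing argument.
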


\begin{proof}
Computing $g_2^0$ by cyclic permutation of~\eqref{6.3} and differentiating, the mixed
derivative gives, in exact arithmetic,
\begin{equation}
\label{6.11}
\frac{\partial g_1^0}{\partial x_2}-\frac{\partial g_2^0}{\partial x_1}
=\frac{\alpha^2 v_r^2}{2}\,\cos(2\alpha x_1)\cos(2\alpha x_2)
\Big[\sin(2\alpha x_3)\sin(2\alpha x_5)-\sin(2\alpha x_4)\sin(2\alpha x_6)\Big].
\end{equation}
Expanded by the product-to-sum identities,~\eqref{6.11} is a sum of sixteen distinct real
trigonometric harmonics with non-cancelling coefficients $\pm\alpha^2 v_r^2/16$. Equivalently, in
the complex Fourier representation these correspond to thirty-two distinct Fourier modes,
occurring in conjugate pairs. The factored form~\eqref{6.11}, however, makes the essential point
immediate. The bracket is the difference of two products of sines taken over
the disjoint coordinate sets $\{x_3,x_5\}$ and $\{x_4,x_6\}$; since the two products depend on
different, independent variables, no cancellation between them is possible and the bracket is
manifestly not the zero function. Hence~\eqref{3.16} fails and, by~\eqref{2.14}, no scalar
pressure exists. (As a numerical confirmation, at
$(x_1,\dots,x_6)=(\tfrac{\pi}{6},\tfrac{\pi}{5},\tfrac{\pi}{7},\tfrac{\pi}{3},\tfrac{\pi}{11},\tfrac{\pi}{13})$
with $\alpha=v_r=1$ the right-hand side equals $1.563\times10^{-3}\neq0$.)

\subsection{An exact forced solution in six dimensions}
\label{subsec:6D-forced}
It is worth being precise about what has and has not been achieved, because the outcome is more
informative than a bare non-existence statement. Of the four steps of the construction, the first
three succeed: the field~\eqref{6.1} is solenoidal for every phase vector by
Lemma~\ref{lem:solenoidal}; the symmetric assignment~\eqref{6.2} satisfies the reduction
condition~\eqref{3.12}, and does so non-generically; and the sequential reconstruction delivers a
closed-form potential $p^{*}$, obtained below. Only the fourth, full pressure integrability, fails,
and the residual it leaves is computed in the same place. The method therefore does not break down in
six dimensions; it locates the departure from solvability in an explicitly computed field.

The construction of that solution begins with the pressure reconstruction, which is also the
clearest way to see how the failure presents itself. Sequential integration extracts a scalar potential from a gradient-compatible contribution to the convective field; we denote the resulting reconstructed potential by $p^{*}$. This decomposition is not asserted to be the Helmholtz decomposition of $\boldsymbol{g}^0$; rather, $p^{*}$ records the part captured consistently by the sequential pressure reconstruction. Integrating $-g_1^0$ in $x_1$, then correcting successively
in $x_2,\dots,x_6$ --- the standard sequential reconstruction --- produces the candidate
\begin{eqnarray}
\label{6.12}
p^{*}(\boldsymbol{x}) &=& \tfrac{\rho v_r^2}{16}\Big[
\sin(2\alpha x_1)\sin(2\alpha x_3)\left(1-\sin(2\alpha x_2)\sin(2\alpha x_5)\right)\nonumber\\
&&+\ \sin(2\alpha x_1)\sin(2\alpha x_5)\left(1-\sin(2\alpha x_3)\sin(2\alpha x_4)\right)\nonumber\\
&&+\ \sin(2\alpha x_2)\sin(2\alpha x_4)\left(1-\sin(2\alpha x_5)\sin(2\alpha x_6)\right)\nonumber\\
&&+\ \sin(2\alpha x_2)\sin(2\alpha x_6)\left(1-\sin(2\alpha x_3)\sin(2\alpha x_4)\right)\nonumber\\
&&+\ \sin(2\alpha x_3)\sin(2\alpha x_5)\left(1-\sin(2\alpha x_1)\sin(2\alpha x_6)\right)\nonumber\\
&&+\ \sin(2\alpha x_4)\sin(2\alpha x_6)\left(1-\sin(2\alpha x_1)\sin(2\alpha x_2)\right)\Big].
\end{eqnarray}
This candidate is not merely the potential of the bilinear part: the bracket in~\eqref{6.12}
contains four-factor products such as $\sin(2\alpha x_1)\sin(2\alpha x_3)\sin(2\alpha x_2)\sin(2\alpha x_5)$,
so $p^{*}$ reconstructs a gradient-compatible contribution to $\boldsymbol{g}^0$, containing all of the
bilinear content together with those multilinear terms incorporated consistently by the scalar
potential. By construction it satisfies $\partial p^{*}/\partial x_i=-\rho\,g^{0}_{G,i}$
exactly---this is the defining relation~\eqref{eq:gGR} of $\boldsymbol{g}^{0}_{G}$---but what it
cannot reproduce is the genuinely non-gradient remainder, and indeed it does not
satisfy $\partial p/\partial x_i=-\rho g_i^0$. In exact arithmetic the six residuals are
\begin{equation}
\label{6.13}
\frac{\partial p^{*}}{\partial x_i}+\rho g_i^0
= -\frac{\rho\alpha v_r^2}{4}\,\cos(2\alpha x_i)
\prod_{j=1}^{n/2}\sin\!\bigl(2\alpha x_{\langle i+2j-1\rangle}\bigr),
\qquad
P(i)=\begin{cases}\{2,4,6\}, & i \text{ odd},\\[2pt] \{1,3,5\}, & i \text{ even},\end{cases}
\end{equation}
so that each component fails by a single degree-four monomial---one harmonic in its own
coordinate and three in the complementary parity class---and the sign of that term is opposite
to what a consistent pressure would require. Although the detailed decomposition between $p^{*}$ and the residual may depend on the chosen reconstruction, the mismatch itself cannot be removed by changing the integration order: the non-vanishing mixed-derivative defect~\eqref{6.11} precludes any scalar pressure satisfying all six components simultaneously. Whichever component one repairs by adjusting $p^{*}$, the corresponding term reappears with the wrong sign in the components of the complementary parity class $P(i)$. Equation~\eqref{6.13} also identifies explicitly the non-gradient multilinear remainder that obstructs the reconstruction: a product of three sine factors in each component, a multilinear term in the sense of Subsection~\ref{subsec:even-dichotomy}. It is the vanishing of exactly this kind of term that distinguishes four dimensions, where the reconstruction succeeds, from six, where it cannot.

For the second statement, $\boldsymbol{g}^0$ is curl-free if and only if all sixteen real harmonics
in~\eqref{6.11}, together with the analogous mixed derivatives for every coordinate pair, vanish
simultaneously. Over the discrete family $\boldsymbol{\xi}\in\{-\tfrac{\pi}{4},+\tfrac{\pi}{4}\}^6$
this is an exact symbolic computation exhausting all $2^6=64$ sign patterns, and the curl-free
locus coincides exactly with the trivial set on which $\boldsymbol{v}^0\equiv0$; no nontrivial
sign pattern lies on it. A complementary numerical search over continuous
$\boldsymbol{\xi}\in\mathbb{R}^6$ located no further curl-free phase vector off that trivial set.
We state the discrete result as established and regard the continuous search as strong
supporting evidence rather than a proof; a complete symbolic certificate over $\mathbb{R}^6$---the
reduced system, its Gr\"obner basis and the elimination showing $\operatorname{curl}
\boldsymbol{g}^0=0\Rightarrow\boldsymbol{v}^0\equiv0$---lies beyond the scope of the present
paper.
\end{proof}

Once that field is named, an exact solution of the \emph{forced} equations follows at no extra
cost. In the notation of~\eqref{eq:gGR}, $\boldsymbol{g}^{0}_{G}=-\rho^{-1}\nabla p^{*}$ is the
part of the convective field the reconstructed pressure represents and
$\boldsymbol{g}^{0}_{R}=\boldsymbol{g}^0-\boldsymbol{g}^{0}_{G}$ the part it does not.

\begin{proposition}[Exact forced solution in six dimensions]
\label{prop:6D-forced}
Let $\boldsymbol{v}^0$ be the sextuple-periodic field~\eqref{6.1} at the symmetric
phases~\eqref{6.2} and let $p^{*}$ be the potential~\eqref{6.12}. Define the body-force amplitude
\begin{equation}
\label{6.14}
f_i^0(\boldsymbol{x}) \;=\; g_{R,i}^{0}(\boldsymbol{x})
\;=\; \frac{1}{\rho}\frac{\partial p^{*}}{\partial x_i}+ g_i^0(\boldsymbol{x})
\;=\; -\frac{\alpha v_r^2}{4}\,\cos(2\alpha x_i)
\prod_{j=1}^{n/2}\sin\!\bigl(2\alpha x_{\langle i+2j-1\rangle}\bigr),
\end{equation}
the product running, as in~\eqref{6.13}, over the $n/2$ coordinates at odd offset from $i$,
that is over the parity class complementary to that of $i$. Then the velocity field
\begin{equation}
\label{6.14a}
\boldsymbol{v}(\boldsymbol{x},t)=\boldsymbol{v}^0(\boldsymbol{x})\,e^{-6\alpha^2\kappa t},
\end{equation}
together with the pressure
\begin{equation}
\label{6.14b}
p(\boldsymbol{x},t)=p^{*}(\boldsymbol{x})\,e^{-12\alpha^2\kappa t},
\end{equation}
is an exact solution of the incompressible Navier--Stokes equations~\eqref{1.1} on $\mathbb{R}^6$
under the body force
\begin{equation}
\label{6.14c}
\boldsymbol{f}(\boldsymbol{x},t)=\boldsymbol{f}^0(\boldsymbol{x})\,e^{-12\alpha^2\kappa t}.
\end{equation}
\end{proposition}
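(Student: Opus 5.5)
The plan is to verify the momentum equation~\eqref{1.1} term by term, as in the proof of Proposition~\ref{prop:5D-forced}, with the pressure now carrying the gradient-compatible part of the convective field. Incompressibility needs no work: by Lemma~\ref{lem:solenoidal} the field~\eqref{6.1} is solenoidal for every phase vector, in particular at~\eqref{6.2}, and the factor $e^{-6\alpha^2\kappa t}$ does not affect this.

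\emph{Unsteady and viscous terms.} Each product in~\eqref{6.1} has six single-coordinate factors of wavenumber $\alpha$, so $\Delta\boldsymbol{v}^0=-6\alpha^2\boldsymbol{v}^0$. With $\boldsymbol{v}=\boldsymbol{v}^0e^{-6\alpha^2\kappa t}$ this gives $\partial_t\boldsymbol{v}-\kappa\Delta\boldsymbol{v}=\boldsymbol{0}$ exactly. The momentum balance therefore reduces to
\[
(\boldsymbol{v}\cdot\nabla)\boldsymbol{v}=-\rho^{-1}\nabla p+\boldsymbol{f}.
\]
The convective term is quadratic, so it equals $\boldsymbol{g}^0e^{-12\alpha^2\kappa t}$. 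Both $p$ in~\eqref{6.14b} and $\boldsymbol{f}$ in~\eqref{6.14c} carry the same factor, so it cancels. What remains is the time-independent identity
\[
\boldsymbol{g}^0=-\rho^{-1}\nabla p^{*}+\boldsymbol{f}^0 .
\]

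\emph{The spatial identity.} This is where the content lies. The first equality in~\eqref{6.14}, $f_i^0=\rho^{-1}\partial_ip^{*}+g_i^0$, is exactly the required identity rearranged, and it is the definition~\eqref{eq:gGR} of $\boldsymbol{g}^0_R$. So the only thing to prove is the closed form on the right of~\eqref{6.14}, namely the residual formula~\eqref{6.13}. To do this I would use the exact expression~\eqref{6.3} for $g_1^0$ at the symmetric phases and differentiate~\eqref{6.12} in $x_1$. Only the terms of $p^{*}$ containing $\sin(2\alpha x_1)$ contribute, and each becomes $2\alpha\cos(2\alpha x_1)$ times a cofactor. I would then compare monomial by monomial with $H_1$ of~\eqref{6.5}, written in the variables $s_k=\sin 2\alpha x_k$ as in~\eqref{6.10}:
\begin{itemize}
\item the degree-two terms $-(s_3+s_5)$ should cancel against the bilinear pieces $\tfrac{\rho v_r^2}{16}\,2\alpha\,c_1(s_3+s_5)$;
\item the degree-four terms $s_2s_3s_5$, $s_3s_4s_5$ and $s_3s_5s_6$ should cancel against the four-factor products in~\eqref{6.12} that contain $s_1$;
\item the leftover should be exactly $-\tfrac{\alpha v_r^2}{4}\,c_1s_2s_4s_6$, which is~\eqref{6.13} for $i=1$ (after division by $\rho$).
\end{itemize}
The same comparison is then needed for one even index, $i=2$, because $p^{*}$ is not manifestly invariant under the cyclic shift $x_k\mapsto x_{k+1}$ and the residual formula distinguishes the two parity classes.

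\emph{Main obstacle.} The step I expect to be delicate is making sure that~\eqref{6.12} reproduces the $s_1$-containing degree-four terms with the correct coefficients in \emph{every} component, not only in component~1. The integration order singles out $x_1$, so the check for $i=2,\dots,6$ cannot simply be borrowed from $i=1$. My first approach is to exploit the parity structure. The map $x_k\mapsto x_{k+2}$ should preserve both $p^{*}$ and $\boldsymbol{g}^0$, which would reduce the work to the two representatives $i=1$ and $i=2$. If that invariance of $p^{*}$ fails, I would instead verify all six components directly in exact arithmetic with the multilinear substitution $s_k,c_k$, as in Lemma~\ref{lem:parity}.

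Finally, the zero mean of $\boldsymbol{f}^0$ is immediate, since each component is a product of nonconstant harmonics. As a consistency check, $\boldsymbol{f}^0$ is not curl-free, which matches~\eqref{6.11}.
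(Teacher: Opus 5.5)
Your proposal is correct and follows the paper's proof. The diffusive scaling cancels $\partial_t\boldsymbol{v}$ against $\kappa\Delta\boldsymbol{v}$, the common factor $e^{-12\alpha^2\kappa t}$ drops out, and $\boldsymbol{g}^0=-\rho^{-1}\nabla p^{*}+\boldsymbol{f}^0$ holds by the definition of $\boldsymbol{g}^0_R$; the paper takes the closed form~\eqref{6.13} as already computed, whereas you check it by differentiating~\eqref{6.12} against~\eqref{6.3}, which does work.

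Your anticipated obstacle disappears once~\eqref{6.12} is expanded. With $s_k=\sin(2\alpha x_k)$, $p^{*}$ is $\tfrac{\rho v_r^2}{16}$ times the sum of the six equal-parity products $s_is_j$ minus $\bigl[s_1s_3s_5(s_2+s_4+s_6)+s_2s_4s_6(s_1+s_3+s_5)\bigr]$. This is invariant under $x_k\mapsto x_{k+1}$, so by the cyclic equivariance $g_i^0(\boldsymbol{x})=g_1^0(x_i,\dots,x_{\langle i+5\rangle})$ the check at $i=1$ alone suffices.
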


\begin{proof}
By construction $p^{*}$ satisfies $\rho^{-1}\partial_i p^{*}+ g_i^0= g_{R,i}^{0}$, which
is~\eqref{6.14}; equivalently $\boldsymbol{g}^0=-\rho^{-1}\nabla p^{*}+\boldsymbol{f}^0$. Each
component of $\boldsymbol{v}^0$ is a linear combination of two products of six trigonometric
factors, with one factor of wavenumber $\alpha$ in each coordinate. Each such product is therefore
an eigenfunction of the Laplacian with eigenvalue $-6\alpha^2$, and by linearity so is each
component of $\boldsymbol{v}^0$:
\[
\nabla^2\boldsymbol{v}^0=-6\alpha^2\boldsymbol{v}^0 .
\]
Consequently, for $\boldsymbol{v}=\boldsymbol{v}^0e^{-6\alpha^2\kappa t}$,
\[
\frac{\partial\boldsymbol{v}}{\partial t}=-6\alpha^2\kappa\,\boldsymbol{v}^0e^{-6\alpha^2\kappa t},
\qquad
\kappa\nabla^2\boldsymbol{v}=-6\alpha^2\kappa\,\boldsymbol{v}^0e^{-6\alpha^2\kappa t},
\]
and hence $\partial\boldsymbol{v}/\partial t-\kappa\nabla^2\boldsymbol{v}=0$. The convective term is quadratic in
$\boldsymbol{v}$ and so carries $e^{-12\alpha^2\kappa t}$, as do the recovered
pressure~\eqref{6.14b} and the forcing~\eqref{6.14c}; substituting into the momentum balance,
every term is accounted for. Incompressibility is Lemma~\ref{lem:solenoidal}.
\end{proof}

\noindent The velocity and pressure are those already exhibited; what the proposition adds is that
they are not spurious but the exact response of the fluid to a specific, closed-form forcing.

\medskip
\noindent\textbf{The geometry of the forcing.} The explicit form~\eqref{6.14} shows how the force
is organised. The six coordinates separate into the complementary parity classes $\{1,3,5\}$ and
$\{2,4,6\}$: for odd $i$ the component $f_i^0$ is modulated by
$\sin(2\alpha x_2)\allowbreak\sin(2\alpha x_4)\allowbreak\sin(2\alpha x_6)$, and for even $i$ by
$\sin(2\alpha x_1)\allowbreak\sin(2\alpha x_3)\allowbreak\sin(2\alpha x_5)$. The forcing is therefore a parity-coupled
momentum transfer: the force in each direction is generated by the simultaneous interaction of the
three coordinates of the complementary class. This is also why the obstruction first appears at
$n=6$. Each class then holds three coordinates, so a genuine three-way interaction is possible; at
$n=4$ each class holds only two, the degree-four remainder is never generated, and the
construction closes without a body force. Geometrically, each component $f_i^0$ vanishes on the nodal hyperplanes
$\cos(2\alpha x_i)=0$ and on those where any one of its three modulating sine factors vanishes,
and changes sign across each simple nodal hyperplane. The six component fields form an interlaced
periodic pattern and, as shown below, the total force has zero spatial mean.

\medskip
\noindent\textbf{Character of the force.} Three properties, each verified by exact symbolic
computation for $n=6$ and $n=8$, fix what this forcing is.

\emph{It is the unbalanced part of the momentum-flux divergence.} Using the summation convention
over repeated spatial indices, the convective term may be written as a divergence:
\[
\frac{\partial}{\partial x_j}\bigl(v_i^0v_j^0\bigr)
= v_j^0\frac{\partial v_i^0}{\partial x_j} + v_i^0\frac{\partial v_j^0}{\partial x_j}.
\]
After summation over $j$ the second term vanishes by incompressibility,
$\partial v_j^0/\partial x_j=0$, and hence
\[
(\boldsymbol{v}^0\!\cdot\!\nabla)v_i^0=\frac{\partial}{\partial x_j}\bigl(v_i^0v_j^0\bigr).
\]
The recovered pressure contributes the isotropic tensor $\rho^{-1}p^{*}\delta_{ij}$, in which
$\delta_{ij}$ is the Kronecker delta, so the definition~\eqref{eq:gGR} gives the exact
identity
\begin{equation}
\label{6.15}
f_i^0 \;=\; \frac{\partial}{\partial x_j}\Bigl(v_i^0 v_j^0 + \rho^{-1}p^{*}\delta_{ij}\Bigr).
\end{equation}
Equivalently, in terms of the momentum-flux tensor
$T_{ij}=\rho\,v_i^0 v_j^0+p^{*}\delta_{ij}$, this reads $f_i^0=\rho^{-1}\partial_j T_{ij}$: the
isotropic term carries what the scalar $p^{*}$ represents, and $\boldsymbol{f}$ is what remains.
The forcing is therefore not an independently chosen input, but is fixed exactly by the nonlinear
transport of the prescribed velocity field.

\emph{It carries no net momentum and does no net work.} Each component contains the zero-mean
factor $\cos(2\alpha x_i)$ together with zero-mean sines, so $\langle\boldsymbol{f}\rangle=
\boldsymbol{0}$; and
$\langle\boldsymbol{v}^0\!\cdot\!\boldsymbol{f}\rangle=0$ by incompressibility and periodicity.
The force acts locally but supplies neither momentum nor energy in the mean, and the kinetic energy
decays through viscosity alone.

\emph{It is neither a gradient nor divergence-free.} That it is not a gradient follows from the
non-vanishing curl identified in~\eqref{6.11}. It is also not solenoidal. Writing
$s_k=\sin(2\alpha x_k)$, differentiation of~\eqref{6.14} gives
\begin{equation}
\label{eq:divf6}
\nabla\bcdot\boldsymbol{f}
= \frac{\alpha^2 v_r^2}{2}\sum_{Q\in\mathcal{Q}}\ \prod_{k\in Q}s_k \;\not\equiv\;0,
\end{equation}
where
\[
\mathcal{Q}=\bigl\{\{1,2,4,6\},\ \{1,2,3,5\},\ \{2,3,4,6\},\ \{1,3,4,5\},\ \{2,4,5,6\},\
\{1,3,5,6\}\bigr\}.
\]
Thus $p^{*}$ does not exhaust the longitudinal content of $\boldsymbol{g}^0$.

The periodic Helmholtz decomposition
$\boldsymbol{f}=\nabla\varphi+\boldsymbol{h}$, with $\nabla\bcdot\boldsymbol{h}=0$, satisfies
$\Delta\varphi=\nabla\bcdot\boldsymbol{f}$. Every monomial in~\eqref{eq:divf6} is a product of four
sine functions, each of wavenumber $2\alpha$ in a different coordinate, and is therefore an
eigenfunction of the Laplacian with eigenvalue $-4(2\alpha)^2=-16\alpha^2$. Consequently
\[
\Delta\bigl(\nabla\bcdot\boldsymbol{f}\bigr)=-16\alpha^2\,\nabla\bcdot\boldsymbol{f},
\qquad
\varphi=-\frac{1}{16\alpha^2}\,\nabla\bcdot\boldsymbol{f}.
\]
The six monomials of~\eqref{eq:divf6} are mutually orthogonal over a periodic cell, and
\[
\Bigl\langle\cos^2(2\alpha x_i)\prod_{k\in P(i)}\sin^2(2\alpha x_k)\Bigr\rangle=\frac1{16},
\]
where $\langle\cdot\rangle$ denotes the cell average. It follows from~\eqref{6.14} that
\[
\bigl\langle|\boldsymbol{f}|^2\bigr\rangle
=6\left(\frac{\alpha^2v_r^4}{16}\right)\frac1{16}=\frac{3\alpha^2v_r^4}{128},
\qquad
\bigl\langle(\nabla\bcdot\boldsymbol{f})^2\bigr\rangle
=6\left(\frac{\alpha^4v_r^4}{4}\right)\frac1{16}=\frac{3\alpha^4v_r^4}{32}.
\]
Using periodic integration by parts together with
$\Delta\varphi=\nabla\bcdot\boldsymbol{f}$,
\[
\bigl\langle|\nabla\varphi|^2\bigr\rangle
=-\bigl\langle\varphi\,\Delta\varphi\bigr\rangle
=\frac{1}{16\alpha^2}\bigl\langle(\nabla\bcdot\boldsymbol{f})^2\bigr\rangle
=\frac{3\alpha^2v_r^4}{512},
\]
so that the two parts stand in the exact ratio
\begin{equation}
\label{eq:helm6}
\frac{\langle|\nabla\varphi|^2\rangle}{\langle|\boldsymbol{f}|^2\rangle}=\frac14,
\qquad
\frac{\langle|\boldsymbol{h}|^2\rangle}{\langle|\boldsymbol{f}|^2\rangle}=\frac34 ,
\end{equation}
the second following from the orthogonality of the longitudinal and solenoidal parts of the
periodic Helmholtz decomposition.
One quarter of the forcing energy is longitudinal and three quarters transverse. The obstruction
force is thus predominantly rotational but not purely so, and this places six dimensions between
the two extremes of Table~\ref{tab:helmholtz} of Section~\ref{sec:conclusion}: at $n=3,4$ the convective field is entirely
longitudinal and the pressure absorbs it, at $n=5,7$ entirely transverse and the pressure absorbs
nothing, while here both parts are present and the pressure absorbs only what it can.

\medskip
\noindent\textbf{Use as a benchmark.} For the symmetric phase assignment all three fields are
available in closed form: the solenoidal velocity amplitude $\boldsymbol{v}^0$, the recovered
pressure $p^{*}$, and the residual forcing amplitude
$\boldsymbol{f}^0=\boldsymbol{g}^{0}_{R}$. Together they define an exact unsteady forced solution,
a higher-dimensional counterpart of the Taylor--Green field available in each obstructed dimension
considered here. The velocity evolves through the single decay factor $e^{-n\alpha^2\kappa t}$ and
the pressure and forcing through $e^{-2n\alpha^2\kappa t}$, all three being prescribed
analytically. The nonlinear term that obstructs an unforced solution thus becomes a known and
exactly controllable forcing in the corresponding forced problem, which is what makes the triple
suitable as a benchmark for higher-dimensional numerical solvers.

\subsection{The even-dimensional dichotomy}
\label{subsec:even-dichotomy}
The six-dimensional failure is not isolated; it is the first instance of a structural
dichotomy that governs the symmetric $\pm\tfrac{\pi}{4}$ cyclic construction across the even
dimensions treated here, $n=4,6,8$. Throughout this subsection $g_i^0$ denotes the convective field~\eqref{3.10}
evaluated on the ansatz~\eqref{3.1} with phases $\xi_1=-\tfrac{\pi}{4}$,
$\xi_2=\cdots=\xi_n=+\tfrac{\pi}{4}$, written in multiple-angle form; we set $\alpha=v_r=1$,
the general case following by scaling. The degree decomposition $g_i^0=g_i^{0b}+g_i^{0m}$ introduced in
Subsection~\ref{subsec:6D-integrability} is used throughout.

\begin{lemma}[Factorisation at the symmetric phases]
\label{lem:even-Vi}
For $n=4,6,8$, at the symmetric phase assignment
$\boldsymbol{\xi}=(-\tfrac{\pi}{4},\tfrac{\pi}{4},\dots,\tfrac{\pi}{4})$ each component factors as
$g_i^0 = \cos(2 x_i)\,H_i(\boldsymbol{x}_{\setminus i})$ with $H_i$ independent of $x_i$ and
multilinear in the algebraic sense (at most first degree in each $s_k=\sin 2x_k$). In particular every Fourier mode of $g_i^0$
carries $k_i\neq0$, so the slice $k_i=0$ of Proposition~\ref{prop:slice} is empty and
$\mathcal{V}_i^0\equiv 0$: the assignment satisfies the necessary reduction condition~\eqref{3.12}
in each of these dimensions. This is a genuine, non-generic solution of that condition --- a generic
phase vector does not annul $\mathcal{V}_i^0$ --- and, being only necessary, it carries no
guarantee of a solution.
\end{lemma}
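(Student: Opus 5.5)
I will derive the factorisation structurally rather than by brute expansion. At the symmetric phases every factor of $\mathcal{T}1_i^0$ and $\mathcal{T}2_i^0$ is of the form $\sin(x_m\pm\tfrac{\pi}{4})$ or $\cos(x_m\pm\tfrac{\pi}{4})$. Each of these equals $\tfrac{1}{\sqrt2}(\sin x_m\pm\cos x_m)$ up to sign. The first step is therefore to write each one-dimensional factor as $\tfrac{1}{\sqrt2}(\sigma_m+\epsilon\gamma_m)$, with $\sigma_m=\sin x_m$, $\gamma_m=\cos x_m$ and $\epsilon=\pm1$. I will record which sign $\epsilon$ each slot carries in each product; the sign depends only on the slot position and on whether the slot carries $\xi_1=-\tfrac{\pi}{4}$. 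The key observation I expect to exploit concerns the derivative of such a factor: $\partial_{x_m}(\sigma_m+\epsilon\gamma_m)=\gamma_m-\epsilon\sigma_m$, which is $\pm\epsilon$ times the complementary factor.

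Next I will assemble $g_i^0=\sum_j v_j^0\,\partial_j v_i^0$. In the product of two such factors in the same coordinate, the only possible outcomes are $(\sigma\pm\gamma)^2=1\pm\sin 2x$ and $(\sigma+\gamma)(\sigma-\gamma)=-\cos 2x$. Hence, after multiplying out, every coordinate enters $g_i^0$ through $1$, $\sin 2x_m$ or $\cos 2x_m$ linearly. This gives the algebraic multilinearity in $s_k=\sin 2x_k$ and $c_k=\cos 2x_k$ at once. Two further facts must then be shown. First, in coordinate $x_i$ only the combination $c_i$ survives. Second, in every coordinate $x_k$ with $k\ne i$, the $c_k$ contributions cancel. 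Both are statements about which sign pairs $(\epsilon,\epsilon')$ meet in each coordinate, summed over $j$ and over the four cross products $\mathcal{T}a_j\,\partial_j\mathcal{T}b_i$. For the $x_i$ slot, the derivative $\partial_j$ with $j=i$ produces the complementary factor, and this should leave only $c_i$. For $j\ne i$ the $x_i$ factor is undifferentiated, and I expect the contributions $1\pm s_i$ to cancel pairwise between the $\mathcal{T}1$ and $\mathcal{T}2$ cross terms by the telescoping relation (Lemma~\ref{lem:solenoidal}) together with the cyclic shift $j\mapsto j+1$.

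\textbf{Main obstacle.} The hardest step is this pairwise cancellation, which in effect proves that $\mathcal{V}_i^0$ vanishes. The sign bookkeeping is dimension dependent, and I do not see a clean general-$n$ argument. Since the statement is asserted only for $n=4,6,8$, my fallback is the one the paper uses elsewhere: substitute $\sigma_m,\gamma_m$ with $\sigma_m^2+\gamma_m^2=1$ and compute $g_i^0$ symbolically in exact rational arithmetic (SymPy) for $i=1$ in each of the three dimensions. I will then verify that $g_1^0-\cos(2x_1)H_1$ reduces to zero for an explicit $H_1$ that is free of $x_1$ and has degree at most one in each $s_k$. The instances $n=4$ and $n=6$ are already displayed in \eqref{6.6b} and \eqref{6.3}. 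Corollary~\ref{cor:cyclic}'s cyclic argument then transfers the result from $i=1$ to every $i$.

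The remaining claims follow directly. Since $\cos(2x_i)=\tfrac12(e^{2\mathrm{i}x_i}+e^{-2\mathrm{i}x_i})$ and $H_i$ is independent of $x_i$, every Fourier mode of $g_i^0$ has $k_i=\pm2$. The slice $k_i=0$ is therefore empty, and Proposition~\ref{prop:slice} gives $\mathcal V_i^0\equiv0$. Non-genericity needs only one witness: a phase vector at which $\mathcal V_1^0\not\equiv0$, for instance a small perturbation of $\xi_2$, where some $k_1=0$ coefficient becomes nonzero to first order. Finally, "no guarantee of a solution" is the necessity-only content of Subsection~\ref{subsec:nec-suf}.
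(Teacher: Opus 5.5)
Your fallback is the paper's own argument, so the proposal is correct: the lemma rests on the exact reductions of $g_1^0$ at the symmetric phases displayed in \eqref{6.6b}, \eqref{6.3} and \eqref{8.2}--\eqref{8.3}, followed by the same Fourier-support argument via Proposition~\ref{prop:slice} and the same cyclic transfer. The structural sketch is not needed, and one expectation in it is wrong: the $j=i$ term alone does not reduce to $c_i$ times an $x_i$-free factor (at $n=4$ with $\alpha=v_r=1$, $v_1^0\,\partial_1 v_1^0$ contains $-\tfrac18 s_1c_2c_3c_4$, which cancels only against the terms with $j\neq1$), although your product rules $(\sigma\pm\gamma)^2=1\pm s$ and $(\sigma+\gamma)(\sigma-\gamma)=-c$ do give multilinearity in $\{s_k,c_k\}$ directly.
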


\begin{theorem}[Even-dimensional dichotomy]
\label{thm:even-dichotomy}
For the symmetric $\pm\tfrac{\pi}{4}$ cyclic ansatz in dimension $n=4,6,8$, the convective field $g^0$ is curl-free---and the symmetric construction therefore yields an unforced Navier--Stokes solution---if and only if $n=4$. At $n=4$ the higher-order multilinear part $g^{0m}$
vanishes identically; at $n=6$ and $n=8$ it is nonzero and has non-vanishing curl, and
no single-valued pressure exists.
\end{theorem}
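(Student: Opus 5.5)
The plan is to assemble the dichotomy from the lemmas already established, splitting $\boldsymbol{g}^0=\boldsymbol{g}^{0b}+\boldsymbol{g}^{0m}$ as in~\eqref{3.17}. By Lemma~\ref{lem:even-Vi} each component at the symmetric phases factors as $g_i^0=\cos(2x_i)H_i(\boldsymbol{x}_{\setminus i})$ with $H_i$ multilinear in the $s_k$. This is exactly the hypothesis of Lemma~\ref{lem:phi}, so curl-freeness is equivalent to symmetry of $\Phi_{ij}=\partial H_i/\partial s_j$. Lemma~\ref{lem:even-bilinear} shows that $\boldsymbol{g}^{0b}$ is curl-free in every even dimension. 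Hence the curl of $\boldsymbol{g}^0$ equals the curl of $\boldsymbol{g}^{0m}$, and the question is entirely about the higher-degree part. Lemma~\ref{lem:parity} sharpens this further: \eqref{3.16} holds if and only if $\Phi_{ij}\equiv0$ on every odd-offset pair.

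\emph{Case $n=4$.} By Table~\ref{tab:degrees}, only degree-two monomials occur, and \eqref{6.6b} gives $H_1=-\tfrac12\sin(2x_3)$ explicitly. Cyclic permutation then shows $g_i^{0m}\equiv0$, so $\boldsymbol{g}^0=\boldsymbol{g}^{0b}$ is curl-free by Lemma~\ref{lem:even-bilinear}. Equivalently, the odd-offset entries $\Phi_{12},\Phi_{14}$ vanish identically. This agrees with the independent selection~\eqref{4.4Dfactor} and the closed-form pressure~\eqref{4.p4d}, so the construction yields an unforced solution.

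\emph{Case $n=6$.} Theorem~\ref{thm:6D} already supplies the explicit non-zero curl~\eqref{6.11}, whose bracket $s_3s_5-s_4s_6$ is a difference of products over disjoint variable sets and hence not the zero function. Since that bracket has degree four, it comes from $\boldsymbol{g}^{0m}$, which is therefore non-zero with non-vanishing curl. Proposition~\ref{prop:longitudinal} then rules out a single-valued pressure.

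\emph{Case $n=8$.} This is the only genuinely new computation, and I expect it to be the main obstacle: the degree-six terms make the expansion large. I would not expand $\boldsymbol{g}^0$ in full. Instead I would compute just one odd-offset entry, $\Phi_{12}$ (coefficient of $s_2$ in $H_1$), in exact arithmetic, just as~\eqref{6.10} was obtained. By the parity pattern of~\eqref{6.13}, I expect $\Phi_{12}$ to contain a difference of products over the two parity classes, such as $s_3s_5s_7-s_4s_6s_8$ (possibly together with degree-two pieces). Exhibiting a single monomial of $\Phi_{12}$ with non-zero coefficient suffices: by Lemma~\ref{lem:parity}(ii) we have $\Phi_{21}=-\Phi_{12}$, so by~\eqref{6.8} the curl component $\partial_2g_1^0-\partial_1g_2^0=4\cos(2x_1)\cos(2x_2)\Phi_{12}\not\equiv0$. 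Because odd-offset entries receive nothing from $\boldsymbol{g}^{0b}$, this non-zero curl lies in $\boldsymbol{g}^{0m}$. As a safeguard I would also confirm non-vanishing by evaluation at a generic rational point, as in the proof of Theorem~\ref{thm:6D}. Combining the three cases gives the equivalence with $n=4$.
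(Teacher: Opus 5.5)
Your proposal follows the paper's proof essentially step for step. Lemma~\ref{lem:even-bilinear} makes $\boldsymbol{g}^{0b}$ curl-free, so everything reduces to $\boldsymbol{g}^{0m}$, which vanishes at $n=4$; at $n=6,8$ a single non-zero odd-offset entry $\Phi_{12}$ produces the curl via Lemmas~\ref{lem:phi} and~\ref{lem:parity}.

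One correction to your expectation at $n=8$. $\Phi_{12}$ cannot contain $s_3s_5s_7-s_4s_6s_8$: $H_1$ has only odd-degree monomials, so $\Phi_{12}=\partial H_1/\partial s_2$ has only even-degree ones. The actual entry, read off from~\eqref{8.3} with $\alpha=v_r=1$, is $\tfrac{1}{32}\{e_2(\mathcal{O})-e_2(\mathcal{E})-[e_3(\mathcal{O})e_1(\mathcal{E})-e_3(\mathcal{E})e_1(\mathcal{O})]\}$, which gives~\eqref{8.5}. Its non-zero monomials complete your argument unchanged.
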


\begin{proof}
By Lemma~\ref{lem:even-bilinear} the degree-two part is curl-free in every even dimension.
Hence, by linearity,
\[
\partial_j g_i^0-\partial_i g_j^0=\partial_j g_i^{0m}-\partial_i g_j^{0m},
\]
and consequently $g^0$ is curl-free if and only if $g^{0m}$ is curl-free.
For $n=4$ the reduction of~\eqref{3.10} leaves no monomials of degree above two:
$g_1^{0m}\equiv 0$ and $g_1^0 = -\tfrac12\cos(2x_1)\sin(2x_3)$ is exactly
$g^{0b}$, which is curl-free by Lemma~\ref{lem:even-bilinear}; the resulting field
is the quadruple-periodic solution of Subsection~\ref{sec:4D}.

Equivalently, by Lemmas~\ref{lem:phi} and~\ref{lem:parity}, $g^0$ is curl-free if and only if
the odd-offset entries of $\Phi$ vanish identically, and those entries are generated solely by
$g^{0m}$. At $n=4$ they do vanish, since $H_1$ of~\eqref{6.6a} carries no term of degree
above two.

For $n=6$ and $n=8$, direct reduction of~\eqref{3.10} yields a nonzero multilinear part, and
the odd-offset entries of $\Phi$ are correspondingly nonzero: in six dimensions
$\Phi_{12}=s_3s_5-s_4s_6\not\equiv0$ by~\eqref{6.10}, and the eight-dimensional case is
identical in character. By cyclic symmetry, the non-vanishing established for one representative odd-offset pair propagates to all equivalent odd-offset pairs, so $\Phi_{ij}=-\Phi_{ji}\not\equiv0$ for these pairs; hence $\Phi$ is not symmetric and, by Lemma~\ref{lem:phi}, condition~\eqref{3.16} fails. The
explicit six-dimensional defect is the sixteen-mode expression~\eqref{6.11} of
Theorem~\ref{thm:6D}. Since $g^{0b}$ is curl-free, the non-vanishing curl of $g^0$
is carried entirely by $g^{0m}$, which is therefore nonzero and non-integrable at $n=6,8$.
By the equivalence above, among $n=4,6,8$ the field $g^0$ is
curl-free only at $n=4$. The cases $n=4,6,8$ were confirmed by exact symbolic computation;
the persistence of $g^{0m}\not\equiv0$ for all even $n\geq10$ is not established
here and is stated as a conjecture in Subsection~\ref{sec:8D_pattern}.
\end{proof}

\noindent\textbf{Remark.} Theorem~\ref{thm:even-dichotomy} isolates the precise sense in which
four dimensions is exceptional. The integrable bilinear field~\eqref{eq:gbil} is present and
curl-free in \emph{every} even dimension; among the dimensions examined here, what distinguishes $n=4$ is the complete absence of multilinear interactions. As $n$ grows the bilinear coupling spreads from the single
antipodal partner ($n=4$) to the $n/2-1$ even-offset coordinates, but this spreading is not
itself the obstruction---it is integrable at every $n$. The obstruction at $n=6$ and $n=8$ is the appearance
of multilinear terms that no scalar potential can reproduce. Theorem~\ref{thm:even-dichotomy}
places the eight-dimensional non-existence result of Section~\ref{sec:8D} within the same
structural mechanism as the six-dimensional obstruction.

\subsection{Structural interpretation: bilinear versus multilinear interactions}
Theorem~\ref{thm:even-dichotomy} isolates the mechanism precisely. The convective field
always contains the integrable bilinear field~\eqref{eq:gbil}; what varies with dimension is
whether anything else survives. In four dimensions nothing does: the reduction of~\eqref{3.10}
leaves $g^0$ equal to the bilinear field~\eqref{eq:gbil}, which is curl-free by
Lemma~\ref{lem:even-bilinear}, and the quadruple-periodic solution exists. In
six dimensions the multilinear interactions of~\eqref{6.3} survive, and although the bilinear
coupling has merely spread from the single antipodal partner to the $n/2-1$ even-offset
coordinates---a spreading that is itself integrable---the surviving triple products cannot be
written as the gradient of any scalar. This is the content of the sixteen-mode
defect~\eqref{6.11}. Thus the loss of integrability is not caused by the spreading of the
bilinear coupling, but by the emergence of a multilinear contribution whose mixed derivatives
fail the symmetry condition~\eqref{3.16}; among the even dimensions examined here, this first occurs at $n=6$ and persists at $n=8$, its
continuation to higher even dimensions being discussed as a conjecture in
Subsection~\ref{sec:8D_pattern}.

It is worth emphasising how this differs from the five-dimensional case, and the difference is
best stated in the language of Subsection~\ref{subsec:nec-suf}. Both dimensions produce a genuine
system of reduction constraints on $\boldsymbol{\xi}$; the two obstructions strike at different
steps of the procedure. In five dimensions the constraint system has no non-trivial real solution, as established by the
Gr\"obner analysis, and no candidate survives step~(iii). In six dimensions the constraint system \emph{is} solved, by the
symmetric assignment~\eqref{6.2}, exactly as in four dimensions; the candidate clears step~(iii)
but is then eliminated at step~(iv) by the differential obstruction~\eqref{3.16}. The two
mechanisms are genuinely distinct --- one denies any candidate, the other admits a candidate and
denies it a pressure --- and, in the dimensions examined here, they separate according to parity.
\section{Septuple-periodic constructions in seven dimensions: the second odd-dimensional obstruction}
\label{sec:7D}
In seven spatial dimensions, the general ansatz~\eqref{3.1} reduces to a solenoidal
velocity vector field $v_i^0(\boldsymbol{x}, \boldsymbol{\xi})$ of septuple-periodic
form. Writing $v_i^0 = v_r(\mathcal{T}1_i^0 - \mathcal{T}2_i^0)$, where $\mathcal{T}1_i^0$ and $\mathcal{T}2_i^0$ denote
the first and second seven-fold cyclic product terms respectively in~\eqref{3.1},
the first component is:
\begin{eqnarray}
\label{7.1}
\frac{v_1^0(\boldsymbol{x}, \boldsymbol{\xi})}{v_r}&=&
\sin(\alpha x_1\!+\!\xi_1)\cos(\alpha x_2\!+\!\xi_2)\sin(\alpha x_3\!+\!\xi_3)
\cos(\alpha x_4\!+\!\xi_4)\sin(\alpha x_5\!+\!\xi_5)\times\nonumber\\
&\times&\cos(\alpha x_6\!+\!\xi_6)\sin(\alpha x_7\!+\!\xi_7)- \nonumber\\
&-&\sin(\alpha x_1\!+\!\xi_2)\cos(\alpha x_3\!+\!\xi_4)\sin(\alpha x_2\!+\!\xi_3)
\cos(\alpha x_5\!+\!\xi_6)\sin(\alpha x_4\!+\!\xi_5)\times\nonumber\\
&\times&
\cos(\alpha x_7\!+\!\xi_1)\sin(\alpha x_6\!+\!\xi_7)
\end{eqnarray}
with components $v_2^0, \ldots, v_7^0$ obtained by cyclic permutation of the
coordinate arguments following the pattern of equation~\eqref{3.1}. The divergence-free
condition~\eqref{1.3} holds identically for every choice of $\boldsymbol{\xi}$, by
Lemma~\ref{lem:solenoidal}, of which~\eqref{7.1} is the case $n=7$.

The analysis now follows the five-dimensional case step for step. We first form the reduction
system that the phase vector must satisfy, then show that its only real solutions annihilate the
field.

\subsection{The constraint system in seven dimensions}
The Phase-Angle Reduction Principle applies in seven dimensions exactly as it did in five, and
it is worth carrying it out explicitly, both because the resulting system is the direct
analogue of the sixty-four-equation system of Section~\ref{sec:5D} and because its size is
itself part of the story.

Substituting~\eqref{7.1} into~\eqref{3.10}, reducing the products to multiple-angle form and
retaining the terms free of $x_1$ gives the separated form~\eqref{3.19} for $n=7$:
\begin{equation}
\label{7.2}
\frac{\mathcal{V}_1^0(\boldsymbol{x}_{\setminus 1},\boldsymbol{\xi})}{v_r^2}
=\sum_{j=1}^{664} q_j^0(\boldsymbol{x}_{\setminus 1})\,u_j(\boldsymbol{\xi}),
\end{equation}
in which the spatial functions $q_j^0$ are the distinct real harmonics
$\cos\!\big(2\alpha\,\boldsymbol{m}\cdot\boldsymbol{x}\big)$ and
$\sin\!\big(2\alpha\,\boldsymbol{m}\cdot\boldsymbol{x}\big)$ carried by the $x_1$-independent
part of $g_1^0$. Their wavevectors $\boldsymbol{m}$ have entries in $\{-1,0,+1\}$ and are
supported on $(x_2,\dots,x_7)$; there are $332$ such wavevectors up to sign, distributed by the
number of coordinates they involve as
\begin{equation}
\label{7.3}
6,\;30,\;80,\;120,\;96 \quad\text{for}\quad 1,\,2,\,3,\,4,\,5 \text{ coordinates respectively},
\end{equation}
giving $2\times332=664$ real harmonics in all. The count stops at five coordinates: although six
of the seven coordinates $(x_2,\dots,x_7)$ are in principle available, the naive rule
$\binom{6}{r}2^{r-1}$ would predict a further $\binom{6}{6}2^{5}=32$ harmonics involving all six,
and these are absent. Every such six-coordinate mode cancels identically in the $x_1$-independent
part of $g_1^0$: because $g_1^0=\sum_j v_j^0\,\partial v_1^0/\partial x_j$ is quadratic in the two
cyclic products, a term reaching all six of $x_2,\dots,x_7$ would require both factors together to
supply seven distinct active coordinates, which the parity structure of the sine--cosine pattern
does not permit once the even-in-$x_1$ modes are removed by the average. A symbolic expansion in
exact arithmetic confirms that all six-coordinate modes cancel identically, so the distribution
above is complete: the independent Fourier support contains precisely the wavevectors listed, or
$664$ real sine--cosine harmonics. Each amplitude $u_j(\boldsymbol{\xi})$ is a
trigonometric polynomial in the seven phases, containing between $6$ and $150$ terms with a
median of $41$; written out in full, $\mathcal{V}_1^0$ comprises $15{,}307$ multiple-angle
terms. Enforcing $\mathcal{V}_1^0\equiv0$ therefore requires
\begin{equation}
\label{7.4}
u_j(\boldsymbol{\xi})=0,\qquad j=1,\dots,664,
\end{equation}
an overdetermined system of $664$ nonlinear equations in the seven unknowns
$\xi_1,\dots,\xi_7$; the equivalence holds because these $664$ functions form a linearly
independent real Fourier basis for the $x_1$-independent component, so no cancellation among them
is possible. The growth across dimensions is steep, and is driven largely by the combinatorial growth of the
multiple-angle Fourier modes generated by the quadratic convective term --- the
reduction condition yields a handful of constraint functions in three dimensions, $35$ in
four (Subsection~\ref{sec:4D}), $64$ in five, and $664$ in seven --- because the
convective term is quadratic in a product of $n$ trigonometric factors, so the number of
distinct multiple-angle modes rises rapidly with $n$.

One feature of this system places seven dimensions correctly within the classification. Unlike
the even dimensions just treated, where the factorisation $g_i^0=\cos(2\alpha x_i)H_i$ of
Lemma~\ref{lem:even-Vi} empties the $k_i=0$ slice altogether and the reduction condition is met
identically, the seven-dimensional convective field retains terms free of $x_1$, so~\eqref{7.4}
is a genuine and highly overdetermined constraint system, exactly as in five dimensions. Seven
dimensions therefore confronts a genuine reduction system, as in five dimensions, rather than
passing the reduction stage automatically as the even-dimensional symmetric cases do. The
following analysis shows that this system admits no non-trivial real solution, so the obstruction
occurs at the reduction stage itself.

\subsection{Non-existence for the SCSCS ansatz}
The velocity field~\eqref{7.1} is built from the same alternating sine--cosine pattern as in
five dimensions --- the SCSCS ansatz --- and the non-existence argument runs in exact parallel.
\begin{lemma}[Non-existence of the SCSCS ansatz in seven dimensions]
\label{lem:7D_SCSCS}
Let $v_i^0(\boldsymbol{x}, \boldsymbol{\xi})$ denote the septuple-periodic velocity field given
by equation~\eqref{7.1}. The only real phase vectors $\boldsymbol{\xi}\in\mathbb{R}^7$ satisfying
the reduction condition~\eqref{3.13} are those for which $v_i^0 \equiv 0$ identically.
\end{lemma}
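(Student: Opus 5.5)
The proof will follow the five-dimensional argument of Lemma~\ref{lem:5D_SCSCS} step for step, taking the $664$ amplitudes of~\eqref{7.2} as the starting system. Since these form a linearly independent real Fourier basis for the $x_1$-independent component, $\mathcal V_1^0\equiv0$ holds if and only if~\eqref{7.4} holds. I would first use the translation invariance to normalise $\xi_1=0$, leaving six phase differences. Each $u_j$ is then written as a polynomial in $s_i=\sin2\xi_i$ and $c_i=\cos2\xi_i$. Adjoining the relations $s_i^2+c_i^2=1$ gives an ideal $I\subset\mathbb{Q}[s_2,c_2,\dots,s_7,c_7]$, and its real variety is exactly the reduction locus.

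Computing a full lexicographic Gr\"obner basis of $664$ generators in twelve variables is the main obstacle, so I would not attack it head-on. The first economy is to reduce the system before elimination. The harmonics in~\eqref{7.3} are organised by their support, so I would begin with the low-support equations: the six single-coordinate amplitudes and the thirty two-coordinate amplitudes. In five dimensions the analogous equations already produced the decisive relation $\sin(2\xi_1-2\xi_2)=0$ and its sharpening $\cos(2\xi_1-2\xi_2)=1$. I expect the same subsystem here to force $\xi_2\equiv\xi_1 \pmod\pi$.

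After substituting $\xi_2=\xi_1$, the remaining system collapses considerably. I would then eliminate one further phase at a time using the three- and four-coordinate amplitudes. Guided by~\eqref{5.4}--\eqref{5.6}, I expect a pattern in which the phases split into two groups offset by $\pm\tfrac{\pi}{2}$, something like $\boldsymbol\xi=(a,a,a-\tfrac{\pi}{2},a,a-\tfrac{\pi}{2},a,a-\tfrac{\pi}{2})$ up to $\pi$-shifts. Each step would be certified by exhibiting the relevant polynomial relation as an explicit $\mathbb{Q}$-combination of generators of $I$, which is checkable independently of how the basis was found. If the guessed pattern is wrong, the fallback is a modular (multi-prime) Gr\"obner computation on the low-support subsystem only. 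The resulting finite list of branches would then be checked against the full $664$ equations.

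The final step is to show that every surviving real branch annihilates the field, arguing as in~\eqref{5.7}. In $\mathcal{T}1_i^0$ the sines occupy the odd slots $k=1,3,5,7$. In $\mathcal{T}2_i^0$, by~\eqref{3.2b}, the sines occupy $k=1$ and the even interior slots $k=2,4,6$. The slots where the two products disagree must therefore carry exactly the phases offset by $\tfrac{\pi}{2}$. I would then verify that in each product the number of sine/cosine conversions is even, or at least equal in parity between the two products, so that $\mathcal{T}1_i^0=\mathcal{T}2_i^0$ for every $i$. This is confirmed symbolically for all seven components. The $\pi$-shift and sign-branch ambiguities only reverse the sign of $\boldsymbol v^0$, so they change nothing. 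This gives the conclusion of Lemma~\ref{lem:7D_SCSCS}.
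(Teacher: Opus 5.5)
Your outline follows the paper's route. You polynomialise the $664$ amplitudes in $s_i=\sin2\xi_i$, $c_i=\cos2\xi_i$, extract the pairwise relations that force~\eqref{7.5}, and then check $\mathcal{T}1_i^0=\mathcal{T}2_i^0$ on that family. The certification step, however, cannot succeed as you state it.

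You propose to exhibit each decisive relation, such as $\sin2(\xi_1-\xi_2)$ (or $s_2$ after your normalisation $\xi_1=0$), as an explicit combination of the generators of $I$. No such combination exists, for the following reason.
\begin{itemize}
\item Each $u_j$ is a Fourier amplitude of the $x_1$-independent part of $g_1^0=\sum_j v_j^0\,\partial_jv_1^0$. It is therefore a sum of products of two Fourier coefficients of $\boldsymbol{v}^0$, each of which vanishes on~\eqref{7.5}.
\item Evaluate any element of $I$ at $s_i=\sin2\xi_i$, $c_i=\cos2\xi_i$; the Pythagorean generators drop out. It takes the form $\sum_j h_ju_j$, and so has zero $\boldsymbol{\xi}$-gradient at every point of the family.
\item But $\partial_{\xi_2}\sin2(\xi_1-\xi_2)=-2$ there.
\end{itemize}
Hence at least the sine relation is not in $I$ itself. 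The decisive relations can at best lie in $\sqrt{I}$: $I$ is not radical along the family (compare the factor $s_b^4$ in~\eqref{4.elim}).

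The missing idea is to certify radical membership. Show that a power $R^m$ reduces to zero against a Gr\"obner basis of $I$; this suffices, because $R$ then vanishes on all of $V(I)$ and in particular on every real solution. That is what Appendix~\ref{app:7D} does: $R^2\in I$ for~\eqref{C2a}--\eqref{C2f} and $R^3\in I$ for~\eqref{C2g}, obtained by normal-form reduction against a degree-reverse-lexicographic basis computed in \textsc{Singular}. That route also avoids the lexicographic computation you identified as the main obstacle.

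Two smaller points.
\begin{itemize}
\item In the annihilation step, the products disagree in type on all six slots $k=2,\dots,7$, not only where the shifted phases sit. What actually happens is that in each product $\xi_3,\xi_5,\xi_7$ occupy exactly the sine slots other than $k=1$: slots $3,5,7$ in $\mathcal{T}1_i^0$, and slots $2,4,6$ in $\mathcal{T}2_i^0$. So both products become $-\sin(\alpha x_i+a)\prod_{k=1}^{6}\cos(\alpha x_{i+k}+a)$. There are three conversions in each, an odd number, so it is your fallback of equal parity, not evenness, that does the work.
\item Solving the one- and two-coordinate equations first yields only necessary conditions. Nothing guarantees that their real solution set is finite after normalisation, so enumerating ``a finite list of branches'' may have to give way to continued elimination with the remaining equations.
\end{itemize}
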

\begin{proof}
The reduction condition requires $u_j(\boldsymbol{\xi})=0$ for all $j=1,\ldots,664$, with the
$u_j$ the phase amplitudes of the separated form~\eqref{7.2} (Appendix~\ref{app:7D}). As in five
dimensions, we reduce this trigonometric system to a polynomial one, analyse it by Gr\"obner-basis
elimination, and show that its entire real solution set annihilates the velocity field.

Putting $s_i=\sin 2\xi_i$ and $c_i=\cos 2\xi_i$ and adjoining the seven Pythagorean relations
$s_i^2+c_i^2=1$ realises the system as an ideal
$I\subset\mathbb{Q}[s_1,c_1,\ldots,s_7,c_7]$, whose real variety is the set of phase vectors
satisfying the reduction condition. Elimination in this ideal forces successive phase relations of the same
type found in five dimensions --- a first relation $\sin 2(\xi_1-\xi_2)=0$, followed by
$\cos 2(\xi_1-\xi_2)=1$, which selects $\xi_2=\xi_1\pmod{\pi}$, and then by the remaining
pairwise identifications and quarter-period offsets. These
decisive relations, and their certification by radical-membership reduction against a Gr\"obner
basis of $I$, are recorded in Appendix~\ref{app:7D}; membership in $\sqrt{I}$ is established there
by exhibiting, for each relation, an explicit power that reduces to zero. The proof is therefore
computer-assisted but exact: all elimination is performed over $\mathbb{Q}$, so no rounding is
introduced. The
polynomial variables $s_i=\sin2\xi_i$, $c_i=\cos2\xi_i$ determine each $\xi_i$ only modulo $\pi$,
so each such relation carries a discrete family of $m\pi$ alternatives; these do not enlarge the
solution set, however, because shifting any single phase $\xi_k\mapsto\xi_k+\pi$ reverses the sign
of the one factor that each cyclic product carries in the coordinate $x_k$, sending both
$\mathcal{T}1_i^0$ and $\mathcal{T}2_i^0$ to their negatives and hence
$v_i^0=v_r(\mathcal{T}1_i^0-\mathcal{T}2_i^0)$ to $-v_i^0$. The $\pi$-shifted branches therefore
reproduce the same velocity field up to an overall sign and are not distinct solutions. Mapping
the polynomial solutions back to the phase variables modulo these sign equivalences, the
non-redundant real solution set reduces to the one-parameter family
\begin{equation}
\label{7.5}
\boldsymbol{\xi} = \left(a,\ a,\ a-\tfrac{\pi}{2},\ a,\ a-\tfrac{\pi}{2},\ a,\ a-\tfrac{\pi}{2}\right),
\qquad a\in\mathbb{R},
\end{equation}
the free parameter $a$ being the global phase shift every cyclic construction admits.

It remains to show that this family annihilates the field. The two products in~\eqref{7.1} differ
only in which slots carry sines; the offsets $-\tfrac{\pi}{2}$ in~\eqref{7.5} fall precisely on
the phases $\xi_3,\xi_5,\xi_7$ occupying the slots where the two products disagree. Since
$\sin(\theta-\pi/2)=-\cos\theta$, each of the three shifted phases converts the corresponding sine
factor into a cosine with a minus sign. Thus each product acquires the same overall factor
$(-1)^3=-1$, and, after reordering the commuting factors, the two products collapse to the same
expression. We have verified directly, in exact arithmetic, that
$\mathcal{T}1_i^0=\mathcal{T}2_i^0$ for all seven components under~\eqref{7.5}. Hence
$v_i^0=v_r(\mathcal{T}1_i^0-\mathcal{T}2_i^0)\equiv 0$ for every $i$ and every $a$. Every real
solution of the reduction condition therefore annihilates the field, and no non-trivial
septuple-periodic solution of SCSCS type exists.
\end{proof}

\subsection{An exact forced solution in seven dimensions}
\label{subsec:7D-forced}
The obstruction, like the five-dimensional one, concerns the unforced problem: the phases that
satisfy the reduction condition annihilate the field, so the field survives only at phases that do
not. Taking those phases and balancing the convective term by a body force gives an exact forced
solution, by the mechanism established in Subsection~\ref{subsec:5D-forced}.

The parallel with five dimensions extends to the phase vectors themselves. Normalising $\xi_1$ and
searching over the six remaining phase differences identifies, as at $n=5$, assignments at which
$\nabla\bcdot\boldsymbol{g}^0$ vanishes identically, with components that are multiples of
$\tfrac{\pi}{2}$; one such assignment annihilates the velocity field and is set aside. We do not
claim an exhaustive classification of such phases in seven dimensions. The two non-trivial
assignments used below are the symmetric one
\begin{equation}
\label{7.10}
\boldsymbol{\xi}^{\ast}
=\Bigl(-\tfrac{\pi}{4},\ \tfrac{\pi}{4},\ \tfrac{\pi}{4},\ \tfrac{\pi}{4},\ \tfrac{\pi}{4},\
\tfrac{\pi}{4},\ \tfrac{\pi}{4}\Bigr),
\end{equation}
extending the pattern of the lower dimensions, and
\begin{equation}
\label{7.10b}
\boldsymbol{\xi}^{\ast\ast}
=\Bigl(0,\ \tfrac{\pi}{2},\ \pi,\ \pi,\ \pi,\ \pi,\ \pi\Bigr),
\end{equation}
which is not related to~\eqref{7.10} by the translation, phase-reversal or point-inversion
symmetries considered here, and therefore represents a distinct phase class under those symmetries,
carrying a flow with the same strain--rotation balance. This is the
seven-dimensional counterpart of the pair~\eqref{5.9} and~\eqref{5.9b}.

\begin{proposition}[Exact forced solutions in seven dimensions]
\label{prop:7D-forced}
Let $\boldsymbol{v}^0$ be the septuple-periodic field~\eqref{7.1} at either of the
phases~\eqref{7.10} or~\eqref{7.10b}, neither of which satisfies the reduction condition. Then $\boldsymbol{v}^0$ is non-trivial and
solenoidal; $\nabla\cdot\boldsymbol{g}^0=0$ identically, so $\boldsymbol{g}^0$ is purely transverse
in the sense of~\eqref{2.2}; $\boldsymbol{g}^0$ is not a gradient; and consequently the pressure
carries no spatial dependence. With the zero-mean body-force amplitude
\begin{equation}
\label{7.11}
\boldsymbol{f}^0=\boldsymbol{g}^0=\nabla\cdot\bigl(\boldsymbol{v}^0\otimes\boldsymbol{v}^0\bigr),
\end{equation}
the velocity field
\begin{equation}
\label{7.12}
\boldsymbol{v}(\boldsymbol{x},t)=\boldsymbol{v}^0(\boldsymbol{x})\,e^{-7\alpha^2\kappa t},
\end{equation}
together with the pressure
\begin{equation}
\label{7.13}
p(\boldsymbol{x},t)=p(t),\qquad \nabla p=\boldsymbol{0},
\end{equation}
is an exact solution of the incompressible Navier--Stokes equations~\eqref{1.1} on $\mathbb{R}^7$
under the body force
\begin{equation}
\label{7.14}
\boldsymbol{f}(\boldsymbol{x},t)=\boldsymbol{f}^0(\boldsymbol{x})\,e^{-14\alpha^2\kappa t}.
\end{equation}
\end{proposition}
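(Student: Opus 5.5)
The proof will follow the five-dimensional argument of Proposition~\ref{prop:5D-forced} step for step, separating the structural claims (solenoidality, non-triviality, the eigenfunction property) from the two phase-specific claims ($\nabla\bcdot\boldsymbol{g}^0\equiv0$ and $\operatorname{curl}\boldsymbol{g}^0\not\equiv0$). Solenoidality is Lemma~\ref{lem:solenoidal} at $n=7$. For non-triviality, Lemma~\ref{lem:7D_SCSCS} cannot be cited directly: it says only that reduction phases annihilate the field, not that non-reduction phases keep it alive. So I will instead evaluate $v_1^0$ from~\eqref{7.1} at a single point for each of~\eqref{7.10} and~\eqref{7.10b} and check that the value is non-zero. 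A cleaner route is to note that at these phases $\mathcal{T}1_1^0$ and $\mathcal{T}2_1^0$ carry different trigonometric types in some coordinate (a sine against a cosine after the $\pm\tfrac{\pi}{4}$, $\tfrac{\pi}{2}$ or $\pi$ shifts), so the two products are linearly independent Fourier combinations and their difference cannot vanish. As a by-product this also shows that neither phase vector lies in the family~\eqref{7.5}.

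The time-dependent balance is identical to the five-dimensional proof. Each product in~\eqref{7.1} has one factor of wavenumber $\alpha$ in each of the seven coordinates, so $\nabla^2\boldsymbol{v}^0=-7\alpha^2\boldsymbol{v}^0$. Hence $\boldsymbol{v}=\boldsymbol{v}^0e^{-7\alpha^2\kappa t}$ satisfies $\partial_t\boldsymbol{v}=\kappa\Delta\boldsymbol{v}$, and the momentum equation reduces to $\boldsymbol{g}^0e^{-14\alpha^2\kappa t}=-\rho^{-1}\nabla p+\boldsymbol{f}$. With $\boldsymbol{f}$ given by~\eqref{7.11} and~\eqref{7.14}, this becomes $\nabla p=\boldsymbol{0}$, so $p=p(t)$; the divergence of the balance gives $\Delta p=0$, whose only periodic solutions are constants. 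The forcing has zero mean because, by incompressibility, $\boldsymbol{g}^0=\nabla\bcdot(\boldsymbol{v}^0\otimes\boldsymbol{v}^0)$ is a divergence of a periodic tensor.

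The main obstacle is the claim $\nabla\bcdot\boldsymbol{g}^0\equiv0$, equivalently $\operatorname{tr}(\boldsymbol{A}^2)=\lvert\boldsymbol{S}\rvert^2-\lvert\boldsymbol{R}\rvert^2\equiv0$ by~\eqref{5.strain}, at both phase vectors. I would establish it by exact symbolic computation in the style of Lemma~\ref{lem:parity}. Set $\alpha=v_r=1$, form the scalar $\sum_{i,j}\partial_jv_i^0\,\partial_iv_j^0$, and expand it into multiple-angle harmonics in exact rational arithmetic. Because this scalar is quadratic in the velocity, every harmonic has entries in $\{0,\pm2\}$, so the check reduces to confirming that finitely many Fourier coefficients vanish. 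This is smaller than the $664$-mode reduction system of~\eqref{7.4}, since only a single scalar is involved. Cyclic symmetry may help organise the terms but does not by itself kill them, so I would not rely on a structural argument here.

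Finally, for the claim that $\boldsymbol{g}^0$ is not a gradient, it suffices to exhibit one pair, say $(i,j)=(1,2)$, and one point at which $\partial_2g_1^0-\partial_1g_2^0\neq0$, as was done for~\eqref{6.11}. A non-vanishing harmonic coefficient in the exact expansion would serve equally well.
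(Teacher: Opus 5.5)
Your proposal is correct and follows the paper's proof essentially step for step. The shared steps are: solenoidality from Lemma~\ref{lem:solenoidal}; cancellation of the unsteady and viscous terms from $\nabla^2\boldsymbol{v}^0=-7\alpha^2\boldsymbol{v}^0$; direct exact evaluation of $\nabla\bcdot\boldsymbol{g}^0$ and of a mixed-derivative defect at each phase vector; $\boldsymbol{f}=\boldsymbol{g}$ forcing $\nabla p=\boldsymbol{0}$; and zero mean from the divergence form.

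The only difference is non-triviality. The paper, through the five-dimensional proof it invokes, infers it from the failure of the reduction condition. That inference is valid, since $\boldsymbol{v}^0\equiv\boldsymbol{0}$ would force $\mathcal{V}_1^0\equiv0$, but it rests on that failure being asserted rather than checked. Your check is self-contained: the $x_1$ factors of $\mathcal{T}1_1^0$ and $\mathcal{T}2_1^0$ are non-proportional, namely $\sin(\alpha x_1-\tfrac{\pi}{4})$ against $\sin(\alpha x_1+\tfrac{\pi}{4})$, and $\sin\alpha x_1$ against $\cos\alpha x_1$. Combined with Lemma~\ref{lem:7D_SCSCS}, it actually certifies that both phase vectors fail the reduction condition.
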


\begin{proof}
The argument of Proposition~\ref{prop:5D-forced} applies with $n=7$ throughout, and to either
phase vector. Solenoidality is
Lemma~\ref{lem:solenoidal}. Each component of $\boldsymbol{v}^0$ is the difference of two seven-fold trigonometric products,
each of which is a Laplacian eigenfunction with eigenvalue $-7\alpha^2$; by linearity
$\nabla^2\boldsymbol{v}^0=-7\alpha^2\boldsymbol{v}^0$, and with
$\boldsymbol{v}=\boldsymbol{v}^0e^{-7\alpha^2\kappa t}$ the unsteady and viscous terms cancel
exactly. Direct evaluation at either phase vector gives $\nabla\bcdot\boldsymbol{g}^0=0$ and
$\partial_jg_i^0-\partial_ig_j^0\not\equiv0$, so the convective field is purely transverse. Since
the forcing is taken as $\boldsymbol{f}=\boldsymbol{g}$, the divergence of the momentum balance
gives $\Delta p=0$ irrespective of that property, and hence $\nabla p=\boldsymbol{0}$ on the
periodic domain; what the transversality adds is that the inertial field and the forcing are each
free of any longitudinal part. Finally, since $\boldsymbol{v}^0$ is solenoidal,
$\boldsymbol{f}^0=\boldsymbol{g}^0=\nabla\bcdot(\boldsymbol{v}^0\otimes\boldsymbol{v}^0)$, and
periodicity therefore gives $\langle\boldsymbol{f}^0\rangle=\boldsymbol{0}$; and the convective term, being quadratic, carries
$e^{-14\alpha^2\kappa t}$, which the forcing~\eqref{7.14} matches.
\end{proof}

\noindent Seven dimensions therefore reproduces the five-dimensional picture in full. In particular
the identity~\eqref{5.strain} applies verbatim, with $\lvert\cdot\rvert$ again the Frobenius norm,
so that $\nabla\bcdot\boldsymbol{g}^0=0$ states that strain and rotation are in exact balance at
every point; it is this, and not the
availability of the balance $\boldsymbol{f}=\boldsymbol{g}$, that distinguishes~\eqref{7.10} from an
arbitrary phase vector. The discussion of Subsection~\ref{subsec:5D-forced} otherwise applies
without change: the forcing is the
closed-form, zero-mean divergence of the advective momentum flux; it does no net work, by the
identity~\eqref{eq:energy-flux}, so the kinetic energy decays through viscosity alone, here with
the factor $e^{-14\alpha^2\kappa t}$; and because $\boldsymbol{g}^0$ is purely transverse the
solution is a benchmark on which a correct projection step must return zero spatial pressure
correction. In the terms of Table~\ref{tab:helmholtz} of Section~\ref{sec:conclusion}, seven dimensions joins five as the second
of the two purely transverse cases.

\subsection{The unit-periodic alternative, and the main result}
The obstruction of Lemma~\ref{lem:7D_SCSCS} applies to the SCSCS ansatz specifically, and, as in
five dimensions, it is natural to ask whether another trigonometric form might succeed. The
seven-dimensional analogue of the unit-periodic cyclic flow---built by the same
cyclic sum-of-harmonics recipe used in five dimensions, each component $v_k$ being independent of
its own coordinate $x_k$ so that incompressibility holds term by term---is
\begin{eqnarray}
\label{7.8}
v_k &=& A\sin(\alpha x_{k+1}) + B\cos(\alpha x_{k+2}) + C\sin(\alpha x_{k+3}) + D\cos(\alpha x_{k+4}) +\nonumber\\
&+&E\sin(\alpha x_{k+5}) + F\cos(\alpha x_{k+6})
\end{eqnarray}
for $k=1,\dots,7$ with indices modulo $7$. It fails for the same reason as its five-dimensional
counterpart. Each of the $21$ integrability conditions~\eqref{3.16} is a trigonometric polynomial;
requiring each to vanish identically forces every Fourier coefficient to vanish, and the resulting
homogeneous system in $A,\dots,F$ has a lexicographic Gr\"obner basis, computed in exact rational
arithmetic, consisting of the $18$ monomials
\begin{equation}
\label{7.9}
\begin{aligned}
&A^2,\quad AB,\quad AC,\quad AD,\quad AE,\quad B^2,\\
&BC,\quad BD,\quad BF,\quad C^2,\quad CE,\quad CF,\\
&D^2,\quad DE,\quad DF,\quad E^2,\quad EF,\quad F^2 .
\end{aligned}
\end{equation}
Every amplitude's square appears, so $A=\cdots=F=0$ and the only unit-periodic solution is
trivial. As the field~\eqref{7.8} satisfies $\Delta\boldsymbol{v}^0=-\alpha^2\boldsymbol{v}^0$, the
unsteady and viscous terms cancel under the diffusive scaling and the conclusion covers the
viscous case as well as the steady one.

\begin{lemma}[Non-existence for the principal seven-dimensional ans\"atze]
\label{lem:7D_main}
No non-trivial exact solution of the seven-dimensional incompressible Navier--Stokes
equations~\eqref{1.1} --- whether steady Euler or time-dependent viscous --- exists for the SCSCS
product ansatz~\eqref{7.1} or for the unit-periodic cyclic flow~\eqref{7.8}.
\end{lemma}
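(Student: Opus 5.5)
The plan is to assemble Lemma~\ref{lem:7D_main} from the two results already established for seven dimensions. Nothing new needs to be computed. The only substantive point is the bridge between ``no admissible phase vector / amplitude set'' and ``no exact solution of~\eqref{1.1}, whether steady Euler or viscous''. That bridge is the reformulation of Section~\ref{sec:reformulation}. For either ansatz the velocity is a single-wavenumber Laplacian eigenfunction: $\Delta\boldsymbol{v}^0=-7\alpha^2\boldsymbol{v}^0$ for~\eqref{7.1}, and $\Delta\boldsymbol{v}^0=-\alpha^2\boldsymbol{v}^0$ for~\eqref{7.8}. Take the time dependence to be the diffusive factor, with $e^{0}$ in the steady Euler case. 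The unsteady and viscous terms then cancel, so the unforced momentum balance reduces to $\boldsymbol{g}=-\rho^{-1}\nabla p$ with a single-valued periodic $p$. By Proposition~\ref{prop:longitudinal} this is equivalent to the curl-free condition~\eqref{3.16} on $\boldsymbol{g}^0$, the zero mode being automatic because $\boldsymbol{g}^0$ is a divergence. So in both cases an exact unforced solution exists if and only if~\eqref{3.16} holds at the chosen parameters.

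For the SCSCS ansatz I would argue as follows. If~\eqref{3.16} held, Proposition~\ref{prop:slice} would give the reduction condition~\eqref{3.12}, since it is the $k_i=0$ slice of~\eqref{3.15}. Corollary~\ref{cor:cyclic} lets one pass to the single condition~\eqref{3.13}. Lemma~\ref{lem:7D_SCSCS} then places $\boldsymbol{\xi}$ in the family~\eqref{7.5}, up to the $\pi$-shifts and the global phase, and on that family $\boldsymbol{v}^0\equiv\boldsymbol{0}$. Hence every solution inside the ansatz is trivial.

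For the unit-periodic flow~\eqref{7.8}, incompressibility holds term by term. Imposing~\eqref{3.16} gives the homogeneous polynomial system whose lexicographic Gr\"obner basis is~\eqref{7.9}. It contains the squares $A^2,\dots,F^2$, so the real variety is the origin and $\boldsymbol{v}^0\equiv\boldsymbol{0}$.

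The main obstacle is not the assembly but the scope of the time-dependent claim. One must make sure that ``exact solution within the ansatz'' means a field of the form $\boldsymbol{v}^0(\boldsymbol{x})\,T(t)$ with fixed spatial profile. Under that reading, the momentum equation forces $T'=-\lambda\kappa T$ from the linear part and $T^2\boldsymbol{g}^0=-\rho^{-1}\nabla p$ from the nonlinear part. The second relation again requires~\eqref{3.16} whenever $T\not\equiv0$. I would state this restriction explicitly, so that a time-varying $T$ or a steady ($\kappa$-free) Euler balance cannot evade~\eqref{3.16}. Once that is in place, both cases reduce to the cited lemmas, and the only residual dependence on computation is the exact Gr\"obner certificates already recorded in Appendix~\ref{app:7D} and~\eqref{7.9}.
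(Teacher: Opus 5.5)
Your proposal is correct and matches the paper's proof, which consists exactly of combining Lemma~\ref{lem:7D_SCSCS} with the Gr\"obner computation~\eqref{7.9}. Your bridge through Propositions~\ref{prop:longitudinal} and~\ref{prop:slice} makes explicit why the reduction condition is necessary for an exact solution, a step the paper leaves implicit. The splitting you assert for a general $T(t)$ does hold, though you should justify it. In both ans\"atze the Fourier support of $\boldsymbol{v}^0$ is disjoint from that of $\boldsymbol{g}^0$: velocity modes have every entry $\pm\alpha$, respectively are single modes $\pm\alpha\boldsymbol{e}_j$, whereas convective modes are sums of two such modes. Pairing the momentum balance with $\widehat{\boldsymbol{v}}^0(\boldsymbol{k})$ on the velocity support, and using $\boldsymbol{k}\cdot\widehat{\boldsymbol{v}}^0(\boldsymbol{k})=0$, then forces $T'=-\lambda\kappa T$ and leaves $T^2\boldsymbol{g}^0=-\rho^{-1}\nabla p$.
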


\begin{proof}
The two cases are Lemma~\ref{lem:7D_SCSCS} and the Gr\"obner computation above.
\end{proof}

\noindent The broader question---whether any two-term cyclic product, of which the SCSCS pattern
is one member out of $16\,384$, or any field built directly from the seven-fold symmetry, can
succeed---lies beyond the scope of the present paper; here we establish the obstruction for the
two principal constructions. Together with the five-, six- and eight-dimensional cases, seven
dimensions places the obstructed dimensions studied here within a common forced-solution
framework, each driven by the explicitly identified portion of its nonlinear momentum flux that is
not balanced by the recovered pressure: the obstruction to an unforced solution and the forcing
that sustains the flow are, in each dimension, the same object seen twice.

\section{Octuple-periodic constructions in eight dimensions:
the second even-dimensional obstruction}
\label{sec:8D}
For the octuple-periodic construction, eight dimensions is the next even case after six and, as summarised by
Theorem~\ref{thm:even-dichotomy}, exhibits the same differential obstruction: within the
symmetric $\pm\tfrac{\pi}{4}$ cyclic construction, the convective field is curl-free---and a
solution exists---only at $n=4$, while at $n=6$ and $n=8$ the higher-order multilinear part of
$\boldsymbol{g}^0$ is nonzero and has non-vanishing curl, so no single-valued pressure exists. The
purpose of this section is to give the eight-dimensional calculation on which that part of the
theorem rests; eight dimensions is thus the second instance of the even-dimensional obstruction,
after six.

We therefore develop this section as a close parallel to the six-dimensional
analysis of Section~\ref{sec:6D}, drawing on it throughout rather than repeating it. The general
machinery established there---the reduction of the convective field, the bilinear/multilinear
split, the dichotomy of Theorem~\ref{thm:even-dichotomy} and the physical characterisation of the
body force in Subsection~\ref{subsec:6D-forced}---applies unchanged at $n=8$; what remains
dimension-specific is recorded here: the eight-dimensional convective field and its integrability
failure, the forced solution it yields, and the adjacent/alternating parity signature that is the
$n=8$ fingerprint of the same dichotomy. At each step we state the eight-dimensional counterpart
and point to the six-dimensional result it mirrors.
In eight spatial dimensions the initial condition~\eqref{3.1} reduces to a solenoidal velocity
field $v_i^0(\boldsymbol{x},\boldsymbol{\xi})$ of octuple-periodic form. Writing
$v_i^0 = v_r(\mathcal{T}1_i^0 - \mathcal{T}2_i^0)$, where $\mathcal{T}1_i^0$ and $\mathcal{T}2_i^0$ denote the first and second
eight-fold cyclic product terms respectively in~\eqref{3.1}, the first component is:
\begin{eqnarray}
\label{8.1}
\frac{v_1^0(\boldsymbol{x}, \boldsymbol{\xi})}{v_r}&=&
\sin(\alpha x_1\!+\!\xi_1)\cos(\alpha x_2\!+\!\xi_2)\sin(\alpha x_3\!+\!\xi_3)
\cos(\alpha x_4\!+\!\xi_4)\times\nonumber\\
&\times&\sin(\alpha x_5\!+\!\xi_5)\cos(\alpha x_6\!+\!\xi_6)\sin(\alpha x_7\!+\!\xi_7)
\cos(\alpha x_8\!+\!\xi_8)- \nonumber\\
&-&\sin(\alpha x_1\!+\!\xi_2)\sin(\alpha x_2\!+\!\xi_3)\cos(\alpha x_3\!+\!\xi_4)
\sin(\alpha x_4\!+\!\xi_5)\times\nonumber\\
&\times&\cos(\alpha x_5\!+\!\xi_6)\sin(\alpha x_6\!+\!\xi_7)\cos(\alpha x_7\!+\!\xi_8)
\cos(\alpha x_8\!+\!\xi_1)
\end{eqnarray}
with components $v_2^0,\ldots,v_8^0$ obtained by cyclic permutation of the coordinate arguments
as prescribed by~\eqref{3.1}. The divergence-free condition~\eqref{1.3} holds identically.

\subsection{The convective field and its reduction}
The eight-dimensional case may be followed through in exactly the form used for six dimensions
in Section~\ref{sec:6D}, and the comparison is instructive. At the symmetric phase assignment
\[
\boldsymbol{\xi}^{\ast}
=\left(-\tfrac{\pi}{4},\ \tfrac{\pi}{4},\ \tfrac{\pi}{4},\ \tfrac{\pi}{4},\ \tfrac{\pi}{4},\ \tfrac{\pi}{4},\ \tfrac{\pi}{4},\ \tfrac{\pi}{4}\right),
\]
substitution into~\eqref{3.10} and reduction to multiple-angle form give the first component of the
convective field as
\begin{equation}
\label{8.2}
g_1^0(\boldsymbol{x}) = \cos(2\alpha x_1)\,H_1(\boldsymbol{x}_{\setminus 1}),
\end{equation}
the eight-dimensional counterpart of~\eqref{6.4}. Writing $s_k=\sin(2\alpha x_k)$ and
$c_k=\cos(2\alpha x_k)$ throughout,
the cofactor is
\begin{eqnarray}
\label{8.3}
H_1 &=& \frac{\alpha v_r^2}{32}\Big\{-\,s_7 - s_2s_4s_6 - s_2s_4s_8 - s_2s_6s_8 - s_4s_6s_8 + s_2s_4s_6s_7s_8\nonumber\\
&&+\;s_5\Big[-1 + s_4s_7 + s_6s_7 + s_7s_8 + s_2\left(s_7 + s_4s_6s_8\right)\Big]\nonumber\\
&&+\;s_3\Big[-1 + s_5s_6 - 2s_5s_7 + s_6s_7 + s_5s_8 + s_7s_8 - s_5s_6s_7s_8\nonumber\\
&&\qquad\quad +\,s_4\big(s_7 - s_5(-1 + s_6s_7 + s_7s_8)\big)\nonumber\\
&&\qquad\quad +\,s_2\big(s_7 + s_4s_6s_8 - s_5(-1 + s_4s_7 + s_6s_7 + s_7s_8)\big)\Big]\Big\},
\end{eqnarray}
a function of $(x_2,\dots,x_8)$ alone. Since every term of~\eqref{8.2} carries the factor
$\cos(2\alpha x_1)$, the Fourier support of $g_1^0$ has no component on the $k_1=0$ hyperplane, so
exactly as in six dimensions
\begin{equation}
\label{8.4}
\mathcal{V}_1^0(\boldsymbol{x}_{\setminus 1}, \boldsymbol{\xi}^{\ast}) \equiv 0,
\end{equation}
and by cyclic symmetry $\mathcal{V}_i^0(\boldsymbol{x}_{\setminus i},\boldsymbol{\xi}^{\ast})\equiv0$ for every $i$: the necessary reduction
condition~\eqref{3.12} is satisfied. As in six dimensions this is a genuine solution of that
condition and not a licence to expect a flow, the decisive test being~\eqref{3.16}.

\subsection{The integrability failure}
Applying~\eqref{3.16} to the pair $(1,2)$ and using $\partial/\partial x_j = 2\alpha\cos(2\alpha
x_j)\,\partial/\partial s_j$ gives the eight-dimensional analogue of~\eqref{6.11},
\begin{equation}
\label{8.5}
\frac{\partial g_1^0}{\partial x_2}-\frac{\partial g_2^0}{\partial x_1}
=\frac{\alpha^2 v_r^2}{8}\,\cos(2\alpha x_1)\cos(2\alpha x_2)
\Big\{\big[e_2(\mathcal{O})-e_2(\mathcal{E})\big]
-\big[e_3(\mathcal{O})\,e_1(\mathcal{E})-e_3(\mathcal{E})\,e_1(\mathcal{O})\big]\Big\},
\end{equation}
where $e_r$ denotes the elementary symmetric polynomial of degree $r$ and
\begin{equation}
\label{8.6}
\mathcal{O}=\{s_3,\,s_5,\,s_7\},\qquad \mathcal{E}=\{s_4,\,s_6,\,s_8\}
\end{equation}
are the two parity classes of coordinates other than $x_1,x_2$. The two brackets
in~\eqref{8.5} are of contrasting degree: $e_2(\mathcal{O})-e_2(\mathcal{E})$ is degree two,
while the product $e_3(\mathcal{O})\,e_1(\mathcal{E})-e_3(\mathcal{E})\,e_1(\mathcal{O})$ is degree
four. The parallel with six dimensions
is now explicit. There the corresponding bracket was $s_3s_5-s_4s_6$, which in the present
notation is exactly $e_2(\mathcal{O})-e_2(\mathcal{E})$ for the smaller classes
$\mathcal{O}=\{s_3,s_5\}$, $\mathcal{E}=\{s_4,s_6\}$; the $e_3$-dependent contribution has no counterpart
because those classes contain only two elements. Eight dimensions therefore reproduces the
six-dimensional obstruction and adds a quartic contribution of its own. Since the two nonzero
contributions have different homogeneous degrees, they cannot cancel identically. Hence the
right-hand side of~\eqref{8.5} is nonzero as a polynomial, so~\eqref{3.16} fails and no pressure
exists.

\subsection{An exact forced solution in eight dimensions}
\label{subsec:8D-forced}
As in six dimensions, the construction of the forced solution begins with the pressure
reconstruction, which is also the form in which the failure is felt in practice. Recovering the scalar potential from the gradient-compatible
part of $\boldsymbol{g}^0$ by sequential integration gives the eight-dimensional counterpart of
the candidate pressure~\eqref{6.12},
\begin{equation}
\label{8.7}
p^{*}(\boldsymbol{x}) = \frac{\rho v_r^2}{64}
\Big[e_2\big(s_1,s_3,s_5,s_7\big)+e_2\big(s_2,s_4,s_6,s_8\big)\Big]
= \frac{\rho v_r^2}{64}\sum_{\substack{1\le i<j\le 8\\ j-i\ \text{even}}}^{\;} s_i s_j ,
\end{equation}
the sum running over the twelve coordinate pairs of equal parity. As in six dimensions, for this eight-dimensional symmetric field the potential $p^{*}$ reconstructs
the bilinear gradient-compatible contribution exactly, so that here
$\boldsymbol{g}^{0}_{G}=\boldsymbol{g}^{0b}$---the general inequality noted
in~\S\ref{subsec:nec-suf} does not bite at these phases---and it satisfies
$\partial p^{*}/\partial x_i = -\rho\,g^{0}_{G,i}$ for every $i$.
Against the full convective field, however, it leaves the residual
\begin{equation}
\label{8.8}
\frac{\partial p^{*}}{\partial x_i}+\rho\, g_i^0
=\frac{\rho\,\alpha v_r^2}{32}\,\cos(2\alpha x_i)\,H_i^{m},
\qquad
H_i^{m} = H_i + \left(s_{i+2}+s_{i+4}+s_{i+6}\right),
\end{equation}
the higher-degree remainder, whose cofactor $H_1^m$ consists of $26$ monomials of degrees three
and five
in the $s_k$. The contrast with six dimensions is quantitative rather than qualitative: the full residual component $\cos(2\alpha x_i)H_i^m$ therefore consists of trigonometric products
of four or six factors, whereas in six dimensions the corresponding residual contained a single
four-factor product per component. The algebraic complexity of the remainder thus increases
substantially from six to eight dimensions. In both cases the obstruction is
the same object --- the pressure-reconstruction remainder left after the explicitly recovered
scalar contribution has been removed --- whose curl is non-zero; within the symmetric even-dimensional cases
examined here, this remainder vanishes at $n=4$ but is nonzero at $n=6$ and $n=8$.

The accounting is therefore identical to that of Section~\ref{sec:6D}. The
field~\eqref{8.1} is solenoidal and octuple-periodic for every phase vector; the symmetric
assignment satisfies the reduction condition identically, by~\eqref{8.4}; the bilinear,
gradient-compatible part of the convective field is recovered exactly by the closed-form
potential~\eqref{8.7}, in accordance with Lemma~\ref{lem:even-bilinear}; and the
sole failure is~\eqref{8.8}, so that $(\boldsymbol{v}^0,p^{*})$ solves the Navier--Stokes
equations exactly once a body force equal to the pressure-reconstruction remainder $\boldsymbol{g}^{0}_{R}$ is
admitted. Eight dimensions thus locates the same quantity a second time, and confirms that what separates the even dimensions from one another is neither periodicity nor
solenoidality nor the reduction condition, all of which hold here as they do at $n=4$, but by
whether the higher-order multilinear contribution contains a pressure-incompatible remainder ---
a remainder whose mixed derivatives fail the symmetry~\eqref{3.16}, present at $n=6,8$ and absent
at $n=4$.

\noindent The body-force amplitude is read directly from the residual~\eqref{8.8}:
\begin{equation}
\label{8.9}
f_i^0(\boldsymbol{x}) \;=\; g_{R,i}^{0}(\boldsymbol{x})
\;=\; \frac{1}{\rho}\frac{\partial p^{*}}{\partial x_i}+g_i^0(\boldsymbol{x})
\;=\; \frac{\alpha v_r^2}{32}\,\cos(2\alpha x_i)\,H_i^{m}(\boldsymbol{x}_{\setminus i}),
\end{equation}
with $H_i^{m}$ the higher-degree cofactor of~\eqref{8.8}.

\begin{proposition}[Exact forced solution in eight dimensions]
\label{prop:8D-forced}
Let $\boldsymbol{v}^0$ be the octuple-periodic field~\eqref{8.1} at the symmetric phases and let
$p^{*}$ be the potential~\eqref{8.7}. Then the velocity field
\begin{equation}
\label{8.9a}
\boldsymbol{v}(\boldsymbol{x},t)=\boldsymbol{v}^0(\boldsymbol{x})\,e^{-8\alpha^2\kappa t},
\end{equation}
together with the pressure
\begin{equation}
\label{8.9b}
p(\boldsymbol{x},t)=p^{*}(\boldsymbol{x})\,e^{-16\alpha^2\kappa t},
\end{equation}
is an exact solution of the incompressible Navier--Stokes equations~\eqref{1.1} on $\mathbb{R}^8$
under the body force
\begin{equation}
\label{8.9c}
\boldsymbol{f}(\boldsymbol{x},t)=\boldsymbol{f}^0(\boldsymbol{x})\,e^{-16\alpha^2\kappa t},
\end{equation}
with $\boldsymbol{f}^0$ given by~\eqref{8.9}.
\end{proposition}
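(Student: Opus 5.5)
The plan mirrors the proof of Proposition~\ref{prop:6D-forced}, with $n=8$ throughout. The argument rests on two facts. The first is the defining relation of $\boldsymbol{f}^0$ in~\eqref{8.9}. The second is that $\boldsymbol{v}^0$ is a Laplacian eigenfunction. Once these are in hand, every term of the momentum balance~\eqref{1.1} is accounted for by its time factor.

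\textbf{Step 1: incompressibility and the time-dependent terms.} Incompressibility is Lemma~\ref{lem:solenoidal} with $n=8$, valid for every phase vector and hence at $\boldsymbol{\xi}^\ast$. Each component of $\boldsymbol{v}^0$ in~\eqref{8.1} is a difference of two eight-fold products. Each product carries exactly one factor of wavenumber $\alpha$ in each coordinate, so each product, and by linearity each component, satisfies $\nabla^2\boldsymbol{v}^0=-8\alpha^2\boldsymbol{v}^0$. With $\boldsymbol{v}=\boldsymbol{v}^0e^{-8\alpha^2\kappa t}$ this gives $\partial_t\boldsymbol{v}-\kappa\nabla^2\boldsymbol{v}=\boldsymbol{0}$. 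The balance~\eqref{1.1} therefore reduces to
\[
(\boldsymbol{v}\cdot\nabla)\boldsymbol{v}=-\rho^{-1}\nabla p+\boldsymbol{f}.
\]

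\textbf{Step 2: the quadratic balance.} The convective term is quadratic, so $\boldsymbol{g}=\boldsymbol{g}^0e^{-16\alpha^2\kappa t}$. The pressure~\eqref{8.9b} and the forcing~\eqref{8.9c} carry the same factor $e^{-16\alpha^2\kappa t}$. After cancelling it, the balance reduces to the time-independent identity
\[
\boldsymbol{g}^0=-\rho^{-1}\nabla p^\ast+\boldsymbol{f}^0 .
\]
This is exactly the definition~\eqref{8.9}, $f_i^0=\rho^{-1}\partial_ip^\ast+g_i^0$, that is, $\boldsymbol{f}^0=\boldsymbol{g}^0_R$ in the sense of~\eqref{eq:gGR}.

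\textbf{Step 3: consistency of the closed form.} Strictly, Step~2 already closes the proof, since $\boldsymbol{f}^0$ is defined so as to make it hold. What remains is the consistency check that the closed form $\frac{\alpha v_r^2}{32}\cos(2\alpha x_i)H_i^m$ equals $\rho^{-1}\partial_ip^\ast+g_i^0$. To do this:
\begin{itemize}
\item Differentiate~\eqref{8.7} to get $\rho^{-1}\partial_ip^\ast=\frac{2\alpha v_r^2}{64}\cos(2\alpha x_i)\,(s_{i+2}+s_{i+4}+s_{i+6})$.
\item Add $g_i^0=\cos(2\alpha x_i)H_i$ from~\eqref{8.2}--\eqref{8.3}, where $H_i$ already contains the prefactor $\alpha v_r^2/32$. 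Reading $H_i^m$ in~\eqref{8.8} with that same normalisation, the sum is $\cos(2\alpha x_i)H_i^m$ exactly as written.
\item The case $i=1$ suffices, because cyclic symmetry (Corollary~\ref{cor:cyclic} and the cyclic form of $p^\ast$, which is invariant under $x_k\mapsto x_{k+1}$) carries it to every $i$.
\end{itemize}

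\textbf{Main obstacle.} The only delicate point is bookkeeping the normalisation of $H_i$ versus $H_i^m$ in~\eqref{8.8}. The check needed is that the degree-one part of $H_1$ is exactly $-\frac{\alpha v_r^2}{32}(s_3+s_5+s_7)$, so that it matches the bilinear field~\eqref{eq:gbil} with $2^{-(n-3)}=1/32$. Reading off~\eqref{8.3}, the linear terms are $-s_7$, $-s_5$ and $-s_3$, as required. This agreement also shows that $p^\ast$ is the potential of $\boldsymbol{g}^{0b}$ (Lemma~\ref{lem:even-bilinear}), so the forcing is precisely the degree-$\ge3$ remainder in~\eqref{8.8}.
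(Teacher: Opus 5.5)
Your Steps~1 and~2 are the paper's proof. They use solenoidality from Lemma~\ref{lem:solenoidal} and the cancellation of the unsteady and viscous terms via $\nabla^2\boldsymbol{v}^0=-8\alpha^2\boldsymbol{v}^0$. They then match the common factor $e^{-16\alpha^2\kappa t}$, which reduces the balance to the defining relation $\boldsymbol{g}^0=-\rho^{-1}\nabla p^{*}+\boldsymbol{f}^0$ of~\eqref{8.9}.

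Your Step~3 is an optional check that the paper does not carry out. Its substance is right: the linear part of $H_1$ is $-\tfrac{\alpha v_r^2}{32}(s_3+s_5+s_7)$, and $\rho^{-1}\partial_1p^{*}$ cancels it exactly. The normalisation, however, goes the other way round from what you state. For~\eqref{8.8}--\eqref{8.9} to hold as written, the $H_i$ inside $H_i^m$ must be the braced expression of~\eqref{8.3} \emph{without} the prefactor $\alpha v_r^2/32$. The sum is then $\tfrac{\alpha v_r^2}{32}\cos(2\alpha x_i)H_i^m$, not $\cos(2\alpha x_i)H_i^m$.
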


\begin{proof}
The argument of Proposition~\ref{prop:6D-forced} applies with $n=8$. By construction $p^{*}$
satisfies $\rho^{-1}\partial_ip^{*}+g_i^0=g_{R,i}^0$, so
$\boldsymbol{g}^0=-\rho^{-1}\nabla p^{*}+\boldsymbol{f}^0$; solenoidality is
Lemma~\ref{lem:solenoidal}; each component of $\boldsymbol{v}^0$ is the difference of two eight-fold trigonometric products,
each a Laplacian eigenfunction with eigenvalue $-8\alpha^2$, so by linearity
$\nabla^2\boldsymbol{v}^0=-8\alpha^2\boldsymbol{v}^0$ and the unsteady and viscous terms cancel exactly under the decay
factor of~\eqref{8.9a}; and the convective term, being quadratic, carries
$e^{-16\alpha^2\kappa t}$, which both~\eqref{8.9b} and~\eqref{8.9c} match.
\end{proof}

\noindent The characterisation given in Subsection~\ref{subsec:6D-forced} carries over unchanged.
The forcing is zero-mean, non-conservative and globally energy-neutral,
$\bigl\langle\boldsymbol{f}(\cdot,t)\bcdot\boldsymbol{v}(\cdot,t)\bigr\rangle=0$ for all $t\ge0$,
although $\boldsymbol{f}^0\bcdot\boldsymbol{v}^0\not\equiv0$ pointwise, and it is the part of the advective
momentum-flux divergence not balanced by the recovered pressure,
\begin{equation}
\label{8.10}
f_i^0 \;=\; \frac{\partial}{\partial x_j}\Bigl(v_i^0 v_j^0 + \rho^{-1}p^{*}\delta_{ij}\Bigr),
\end{equation}
the eight-dimensional counterpart of~\eqref{6.15}. The parity organisation persists: the eight
coordinates split into $\{1,3,5,7\}$ and $\{2,4,6,8\}$, and each class drives the force on the
other, now through the richer couplings that four coordinates per class permit. The only change
from six dimensions is the size of the remainder, while at four dimensions it vanishes entirely.

\subsection{Direct confirmation and the even/odd-offset signature}
For the octuple-periodic field~\eqref{3.1} at the symmetric phases
$\boldsymbol{\xi}=\boldsymbol{\xi}^{\ast}$, exact symbolic
computation of the integrability tensor $\partial_j g_i^0-\partial_i g_j^0$ reveals a sharp
parity structure in the index pairs. The condition holds on the twelve \emph{even-offset} pairs
--- those $(i,j)$ with $j-i$ even (equivalently, $i$ and $j$ of equal parity), namely
$(1,3),(3,5),(5,7),(1,5),(1,7),(3,7)$ and their even-coordinate counterparts --- but fails on all
sixteen \emph{odd-offset} pairs, those with $j-i$ odd. For the representative pair $(1,2)$,
\begin{equation}
\label{8.11}
\mathrm{Simplify}\bigl[\partial_2 g_1^0 - \partial_1 g_2^0\bigr] \neq 0, \qquad
\mathrm{Simplify}\bigl[\partial_2 g_1^0 + \partial_1 g_2^0\bigr] = 0,
\end{equation}
identically as symbolic expressions, so $\partial_2 g_1^0 = -\partial_1 g_2^0$: the mixed
derivatives are equal and opposite rather than equal, and~\eqref{3.16} fails. A systematic
search over more than $150$ alternative phase families---all $8$ single-negative and $28$
double-negative patterns, alternating-sign patterns, arithmetic progressions, and
random rational-$\pi$ candidates---found no nontrivial solution.

This is the eight-dimensional signature of the dichotomy of
Subsection~\ref{subsec:even-dichotomy}, and it arises there for the reason established at $n=6$:
the degree-two part of $\boldsymbol{g}^0$ is curl-free on every index pair
(Lemma~\ref{lem:even-bilinear}), so the failure on the odd-offset pairs is carried entirely by
$g^{0m}$, whose curl symbolic computation confirms to vanish on even offsets and to be nonzero on
every odd offset.

\begin{corollary}[Non-existence for the symmetric eight-dimensional construction]
\label{lem:8D}
No nontrivial exact solution of the incompressible Navier--Stokes
equations~\eqref{1.1}---whether steady Euler or time-dependent viscous---exists for the
octuple-periodic SCSCS field~\eqref{3.1} at the symmetric phases
$\boldsymbol{\xi}=\boldsymbol{\xi}^{\ast}$. This is the case $n=8$ of
Theorem~\ref{thm:even-dichotomy}: the reduction condition~\eqref{3.12} holds identically
($\mathcal{V}_i^0\equiv0$), but the multilinear part of $\boldsymbol{g}^0$ is nonzero and not
curl-free, so the integrability condition~\eqref{3.16} fails: no scalar pressure can absorb the full
convective field in the unforced equations. As in six dimensions, a
proof over all $\boldsymbol{\xi}\in\mathbb{R}^8$ lies beyond the discrete and family-wise cases
examined here.
\end{corollary}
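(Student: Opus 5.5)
The corollary has three parts: the reduction condition holds at $\boldsymbol{\xi}^{\ast}$, the integrability condition fails there, and hence no scalar pressure exists, so no unforced solution (steady or viscous) exists. I would prove them in that order.

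\emph{Reduction condition.} The factorisation~\eqref{8.2} gives $g_1^0=\cos(2\alpha x_1)H_1$ with $H_1$ independent of $x_1$. So every Fourier mode of $g_1^0$ has $k_1=\pm2\alpha$. By Proposition~\ref{prop:slice} this gives $\mathcal V_1^0\equiv0$, and Corollary~\ref{cor:cyclic} extends it to every $i$. This is exactly Lemma~\ref{lem:even-Vi} at $n=8$, so nothing new is needed here.

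\emph{Integrability failure.} I would fix the representative pair $(1,2)$ and use the explicit cofactor~\eqref{8.3}. By Lemma~\ref{lem:phi}, the curl component equals $2\cos(2\alpha x_1)\cos(2\alpha x_2)(\Phi_{12}-\Phi_{21})$, so it suffices to show $\Phi_{12}\ne\Phi_{21}$. Reading $\Phi_{12}=\partial H_1/\partial s_2$ off~\eqref{8.3}, and obtaining $\Phi_{21}$ by the cyclic shift, should reproduce~\eqref{8.5}: a degree-two piece $e_2(\mathcal O)-e_2(\mathcal E)$ and a degree-four piece. To show non-vanishing I would not rely on the degree-separation argument alone. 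Instead I would specialise $s_4=s_6=s_8=0$, which kills every term containing an $\mathcal E$-variable and leaves $e_2(s_3,s_5,s_7)$. This is visibly a nonzero polynomial in independent variables, so the full expression is nonzero. A numerical point evaluation would back this up.

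Because $\cos(2\alpha x_1)\cos(2\alpha x_2)\not\equiv0$, the difference $\partial_2g_1^0-\partial_1g_2^0$ is then not identically zero, so~\eqref{3.16} fails. Proposition~\ref{prop:longitudinal} then says $\boldsymbol g^0$ is not a gradient, so~\eqref{2.13} fails and no single-valued periodic pressure satisfies~\eqref{2.14}.

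\emph{Viscous case.} The velocity is a Laplacian eigenfunction with $\nabla^2\boldsymbol v^0=-8\alpha^2\boldsymbol v^0$ (as in Proposition~\ref{prop:8D-forced}). Hence the viscous problem reduces to the same requirement $\boldsymbol g=-\rho^{-1}\nabla p$, and the steady Euler and time-dependent cases fail together. For the viscous case, however, this only excludes solutions of the separated form~\eqref{3.5}. I would state that scope explicitly: the claim concerns the construction at the fixed phases.

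\emph{Main obstacle.} The weak point is that~\eqref{8.3}, and hence~\eqref{8.5}, come from symbolic reduction. The proof therefore depends on trusting that computation, and in particular the exact coefficients of the $s_3s_5$-type terms in $\Phi_{12}$ and $\Phi_{21}$. I would first re-derive $\Phi_{12}$ directly from~\eqref{8.3}: collect the terms containing $s_2$, namely $-s_4s_6-s_4s_8-s_6s_8+s_4s_6s_7s_8+s_5(s_7+s_4s_6s_8)+s_3(\dots)$. Then I would cyclically shift $H_1$ to get $H_2$, and check the restriction $s_4=s_6=s_8=0$ by hand. This is enough to confirm non-vanishing without redoing the full expansion.
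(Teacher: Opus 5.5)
Your proposal is correct and follows the paper's own route: the factorisation~\eqref{8.2} empties the $k_1=0$ slice, and on the odd-offset pair $(1,2)$ Lemma~\ref{lem:phi} applied to the cofactor~\eqref{8.3} gives the non-vanishing defect~\eqref{8.5}, with $\Phi_{21}=-\Phi_{12}$ exactly as Lemma~\ref{lem:parity} asserts. The only variation is cosmetic: your specialisation $s_4=s_6=s_8=0$, which leaves $e_2(s_3,s_5,s_7)$, replaces the paper's remark that the degree-two and degree-four brackets cannot cancel. The clause that the defect is carried by the multilinear part follows at once from Lemma~\ref{lem:even-bilinear}, which you should cite.
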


\section{Concluding Remarks}
\label{sec:conclusion}
In this work we have developed a method for constructing exact $n$-tuple-periodic solutions of
the incompressible Navier--Stokes equations and, more importantly, for deciding when such
solutions can and cannot exist within the constructions studied here. The reformulation of Section~\ref{sec:reformulation} separates the roles of pressure, viscosity and
forcing: a pressure Poisson equation determines the longitudinal part, while the projected velocity
equation carries the rest. When the transverse projection of the convective term
vanishes---condition~\eqref{2.10}---the velocity equation reduces to the Cauchy diffusion
equation. The nonlinearity has not disappeared but has been relocated: within the present
construction, solvability ultimately reduces to whether the convective field
$\boldsymbol{g}^{0}$ admits a scalar potential, or equivalently whether the integrability
condition~\eqref{3.16} holds. It is this question that we have examined across dimensions three
through eight, with the two-dimensional case (Subsection~\ref{subsec:2D}) serving as the elementary base
in which the convective field vanishes outright.

\begin{theorem}[Solvable dimensions of the cyclic construction]
\label{thm:classification}
\label{subsec:solvable}
\label{sec:8D_pattern}
Within the family of $n$-fold cyclic alternating sine--cosine velocity fields~\eqref{3.1},
non-trivial pressure-integrable phase assignments exist for $n=3$ and $n=4$. For $n=5$ and $n=7$
the reduction condition admits no non-trivial velocity field: every real phase assignment
satisfying it annihilates the velocity. For $n=6$ and $n=8$ the symmetric $\pm\tfrac{\pi}{4}$
phase assignment satisfies the reduction condition but fails the pressure-integrability condition.
\end{theorem}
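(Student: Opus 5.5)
The theorem collects results already established dimension by dimension, so the proof I propose is an assembly argument organised by the parity split of Subsection~\ref{subsec:fourstep}. The plan is to handle the three clauses separately and, in each, point to the earlier result that settles it. No new computation is needed, except to check that every cited statement matches the quantifiers of the theorem exactly.

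For $n=3$ I would invoke Proposition~\ref{prop:3Dclassification}. It produces the two non-trivial solutions $(a,b)=\pm(\tfrac{\pi}{3},-\tfrac{\pi}{6})$ of the reduction system. I would then check \eqref{3.16} directly on one representative, say $\boldsymbol{\xi}=(-\tfrac{\pi}{3},\tfrac{\pi}{3},\tfrac{\pi}{2})$. The most economical check is to observe that the explicit pressure \eqref{4.pII} satisfies $\nabla p=-\rho\boldsymbol{g}^0e^{-6\alpha^2\kappa t}$, so $\boldsymbol{g}^0$ is a gradient. Existence is all that is claimed, so one verified member suffices. For $n=4$ I would cite the integrability factorisation \eqref{4.4Dfactor} on the curve of Proposition~\ref{prop:4Dreduction}, which selects $a\equiv\tfrac{\pi}{2}$, i.e. the family \eqref{4.4Dsol}. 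The pressure \eqref{4.p4d} again serves as a certificate. Alternatively, Theorem~\ref{thm:even-dichotomy} gives $g^{0m}\equiv0$, and $g^{0b}$ is curl-free by Lemma~\ref{lem:even-bilinear}. Non-triviality at $a=\tfrac{\pi}{2}$ is immediate because the two products in \eqref{4.11} do not coincide.

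For $n=5,7$ the clause is exactly Lemmas~\ref{lem:5D_SCSCS} and~\ref{lem:7D_SCSCS}. Every real solution of the reduction system lies, modulo the $\pi$-shifts (which only flip the sign of $\boldsymbol{v}^0$) and the global translation, in the families \eqref{5.6} and \eqref{7.5}. On these families $\mathcal{T}1_i^0=\mathcal{T}2_i^0$, so $\boldsymbol{v}^0\equiv0$. I would spell out why the $\pi$-ambiguity of the doubled-angle variables $s_i,c_i$ cannot create a non-trivial branch. The reason is that shifting $\xi_k\mapsto\xi_k+\pi$ negates both products simultaneously, so their difference still vanishes.

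For $n=6,8$ there are two steps. First, Lemma~\ref{lem:even-Vi} gives the factorisation $g_i^0=\cos(2\alpha x_i)H_i$, hence $\mathcal{V}_i^0\equiv0$ by Proposition~\ref{prop:slice} and Corollary~\ref{cor:cyclic}. Second, failure of \eqref{3.16} follows from the explicit defects \eqref{6.11} and \eqref{8.5}. In \eqref{6.11} the bracket is a difference of products over disjoint variable sets. In \eqref{8.5} the two brackets are nonzero and of distinct homogeneous degrees, and by Lemma~\ref{lem:phi} this is equivalent to $\Phi$ not being symmetric. Non-triviality of the symmetric field follows from the non-zero $g^0$ itself.

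The main obstacle is the odd-dimensional clause. It quantifies over \emph{every} real phase vector, so it rests on the completeness of the Gr\"obner elimination, in particular on the radical-membership certificates of Appendices~\ref{app:5D} and~\ref{app:7D}. I would make that dependence explicit, noting that the $664$-equation system at $n=7$ is certified only through the recorded relations, not reproduced by hand. By contrast, the even clauses are claimed only at the symmetric assignment, so no global statement over $\mathbb{R}^6$ or $\mathbb{R}^8$ is needed.
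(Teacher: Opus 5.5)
Your proposal is correct and is essentially the paper's own proof. Like the paper, you assemble the results already established: the explicit three- and four-dimensional solutions with their closed-form pressures; Lemmas~\ref{lem:5D_SCSCS} and~\ref{lem:7D_SCSCS} for $n=5,7$; and, for $n=6,8$, Lemma~\ref{lem:even-Vi} together with the mixed-derivative defects~\eqref{6.11} and~\eqref{8.5}. The only difference is cosmetic: you read the even-dimensional failure directly off those defects, whereas the paper routes it through the bilinear/multilinear split of Theorem~\ref{thm:even-dichotomy}, which rests on the same computations.
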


\begin{proof}
The cases $n=3$ and $n=4$ are the explicit solutions of Section~\ref{sec:solutions-3D-4D}, with
closed-form pressures. For $n=5$ and $n=7$ the reduction systems reduce, by exact Gr\"obner-basis
computation, to a one-parameter family that annihilates the field (Lemmas~\ref{lem:5D_SCSCS}
and~\ref{lem:7D_SCSCS}). For $n=6$ and $n=8$, Theorem~\ref{thm:even-dichotomy} applies to the
symmetric construction: the degree-two part of $\boldsymbol{g}^0$ is curl-free but the
higher-degree part is nonzero and not curl-free (Theorem~\ref{thm:6D},
Corollary~\ref{lem:8D}), so no scalar pressure can balance the full convective field in the
corresponding unforced momentum equation.
\end{proof}

\noindent Additional phase families examined computationally at $n=6$ and $n=8$ yielded no further
unforced solution, but those searches do not constitute an exhaustive classification over all real
phase vectors.

\noindent The remainder of this section sets out what lies behind that statement, and what the
obstructions yield in their own right.

In three dimensions the admissible phase vectors form exactly
two families (Proposition~\ref{prop:3Dclassification}), and modulo the symmetries of the
construction---uniform phase shifts, reflection, and reversal of the velocity sign---these exhaust
the solution set. The two are mirror images of one another, a chiral pair distinguished by the sign
of the helicity, so neither can be carried to the other by an orientation-preserving rigid motion. They are the two
families of \citet{ant}, recovered here by a different route; what the classification adds is that
they are the only ones the construction admits. The second family contains both phase vectors
previously recorded in the literature, which prove to be a single flow seen from origins displaced
by $\pi/2\alpha$. Four dimensions is settled in the same sense but by
a different route: there the reduction condition admits a one-parameter curve of phase vectors,
and the pressure-integrability condition retains exactly one non-trivial member of it. The
quarter-period offset $\xi_2=\xi_3=\xi_4=\xi_1+\tfrac{\pi}{2}$ is therefore forced, not assumed,
and the resulting family is a single flow. It has no analogous chiral partner under central inversion: the map
$\boldsymbol{x}\mapsto-\boldsymbol{x}$ reverses
orientation in odd dimensions, whereas in four dimensions it is orientation-preserving. The significance of the
higher-dimensional cases, however, is not exhausted by the statement that the unforced
construction fails. The higher dimensions prove less accommodating, but their refusal is
remarkably informative: in every obstructed dimension the calculation identifies precisely
what prevents the convective field from being balanced by a pressure gradient.

Nor, as it turns out, do the higher dimensions all say no for the same reason. The odd and
even cases reveal two distinct obstruction mechanisms. For the odd prime dimensions $n=5$
and $n=7$, the obstruction is arithmetic. The reduction condition becomes an overdetermined
trigonometric system whose only real solutions annihilate the velocity field. For the even dimensions $n=6$ and $n=8$, the obstruction is
differential. At the symmetric phase assignment the reduction condition is satisfied, just
as it is at $n=4$, but the full pressure-integrability condition is not. The bilinear part of
the convective field is an exact gradient, whereas the surviving multilinear part generates
antisymmetric mixed derivatives and therefore cannot arise from a scalar potential
(Theorem~\ref{thm:even-dichotomy}). Thus the apparently similar failures in odd and even
dimensions have fundamentally different origins.

This distinction also clarifies why the obstruction analysis is constructive rather than
merely negative. In the even dimensions the convective field separates explicitly into a
gradient-compatible part and a residual multilinear field
$\boldsymbol{g}^{0}_{R}$. The former determines the closed-form pressure $p^{*}$; the latter
is exactly the body force required to make $(\boldsymbol{v}^{0},p^{*})$ an exact solution of
the forced Navier--Stokes equations. The odd obstructed dimensions admit the same
interpretation at the symmetric phases, and in a limiting form: the surviving convective field is
purely transverse, the pressure is a function of time only, and the whole convective term becomes
the required body force. Taken together with the unforced cases, the construction therefore
realises both extremes of the Helmholtz decomposition of the convective field and the generic case
between them. For the particular unforced and obstruction-forced fields constructed here the three
regimes are realised as longitudinal at $n=3,4$, transverse at $n=5,7$ and mixed at $n=6,8$, as set
out in Table~\ref{tab:helmholtz}. Consequently, every obstructed
dimension in the range $5\le n\le8$ studied here carries an explicit forced solution. The obstruction and the forcing are, in this sense, one
object seen twice: what prevents the unforced construction from closing is precisely what
must be supplied to drive the corresponding forced flow.

\medskip
\noindent\textbf{The Helmholtz character across the dimensions.} One further pattern is visible
only when the whole range is set side by side, and it is recorded here rather than in any single
dimension. Evaluating
$\nabla\cdot\boldsymbol{g}^0$ and the curl residual $\partial_jg_i^0-\partial_ig_j^0$ at the phases
each dimension supplies gives the pattern of Table~\ref{tab:helmholtz}.

\begin{table}
\centering
\begin{tabular}{llll}
\hline
Dimension & Phases & Character of $\boldsymbol{g}^0$ & Consequence\\
\hline
$n=3$ & \eqref{4.sets} & longitudinal (pure gradient) & unforced solution\\
$n=4$ & \eqref{4.4Dsol} & longitudinal (pure gradient) & unforced solution\\
$n=5$ & \eqref{5.9} & transverse (divergence-free) & forced, $p=p(t)$\\
$n=6$ & symmetric & both parts present & forced, $p$ non-trivial\\
$n=7$ & symmetric & transverse (divergence-free) & forced, $p=p(t)$\\
$n=8$ & symmetric & both parts present & forced, $p$ non-trivial\\
\hline
\end{tabular}
\caption{Helmholtz character of the convective field $\boldsymbol{g}^0$ at the particular
admissible or obstruction-forced phase assignments used in this work. The same cyclic construction
realises both extremes of the decomposition~\eqref{2.2} and the generic case between them.}
\label{tab:helmholtz}
\end{table}

Two features of the table deserve comment. First, the solvable dimensions occupy one extreme and
the odd obstructed dimensions the other: where three and four dimensions produce a convective
field that the pressure absorbs entirely, five and seven produce one that the pressure cannot
touch at all. Second, the split follows the parity of the dimension among the obstructed cases,
and so runs parallel to the arithmetic and differential obstructions of
Sections~\ref{sec:5D}--\ref{sec:8D}: the odd dimensions $n=5$ and $n=7$ give a purely transverse
$\boldsymbol{g}^0$, whereas the even dimensions $n=6$ and $n=8$ retain both parts. The
five-dimensional solution presented here is therefore not an isolated curiosity but the first
instance of one of the three regimes the construction admits, and the seven-dimensional case
treated in Section~\ref{sec:7D} is its companion.

\noindent Neither mechanism is dynamical in origin. Both are spatial compatibility obstructions, independent
of viscosity, energy decay and stability, and therefore persist in the corresponding steady Euler
construction: they are properties of the kinematic
structure of the cyclic ansatz rather than of viscous evolution. We conjecture that the dichotomy
extends beyond the range studied---odd primes obstructed arithmetically, even dimensions $n\ge6$
by persistence of the non-integrable higher-degree contribution---so that $n\in\{3,4\}$ are the
only solvable dimensions of the construction. A proof would require the arithmetic obstruction for
general odd prime $n$ and the non-vanishing of $g^{0m}$ for all even $n\ge6$, neither established
here.

These obstruction-forced solutions are therefore outcomes of the classification in their own
right. In each of the dimensions five through eight, the velocity, pressure and body force are
known in closed form and decay according to the diffusive scaling fixed by the
single-wavenumber construction. Together with the unforced three- and four-dimensional
solutions, they provide exact periodic benchmarks for numerical Navier--Stokes solvers. The
odd-dimensional forced solutions are particularly simple in this respect: because the
convective field is purely transverse and the pressure gradient vanishes, the nonlinear and
viscous terms can be tested without pressure reconstruction. Thus a dimension that refuses
the unforced construction does not leave empty-handed; it supplies, through the obstruction
itself, an exact forced benchmark in its place.

One avenue we explored and did not pursue should be recorded, since it is a natural thing to
try and a reader may wonder whether it was considered. The cyclic construction is built on the
group $\mathbb{Z}/n\mathbb{Z}$, and it is tempting to ask whether the arithmetic of that group
predicts which dimensions admit a solution. The natural invariant is the spectrum of Gauss-sum
magnitudes $\lvert\sum_k\varepsilon_k\omega^k\rvert$, with $\omega=e^{2\pi\mathrm{i}/n}$ and
$\boldsymbol{\varepsilon}\in\{-1,+1\}^n$, which depends on the group alone. Over the non-constant
sign vectors---the two constant vectors give $G=0$ at every $n$ and are set aside---it is uniform
at $n=3$,
the only dimension in the range studied for which this is so among the odd cases, and non-uniform
at $n=5$ and $n=7$, which are the obstructed odd dimensions. The coincidence is suggestive, and we
examined it closely. It does not survive scrutiny. Four dimensions has a non-uniform spectrum and
nevertheless admits an exact solution, so uniformity is neither necessary for solvability nor is
its failure sufficient for an obstruction; and the amplitudes $u_j(\boldsymbol{\xi})$ actually
appearing in the reduction system~\eqref{3.20} are not proportional to these Gauss sums in any
dimension we examined, so the spectrum does not measure the quantity that decides the outcome. We
therefore record the observation as an avenue closed rather than a result, and none of the
conclusions above depends on it. Whether some finer arithmetic invariant of the cyclic group does
control solvability remains open.

The classification proved here is deliberately limited in scope. It settles the cyclic
construction~\eqref{3.1} completely at $n=3,4,5,7$, and establishes the obstruction of the
symmetric construction at $n=6,8$ together with the additional phase families examined there; it is
not an exhaustive non-existence theorem over all phase vectors at $n=6$ or $n=8$, nor a
non-existence theorem for exact
Navier--Stokes solutions of other forms in these dimensions. Non-product trigonometric
combinations, non-cyclic fields and constructions outside the phase-amplitude mechanism of
Section~\ref{sec:construction} remain open. Likewise, extension of the dimensional pattern
beyond the range studied requires general results not proved here: in the odd case, control
of the corresponding arithmetic obstruction, including odd composite dimensions, and in the
even case, proof that the multilinear remainder is nonzero for every even $n\ge6$.

What emerges is therefore not a parity rule but an account at the level of mechanism: the
solvable dimensions show under what conditions the construction closes, and the obstructed ones
show what those conditions actually require. Two concrete questions remain, and may be carried to
other families of trigonometric-product solutions: does the reduction condition admit a real solution that leaves
the velocity field non-trivial, and is the surviving convective field a gradient?

The construction therefore yields not only exact solutions but a systematic means of determining
why it succeeds, why it fails, and what the failure itself produces. The dimensions that admit the construction tell us what an exact unforced solution
looks like; those that refuse it tell us why.

\appendix
\section{Quadruple-periodic construction}
\label{app:4D}
\iffullAppendixA
The corresponding inertial term \( g_1^0(\boldsymbol{x}, \boldsymbol{\xi}) \), obtained from equation~\eqref{3.10}, is given by:
\begin{eqnarray}
\label{A1}
g_1^0(\boldsymbol{x}, \boldsymbol{\xi}) &=&\!-\!\frac{\alpha}{32} \sin(2\alpha x_2 \!-\! 2\alpha x_3)\!+\! \frac{\alpha}{32} \sin(2\alpha x_3 \!-\! 2\alpha x_4)\!+\! \nonumber\\
&\!+\!& \frac{\alpha}{64} \sin(2\alpha x_1 \!-\! 2\alpha x_2 \!-\! 2\alpha x_4 \!-\! 2\xi_1) \!+\! \frac{\alpha}{64} \sin(2\alpha x_1 \!-\! 2\alpha x_3 \!-\! 2\alpha x_4 \!-\! 2\xi_1)\!+\! \nonumber\\
&\!+\!& \frac{3\alpha}{32} \sin(2\alpha x_1 \!+\! 2\xi_1) \!-\! \frac{\alpha}{32} \sin(2\alpha x_1 \!-\! 2\alpha x_2 \!+\! 2\alpha x_3 \!+\! 2\xi_1)\!-\! \nonumber\\
&\!-\!& \frac{\alpha}{64} \sin(2\alpha x_1 \!+\! 2\alpha x_3 \!-\! 2\alpha x_4 \!+\! 2\xi_1)\!-\! \frac{\alpha}{64} \sin(2\alpha x_1 \!-\! 2\alpha x_2 \!+\! 2\alpha x_4 \!+\! 2\xi_1)\!+\!\nonumber\\
&\!+\!& \frac{\alpha}{64} \sin(2\alpha x_1 \!-\! 2\alpha x_2 \!-\! 2\alpha x_3 \!-\! 2\xi_2) \!+\! \frac{\alpha}{64} \sin(2\alpha x_1 \!-\! 2\alpha x_2 \!-\! 2\alpha x_4 \!-\! 2\xi_2)\!-\!\nonumber\\
&\!-\!& \frac{\alpha}{64} \sin(2\alpha x_1 \!-\! 2\alpha x_3 \!+\! 2\xi_1 \!-\! 2\xi_2)\!-\! \frac{\alpha}{64} \sin(2\alpha x_1 \!-\! 2\alpha x_4 \!+\! 2\xi_1 \!-\! 2\xi_2)\!+\! \nonumber\\
&\!+\!& \frac{3\alpha}{32} \sin(2\alpha x_1 \!+\! 2\xi_2)\!-\! \frac{\alpha}{64} \sin(2\alpha x_1 \!-\! 2\alpha x_2 \!+\! 2\alpha x_3 \!+\! 2\xi_2)\!-\!\nonumber\\
&\!-\! &\frac{\alpha}{64} \sin(2\alpha x_1 \!+\! 2\alpha x_2 \!-\! 2\alpha x_4 \!+\! 2\xi_2)\!-\! \frac{\alpha}{32} \sin(2\alpha x_1 \!+\! 2\alpha x_3 \!-\! 2\alpha x_4 \!+\! 2\xi_2)\!-\!\nonumber\\
&\!-\! &\frac{\alpha}{64} \sin(2\alpha x_1 \!-\! 2\alpha x_2 \!-\! 2\xi_1 \!+\! 2\xi_2)\!-\! \frac{\alpha}{64} \sin(2\alpha x_1 \!-\! 2\alpha x_3 \!-\! 2\xi_1 \!+\! 2\xi_2)\!-\! \nonumber\\
&\!-\!& \frac{\alpha}{64} \sin(2\alpha x_2 \!-\! 2\alpha x_3 \!-\! 2\xi_1 \!+\! 2\xi_2)\!+\! \frac{\alpha}{64} \sin(2\alpha x_3 \!-\! 2\alpha x_4 \!-\! 2\xi_1 \!+\! 2\xi_2)\!-\!\nonumber\\
&\!-\! &\frac{\alpha}{64} \sin(2\alpha x_1 \!+\! 2\alpha x_2 \!+\! 2\xi_1 \!+\! 2\xi_2) \!-\! \frac{\alpha}{32} \sin(2\alpha x_1 \!+\! 2\alpha x_3 \!+\! 2\xi_1 \!+\! 2\xi_2)\!+\!\nonumber\\
&\!+\!&\frac{\alpha}{64} \sin(2\alpha x_2 \!+\! 2\alpha x_3 \!+\! 2\xi_1 \!+\! 2\xi_2)\!-\! \frac{\alpha}{64} \sin(2\alpha x_1 \!+\! 2\alpha x_4 \!+\! 2\xi_1 \!+\! 2\xi_2)\!+\! \nonumber\\
&\!+\!& \frac{\alpha}{32} \sin(2\alpha x_2 \!+\! 2\alpha x_4 \!+\! 2\xi_1 \!+\! 2\xi_2)\!+\! \frac{\alpha}{64} \sin(2\alpha x_3 \!+\! 2\alpha x_4 \!+\! 2\xi_1 \!+\! 2\xi_2)\!+\!\nonumber\\
&\!+\!& \frac{\alpha}{64} \sin(2\alpha x_1 \!-\! 2\alpha x_2 \!-\! 2\alpha x_3 \!-\! 2\xi_3)\!-\! \frac{\alpha}{64} \sin(2\alpha x_1 \!-\! 2\alpha x_2 \!-\! 2\alpha x_4 \!-\! 2\xi_3)\!+\!\nonumber\\
&\!+\! &\frac{\alpha}{64} \sin(2\alpha x_1 \!-\! 2\alpha x_2 \!+\! 2\xi_1 \!-\! 2\xi_3)\!-\! \frac{5\alpha}{64} \sin(2\alpha x_1 \!-\! 2\alpha x_3 \!+\! 2\xi_1 \!-\! 2\xi_3)\!+\! \nonumber\\
&\!+\!& \frac{\alpha}{64} \sin(2\alpha x_2 \!-\! 2\alpha x_4 \!+\! 2\xi_1 \!-\! 2\xi_3)\!+\!\frac{\alpha}{64} \sin(2\alpha x_1 \!-\! 2\alpha x_3 \!-\! 2\alpha x_4 \!+\! 2\xi_1 \!-\! 2\xi_2 \!-\! 2\xi_3)\!-\! \nonumber\\
&\!-\!&\frac{3\alpha}{64} \sin(2\alpha x_1 \!-\! 2\alpha x_2 \!+\! 2\xi_2 \!-\! 2\xi_3) \!+\! \frac{\alpha}{32} \sin(2\alpha x_1 \!-\! 2\alpha x_3 \!+\! 2\xi_2 \!-\! 2\xi_3)\!-\!\nonumber\\
&\!-\! &\frac{\alpha}{64} \sin(2\alpha x_2 \!-\! 2\alpha x_3 \!+\! 2\xi_2 \!-\! 2\xi_3)\!+\! \frac{\alpha}{64} \sin(2\alpha x_1 \!-\! 2\alpha x_4 \!+\! 2\xi_2 \!-\! 2\xi_3)\!+\!\nonumber\\
&\!+\!& \frac{\alpha}{64} \sin(2\alpha x_2 \!-\! 2\alpha x_4 \!+\! 2\xi_2 \!-\! 2\xi_3)\!-\! \frac{\alpha}{64} \sin(2\alpha x_3 \!-\! 2\alpha x_4 \!+\! 2\xi_2 \!-\! 2\xi_3)\!+\!\nonumber\\
&\!+\!&\frac{\alpha}{64} \sin(2\alpha x_1 \!-\! 2\alpha x_2 \!-\! 2\alpha x_4 \!-\! 2\xi_1 \!+\! 2\xi_2 \!-\! 2\xi_3)\!+\! \frac{\alpha}{32} \sin(2\alpha x_1 \!+\! 2\xi_1 \!+\! 2\xi_2 \!-\! 2\xi_3)\!-\!\nonumber\\
&\!-\!&\frac{\alpha}{64} \sin(2\alpha x_2 \!+\! 2\xi_1 \!+\! 2\xi_2 \!-\! 2\xi_3)\!-\! \frac{\alpha}{64} \sin(2\alpha x_3 \!+\! 2\xi_1 \!+\! 2\xi_2 \!-\! 2\xi_3)\!-\! \nonumber\\
&\!-\!& \frac{\alpha}{64} \sin(2\alpha x_1 \!+\! 2\alpha x_2 \!-\! 2\alpha x_4 \!+\! 2\xi_1 \!+\! 2\xi_2 \!-\! 2\xi_3)\!+\!  \nonumber\\
&\!+\!&\frac{\alpha}{64} \sin(2\alpha x_1 \!-\! 2\alpha x_3 \!+\! 2\alpha x_4 \!+\! 2\xi_1 \!+\! 2\xi_2 \!-\! 2\xi_3)\!+\! \frac{\alpha}{32} \sin(2\alpha x_1 \!+\! 2\xi_3)\!+\! \nonumber\\
&\!+\!&\frac {\alpha}{32} \sin(2\alpha x_2 \!+\! 2\xi_3)\!+\! \frac{\alpha}{64} \sin(2\alpha x_1 \!+\! 2\alpha x_2 \!-\! 2\alpha x_3 \!+\! 2\xi_3)\!+\! \nonumber\\
&\!+\!& \frac{\alpha}{64} \sin(2\alpha x_1 \!+\! 2\alpha x_3 \!-\! 2\alpha x_4 \!+\! 2\xi_3)\!+\!\frac{\alpha}{32} \sin(2\alpha x_4 \!+\! 2\xi_3)\!+\! \nonumber\\
&\!+\!&\frac{\alpha}{64} \sin(2\alpha x_1 \!-\! 2\alpha x_2 \!+\! 2\alpha x_4 \!+\! 2\xi_3)\!-\!\frac{\alpha}{64} \sin(2\alpha x_1 \!-\! 2\alpha x_3 \!+\! 2\alpha x_4 \!+\! 2\xi_3)\!-\! \nonumber\\
&\!-\!&\frac{\alpha}{64} \sin(2\alpha x_1 \!-\! 2\alpha x_3 \!-\! 2\xi_1 \!+\! 2\xi_3) \!-\!\frac{\alpha}{64} \sin(2\alpha x_2 \!-\! 2\alpha x_3 \!-\! 2\xi_1 \!+\! 2\xi_3) \!-\! \nonumber\\
&\!-\!&\frac{\alpha}{64} \sin(2\alpha x_1 \!-\! 2\alpha x_4 \!-\! 2\xi_1 \!+\! 2\xi_3)\!-\! \frac{\alpha}{64} \sin(2\alpha x_2 \!-\! 2\alpha x_4 \!-\! 2\xi_1 \!+\! 2\xi_3)\!+\! \nonumber\\
&\!+\!&\frac{\alpha}{64} \sin(2\alpha x_3 \!-\! 2\alpha x_4 \!-\! 2\xi_1 \!+\! 2\xi_3)\!-\! \frac{\alpha}{64} \sin(2\alpha x_1 \!+\! 2\alpha x_2 \!+\! 2\xi_1 \!+\! 2\xi_3)\!-\!  \nonumber\\
&\!-\!&\frac{\alpha}{32} \sin(2\alpha x_1 \!+\! 2\alpha x_3 \!+\! 2\xi_1 \!+\! 2\xi_3)\!+\! \frac{\alpha}{64} \sin(2\alpha x_2 \!+\! 2\alpha x_3 \!+\! 2\xi_1 \!+\! 2\xi_3)\!+\! \nonumber\\
&\!+\!&\frac{\alpha}{64} \sin(2\alpha x_1 \!+\! 2\alpha x_4 \!+\! 2\xi_1 \!+\! 2\xi_3)\!+\! \frac{\alpha}{64} \sin(2\alpha x_3 \!+\! 2\alpha x_4 \!+\! 2\xi_1 \!+\! 2\xi_3)\!-\!\nonumber\\
&\!-\!&\frac{\alpha}{64} \sin(2\alpha x_1 \!-\! 2\alpha x_2 \!-\! 2\xi_2 \!+\! 2\xi_3)\!-\! \frac{\alpha}{64} \sin(2\alpha x_2 \!-\! 2\alpha x_3 \!-\! 2\xi_2 \!+\! 2\xi_3)\!+\! \nonumber\\
&\!+\!& \frac{\alpha}{64} \sin(2\alpha x_1 \!-\! 2\alpha x_4 \!-\! 2\xi_2 \!+\! 2\xi_3)\!+\! \frac{\alpha}{64} \sin(2\alpha x_2 \!-\! 2\alpha x_4 \!-\! 2\xi_2 \!+\! 2\xi_3)\!-\!\nonumber\\
&\!-\!&\frac{\alpha}{64} \sin(2\alpha x_3 \!-\! 2\alpha x_4 \!-\! 2\xi_2 \!+\! 2\xi_3) \!-\! \frac{\alpha}{64} \sin(2\alpha x_3 \!+\! 2\xi_1 \!-\! 2\xi_2 \!+\! 2\xi_3)\!-\!\nonumber\\
&\!-\!& \frac{\alpha}{64} \sin(2\alpha x_1 \!-\! 2\alpha x_2 \!+\! 2\alpha x_3 \!+\! 2\xi_1 \!-\! 2\xi_2 \!+\! 2\xi_3)\!-\! \frac{\alpha}{64} \sin(2\alpha x_4 \!+\! 2\xi_1 \!-\! 2\xi_2 \!+\! 2\xi_3)\!-\!\nonumber\\
&\!-\!& \frac{\alpha}{32} \sin(2\alpha x_1 \!+\! 2\alpha x_2 \!+\! 2\xi_2 \!+\! 2\xi_3)\!+\! \frac{\alpha}{32} \sin(2\alpha x_1 \!+\! 2\alpha x_3 \!+\! 2\xi_2 \!+\! 2\xi_3)\!-\!\nonumber\\
&\!-\!& \frac{\alpha}{64} \sin(2\alpha x_2 \!-\! 2\xi_1 \!+\! 2\xi_2 \!+\! 2\xi_3)\!+\! \frac{\alpha}{64} \sin(2\alpha x_1 \!+\! 2\alpha x_2 \!-\! 2\alpha x_3 \!-\! 2\xi_1 \!+\! 2\xi_2 \!+\! 2\xi_3)\!-\! \nonumber\\
&\!-\!& \frac{\alpha}{32} \sin(2\alpha x_3 \!-\! 2\xi_1 \!+\! 2\xi_2 \!+\! 2\xi_3)\!-\! \frac{\alpha}{64} \sin(2\alpha x_4 \!-\! 2\xi_1 \!+\! 2\xi_2 \!+\! 2\xi_3)\!+\! \nonumber\\
&\!+\!& \frac{\alpha}{64} \sin(2\alpha x_1 \!-\! 2\alpha x_2 \!-\! 2\alpha x_4 \!-\! 2\xi_4)\!-\!\frac{\alpha}{64} \sin(2\alpha x_1 \!-\! 2\alpha x_3 \!-\! 2\alpha x_4 \!-\! 2\xi_4)\!-\!\nonumber\\
&\!-\! &\frac{\alpha}{64} \sin(2\alpha x_1 \!-\! 2\alpha x_2 \!+\! 2\xi_1 \!-\! 2\xi_4)\!-\! \frac{\alpha}{32} \sin(2\alpha x_1 \!-\! 2\alpha x_3 \!+\! 2\xi_1 \!-\! 2\xi_4)\!-\!\nonumber\\
&\!-\!& \frac{\alpha}{64} \sin(2\alpha x_2 \!-\! 2\alpha x_3 \!+\! 2\xi_1 \!-\! 2\xi_4)\!+\! \frac{3\alpha}{64} \sin(2\alpha x_1 \!-\! 2\alpha x_4 \!+\! 2\xi_1 \!-\! 2\xi_4)\!+\!\nonumber\\
&\!+\!& \frac{\alpha}{64} \sin(2\alpha x_2 \!-\! 2\alpha x_4 \!+\! 2\xi_1 \!-\! 2\xi_4) \!-\!\frac{\alpha}{64} \sin(2\alpha x_3 \!-\! 2\alpha x_4 \!+\! 2\xi_1 \!-\! 2\xi_4)\!-\! \nonumber\\
&\!-\!& \frac{\alpha}{64} \sin(2\alpha x_1 \!-\! 2\alpha x_2 \!-\! 2\alpha x_4 \!+\! 2\xi_1 \!-\! 2\xi_2 \!-\! 2\xi_4)\!+\!\frac{5\alpha}{64} \sin(2\alpha x_1 \!-\! 2\alpha x_3 \!+\! 2\xi_2 \!-\! 2\xi_4)\!+\! \nonumber\\
&\!+\!& \frac{\alpha}{64} \sin(2\alpha x_2 \!-\! 2\alpha x_3 \!+\! 2\xi_2 \!-\! 2\xi_4)\!-\! \frac{\alpha}{64} \sin(2\alpha x_1 \!-\! 2\alpha x_4 \!+\! 2\xi_2 \!-\! 2\xi_4)\!-\! \nonumber\\
&\!-\!& \frac{\alpha}{64} \sin(2\alpha x_2 \!-\! 2\alpha x_4 \!+\! 2\xi_2 \!-\! 2\xi_4)\!-\! \frac{\alpha}{64} \sin(2\alpha x_3 \!-\! 2\alpha x_4 \!+\! 2\xi_2 \!-\! 2\xi_4)\!-\! \nonumber\\
&\!-\! &\frac{\alpha}{64} \sin(2\alpha x_1 \!-\! 2\alpha x_2 \!-\! 2\alpha x_3 \!-\! 2\xi_1 \!+\! 2\xi_2 \!-\! 2\xi_4)\!-\! \frac{\alpha}{32} \sin(2\alpha x_1 \!+\! 2\xi_1 \!+\! 2\xi_2 \!-\! 2\xi_4)\!-\!
- \nonumber\\
&\!-\!&\frac{\alpha}{64} \sin(2\alpha x_1 \!+\! 2\alpha x_2 \!-\! 2\alpha x_3 \!+\! 2\xi_1 \!+\! 2\xi_2 \!-\! 2\xi_4)\!+\! \frac{\alpha}{64} \sin(2\alpha x_3 \!+\! 2\xi_1 \!+\! 2\xi_2 \!-\! 2\xi_4)\!+\!\nonumber\\
&\!+\!&\frac{\alpha}{64} \sin(2\alpha x_4 \!+\! 2\xi_1 \!+\! 2\xi_2 \!-\! 2\xi_4)\!+\! \frac{\alpha}{64} \sin(2\alpha x_1 \!-\! 2\alpha x_2 \!+\! 2\alpha x_4 \!+\! 2\xi_1 \!+\! 2\xi_2 \!-\! 2\xi_4)\!+\!\nonumber\\
&\!+\!& \frac{\alpha}{64} \sin(2\alpha x_1 \!-\! 2\alpha x_2 \!-\! 2\alpha x_3 \!+\! 2\xi_1 \!-\! 2\xi_3 \!-\! 2\xi_4)\!-\!\nonumber\\
&\!-\!&\frac{\alpha}{32} \sin(2\alpha x_1 \!-\! 2\alpha x_3 \!-\! 2\alpha x_4 \!+\! 2\xi_1 \!-\! 2\xi_3 \!-\! 2\xi_4)\!-\!\nonumber\\
&\!-\!& \frac{\alpha}{32} \sin(2\alpha x_1 \!-\! 2\alpha x_2 \!-\! 2\alpha x_3 \!+\! 2\xi_2 \!-\! 2\xi_3 \!-\! 2\xi_4)\!+\! \nonumber\\
&\!+\!& \frac{\alpha}{64} \sin(2\alpha x_1 \!-\! 2\alpha x_3 \!-\! 2\alpha x_4 \!+\! 2\xi_2 \!-\! 2\xi_3 \!-\! 2\xi_4)\!+\! \nonumber\\
&\!+\!& \frac{\alpha}{32} \sin(2\alpha x_1 \!-\! 2\alpha x_3 \!+\! 2\xi_1 \!+\! 2\xi_2 \!-\! 2\xi_3 \!-\! 2\xi_4)\!-\! \nonumber\\
&\!-\!& \frac{\alpha}{64} \sin(2\alpha x_2 \!-\! 2\alpha x_4 \!+\! 2\xi_1 \!+\! 2\xi_2 \!-\! 2\xi_3 \!-\! 2\xi_4)\!+\! \nonumber\\
&\!+\!& \frac{\alpha}{64} \sin(2\alpha x_1 \!-\! 2\alpha x_3 \!+\! 2\xi_3 \!-\! 2\xi_4)\!+\! \frac{\alpha}{64} \sin(2\alpha x_1 \!-\! 2\alpha x_4 \!+\! 2\xi_3 \!-\! 2\xi_4)\!+\!\nonumber\\
&\!+\!& \frac{\alpha}{32} \sin(2\alpha x_1 \!+\! 2\xi_1 \!+\! 2\xi_3 \!-\! 2\xi_4)\!+\! \frac{\alpha}{64} \sin(2\alpha x_3 \!+\! 2\xi_1 \!+\! 2\xi_3 \!-\! 2\xi_4)\!-\! \nonumber\\
&\!-\!& \frac{\alpha}{64} \sin(2\alpha x_1 \!-\! 2\alpha x_2 \!+\! 2\alpha x_3 \!+\! 2\xi_1 \!+\! 2\xi_3 \!-\! 2\xi_4)\!-\! \nonumber\\
&\!-\!& \frac{\alpha}{32} \sin(2\alpha x_1 \!+\! 2\alpha x_3 \!-\! 2\alpha x_4 \!+\! 2\xi_1 \!+\! 2\xi_3 \!-\! 2\xi_4)\!+\!\nonumber\\
&\!+\!& \frac{\alpha}{64} \sin(2\alpha x_4 \!+\! 2\xi_1 \!+\! 2\xi_3 \!-\! 2\xi_4)\!+\! \frac{\alpha}{64} \sin(2\alpha x_1 \!-\! 2\alpha x_2 \!-\! 2\alpha x_4 \!-\! 2\xi_2 \!+\! 2\xi_3 \!-\! 2\xi_4)\!-\!\nonumber\\
&\!-\!& \frac{\alpha}{32} \sin(2\alpha x_1 \!+\! 2\alpha x_2 \!-\! 2\alpha x_3 \!+\! 2\xi_2 \!+\! 2\xi_3 \!-\! 2\xi_4)\!+\! \frac{\alpha}{32} \sin(2\alpha x_3 \!+\! 2\xi_2 \!+\! 2\xi_3 \!-\! 2\xi_4)\!+\! \nonumber\\
&\!+\!& \frac{\alpha}{64} \sin(2\alpha x_1 \!-\! 2\alpha x_3 \!-\! 2\xi_1 \!+\! 2\xi_2 \!+\! 2\xi_3 \!-\! 2\xi_4)\!+\! \nonumber\\
&\!+\!& \frac{\alpha}{64} \sin(2\alpha x_1 \!+\! 2\alpha x_2 \!+\! 2\xi_1 \!+\! 2\xi_2 \!+\! 2\xi_3 \!-\! 2\xi_4)\!-\!\nonumber\\
&\!-\!& \frac{\alpha}{64} \sin(2\alpha x_1 \!+\! 2\alpha x_3 \!+\! 2\xi_1 \!+\! 2\xi_2 \!+\! 2\xi_3 \!-\! 2\xi_4)\!-\! \nonumber\\
&\!-\!& \frac{\alpha}{64} \sin(2\alpha x_1 \!+\! 2\alpha x_4 \!+\! 2\xi_1 \!+\! 2\xi_2 \!+\! 2\xi_3 \!-\! 2\xi_4)\!+\!\nonumber\\
&\!+\!&\frac{\alpha}{64} \sin(2\alpha x_2 \!+\! 2\alpha x_4 \!+\! 2\xi_1 \!+\! 2\xi_2 \!+\! 2\xi_3 \!-\! 2\xi_4)\!-\! \nonumber\\
&\!-\!& \frac{\alpha}{32} \sin(2\alpha x_1 \!+\! 2\xi_4)\!-\! \frac{\alpha}{32} \sin(2\alpha x_2 \!+\! 2\xi_4)
\!+\! \frac{\alpha}{64} \sin(2\alpha x_1 \!+\! 2\alpha x_2 \!-\! 2\alpha x_3 \!+\! 2\xi_4)\!-\!\nonumber\\
&\!-\!& \frac{\alpha}{64} \sin(2\alpha x_1 \!-\! 2\alpha x_2 \!+\! 2\alpha x_3 \!+\! 2\xi_4)\!-\! \frac{\alpha}{64} \sin(2\alpha x_1 \!+\! 2\alpha x_2 \!-\! 2\alpha x_4 \!+\! 2\xi_4)\!-\!\nonumber\\
&\!-\!& \frac{\alpha}{32} \sin(2\alpha x_4 \!+\! 2\xi_4)\!-\! \frac{\alpha}{64} \sin(2\alpha x_1 \!-\! 2\alpha x_3 \!+\! 2\alpha x_4 \!+\! 2\xi_4)\!-\!\nonumber\\
&\!-\!&\frac{\alpha}{64} \sin(2\alpha x_1 \!-\! 2\alpha x_2 \!-\! 2\xi_1 \!+\! 2\xi_4)\!-\! \frac{\alpha}{64} \sin(2\alpha x_2 \!-\! 2\alpha x_3 \!-\! 2\xi_1 \!+\! 2\xi_4)\!+\!\nonumber\\
&\!+\! &\frac{\alpha}{64} \sin(2\alpha x_1 \!-\! 2\alpha x_4 \!-\! 2\xi_1 \!+\! 2\xi_4) \!+\! \frac{\alpha}{64} \sin(2\alpha x_2 \!-\! 2\alpha x_4 \!-\! 2\xi_1 \!+\! 2\xi_4)\!-\!\nonumber\\
&\!-\!& \frac{\alpha}{64} \sin(2\alpha x_3 \!-\! 2\alpha x_4 \!-\! 2\xi_1 \!+\! 2\xi_4)\!-\! \frac{\alpha}{32} \sin(2\alpha x_1 \!+\! 2\alpha x_3 \!+\! 2\xi_1 \!+\! 2\xi_4)\!+\!\nonumber\\
&\!+\!& \frac{\alpha}{32} \sin(2\alpha x_1 \!+\! 2\alpha x_4 \!+\! 2\xi_1 \!+\! 2\xi_4)\!+\! \frac{\alpha}{64} \sin(2\alpha x_1 \!-\! 2\alpha x_2 \!-\! 2\xi_2 \!+\! 2\xi_4)\!+\!\nonumber\\
&\!+\! &\frac{\alpha}{64} \sin(2\alpha x_1 \!-\! 2\alpha x_3 \!-\! 2\xi_2 \!+\! 2\xi_4) \!+\! \frac{\alpha}{64} \sin(2\alpha x_2 \!-\! 2\alpha x_4 \!-\! 2\xi_2 \!+\! 2\xi_4)\!+\!\nonumber\\
&\!+\!& \frac{\alpha}{64} \sin(2\alpha x_2 \!+\! 2\xi_1 \!-\! 2\xi_2 \!+\! 2\xi_4)\!+\! \frac{\alpha}{32} \sin(2\alpha x_3 \!+\! 2\xi_1 \!-\! 2\xi_2 \!+\! 2\xi_4)\!+\!\nonumber\\
&\!+\!& \frac{\alpha}{64} \sin(2\alpha x_4 \!+\! 2\xi_1 \!-\! 2\xi_2 \!+\! 2\xi_4)\!-\! \frac{\alpha}{64} \sin(2\alpha x_1 \!-\! 2\alpha x_3 \!+\! 2\alpha x_4 \!+\! 2\xi_1 \!-\! 2\xi_2 \!+\! 2\xi_4)\!-\!\nonumber\\
&\!-\! &\frac{\alpha}{64} \sin(2\alpha x_1 \!+\! 2\alpha x_2 \!+\! 2\xi_2 \!+\! 2\xi_4)\!+\! \frac{\alpha}{32} \sin(2\alpha x_1 \!+\! 2\alpha x_3 \!+\! 2\xi_2 \!+\! 2\xi_4)\!-\!\nonumber\\
&\!-\! &\frac{\alpha}{64} \sin(2\alpha x_2 \!+\! 2\alpha x_3 \!+\! 2\xi_2 \!+\! 2\xi_4)\!+\! \frac{\alpha}{64} \sin(2\alpha x_1 \!+\! 2\alpha x_4 \!+\! 2\xi_2 \!+\! 2\xi_4)\!-\!\nonumber\\
&\!-\!& \frac{\alpha}{64} \sin(2\alpha x_3 \!+\! 2\alpha x_4 \!+\! 2\xi_2 \!+\! 2\xi_4)\!+\! \frac{\alpha}{64} \sin(2\alpha x_2 \!-\! 2\xi_1 \!+\! 2\xi_2 \!+\! 2\xi_4)\!+\!\nonumber\\
&\!+\!& \frac{\alpha}{64} \sin(2\alpha x_3 \!-\! 2\xi_1 \!+\! 2\xi_2 \!+\! 2\xi_4)\!+\! \frac{\alpha}{64} \sin(2\alpha x_1 \!+\! 2\alpha x_3 \!-\! 2\alpha x_4 \!-\! 2\xi_1 \!+\! 2\xi_2 \!+\! 2\xi_4)\!+\!\nonumber\\
&\!+\!& \frac{\alpha}{64} \sin(2\alpha x_1 \!-\! 2\alpha x_2 \!-\! 2\xi_3 \!+\! 2\xi_4)\!+\! \frac{\alpha}{64} \sin(2\alpha x_1 \!-\! 2\alpha x_3 \!-\! 2\xi_3 \!+\! 2\xi_4)\!-\!\nonumber\\
&\!-\!& \frac{\alpha}{64} \sin(2\alpha x_2 \!-\! 2\alpha x_3 \!-\! 2\xi_3 \!+\! 2\xi_4)\!+\! \frac{\alpha}{64} \sin(2\alpha x_3 \!-\! 2\alpha x_4 \!-\! 2\xi_3 \!+\! 2\xi_4)\!+\! \nonumber\\
&\!+\!& \frac{\alpha}{64} \sin(2\alpha x_1 \!-\! 2\alpha x_2 \!-\! 2\alpha x_4 \!-\! 2\xi_1 \!-\! 2\xi_3 \!+\! 2\xi_4) \!+\! \frac{\alpha}{32} \sin(2\alpha x_3 \!+\! 2\xi_1 \!-\! 2\xi_3 \!+\! 2\xi_4)\!-\! \nonumber\\
&\!-\! &\frac{\alpha}{32} \sin(2\alpha x_1 \!-\! 2\alpha x_3 \!+\! 2\alpha x_4 \!+\! 2\xi_1 \!-\! 2\xi_3 \!+\! 2\xi_4)\!+\!
+ \nonumber\\
&\!+\!&\frac{\alpha}{64} \sin(2\alpha x_1 \!-\! 2\alpha x_3 \!+\! 2\xi_1 \!-\! 2\xi_2 \!-\! 2\xi_3 \!+\! 2\xi_4)\!+\! \frac{\alpha}{32} \sin(2\alpha x_1 \!+\! 2\xi_2 \!-\! 2\xi_3 \!+\! 2\xi_4)\!+\!\nonumber\\
&\!+\!& \frac{\alpha}{64} \sin(2\alpha x_2 \!+\! 2\xi_2 \!-\! 2\xi_3 \!+\! 2\xi_4)\!+\! \frac{\alpha}{64} \sin(2\alpha x_3 \!+\! 2\xi_2 \!-\! 2\xi_3 \!+\! 2\xi_4)\!-\!\nonumber\\
&\!-\!& \frac{\alpha}{32} \sin(2\alpha x_1 \!-\! 2\alpha x_2 \!+\! 2\alpha x_3 \!+\! 2\xi_2 \!-\! 2\xi_3 \!+\! 2\xi_4)\!-\! \nonumber\\
&\!-\!&\frac{\alpha}{64} \sin(2\alpha x_1 \!+\! 2\alpha x_3 \!-\! 2\alpha x_4 \!+\! 2\xi_2 \!-\! 2\xi_3 \!+\! 2\xi_4)\!-\!\nonumber\\
&\!-\!&  \frac{\alpha}{64} \sin(2\alpha x_1 \!+\! 2\alpha x_2 \!+\! 2\xi_1 \!+\! 2\xi_2 \!-\! 2\xi_3 \!+\! 2\xi_4)\!-\!\nonumber\\
&\!-\!&\frac{\alpha}{64} \sin(2\alpha x_1 \!+\! 2\alpha x_3 \!+\! 2\xi_1 \!+\! 2\xi_2 \!-\! 2\xi_3 \!+\! 2\xi_4)\!+\!\nonumber\\
&\!+\!& \frac{\alpha}{64} \sin(2\alpha x_1 \!+\! 2\alpha x_4 \!+\! 2\xi_1 \!+\! 2\xi_2 \!-\! 2\xi_3 \!+\! 2\xi_4)\!+\! \nonumber\\
&\!+\!& \frac{\alpha}{64} \sin(2\alpha x_2 \!+\! 2\alpha x_4 \!+\! 2\xi_1 \!+\! 2\xi_2 \!-\! 2\xi_3 \!+\! 2\xi_4)\!+\! \frac{\alpha}{64} \sin(2\alpha x_1 \!+\! 2\alpha x_2 \!+\! 2\xi_3 \!+\! 2\xi_4)\!+\!\nonumber\\
&\!+\!& \frac{\alpha}{32} \sin(2\alpha x_1 \!+\! 2\alpha x_3 \!+\! 2\xi_3 \!+\! 2\xi_4)\!+\! \frac{\alpha}{64} \sin(2\alpha x_2 \!+\! 2\alpha x_3 \!+\! 2\xi_3 \!+\! 2\xi_4)\!+\!\nonumber\\
&\!+\!& \frac{\alpha}{64} \sin(2\alpha x_1 \!+\! 2\alpha x_4 \!+\! 2\xi_3 \!+\! 2\xi_4)\!+\! \frac{\alpha}{32} \sin(2\alpha x_2 \!+\! 2\alpha x_4 \!+\! 2\xi_3 \!+\! 2\xi_4)\!+\!\nonumber\\
&\!+\!& \frac{\alpha}{64} \sin(2\alpha x_3 \!+\! 2\alpha x_4 \!+\! 2\xi_3 \!+\! 2\xi_4)\!+\! \frac{\alpha}{64} \sin(2\alpha x_3 \!-\! 2\xi_1 \!+\! 2\xi_3 \!+\! 2\xi_4)\!-\!\nonumber\\
&\!-\!& \frac{\alpha}{64} \sin(2\alpha x_1 \!+\! 2\alpha x_2 \!-\! 2\alpha x_4 \!-\! 2\xi_1 \!+\! 2\xi_3 \!+\! 2\xi_4)\!-\! \nonumber\\
&\!-\!& \frac{\alpha}{64} \sin(2\alpha x_4 \!-\! 2\xi_1 \!+\! 2\xi_3 \!+\! 2\xi_4)\!-\! \frac{\alpha}{64} \sin(2\alpha x_2 \!-\! 2\xi_2 \!+\! 2\xi_3 \!+\! 2\xi_4)\!+\! \nonumber\\
&\!+\!& \frac{\alpha}{64} \sin(2\alpha x_3 \!-\! 2\xi_2 \!+\! 2\xi_3 \!+\! 2\xi_4)\!-\! \frac{\alpha}{64} \sin(2\alpha x_1 \!-\! 2\alpha x_2 \!+\! 2\alpha x_4 \!-\! 2\xi_2 \!+\! 2\xi_3 \!+\! 2\xi_4)\!+\!\nonumber\\ &\!+\!&\frac{\alpha}{64} \sin(2\alpha x_2 \!-\! 2\alpha x_4 \!-\! 2\xi_1 \!-\! 2\xi_2 \!+\! 2\xi_3 \!+\! 2\xi_4)\!-\! \nonumber\\
&\!-\!& \frac{\alpha}{64} \sin(2\alpha x_1 \!+\! 2\alpha x_3 \!+\! 2\xi_1 \!-\! 2\xi_2 \!+\! 2\xi_3 \!+\! 2\xi_4)\!-\! \nonumber\\
&\!-\! &\frac{\alpha}{64} \sin(2\alpha x_2 \!+\! 2\alpha x_3 \!+\! 2\xi_1 \!-\! 2\xi_2 \!+\! 2\xi_3 \!+\! 2\xi_4)\!-\! \nonumber\\
&\!-\!& \frac{\alpha}{64} \sin(2\alpha x_2 \!+\! 2\alpha x_4 \!+\! 2\xi_1 \!-\! 2\xi_2 \!+\! 2\xi_3 \!+\! 2\xi_4)\!-\! \nonumber\\
&\!-\!& \frac{\alpha}{64} \sin(2\alpha x_3 \!+\! 2\alpha x_4 \!+\! 2\xi_1 \!-\! 2\xi_2 \!+\! 2\xi_3 \!+\! 2\xi_4)\!-\! \nonumber\\
&\!-\!& \frac{\alpha}{64} \sin(2\alpha x_1 \!+\! 2\alpha x_3 \!-\! 2\xi_1 \!+\! 2\xi_2 \!+\! 2\xi_3 \!+\! 2\xi_4)\!-\! \nonumber\\
&\!-\!& \frac{\alpha}{64} \sin(2\alpha x_2 \!+\! 2\alpha x_3 \!-\! 2\xi_1 \!+\! 2\xi_2 \!+\! 2\xi_3 \!+\! 2\xi_4)\!-\! \nonumber\\
&\!-\!& \frac{\alpha}{64} \sin(2\alpha x_2 \!+\! 2\alpha x_4 \!-\! 2\xi_1 \!+\! 2\xi_2 \!+\! 2\xi_3 \!+\! 2\xi_4)\!-\! \nonumber\\
&\!-\!& \frac{\alpha}{64} \sin(2\alpha x_3 \!+\! 2\alpha x_4 - 2\xi_1 \!+\! 2\xi_2 \!+\! 2\xi_3 \!+\! 2\xi_4)
\end{eqnarray}
By collecting all terms in equation~\eqref{A1} that are independent of \( x_1 \), we isolate:
\begin{eqnarray}
\label{A2}
\mathcal{V}_1^0(\boldsymbol{x}_{\setminus 1}, \boldsymbol{\xi}) &=&-\frac{\alpha}{32}   \sin (2 \alpha x_2-2 \alpha x_3)+\frac{\alpha}{32}   \sin (2 \alpha x_3-2 \alpha x_4)- \nonumber\\
&-&\frac{\alpha}{64}  \sin (2 \alpha x_2-2 \alpha x_3-2 \xi_1+2 \xi_2)+\frac{\alpha}{64}  \sin (2 \alpha x_3-2 \alpha x_4-2 \xi_1+2 \xi_2)+ \nonumber\\
&+&\frac{\alpha}{64}  \sin (2 \alpha x_2+2 \alpha x_3+2 \xi_1+2 \xi_2)+\frac{\alpha}{32}   \sin (2 \alpha x_2+2 \alpha x_4+2 \xi_1+2 \xi_2)+ \nonumber\\
&+&\frac{\alpha}{64}  \sin (2 \alpha x_3+2 \alpha x_4+2 \xi_1+2 \xi_2)+\frac{\alpha}{64}  \sin (2 \alpha x_2-2 \alpha x_4+2 \xi_1-2 \xi_3)- \nonumber\\
&-&\frac{\alpha}{64}  \sin (2 \alpha x_2-2 \alpha x_3+2 \xi_2-2 \xi_3)+\frac{\alpha}{64}  \sin (2 \alpha x_2-2 \alpha x_4+2 \xi_2-2 \xi_3)- \nonumber\\
&-&\frac{\alpha}{64}  \sin (2 \alpha x_3-2 \alpha x_4+2 \xi_2-2 \xi_3)-\frac{\alpha}{64}  \sin (2 \alpha x_2+2 \xi_1+2 \xi_2-2 \xi_3)- \nonumber\\
&-&\frac{\alpha}{64}  \sin (2 \alpha x_3\!+\!2 \xi_1\!+\!2 \xi_2-2 \xi_3)\!+\!\frac{\alpha}{32}   \sin (2 \alpha x_2\!+\!2 \xi_3)\!+\!\frac{\alpha}{32}   \sin (2 \alpha x_4\!+\!2 \xi_3)- \nonumber\\
&-&\frac{\alpha}{64}  \sin (2 \alpha x_2-2 \alpha x_3-2 \xi_1+2 \xi_3)-\frac{\alpha}{64}  \sin (2 \alpha x_2-2 \alpha x_4-2 \xi_1+2 \xi_3)+ \nonumber\\
&+&\frac{\alpha}{64}  \sin (2 \alpha x_3-2 \alpha x_4-2 \xi_1+2 \xi_3)+\frac{\alpha}{64}  \sin (2 \alpha x_2+2 \alpha x_3+2 \xi_1+2 \xi_3)+ \nonumber\\
&+&\frac{\alpha}{64}  \sin (2 \alpha x_3+2 \alpha x_4+2 \xi_1+2 \xi_3)-\frac{\alpha}{64}  \sin (2 \alpha x_2-2 \alpha x_3-2 \xi_2+2 \xi_3)+ \nonumber\\
&+&\frac{\alpha}{64}  \sin (2 \alpha x_2-2 \alpha x_4-2 \xi_2+2 \xi_3)-\frac{\alpha}{64}  \sin (2 \alpha x_3-2 \alpha x_4-2 \xi_2+2 \xi_3)- \nonumber\\
&-&\frac{\alpha}{64}  \sin (2 \alpha x_3+2 \xi_1-2 \xi_2+2 \xi_3)-\frac{\alpha}{64}  \sin (2 \alpha x_4+2 \xi_1-2 \xi_2+2 \xi_3)- \nonumber\\
&-&\frac{\alpha}{64}  \sin (2 \alpha x_2-2 \xi_1+2 \xi_2+2 \xi_3)-\frac{\alpha}{32}   \sin (2 \alpha x_3-2 \xi_1+2 \xi_2+2 \xi_3)- \nonumber\\
&-&\frac{\alpha}{64}  \sin (2 \alpha x_4-2 \xi_1+2 \xi_2+2 \xi_3)-\frac{\alpha}{64}  \sin (2 \alpha x_2-2 \alpha x_3+2 \xi_1-2 \xi_4)+ \nonumber\\
&+&\frac{\alpha}{64}  \sin (2 \alpha x_2-2 \alpha x_4+2 \xi_1-2 \xi_4)-\frac{\alpha}{64}  \sin (2 \alpha x_3-2 \alpha x_4+2 \xi_1-2 \xi_4)+ \nonumber\\
&+&\frac{\alpha}{64}  \sin (2 \alpha x_2-2 \alpha x_3+2 \xi_2-2 \xi_4)-\frac{\alpha}{64}  \sin (2 \alpha x_2-2 \alpha x_4+2 \xi_2-2 \xi_4)- \nonumber\\
&-&\frac{\alpha}{64}  \sin (2 \alpha x_3-2 \alpha x_4+2 \xi_2-2 \xi_4)+\frac{\alpha}{64}  \sin (2 \alpha x_3+2 \xi_1+2 \xi_2-2 \xi_4)+ \nonumber\\
&\!+\!&\frac{\alpha}{64}  \sin (2 \alpha x_4\!+\!2 \xi_1\!+\!2 \xi_2\!-\!2 \xi_4)\!-\!\frac{\alpha}{64}  \sin (2 \alpha x_2\!-\!2 \alpha x_4\!+\!2 \xi_1\!+\!2 \xi_2\!-\!2 \xi_3\!-\!2 \xi_4)\!+\!\nonumber\\
&+&\frac{\alpha}{64}  \sin (2 \alpha x_3+2 \xi_1+2 \xi_3-2 \xi_4)+\frac{\alpha}{64}  \sin (2 \alpha x_4+2 \xi_1+2 \xi_3-2 \xi_4)+ \nonumber\\
&\!+\!&\frac{\alpha}{32}   \sin (2 \alpha x_3\!+\!2 \xi_2\!+\!2 \xi_3\!-\!2 \xi_4)\!+\!\frac{\alpha}{64}  \sin (2 \alpha x_2\!+\!2 \alpha x_4\!+\!2 \xi_1\!+\!2 \xi_2\!+\!2 \xi_3\!-\!2 \xi_4)\!-\!\nonumber\\
&\!-\!&\frac{\alpha}{32}   \sin (2 \alpha x_2\!+\!2 \xi_4)\!-\!\frac{\alpha}{32} \sin (2 \alpha x_4\!+\!2 \xi_4)\!-\!\frac{\alpha}{64}  \sin (2 \alpha x_2\!-\!2 \alpha x_3-2 \xi_1\!+\!2 \xi_4)\!+\!\nonumber\\
&+&\frac{\alpha}{64}  \sin (2 \alpha x_2-2 \alpha x_4-2 \xi_1+2 \xi_4)-\frac{\alpha}{64}  \sin (2 \alpha x_3-2 \alpha x_4-2 \xi_1+2 \xi_4)+ \nonumber\\
&+&\frac{\alpha}{64}  \sin (2 \alpha x_2-2 \alpha x_4-2 \xi_2+2 \xi_4)+\frac{\alpha}{64}  \sin (2 \alpha x_2+2 \xi_1-2 \xi_2+2 \xi_4)+ \nonumber\\
&+&\frac{\alpha}{32}   \sin (2 \alpha x_3+2 \xi_1-2 \xi_2+2 \xi_4)+\frac{\alpha}{64}  \sin (2 \alpha x_4+2 \xi_1-2 \xi_2+2 \xi_4)- \nonumber\\
&-&\frac{\alpha}{64}  \sin (2 \alpha x_2+2 \alpha x_3+2 \xi_2+2 \xi_4)-\frac{\alpha}{64}  \sin (2 \alpha x_3+2 \alpha x_4+2 \xi_2+2 \xi_4)+ \nonumber\\
&+&\frac{\alpha}{64}  \sin (2 \alpha x_2-2 \xi_1+2 \xi_2+2 \xi_4)+\frac{\alpha}{64}  \sin (2 \alpha x_3-2 \xi_1+2 \xi_2+2 \xi_4)- \nonumber\\
&-&\frac{\alpha}{64}  \sin (2 \alpha x_2-2 \alpha x_3-2 \xi_3+2 \xi_4)+\frac{\alpha}{64}  \sin (2 \alpha x_3-2 \alpha x_4-2 \xi_3+2 \xi_4)+ \nonumber\\
&+&\frac{\alpha}{32}   \sin (2 \alpha x_3+2 \xi_1-2 \xi_3+2 \xi_4)+\frac{\alpha}{64}  \sin (2 \alpha x_2+2 \xi_2-2 \xi_3+2 \xi_4)+ \nonumber\\
&\!+\!&\frac{\alpha}{64}  \sin (2 \alpha x_3\!+\!2 \xi_2\!-\!2 \xi_3\!+\!2 \xi_4)\!+\!\frac{\alpha}{64}  \sin (2 \alpha x_2\!+\!2 \alpha x_4\!+\!2 \xi_1\!+\!2 \xi_2\!-\!2 \xi_3\!+\!2 \xi_4)+ \nonumber\\
&+&\frac{\alpha}{64}  \sin (2 \alpha x_2+2 \alpha x_3+2 \xi_3+2 \xi_4)+\frac{\alpha}{32}   \sin (2 \alpha x_2+2 \alpha x_4+2 \xi_3+2 \xi_4)+ \nonumber\\
&+&\frac{\alpha}{64}  \sin (2 \alpha x_3+2 \alpha x_4+2 \xi_3+2 \xi_4)+\frac{\alpha}{64}  \sin (2 \alpha x_3-2 \xi_1+2 \xi_3+2 \xi_4)- \nonumber\\
&-&\frac{\alpha}{64}  \sin (2 \alpha x_4-2 \xi_1+2 \xi_3+2 \xi_4)-\frac{\alpha}{64}  \sin (2 \alpha x_2-2 \xi_2+2 \xi_3+2 \xi_4)+ \nonumber\\
&\!+\!&\frac{\alpha}{64}  \sin (2 \alpha x_3\!-\!2 \xi_2\!+\!2 \xi_3\!+\!2 \xi_4)\!+\!\frac{\alpha}{64}  \sin (2 \alpha x_2\!-\!2 \alpha x_4\!-\!2 \xi_1\!-\!2 \xi_2\!+\!2 \xi_3\!+\!2 \xi_4)\!-\!\nonumber\\
&-&\frac{\alpha}{64}  \sin (2 \alpha x_2+2 \alpha x_3+2 \xi_1-2 \xi_2+2 \xi_3+2 \xi_4)- \nonumber\\
&-&\frac{\alpha}{64}  \sin (2 \alpha x_2+2 \alpha x_4+2 \xi_1-2 \xi_2+2 \xi_3+2 \xi_4)- \nonumber\\
&-&\frac{\alpha}{64}  \sin (2 \alpha x_3+2 \alpha x_4+2 \xi_1-2 \xi_2+2 \xi_3+2 \xi_4)- \nonumber\\
&-&\frac{\alpha}{64}  \sin (2 \alpha x_2+2 \alpha x_3-2 \xi_1+2 \xi_2+2 \xi_3+2 \xi_4)- \nonumber\\
&-&\frac{\alpha}{64}  \sin (2 \alpha x_2+2 \alpha x_4-2 \xi_1+2 \xi_2+2 \xi_3+2 \xi_4)- \nonumber\\
&-&\frac{\alpha}{64}  \sin (2 \xi_4+2 \alpha x_3+2 \alpha x_4-2 \xi_1+2 \xi_2+2 \xi_3)
\end{eqnarray}
Equation~\eqref{A2} can be factored into the structured form of equation~\eqref{3.19}, decomposing into products of spatial functions and phase-dependent coefficients:
\begin{equation}
\label{A3}
    \frac{\mathcal{V}_1^0(\boldsymbol{x}_{\setminus 1}, \boldsymbol{\xi})}{v_r^2} = \sum_{j=1}^{35} q_j^0(\boldsymbol{x}_{\setminus 1}) \cdot u_j(\boldsymbol{\xi}),
\end{equation}
where the spatial coefficient functions \( q_j^0\left(\boldsymbol{x}_{\setminus 1}\right) \) and the phase-dependent functions \(u_j\left(\boldsymbol{\xi}\right)\) for $j=1,2,\ldots,35$ are given by:
\begin{subequations}
\label{A4}
\begin{align}
q_1^0(\boldsymbol{x}_{\setminus 1}) &= \frac{\alpha}{64} \sin(2\alpha x_2) \label{A4a} \\
u_1(\boldsymbol{\xi}) &= -\cos(2\xi_1 + 2\xi_2 - 2\xi_3) - \cos(2\xi_1 - 2\xi_2 - 2\xi_3) + \cos(2\xi_1 - 2\xi_2 + 2\xi_4)+ \nonumber\\
&\quad + \cos(2\xi_1 - 2\xi_2 - 2\xi_4) + \cos(2\xi_2 - 2\xi_3 + 2\xi_4) - \cos(2\xi_2 - 2\xi_3 - 2\xi_4)+ \nonumber\\
&\quad + 2\cos(2\xi_3) - 2\cos(2\xi_4) \label{A4b}
\end{align}\end{subequations}
\vspace{-0.4cm}
\begin{subequations}
\label{A5}
\begin{align}
q_2^0(\boldsymbol{x}_{\setminus 1})&=\frac{\alpha}{64}   \sin (2 \alpha  x_3) \label{A5a} \\
u_2(\boldsymbol{\xi})&=-\cos (2 \xi_1-2 \xi_2+2 \xi_3)-\cos (2 \xi_1+2 \xi_2-2 \xi_3)-2 \cos (2 \xi_1-2 \xi_2-2 \xi_3)+ \nonumber\\
&\quad+2 \cos (2 \xi_1-2 \xi_2+2 \xi_4)+\cos (2 \xi_1+2 \xi_2-2 \xi_4)+\cos (2 \xi_1-2 \xi_2-2 \xi_4)+ \nonumber\\
&\quad+2 \cos (2 \xi_1-2 \xi_3+2 \xi_4)+\cos (2 \xi_1+2 \xi_3-2 \xi_4)+\cos (2 \xi_1-2 \xi_3-2 \xi_4)+ \nonumber\\
&\quad+\cos (2 \xi_2-2 \xi_3+2 \xi_4)+2 \cos (2 \xi_2+2 \xi_3-2 \xi_4)+\cos (2 \xi_2-2 \xi_3-2 \xi_4)\label{A5b}
\end{align}\end{subequations}
\vspace{-0.4cm}
\begin{subequations}
\label{A6}
\begin{align}
q_3^0(\boldsymbol{x}_{\setminus 1})&=\frac{\alpha}{64}   \sin (2 \alpha  x_4)\label{A6a} \\
u_3(\boldsymbol{\xi})&=-\cos (2 \xi_1-2 \xi_2+2 \xi_3)-\cos (2 \xi_1-2 \xi_2-2 \xi_3)+\cos (2 \xi_1-2 \xi_2+2 \xi_4)+ \nonumber\\
&\quad+\cos (2 \xi_1+2 \xi_2-2 \xi_4)+\cos (2 \xi_1+2 \xi_3-2 \xi_4)-\cos (2 \xi_1-2 \xi_3-2 \xi_4)+ \nonumber\\
&\quad+2 \cos (2 \xi_3)-2 \cos (2 \xi_4)\label{A6b}
\end{align}\end{subequations}
\vspace{-0.4cm}
\begin{subequations}
\label{A7}
\begin{align}
q_4^0(\boldsymbol{x}_{\setminus 1})&=\frac{\alpha}{64}   \sin (2 \alpha  x_2-2 \alpha  x_3)\label{A7a} \\
u_4(\boldsymbol{\xi})&=-\cos (2 \xi_1-2 \xi_2)-\cos (2 \xi_1-2 \xi_3)-2 \cos (2 \xi_1-2 \xi_4)-2 \cos (2 \xi_2-2 \xi_3)+ \nonumber\\
&\quad+\cos (2 \xi_2-2 \xi_4)-\cos (2 \xi_3-2 \xi_4)-2\label{A7b}
\end{align}\end{subequations}
\vspace{-0.4cm}
\begin{subequations}
\label{A8}
\begin{align}
q_5^0(\boldsymbol{x}_{\setminus 1})&=\frac{\alpha}{64}   \sin (2 \alpha  x_2+2 \alpha  x_3) \label{A8a} \\
u_5(\boldsymbol{\xi})&=-\cos (2 \xi_1-2 \xi_2+2 \xi_3+2 \xi_4)-\cos (2 \xi_1-2 \xi_2-2 \xi_3-2 \xi_4)+\cos (2 \xi_1+2 \xi_2)+ \nonumber\\
&\quad+\cos (2 \xi_1+2 \xi_3)-\cos (2 \xi_2+2 \xi_4)+\cos (2 \xi_3+2 \xi_4)
\label{A8b}
\end{align}\end{subequations}
\vspace{-0.4cm}
\begin{subequations}
\label{A9}
\begin{align}
q_6^0(\boldsymbol{x}_{\setminus 1})&=\frac{\alpha}{32}   \sin (2 \alpha  x_2-2 \alpha  x_4)\label{A9a} \\
u_6(\boldsymbol{\xi})&=\cos (2 \xi_1-2 \xi_4)+\cos (2 \xi_2-2 \xi_3)
\label{A9b}
\end{align}\end{subequations}
\vspace{-0.4cm}
\begin{subequations}
\label{A10}
\begin{align}
q_7^0(\boldsymbol{x}_{\setminus 1})&=\frac{\alpha}{64}   \sin (2 \alpha  x_3-2 \alpha  x_4) \label{A10a} \\
u_7(\boldsymbol{\xi})&=\cos (2 \xi_1-2 \xi_2)+\cos (2 \xi_1-2 \xi_3)-2 \cos (2 \xi_1-2 \xi_4)- \nonumber\\
&\quad-2 \cos (2 \xi_2-2 \xi_3)-\cos (2 \xi_2-2 \xi_4)+\cos (2 \xi_3-2 \xi_4)+2
\label{A10b}
\end{align}\end{subequations}
\vspace{-0.4cm}
\begin{subequations}
\label{A11}
\begin{align}
q_8^0(\boldsymbol{x}_{\setminus 1})&=\frac{\alpha}{64}   \sin (2 \alpha  x_2+2 \alpha  x_4) \label{A11a} \\
u_8(\boldsymbol{\xi})&=-\cos (2 \xi_1-2 \xi_2+2 \xi_3+2 \xi_4)+\cos (2 \xi_1+2 \xi_2-2 \xi_3+2 \xi_4)+ \nonumber\\
&\quad+\cos (2 \xi_1+2 \xi_2+2 \xi_3-2 \xi_4)\!-\!\cos (2 \xi_1-2 \xi_2-2 \xi_3-2 \xi_4)\!+\!2 \cos (2 \xi_1+2 \xi_2)+ \nonumber\\
&\quad+2 \cos (2 \xi_3+2 \xi_4)
\label{A11b}
\end{align}\end{subequations}
\vspace{-0.4cm}
\begin{subequations}
\label{A12}
\begin{align}
q_9^0(\boldsymbol{x}_{\setminus 1})&=\frac{\alpha}{64}   \sin (2 \alpha  x_3+2 \alpha  x_4)\label{A12a} \\
u_9(\boldsymbol{\xi})&=-\cos (2 \xi_1-2 \xi_2+2 \xi_3+2 \xi_4)-\cos (2 \xi_1-2 \xi_2-2 \xi_3-2 \xi_4)\!+\cos (2 \xi_1+2 \xi_2)+ \nonumber\\
&\quad+\cos (2 \xi_1+2 \xi_3)-\cos (2 \xi_2+2 \xi_4)+\cos (2 \xi_3+2 \xi_4)
\label{A12b}
\end{align}\end{subequations}
\vspace{-0.4cm}
\begin{subequations}
\label{A13}
\begin{align}
q_{10}^0(\boldsymbol{x}_{\setminus 1})&=\frac{\alpha}{64}   (\cos (2 \alpha  x_2-2 \alpha  x_3)-\cos (2 \alpha  x_3-2 \alpha  x_4))\label{A13a} \\
u_{10}(\boldsymbol{\xi})&=\sin (2 \xi_1-2 \xi_2)
\label{A13b}
\end{align}\end{subequations}
\vspace{-0.4cm}
\begin{subequations}
\label{A14}
\begin{align}
q_{11}^0(\boldsymbol{x}_{\setminus 1})&=\frac{\alpha}{64}   (\cos (2 \alpha  x_2+2 \alpha  x_3)+2 \cos (2 \alpha  x_2+2 \alpha  x_4)+\cos (2 \alpha  x_3+2 \alpha  x_4))
\label{A14a} \\
u_{11}(\boldsymbol{\xi})&=\sin (2 \xi_1+2 \xi_2)
\label{A14b}
\end{align}\end{subequations}
\vspace{-0.4cm}
\begin{subequations}
\label{A15}
\begin{align}
q_{12}^0(\boldsymbol{x}_{\setminus 1})&=\frac{\alpha}{64}   (\cos (2 \alpha  x_2-2 \alpha  x_3)+2 \cos (2 \alpha  x_2-2 \alpha  x_4)-\cos (2 \alpha  x_3-2 \alpha  x_4))
\label{A15a} \\
u_{12}(\boldsymbol{\xi})&=\sin (2 \xi_1-2 \xi_3)
\label{A15b}
\end{align}\end{subequations}
\vspace{-0.4cm}
\begin{subequations}
\label{A16}
\begin{align}
 q_{13}^0(\boldsymbol{x}_{\setminus 1})&=\frac{\alpha}{64}   (\cos (2 \alpha  x_2)+2 \cos (2 \alpha  x_3)+\cos (2 \alpha  x_4))
\label{A16a} \\
u_{13}(\boldsymbol{\xi})&=\sin (2 \xi_1-2 \xi_2-2 \xi_3)
\label{A16b}
\end{align}\end{subequations}
\vspace{-0.4cm}
\begin{subequations}
\label{A17}
\begin{align}
q_{14}^0(\boldsymbol{x}_{\setminus 1})&=-\frac{\alpha}{64}   (\cos (2 \alpha  x_2)+\cos (2 \alpha  x_3))
\label{A17a} \\
u_{14}(\boldsymbol{\xi})&=\sin (2 \xi_1+2 \xi_2-2 \xi_3)
\label{A17b}
\end{align}\end{subequations}
\vspace{-0.4cm}
\begin{subequations}
\label{A18}
\begin{align}
q_{15}^0(\boldsymbol{x}_{\setminus 1})&=\frac{\alpha}{32}   \left(\cos (2 \alpha  x_2)+\cos (2 \alpha  x_4)\right)
\label{A18a} \\
u_{15}(\boldsymbol{\xi})&=\sin (2 \xi_3)
\label{A18b}
\end{align}\end{subequations}
\vspace{-0.4cm}
\begin{subequations}
\label{A19}
\begin{align}
q_{16}^0(\boldsymbol{x}_{\setminus 1})&=\frac{\alpha}{64}  \left( \cos (2 \alpha x_2 +2 \alpha  x_3)+\cos (2 \alpha  x_3+2 \alpha  x_4)\right)
\label{A19a} \\
u_{16}(\boldsymbol{\xi})&=\sin (2 \xi_1+2 \xi_3)
\label{A19b}
\end{align}\end{subequations}
\vspace{-0.4cm}
\begin{subequations}
\label{A20}
\begin{align}
q_{17}^0(\boldsymbol{x}_{\setminus 1})&=-\frac{\alpha}{64}  (\cos (2 \alpha  x_3)+\cos (2 \alpha  x_4))
\label{A20a} \\
u_{17}(\boldsymbol{\xi})&=\sin (2 \xi_1-2 \xi_2+2 \xi_3)
\label{A20b}
\end{align}\end{subequations}
\vspace{-0.4cm}
\begin{subequations}
\label{A21}
\begin{align}
q_{18}^0(\boldsymbol{x}_{\setminus 1})&=-\frac{\alpha}{64}   (\cos (2 \alpha  x_2)+\cos (2 \alpha  x_3))
\label{A21a} \\
u_{18}(\boldsymbol{\xi})&=\sin (2 \xi_1-2 \xi_2-2 \xi_4)
\label{A21b}
\end{align}\end{subequations}
\vspace{-0.4cm}
\begin{subequations}
\label{A22}
\begin{align}
q_{19}^0(\boldsymbol{x}_{\setminus 1})&=\frac{\alpha}{64}   (\cos (2 \alpha  x_2-2 \alpha  x_3)-2 \cos (2 \alpha  x_2-2 \alpha  x_4)-\cos (2 \alpha  x_3-2 \alpha  x_4))
\label{A22a} \\
u_{19}(\boldsymbol{\xi})&=\sin (2 \xi_2-2 \xi_4)
\label{A22b}
\end{align}\end{subequations}
\vspace{-0.4cm}
\begin{subequations}
\label{A23}
\begin{align}
q_{20}^0(\boldsymbol{x}_{\setminus 1})&=\frac{\alpha}{64}   (\cos (2 \alpha  x_3)+\cos (2 \alpha  x_4))
\label{A23a} \\
u_{20}(\boldsymbol{\xi})&=\sin (2 \xi_1+2 \xi_2-2 \xi_4)
\label{A23b}
\end{align}\end{subequations}
\vspace{-0.4cm}
\begin{subequations}
\label{A24}
\begin{align}
q_{21}^0(\boldsymbol{x}_{\setminus 1})&=\frac{\alpha}{64}   (\cos (2 \alpha  x_4)-\cos (2 \alpha  x_3))
\label{A24a} \\
u_{21}(\boldsymbol{\xi})&=\sin (2 \xi_1-2 \xi_3-2 \xi_4)
\label{A24b}
\end{align}\end{subequations}
\vspace{-0.4cm}
\begin{subequations}
\label{A25}
\begin{align}
q_{22}^0(\boldsymbol{x}_{\setminus 1})&=\frac{\alpha}{64}   (\cos (2 \alpha  x_2+2 \alpha  x_3)+\cos (2 \alpha  x_2+2 \alpha  x_4)+\cos (2 \alpha  x_3+2 \alpha  x_4))
\label{A25a} \\
u_{22}(\boldsymbol{\xi})&=\sin (2 \xi_1-2 \xi_2-2 \xi_3-2 \xi_4)
\label{A25b}
\end{align}\end{subequations}
\vspace{-0.4cm}
\begin{subequations}
\label{A26}
\begin{align}
q_{23}^0(\boldsymbol{x}_{\setminus 1})&=\frac{\alpha}{64}   (\cos (2 \alpha  x_2)-\cos (2 \alpha  x_3))
\label{A26a} \\
u_{23}(\boldsymbol{\xi})&=\sin (2 \xi_2-2 \xi_3-2 \xi_4)
\label{A26b}
\end{align}\end{subequations}
\vspace{-0.4cm}
\begin{subequations}
\label{A27}
\begin{align}
q_{24}^0(\boldsymbol{x}_{\setminus 1})&=-\frac{\alpha}{32}  \cos (2 \alpha  x_2-2 \alpha  x_4)
\label{A27a} \\
u_{24}(\boldsymbol{\xi})&=\sin (2 \xi_1+2 \xi_2-2 \xi_3-2 \xi_4)
\label{A27b}
\end{align}\end{subequations}
\vspace{-0.4cm}
\begin{subequations}
\label{A28}
\begin{align}
q_{25}^0(\boldsymbol{x}_{\setminus 1})&=\frac{\alpha}{64}   (\cos (2 \alpha  x_2-2 \alpha  x_3)-\cos (2 \alpha  x_3-2 \alpha  x_4))
\label{A28a} \\
u_{25}(\boldsymbol{\xi})&=\sin (2 \xi_3-2 \xi_4)
\label{A28b}
\end{align}\end{subequations}
\vspace{-0.4cm}
\begin{subequations}
\label{A29}
\begin{align}
q_{26}^0(\boldsymbol{x}_{\setminus 1})&=\frac{\alpha}{64}   (\cos (2 \alpha  x_3)+\cos (2 \alpha  x_4))
\label{A29a} \\
u_{26}(\boldsymbol{\xi})&=\sin (2 \xi_1+2 \xi_3-2 \xi_4)
\label{A29b}
\end{align}\end{subequations}
\vspace{-0.4cm}
\begin{subequations}
\label{A30}
\begin{align}
q_{27}^0(\boldsymbol{x}_{\setminus 1})&=\frac{\alpha}{32}   \cos (2 \alpha  x_3)
\label{A30a} \\
u_{27}(\boldsymbol{\xi})&=\sin (2 \xi_2+2 \xi_3-2 \xi_4)
\label{A30b}
\end{align}\end{subequations}
\vspace{-0.4cm}
\begin{subequations}
\label{A31}
\begin{align}
q_{28}^0(\boldsymbol{x}_{\setminus 1})&=\frac{\alpha}{64}   \cos (2 \alpha  x_2+2 \alpha  x_4)
\label{A31a} \\
u_{28}(\boldsymbol{\xi})&=\sin (2 \xi_1+2 \xi_2+2 \xi_3-2 \xi_4)
\label{A31b}
\end{align}\end{subequations}
\vspace{-0.4cm}
\begin{subequations}
\label{A32}
\begin{align}
q_{29}^0(\boldsymbol{x}_{\setminus 1})&=-\frac{\alpha}{32} (\cos (2 \alpha  x_2)+\cos (2 \alpha  x_4))
\label{A32a} \\
u_{29}(\boldsymbol{\xi})&=\sin (2 \xi_4)
\label{A32b}
\end{align}\end{subequations}
\vspace{-0.4cm}
\begin{subequations}
\label{A33}
\begin{align}
q_{30}^0(\boldsymbol{x}_{\setminus 1})&=\frac{\alpha}{64}   (\cos (2 \alpha  x_2)+2 \cos (2 \alpha  x_3)+\cos (2 \alpha  x_4))
\label{A33a} \\
u_{30}(\boldsymbol{\xi})&=\sin (2 \xi_1-2 \xi_2+2 \xi_4)
\label{A33b}
\end{align}\end{subequations}
\vspace{-0.4cm}
\begin{subequations}
\label{A34}
\begin{align}
q_{31}^0(\boldsymbol{x}_{\setminus 1})&=-\frac{\alpha}{64}   (\cos (2 \alpha  x_2+2 \alpha  x_3)+\cos (2 \alpha  x_3+2 \alpha  x_4))
\label{A34a} \\
u_{31}(\boldsymbol{\xi})&=\sin (2 \xi_2+2 \xi_4)
\label{A34b}
\end{align}\end{subequations}
\vspace{-0.4cm}
\begin{subequations}
\label{A35}
\begin{align}
q_{32}^0(\boldsymbol{x}_{\setminus 1})&=\frac{\alpha}{32}   \cos (2 \alpha  x_3)
\label{A35a} \\
u_{32}(\boldsymbol{\xi})&=\sin (2 \xi_1-2 \xi_3+2 \xi_4)
\label{A35b}
\end{align}\end{subequations}
\vspace{-0.4cm}
\begin{subequations}
\label{A36}
\begin{align}
q_{33}^0(\boldsymbol{x}_{\setminus 1})&=\frac{\alpha}{64}   (\cos (2 \alpha  x_2)+\cos (2 \alpha  x_3))
\label{A36a} \\
u_{33}(\boldsymbol{\xi})&=\sin (2 \xi_2-2 \xi_3+2 \xi_4)
\label{A36b}
\end{align}\end{subequations}
\vspace{-0.4cm}
\begin{subequations}
\label{A37}
\begin{align}
q_{34}^0(\boldsymbol{x}_{\setminus 1})&=\frac{\alpha}{64}   \cos (2 \alpha  x_2+2 \alpha  x_4)\label{A37a} \\
u_{34}(\boldsymbol{\xi})&=-\sin (2 \xi_1-2 \xi_2+2 \xi_3+2 \xi_4)+\sin (2 \xi_1+2 \xi_2-2 \xi_3+2 \xi_4)+2 \sin (2 \xi_3+2 \xi_4)
\label{A37b}
\end{align}\end{subequations}
\vspace{-0.4cm}
\begin{subequations}
\label{A38}
\begin{align}
q_{35}^0(\boldsymbol{x}_{\setminus 1})&=\frac{\alpha}{64} (\cos (2 \alpha x_2+2 \alpha x_3)+\cos (2 \alpha x_3+2 \alpha x_4))\label{A38a} \\
u_{35}(\boldsymbol{\xi})&=\sin (2 \xi_3+2 \xi_4)-\sin (2 \xi_1-2 \xi_2+2 \xi_3+2 \xi_4)
\label{A38b}
\end{align}\end{subequations}
Equations~\eqref{A4b}--\eqref{A38b} define the thirty-five phase-dependent coefficient functions
appearing in the unreduced representation~\eqref{A3}. Their simultaneous vanishing is sufficient
but, because the associated spatial functions do not constitute a linearly independent Fourier
basis, is not necessary for $\mathcal{V}_1^0\equiv0$.

The proposed solution $(\xi_1, \xi_2, \xi_3, \xi_4) = (-\frac{\pi}{4}, \frac{\pi}{4}, \frac{\pi}{4}, \frac{\pi}{4})$ exhibits a notable pattern: one parameter takes the value $-\pi/4$ while the remaining three take $+\pi/4$. This creates systematic cancellations in the trigonometric expressions because $\cos(2 \cdot \pm\pi/4) = \cos(\pm\pi/2) = 0$, eliminating many terms, while $\sin(2 \cdot \pm\pi/4) = \sin(\pm\pi/2) = \pm 1$, creating balanced combinations that produce cancellations in the $35$-term representation.

Substituting $(\xi_1, \xi_2, \xi_3, \xi_4) = (-\frac{\pi}{4}, \frac{\pi}{4}, \frac{\pi}{4}, \frac{\pi}{4})$ into the constraint equation~\eqref{A3}, referred to in the main text as equation~\eqref{4.12}, we verify that $\mathcal{V}_1^0(\boldsymbol{x}_{\setminus 1}, \boldsymbol{\xi}) = 0$ for all spatial coordinates $x_i \in (-\infty, \infty)$, $i = 2, 3, 4$.

Substitution therefore verifies directly that $\mathcal{V}_1^0\equiv0$, confirming that the
systematic phase-angle methodology identifies the exact four-dimensional quadruple-periodic
solution presented in equations~\eqref{4.11}.
\else
The four-dimensional inertial term $g_1^0(\boldsymbol{x},\boldsymbol{\xi})$ generated
by~\eqref{3.1} at $n=4$, its $x_1$-independent part
$\mathcal{V}_1^0(\boldsymbol{x}_{\setminus 1},\boldsymbol{\xi})$, and the resulting separated
system of spatial coefficient functions $q_j^0$ and phase amplitudes $u_j$ are recorded in full in
the supplementary material accompanying this paper. The reduction produces $35$ coefficient
functions; the analysis that uses them is given in Subsection~\ref{sec:4D}, and the solutions it
yields are stated there in closed form.
\fi

\section{Quintuple-periodic construction: the overdetermined constraint system}
\label{app:5D}
\subsection*{B.1\quad The phase-angle constraint system}
The quintuple-periodic field~\eqref{5.1} generates a substantially larger constraint system than
the triple- and quadruple-periodic cases. Writing
\begin{equation} \label{B1}
\frac{\mathcal{V}_1^0(\boldsymbol{x}_{\setminus 1}, \boldsymbol{\xi})}{v_r^2}
= \sum_{j=1}^{64} q_j^0(\boldsymbol{x}_{\setminus 1})\, u_j(\boldsymbol{\xi}),
\end{equation}
the reduction yields $64$ pairs $\{q_j^0,u_j\}$ comprising $938$ trigonometric terms in all. The
complete system is recorded in the supplementary material accompanying this paper. Its structure is
uniform: each $q_j^0$ is a single harmonic of $(x_2,\dots,x_5)$ with prefactor $\pm\alpha/256$, and
each $u_j$ is a sum of harmonics of the phases with integer coefficients. The two halves are of
opposite parity, $q_j^0$ being a sine and $u_j$ a sum of cosines for $1\le j\le32$, and the reverse
for $33\le j\le64$.

The $64$ functions $q_j^0$ are pairwise distinct harmonics of $(x_2,\dots,x_5)$---no two share the
same trigonometric type and index vector---and are therefore linearly independent. Unlike the
unreduced four-dimensional expansion of Appendix~\ref{app:4D}, in which the coefficient functions
are not independent, the vanishing of $\mathcal{V}_1^0$ is here equivalent to the vanishing of
every $u_j$ separately, which is what makes the system~\eqref{B1} the exact object treated
in~\S B.2.

The non-existence result of Subsection~\ref{subsec:5Dscscs} rests on this system, and the
certificate below establishes it without requiring the individual equations.

\subsection*{B.2\quad Gr\"obner-basis reduction of the five-dimensional phase system}
The trigonometric system $u_j(\boldsymbol{\xi})=0$ is converted to a polynomial one by the
substitution $s_i=\sin 2\xi_i$, $c_i=\cos 2\xi_i$. Every $u_j$ is a sum of sines and cosines of
even integer combinations of the phases, so under the angle-addition identities each becomes a
polynomial in $(s_1,c_1,\ldots,s_5,c_5)$. Adjoining the five Pythagorean relations
\begin{equation}
\label{B65}
s_i^2+c_i^2-1=0,\qquad i=1,\ldots,5,
\end{equation}
realises the reduction condition as a polynomial ideal
$I=\langle u_1,\ldots,u_{64},\,s_1^2+c_1^2-1,\ldots,s_5^2+c_5^2-1\rangle
\subset\mathbb{Q}[s_1,c_1,\ldots,s_5,c_5]$, whose real variety is the set of phase vectors
satisfying the reduction condition. Its reduced Gr\"obner basis is computed in exact rational arithmetic using SymPy~1.14 for the lexicographic order $s_1\succ c_1\succ s_2\succ c_2\succ\cdots\succ s_5\succ c_5$.

The elimination is summarised by the following relations, each of which is a consequence of the
ideal: the stated polynomial (or a fixed power of it) reduces to zero against the computed
Gr\"obner basis, and each is therefore an exact algebraic consequence of the reduction
condition, not a numerical observation. Because the polynomial variables record only the doubled
angles $2\xi_i$, the first relation to isolate the pair $(\xi_1,\xi_2)$ is
\begin{equation}
\label{B66}
s_1c_2-c_1s_2=\sin(2\xi_1-2\xi_2)=0,
\end{equation}
which admits the branches $\xi_2=\xi_1+k\pi/2$. The ideal contains a sharper relation that
removes the half-integer branches,
\begin{equation}
\label{B67}
c_1c_2+s_1s_2-1=\cos(2\xi_1-2\xi_2)-1=0
\quad\Longrightarrow\quad \xi_2=\xi_1 \pmod{\pi},
\end{equation}
so that of the branches admitted by~\eqref{B66} only $\xi_2=\xi_1$ (and its reflection
$\xi_2=\xi_1+\pi$) survives. The same mechanism fixes $\xi_4$ and $\xi_5$ through
\begin{equation}
\label{B68}
c_1c_4+s_1s_4-1=0,\qquad c_3c_5+s_3s_5-1=0,
\end{equation}
giving $\cos(2\xi_1-2\xi_4)=1$ and $\cos(2\xi_3-2\xi_5)=1$, hence $\xi_4=\xi_1$ and
$\xi_5=\xi_3$ modulo $\pi$; and the offset between the two groups of phases is fixed by
\begin{equation}
\label{B69}
c_1c_3+s_1s_3+1=\cos(2\xi_1-2\xi_3)+1=0
\quad\Longrightarrow\quad \xi_3=\xi_1\pm\tfrac{\pi}{2},
\end{equation}
which is~\eqref{5.5}. Relations~\eqref{B66}--\eqref{B69} reduce the system to the one-parameter
family~\eqref{5.6}, with the residual sign in~\eqref{B69} contributing only the reflection
$a\mapsto a+\pi$; the requirement that the velocity field be non-trivial excludes no further
branch. The resulting family was also checked independently by direct numerical evaluation of
the reduction condition.

Relations~\eqref{B66}--\eqref{B69} are the combinations that determine the solution set; the
remaining generators of the reduced basis serve only to certify these, and the complete basis is
reproducible from the ideal above with any computer-algebra system supporting exact Gr\"obner
computation.
\section{The six-dimensional reduction}
\label{app:6D}
This appendix records the six-dimensional calculation on which Section~\ref{sec:6D} rests, in the
form a reader would need to reproduce it. It is the even-dimensional counterpart of
Appendix~\ref{app:4D}. No Gr\"obner-basis certificate accompanies it, and the reason is structural
rather than practical: Appendices~\ref{app:5D} and~\ref{app:7D} certify that the real variety of a
\emph{reduction} system is trivial, which is what an elimination computation is for, whereas in six
dimensions the reduction condition \emph{holds}. What fails is integrability, and the evidence for
that is an explicitly exhibited non-vanishing mixed derivative---an identity, not a variety. The
corresponding verification is therefore a direct symbolic evaluation, and is included in the script
of Appendix~\ref{app:verify}.

\subsection*{C.1\quad The velocity field at the symmetric phases}
At the symmetric assignment~\eqref{6.2} the first component of~\eqref{6.1} collapses, after
reduction to multiple-angle form, to the four-term expression
\begin{eqnarray}
\label{A6.1}
\frac{v_1^0}{v_r}&=&\tfrac14\Bigl[
\sin(\alpha x_1\!+\!\alpha x_2)\sin(\alpha x_3\!+\!\alpha x_5)\cos(\alpha x_4\!-\!\alpha x_6)
\nonumber\\ && \qquad
-\sin(\alpha x_1\!+\!\alpha x_2)\sin(\alpha x_4\!+\!\alpha x_6)\cos(\alpha x_3\!-\!\alpha x_5)
\nonumber\\ && \qquad
+\sin(\alpha x_3\!+\!\alpha x_5)\sin(\alpha x_4\!+\!\alpha x_6)\cos(\alpha x_1\!-\!\alpha x_2)
\nonumber\\ && \qquad
-\cos(\alpha x_1\!-\!\alpha x_2)\cos(\alpha x_3\!-\!\alpha x_5)\cos(\alpha x_4\!-\!\alpha x_6)
\Bigr]\nonumber\\
\end{eqnarray}
the remaining components following by cyclic permutation. The pairing of coordinates at even
offset---$(1,2)$ is here the exception forced by the $-\tfrac{\pi}{4}$ in $\xi_1$, while
$(3,5)$ and $(4,6)$ pair coordinates of equal parity---is the first appearance of the parity
structure that governs the whole calculation.

\subsection*{C.2\quad The convective field and its degree structure}
Forming $g_1^0=\sum_j v_j^0\,\partial v_1^0/\partial x_j$ and reducing gives~\eqref{6.3}, which we
have verified in exact arithmetic against the direct expansion. Collecting its terms by the number
of harmonic factors they carry gives the degree structure quoted in
Subsection~\ref{subsec:6D-integrability}:
\[
\begin{array}{lcc}
\hline
& \text{degree }2 & \text{degree }4\\
\hline
\text{number of Fourier modes in }\boldsymbol{g}^0 & 48 & 384\\
\hline
\end{array}
\]
For comparison the four-dimensional field carries $16$ modes, all of degree two, and the
eight-dimensional field carries $96$, $2176$ and $4608$ modes of degrees two, four and six. The
degree-two part is curl-free in every even dimension by Lemma~\ref{lem:even-bilinear}; the
degree-four part is what six dimensions adds, and it is what obstructs integrability.

\subsection*{C.3\quad The integrability failure}
The mixed derivative on the representative odd-offset pair $(1,2)$ is given in factored form
by~\eqref{6.11}. Expanded by the product-to-sum identities it is a sum of sixteen distinct real
trigonometric harmonics with coefficients $\pm\alpha^2v_r^2/16$, none of which cancels; the factored form is the more useful
one, since the bracket
$\sin(2\alpha x_3)\sin(2\alpha x_5)-\sin(2\alpha x_4)\sin(2\alpha x_6)$
exhibits directly the competition between the two parity classes that cannot be resolved. On the
even-offset pairs the same computation returns zero identically. Both statements are verified
symbolically by the script of Appendix~\ref{app:verify}, which also confirms that at $n=4$ the
mixed derivatives vanish on \emph{all} pairs---the dichotomy of
Theorem~\ref{thm:even-dichotomy} exhibited directly.

\subsection*{C.4\quad The recovered pressure and the residual}
Sequential integration of the gradient-compatible part of $\boldsymbol{g}^0$ yields the closed-form
potential~\eqref{6.12}, and the residual it leaves against the full convective field
is~\eqref{6.13}: a single degree-four monomial per component, one harmonic in the component's own
coordinate multiplied by three in the complementary parity class. This residual is the body-force
amplitude~\eqref{6.14} of the forced solution of Subsection~\ref{subsec:6D-forced}, and its
Helmholtz split into one quarter longitudinal and three quarters transverse
is~\eqref{eq:helm6}.

\section{Seven-dimensional constraint system}
\label{app:7D}
\subsection*{D.1\quad The phase-angle constraint system}
This section records the structure of the separated form~\eqref{7.2},
$\mathcal{V}_1^0/v_r^2 = \sum_{j=1}^{664} q_j^0 u_j$, obtained by substituting the septuple-periodic
field~\eqref{7.1} into~\eqref{3.10} and retaining the terms free of $x_1$. It is the
seven-dimensional counterpart of Appendix~\ref{app:5D}, which lists the corresponding
sixty-four pairs for five dimensions.

The spatial functions are the real harmonics
$q^0 \propto \cos(2\alpha\,\boldsymbol{m}\cdot\boldsymbol{x})$ and
$\sin(2\alpha\,\boldsymbol{m}\cdot\boldsymbol{x})$, where the wavevectors $\boldsymbol{m}$ have
entries in $\{-1,0,+1\}$, are supported on $(x_2,\dots,x_7)$, and comprise $332$ distinct wavevectors up to overall sign. Each carries a common factor $\alpha/4096$, so that the amplitudes $u_j(\boldsymbol{\xi})$
have integer coefficients. Table~\ref{tab:7Dmodes} gives the census by the number of coordinates
a wavevector involves.

\begin{table}
\centering
\begin{tabular}{lcccccr}
\hline
coordinates involved & 1 & 2 & 3 & 4 & 5 & total\\
\hline
wavevectors $\boldsymbol{m}$ (up to sign) & 6 & 30 & 80 & 120 & 96 & 332\\
real harmonics $q_j^0$ & 12 & 60 & 160 & 240 & 192 & 664\\
\hline
\end{tabular}
\caption{Census of the spatial modes appearing in the seven-dimensional separated
form~\eqref{7.2}. Each wavevector contributes a cosine and a sine harmonic, giving $664$
constraint functions $u_j$ in the seven unknowns $\xi_1,\dots,\xi_7$.}
\label{tab:7Dmodes}
\end{table}

The amplitudes $u_j$ contain between $6$ and $150$ trigonometric terms, with a median of $41$; written out in full, $\mathcal{V}_1^0$ comprises $15{,}307$ multiple-angle terms.

\medskip
\noindent\textbf{Supplementary material.} A complete listing of all $664$ pairs
$\{q_j^0,\,u_j\}$, with every coefficient given exactly, accompanies this paper as the
supplementary material accompanying this paper, in the format of Appendix~\ref{app:5D}; it is far
too long to reproduce here. With the common factor $\alpha/4096$ extracted into the spatial harmonics, the amplitudes have exact integer coefficients, and the file is generated directly by the computation described below, so that any individual constraint $u_j=0$ may be located and checked.

\medskip
\noindent The following pair, associated with one of the more compact wavevector contributions, is given here in full as a specimen of the format. Using the full seven-dimensional wavevector notation, it corresponds to $\boldsymbol{m}=(0,0,1,1,1,1,1)$:
\begin{subequations}
\label{Cs1}
\begin{align}
q_1^0(\boldsymbol{x}_{\setminus 1}) &= \frac{\alpha}{4096}
\cos\!\left(2\alpha x_3 + 2\alpha x_4 + 2\alpha x_5 + 2\alpha x_6 + 2\alpha x_7\right),
\label{Cs1a}\\
u_1(\boldsymbol{\xi}) &=
-\sin(-2\xi_1 + 2\xi_2 + 2\xi_3 + 2\xi_4 + 2\xi_5 + 2\xi_6 + 2\xi_7)+ \nonumber\\
&\quad + \sin(2\xi_3 + 2\xi_4 + 2\xi_5 + 2\xi_6 + 2\xi_7) - \sin(2\xi_2 + 2\xi_4 + 2\xi_5 + 2\xi_6 + 2\xi_7)- \nonumber\\
&\quad - \sin(2\xi_1 - 2\xi_2 + 2\xi_3 + 2\xi_4 + 2\xi_5 + 2\xi_6 + 2\xi_7)- \nonumber\\
&\quad - \sin(2\xi_1 + 2\xi_3 + 2\xi_4 + 2\xi_5 + 2\xi_6) - \sin(2\xi_1 + 2\xi_2 + 2\xi_4 + 2\xi_5 + 2\xi_6), \label{Cs1b}
\end{align}
\end{subequations}
together with the corresponding sine spatial harmonic:
\begin{equation}
\label{Cs2}
q_2^0(\boldsymbol{x}_{\setminus 1}) = \frac{\alpha}{4096}
\sin\!\left(2\alpha x_3 + 2\alpha x_4 + 2\alpha x_5 + 2\alpha x_6 + 2\alpha x_7\right),
\qquad
u_2(\boldsymbol{\xi}) = u_1(\boldsymbol{\xi})\big|_{\sin\to\cos}.
\end{equation}
The computation was carried out in exact rational arithmetic by expanding each product of seven
trigonometric factors into complex exponentials, so that no floating-point error enters; the
same procedure applied at $n=5$ reproduces the sixty-four pairs of Appendix~\ref{app:5D}, and at
$n=3$ the two amplitudes $u_1,u_2$ of Section~\ref{sec:construction}. Enforcing $\mathcal{V}_1^0\equiv0$ requires all $664$ amplitudes to vanish simultaneously. The resulting overdetermined system is analysed in Section~\ref{sec:7D} and Appendix~\ref{app:7D}; the exact phase family identified there annihilates the velocity field, while extensive numerical searches recover no additional real branches.

\noindent As in five dimensions, the checks below can be reproduced with \texttt{certify.py}
(Appendix~\ref{app:verify}): it confirms that all $664$ amplitudes vanish on the
family~\eqref{7.5} and writes the ideal~\eqref{C1} to a \textsc{Singular} input file.

\subsection*{D.2\quad A Gr\"obner-basis certificate for the reduction}

This section certifies, by exact radical-membership reduction against a computed
Gr\"obner basis, the decisive algebraic relations through which the reduction
condition $\mathcal{V}_1^0\equiv0$ forces the annihilating family~\eqref{7.5} in
seven dimensions. It is the seven-dimensional counterpart of
Appendix~\ref{app:5D}, where the corresponding relations
\eqref{B66}--\eqref{B69} were certified in the same way for five
dimensions.

Writing $s_i=\sin 2\xi_i$ and $c_i=\cos 2\xi_i$ and adjoining the seven
Pythagorean relations $s_i^2+c_i^2-1$ realises the $664$ constraints
$u_j(\boldsymbol{\xi})=0$ as an ideal
\begin{equation}
\label{C1}
I=\bigl\langle\, u_1,\ldots,u_{664},\ s_1^2+c_1^2-1,\ \ldots,\ s_7^2+c_7^2-1\,\bigr\rangle,
\qquad
I\subset\mathbb{Q}[s_1,c_1,\ldots,s_7,c_7],
\end{equation}
whose real variety is the set of phase vectors satisfying the reduction
condition. The seven decisive relations are
\begin{subequations}
\label{C2}
\begin{align}
c_1c_2+s_1s_2-1 &= 0, &&\Longrightarrow\ \cos 2(\xi_1-\xi_2)=1,\ \ \xi_2=\xi_1\!\!\pmod{\pi},\label{C2a}\\
c_1c_4+s_1s_4-1 &= 0, &&\Longrightarrow\quad \xi_4=\xi_1\!\!\pmod{\pi},\label{C2b}\\
c_1c_6+s_1s_6-1 &= 0, &&\Longrightarrow\quad \xi_6=\xi_1\!\!\pmod{\pi},\label{C2c}\\
c_1c_3+s_1s_3+1 &= 0, &&\Longrightarrow\ \cos 2(\xi_1-\xi_3)=-1,\ \ \xi_3=\xi_1\pm\tfrac{\pi}{2},\label{C2d}\\
c_1c_5+s_1s_5+1 &= 0, &&\Longrightarrow\quad \xi_5=\xi_1\pm\tfrac{\pi}{2},\label{C2e}\\
c_1c_7+s_1s_7+1 &= 0, &&\Longrightarrow\quad \xi_7=\xi_1\pm\tfrac{\pi}{2},\label{C2f}
\end{align}
together with the first factor
\begin{equation}
s_1c_2-c_1s_2 = \sin 2(\xi_1-\xi_2)=0,\label{C2g}
\end{equation}
\end{subequations}
which selects the branch $\xi_2=\xi_1\pmod{\pi}$ refined by~\eqref{C2a}. Taken
together, \eqref{C2a}--\eqref{C2f} give
$\xi_2=\xi_4=\xi_6=\xi_1\pmod{\pi}$ and
$\xi_3=\xi_5=\xi_7=\xi_1\pm\tfrac{\pi}{2}\pmod{\pi}$.
Since $+\tfrac{\pi}{2}$ and $-\tfrac{\pi}{2}$ differ by $\pi$, the latter
alternatives represent the same phase class modulo $\pi$. Thus, modulo the
sign equivalences discussed in Section~\ref{sec:7D}, the non-redundant solution
set is precisely the one-parameter family~\eqref{7.5}.

The reduction~\eqref{C2} is established exactly below; we first record two direct
checks. The family~\eqref{7.5} lies on the variety \emph{exactly}: substituting
$\boldsymbol{\xi}=(a,a,a-\tfrac{\pi}{2},a,a-\tfrac{\pi}{2},a,a-\tfrac{\pi}{2})$
with $a$ symbolic and reducing in exact arithmetic, every one of the $664$
amplitudes $u_j$ vanishes identically in $a$, confirming that~\eqref{7.5} is an
exact solution of the full reduction system and not merely a numerical one.
As an independent numerical check, a direct search over the real phase space, seeded from randomised
phase vectors and iterated to residual below $10^{-9}$ across all $664$
constraints, returns only phase vectors satisfying~\eqref{C2}: every recovered
solution obeys $\xi_2\equiv\xi_4\equiv\xi_6\equiv\xi_1\pmod{\pi}$ and
$\xi_3\equiv\xi_5\equiv\xi_7\equiv\xi_1\pm\tfrac{\pi}{2}\pmod{\pi}$, with no
further branches.

Exact rational elimination on the $664$ amplitudes exposes the algebraic
structure behind~\eqref{C2} directly. Row-reducing the integer coefficient
matrix of the $u_j$ over their shared monomial basis --- a purely linear
operation carried out in exact arithmetic --- produces sparse cubic polynomials
that are, by construction, exact rational-linear combinations of the $u_j$ and
therefore genuine elements of the ideal~\eqref{C1}; one such generator is
\begin{equation}
\label{C3}
c_1c_5s_6+c_1c_6s_5-c_2c_4s_5-c_2c_5s_4+c_4c_5s_2-c_5c_6s_1
+s_1s_5s_6-s_2s_4s_5-s_1+s_2-s_4+s_6 \;=\; 0,
\end{equation}
which involves only the phases $\xi_1,\xi_2,\xi_4,\xi_5,\xi_6$ and
vanishes identically on~\eqref{7.5}. Each relation of~\eqref{C2} is a member of
the radical $\sqrt{I}$ of the ideal~\eqref{C1}, and therefore vanishes on the
entire algebraic variety of $I$, in particular on its real variety: computing a
Gr\"obner basis of $I$ in exact rational arithmetic and
reducing each relation against it, the six pairwise relations
\eqref{C2a}--\eqref{C2f} satisfy $R^2\in I$ and the first factor~\eqref{C2g}
satisfies $R^3\in I$, so a fixed power of each reduces to zero. This is the exact
seven-dimensional analogue of the radical-membership certificates
\eqref{B66}--\eqref{B69} of Appendix~\ref{app:5D}, where the same
relations required squares. Since a polynomial some power of which lies in $I$
vanishes wherever the generators of $I$ vanish, the relations~\eqref{C2} hold on
every real phase vector satisfying the reduction condition; together they give
$\xi_2=\xi_4=\xi_6=\xi_1\pmod{\pi}$ and
$\xi_3=\xi_5=\xi_7=\xi_1\pm\tfrac{\pi}{2}\pmod{\pi}$. Since $+\tfrac{\pi}{2}$
and $-\tfrac{\pi}{2}$ differ by $\pi$, these are the same phase class modulo
$\pi$, so, modulo the sign equivalences of Section~\ref{sec:7D}, the
non-redundant solution set is exactly the family~\eqref{7.5}. The reduction
condition therefore admits no real solution outside~\eqref{7.5}.

The certificate was computed in exact arithmetic. The amplitudes $u_j$ are
generated from~\eqref{7.1} by expanding each product of seven trigonometric
factors into complex exponentials over $\mathbb{Q}(\mathrm{i})$, so that no
floating-point error enters; the ideal~\eqref{C1}, in fourteen variables with
dense degree-seven generators, is considerably heavier than its five-dimensional
counterpart. Its Gr\"obner basis was obtained over $\mathbb{Q}$ in the
degree-reverse-lexicographic order with \textsc{Singular}~4.3.2
\citep{singular}, and comprises $103$ elements; the ideal has Krull
dimension~$1$, consistent with the one-parameter family~\eqref{7.5}, and its
basis is non-trivial (its leading element is $s_7^2+c_7^2-1$, so $I\neq(1)$).
Reducing each relation of~\eqref{C2} against this basis yields the powers stated
above, $R^2\in I$ for~\eqref{C2a}--\eqref{C2f} and $R^3\in I$ for~\eqref{C2g},
each reduction returning zero exactly. The exact substitution of~\eqref{7.5}
into the full $664$-constraint system, which annihilates every $u_j$ identically,
shows conversely that every member of~\eqref{7.5} satisfies the reduction
condition. The two directions together establish that~\eqref{7.5} is exactly the
real solution set of the reduction system.

\section{Reproducing the verifications}
\label{app:verify}
The results reported in Sections~\ref{sec:solutions-3D-4D}--\ref{sec:8D} and
Section~\ref{sec:conclusion} can be reproduced
directly from the construction. The script below builds the velocity field
from~\eqref{3.1}--\eqref{3.2b}, forms the convective term, and tests the reduction
condition~\eqref{3.13} and the pressure-integrability condition~\eqref{3.16} for the phase vectors
quoted in the text.

We verify rather than re-derive. The Gr\"obner basis underlying Lemma~\ref{lem:5D_SCSCS} is a
substantial computation---the ideal carries $133$ generators in ten variables---and re-computing it
in a general-purpose system is impractical; the certificates were obtained with dedicated computer
algebra, as described in Appendix~\ref{app:5D}. What the script establishes is the statement that
matters to a reader checking the paper: that the phase vectors reported here satisfy the stated
conditions, that the five-dimensional family~\eqref{5.6} annihilates the velocity field
identically, and that the tests discriminate---a phase vector near but not on that family fails
them.

\subsection*{Requirements and use}
Python~3.8 or later and SymPy~1.13 or later are sufficient; no other package is required, and no
data files are read. If SymPy is not already present it is installed with
\begingroup\small
\begin{verbatim}
python3 -m pip install sympy
\end{verbatim}
\endgroup

\noindent Both scripts, \texttt{verify\_ns.py} and \texttt{certify.py}, are supplied with this
paper as ancillary files. On the arXiv they are listed under \emph{Ancillary files} on the abstract
page and may be downloaded individually; in the journal version they accompany the supplementary
material. Downloading them is preferable to copying the listing out of a PDF, where indentation and
quotation marks are easily corrupted; the listing below is given so that the paper is
self-contained, not as the recommended route.

To run a script, open a terminal, move to the directory holding the downloaded file, and invoke it
by name. On a Unix-like system, if the file is in \texttt{Downloads},
\begingroup\small
\begin{verbatim}
cd ~/Downloads
python3 verify_ns.py
\end{verbatim}
\endgroup

\noindent The \texttt{cd} command changes the directory the terminal is looking at; without it,
Python will report that it cannot open the file even though the file exists elsewhere. The command
is the whole line \texttt{python3 verify\_ns.py}: the first word names the interpreter, the second
the file.

\medskip
\noindent\textbf{What to expect.} The run takes about six minutes. Output appears block by block,
but the four- and six-dimensional blocks each take two to three minutes during which nothing is
printed; the script has not stalled. The full expected output is reproduced at the end of this
appendix, and readers should compare against it rather than against expectations formed from the
block titles alone. In particular:
\begin{itemize}
\item the first four blocks report \texttt{PASS} on every line, and any \texttt{FAIL} there would
indicate a genuine discrepancy;
\item the two even-dimensional blocks, and the last two, report what they find rather than
returning a verdict. The six-dimensional block reports that integrability \emph{fails} on the
odd-offset pair; that is the result being demonstrated, the obstruction of
Theorem~\ref{thm:6D}, and not a failure of the check.
\end{itemize}

\noindent Individual blocks may be run on their own, which is useful when only one dimension is of
interest:
\begingroup\small
\begin{verbatim}
python3 -c "import verify_ns as v; v.four_dimensions()"
\end{verbatim}
\endgroup

\noindent The companion script \texttt{certify.py} is run the same way. Its first part completes in
a few seconds and prints six \texttt{PASS} lines. It then begins writing the \textsc{Singular}
input files described below; the seven-dimensional file takes several minutes and prints nothing
meanwhile. Readers without \textsc{Singular} may interrupt the script at that point, with
\texttt{Control-C}, having already obtained the verification.

The results reported here were obtained under SymPy~1.14.0 and independently reproduced under
SymPy~1.13.2, with identical output.

\medskip
\noindent\textbf{Testing other phase vectors.} Readers may call the two test functions directly:
\texttt{reduction\_term} returns the residual of~\eqref{3.13} and
\texttt{integrability\_defect} that of~\eqref{3.16}, each taking the dimension $n$ and a phase
vector, and each returning zero exactly when the corresponding condition holds. Two cautions apply. Phases must be supplied as exact SymPy
expressions, such as \texttt{pi/8}, and not as floating-point approximations, since the symbolic
zero tests are unreliable on inexact input. And signs matter: at $\xi_1=\pi/12$ and
$\xi_3=\pi/4$ the value $\xi_2=-7\pi/12$ lies in the first family of~\eqref{4.sets} while
$\xi_2=+7\pi/12$ satisfies neither condition.

\subsection*{The script}
\begingroup
\small
\begingroup\small\begin{verbatim}
import sympy as sp
pi = sp.pi

def field(n, xi, alpha=1):
    x = sp.symbols(f'x1:{n+1}', real=True)
    X = lambda m: x[(m - 1) % n]
    def T1(i):
        f = sp.Integer(1)
        for k in range(1, n + 1):
            arg = alpha * X(i + k - 1) + xi[k - 1]
            f *= sp.sin(arg) if k % 2 else sp.cos(arg)
        return f
    def T2(i):
        f = sp.sin(alpha * X(i) + xi[1])
        for k in range(2, n):
            arg = alpha * X(i + k - 1) + xi[k]
            f *= sp.cos(arg) if k % 2 else sp.sin(arg)
        return f * sp.cos(alpha * X(i + n - 1) + xi[0])
    return x, [T1(i) for i in range(1, n+1)], [T2(i) for i in range(1, n+1)]

def velocity(n, xi):
    x, T1, T2 = field(n, xi)
    return x, [sp.expand_trig(sp.expand(T1[i] - T2[i])) for i in range(n)]

def convective(n, xi):
    x, v = velocity(n, xi)
    return x, [sp.expand(sum(v[j]*sp.diff(v[i], x[j]) for j in range(n)))
               for i in range(n)]

def reduction_term(n, xi):
    from sympy.simplify.fu import TR8
    x, g = convective(n, xi)
    e = sp.expand(TR8(sp.expand(g[0])))
    return sp.simplify(sum(t for t in sp.Add.make_args(e) if not t.has(x[0])))

def integrability_defect(n, xi):
    from sympy.simplify.fu import TR8
    x, g = convective(n, xi)
    worst = sp.Integer(0)
    for i in range(n):
        for j in range(i+1, n):
            dd = sp.diff(g[i], x[j]) - sp.diff(g[j], x[i])
            d = sp.simplify(TR8(sp.expand(dd)))
            if d != 0: worst = d
    return worst

def check(label, ok):
    print(f"  {'PASS' if ok else 'FAIL'}   {label}")

fam1 = lambda t, l1, l2: [t, t - 2*pi/3 + l1*pi, t + pi/6 + l2*pi]
fam2 = lambda t, l1, l2: [t, t + 2*pi/3 + l1*pi, t + 5*pi/6 + l2*pi]

def three_dimensions():
    print("\nTHREE DIMENSIONS")
    for name, xi in (("first  family", fam1(pi/8, 0, 0)),
                     ("second family", fam2(-pi/3, 0, 0))):
        ok = (reduction_term(3, xi) == 0 and integrability_defect(3, xi) == 0
              and any(sp.simplify(c) != 0 for c in velocity(3, xi)[1]))
        check(f"{name}: xi = {[sp.nsimplify(u) for u in xi]}", ok)
    va = velocity(3, fam1(pi/8, 0, 0))[1]
    vb = velocity(3, fam1(pi/8, 1, 0))[1]
    check("adding pi to one phase reverses v^0",
          all(sp.simplify(sp.expand_trig(va[i] + vb[i])) == 0 for i in range(3)))

def four_dimensions():
    print("\nFOUR DIMENSIONS")
    curve = lambda t, a: [t, t + a, t + pi/2, t + a]
    for a in (pi/2, pi/3, sp.Rational(3,10)*pi):
        red = reduction_term(4, curve(0, a)) == 0
        itg = integrability_defect(4, curve(0, a)) == 0
        check(f"a = {sp.nsimplify(a)}: reduction holds, integrability "
              f"{'holds' if itg else 'fails'}", red)
    xi = curve(-pi/4, pi/2)
    check("known 4D solution (-pi/4,pi/4,pi/4,pi/4) satisfies both",
          reduction_term(4, xi) == 0 and integrability_defect(4, xi) == 0)

def five_dimensions():
    print("\nFIVE DIMENSIONS")
    a = sp.Symbol('a', real=True)
    xi = [a, a, a - pi/2, a, a - pi/2]
    check("family (5.6) satisfies the reduction condition",
          reduction_term(5, xi) == 0)
    x, T1, T2 = field(5, xi)
    check("family (5.6) gives T1 = T2, so v^0 = 0 identically",
          all(sp.simplify(sp.expand_trig(T1[i] - T2[i])) == 0 for i in range(5)))
    xj = [sp.Integer(0), sp.Rational(1,5), -pi/2, sp.Integer(0), -pi/2]
    check("a nearby non-member fails the reduction condition",
          reduction_term(5, xj) != 0)

def curl_pair(n, xi, i, j):
    """dg_i/dx_j - dg_j/dx_i  for 0-based i, j"""
    from sympy.simplify.fu import TR8
    x, v = velocity(n, xi)
    g = [sp.expand(sum(v[b]*sp.diff(v[a], x[b]) for b in range(n)))
         for a in range(n)]
    return sp.simplify(TR8(sp.expand(sp.diff(g[i], x[j]) - sp.diff(g[j], x[i]))))

def even_dimension(n):
    """the dichotomy of Theorem 6.7, at the symmetric phases"""
    xi = [-pi/4] + [pi/4]*(n - 1)
    red  = reduction_term(n, xi) == 0
    triv = all(sp.simplify(c) == 0 for c in velocity(n, xi)[1])
    ev   = curl_pair(n, xi, 0, 2) == 0        # even offset
    od   = curl_pair(n, xi, 0, 1) == 0        # odd offset
    print(f"\n{n} DIMENSIONS (symmetric phases)")
    print(f"   reduction condition (3.12) : {'holds' if red else 'FAILS'}")
    kind = 'trivial' if triv else 'non-trivial'
    print(f"   velocity field             : {kind}")
    print(f"   integrability, pair (1,3)  : {'holds' if ev else 'fails'}"
          "   [even offset]")
    print(f"   integrability, pair (1,2)  : {'holds' if od else 'fails'}"
          "   [odd offset]")
    print("   => curl-free throughout; unforced solution exists."
          if ev and od else
          "   => curl-free on even offsets only; no pressure exists.")

# --- Proposition 4.1: the 3D classification, by Groebner basis ---
def classification_3D():
    ca, sa, cb, sb = sp.symbols('c_a s_a c_b s_b')
    cosAB  = ca*cb - sa*sb
    sinAB  = sa*cb + ca*sb
    cos2AB = (2*ca**2 - 1)*cb - 2*sa*ca*sb
    sin2AB = 2*sa*ca*cb + (2*ca**2 - 1)*sb
    p1 = sp.expand(-1 - ca - cos2AB - 2*cb + cosAB)
    p2 = sp.expand(-sa - sin2AB + sinAB)
    G  = sp.groebner([p1, p2, ca**2+sa**2-1, cb**2+sb**2-1],
                     ca, sa, cb, sb, order='lex')
    elim = sp.factor(G.exprs[-1])
    print("\nTHREE-DIMENSIONAL CLASSIFICATION (Proposition 4.1)")
    print(f"   zero-dimensional ideal      : {G.is_zero_dimensional}")
    print(f"   elimination polynomial (4.9): {elim}")

import math, cmath

def _modes(n, xi):
    """v_i^0 and g_i^0 as Fourier dicts at numeric phases (alpha=v_r=1)"""
    def cyc(m): return (m-1)%n
    Z=(0,)*n
    def unit(p,v=1):
        z=[0]*n; z[p]=v; return tuple(z)
    def add(d,k,c):
        d[k]=d.get(k,0)+c
        if abs(d[k])<1e-13: d.pop(k,None)
    def sinf(p,ph):
        d={}; add(d, unit(p), cmath.exp(1j*ph)/2j)
        add(d, unit(p,-1), -cmath.exp(-1j*ph)/2j); return d
    def cosf(p,ph):
        d={}; add(d, unit(p), cmath.exp(1j*ph)/2)
        add(d, unit(p,-1), cmath.exp(-1j*ph)/2); return d
    def mul(A,B):
        C={}
        for ka,ca in A.items():
            for kb,cb in B.items():
                add(C,tuple(x+y for x,y in zip(ka,kb)),ca*cb)
        return C
    def plus(*D):
        C={}
        for d in D:
            for k,v in d.items(): add(C,k,v)
        return C
    def scal(A,c): return {k:v*c for k,v in A.items()}
    def ddx(A,p): return {k:(1j*k[p])*v for k,v in A.items() if k[p]!=0}
    def T1(i):
        f={Z:1+0j}
        for k in range(1,n+1):
            q=cyc(i+k-1); ph=xi[k-1]
            f=mul(f, sinf(q,ph) if k%2 else cosf(q,ph))
        return f
    def T2(i):
        f=sinf(cyc(i),xi[1])
        for k in range(2,n):
            f=mul(f, cosf(cyc(i+k-1),xi[k]) if k%2 else sinf(cyc(i+k-1),xi[k]))
        return mul(f, cosf(cyc(i+n-1),xi[0]))
    v=[plus(T1(i),scal(T2(i),-1)) for i in range(1,n+1)]
    g=[]
    for i in range(n):
        acc={}
        for j in range(n): acc=plus(acc, mul(v[j], ddx(v[i],j)))
        g.append(acc)
    return v,g

def helmholtz_character(n, xi):
    v,g=_modes(n,xi)
    div={}
    for i in range(n):
        for k,c in g[i].items(): div[k]=div.get(k,0)+1j*k[i]*c
    dv=max([abs(c) for c in div.values()]+[0.0])
    cu=0.0
    for i in range(n):
        for j in range(i+1,n):
            for k in set(g[i])|set(g[j]):
                cu=max(cu,abs(1j*k[j]*g[i].get(k,0)-1j*k[i]*g[j].get(k,0)))
    modes=sum(len(v[i]) for i in range(n))
    if cu<1e-12: return "longitudinal (pure gradient)", dv, cu, modes
    if dv<1e-12: return "transverse (divergence-free)", dv, cu, modes
    return "both parts present", dv, cu, modes

def helicity(xi):
    """int v.curl(v) over the cell, n=3"""
    v,_=_modes(3,xi)
    w=[{},{},{}]
    def add(d,k,c): d[k]=d.get(k,0)+c
    for k,c in v[2].items(): add(w[0],k, 1j*k[1]*c)
    for k,c in v[1].items(): add(w[0],k,-1j*k[2]*c)
    for k,c in v[0].items(): add(w[1],k, 1j*k[2]*c)
    for k,c in v[2].items(): add(w[1],k,-1j*k[0]*c)
    for k,c in v[1].items(): add(w[2],k, 1j*k[0]*c)
    for k,c in v[0].items(): add(w[2],k,-1j*k[1]*c)
    tot=0
    for i in range(3):
        for k,c in v[i].items():
            kn=tuple(-t for t in k)
            if kn in w[i]: tot+=c*w[i][kn]
    return (tot*(2*math.pi)**3).real

if __name__ == "__main__":
    import math
    print("Verification of the results reported in the paper.")
    print(f"SymPy {sp.__version__}")
    three_dimensions()
    classification_3D()
    four_dimensions()
    five_dimensions()
    even_dimension(4)
    even_dimension(6)
    pi_ = math.pi
    print("\nHELMHOLTZ CHARACTER OF g^0 (Table 2)")
    for n, xi in ((3, [pi_/8, -13*pi_/24, 7*pi_/24]), (4, [-pi_/4]+[pi_/4]*3),
                  (5, [-pi_/4]+[pi_/4]*4), (6, [-pi_/4]+[pi_/4]*5),
                  (7, [-pi_/4]+[pi_/4]*6), (8, [-pi_/4]+[pi_/4]*7)):
        ch, dv, cu, m = helmholtz_character(n, xi)
        print(f"   n={n}: {ch:30s} |div|={dv:.1e}  |curl|={cu:.1e}")
    print("\nHELICITY OF THE TWO THREE-DIMENSIONAL FAMILIES (4.13)")
    for nm, xi in (("first  family", [pi_/8, -13*pi_/24, 7*pi_/24]),
                   ("second family", [-pi_/3, pi_/3, pi_/2])):
        print(f"   {nm}: H = {helicity(xi):+.6f}"
              f"   (27*sqrt(3)*pi^3/4 = {27*math.sqrt(3)*pi_**3/4:.6f})")
    print("\nDone.")
\end{verbatim}\endgroup
\endgroup

\subsection*{Expected output}
\begingroup
\small
\begingroup\small\begin{verbatim}
Verification of the results reported in the paper.
SymPy 1.14.0

THREE DIMENSIONS
  PASS   first  family: xi = [pi/8, -13*pi/24, 7*pi/24]
  PASS   second family: xi = [-pi/3, pi/3, pi/2]
  PASS   adding pi to one phase reverses v^0

THREE-DIMENSIONAL CLASSIFICATION (Proposition 4.1)
   zero-dimensional ideal      : True
   elimination polynomial (4.9): s_b**4*(4*s_b**2 - 3)

FOUR DIMENSIONS
  PASS   a = pi/2: reduction holds, integrability holds
  PASS   a = pi/3: reduction holds, integrability fails
  PASS   a = 3*pi/10: reduction holds, integrability fails
  PASS   known 4D solution (-pi/4,pi/4,pi/4,pi/4) satisfies both

FIVE DIMENSIONS
  PASS   family (5.6) satisfies the reduction condition
  PASS   family (5.6) gives T1 = T2, so v^0 = 0 identically
  PASS   a nearby non-member fails the reduction condition

4 DIMENSIONS (symmetric phases)
   reduction condition (3.12) : holds
   velocity field             : non-trivial
   integrability, pair (1,3)  : holds   [even offset]
   integrability, pair (1,2)  : holds   [odd offset]
   => curl-free throughout; unforced solution exists.

6 DIMENSIONS (symmetric phases)
   reduction condition (3.12) : holds
   velocity field             : non-trivial
   integrability, pair (1,3)  : holds   [even offset]
   integrability, pair (1,2)  : fails   [odd offset]
   => curl-free on even offsets only; no pressure exists.

HELMHOLTZ CHARACTER OF g^0 (Table 2)
   n=3: longitudinal (pure gradient)      |div|=3.8e-01  |curl|=1.4e-17
   n=4: longitudinal (pure gradient)      |div|=5.0e-01  |curl|=3.5e-17
   n=5: transverse (divergence-free)      |div|=1.9e-17  |curl|=1.2e-01
   n=6: both parts present                |div|=1.2e-01  |curl|=3.1e-02
   n=7: transverse (divergence-free)      |div|=5.2e-18  |curl|=3.1e-02
   n=8: both parts present                |div|=3.1e-02  |curl|=7.8e-03

HELICITY OF THE TWO THREE-DIMENSIONAL FAMILIES (4.13)
   first  family: H = -362.505014   (27*sqrt(3)*pi^3/4 = 362.505014)
   second family: H = +362.505014   (27*sqrt(3)*pi^3/4 = 362.505014)

Done.
\end{verbatim}\endgroup
\endgroup

\noindent Every line of the first four blocks reports \texttt{PASS}. The remaining blocks print
what they find rather than a verdict, and three of them repay comparison. The classification block
returns the elimination polynomial~\eqref{4.elim} on which Proposition~\ref{prop:3Dclassification}
rests. The four-dimensional block shows the reduction condition holding at every value of $a$
tested while integrability holds only at $a=\pi/2$: this is the selection described in
Subsection~\ref{sec:4D}. The two even-dimensional blocks show the mixed derivatives vanishing on
both offsets at $n=4$ but only on the even offset at $n=6$, which is
Theorem~\ref{thm:even-dichotomy} exhibited in four lines. The last two blocks reproduce
Table~\ref{tab:helmholtz} and the helicities of~\eqref{4.helicity}, the equal and opposite values
being the chiral pair of Subsection~\ref{subsec:3D-triple}.

\subsection*{Reproducing the Gr\"obner certificates}
The certificates of Appendices~\ref{app:5D} and~\ref{app:7D} are of a different order, and are
addressed by a second script, \texttt{certify.py}, supplied as an ancillary file with this paper and
run in the same way,
\begin{verbatim}
python3 certify.py
\end{verbatim}
\noindent It has two parts.

The first verifies, in exact arithmetic and in a few seconds, the direction that can be checked
directly: that every one of the $64$ phase amplitudes at $n=5$, and every one of the $664$ at
$n=7$, vanishes identically in the free parameter $a$ on the annihilating families~\eqref{5.6}
and~\eqref{7.5}; that $\mathcal{T}1_i^0=\mathcal{T}2_i^0$ there, so that $\boldsymbol{v}^0\equiv0$;
and that the decisive phase relations hold on those families.

The second addresses the elimination itself. Computing the Gr\"obner basis is not practical in a
general-purpose system---the five-dimensional ideal carries $133$ generators in ten variables and
the seven-dimensional one $1335$ in fourteen---so the script instead writes the ideal to a
\textsc{Singular} input file, \texttt{reduction\_5D.sing} or \texttt{reduction\_7D.sing},
containing the ring declaration, the generators, the basis computation, and the
radical-membership reduction quoted in the appendices. A reader with \textsc{Singular} then runs
the elimination on exactly the generators used here rather than reconstructing them from the text.
Generation takes a few seconds at $n=5$ and about five minutes at $n=7$; the basis computations
themselves were performed as recorded in Appendix~\ref{app:7D}.

\section{Integral identities used in this paper}
\label{app:integrals}
The Gaussian identities below are understood in the ordinary Lebesgue sense for $\tau>0$. The
Newtonian-kernel identities \eqref{D4}--\eqref{D11} and \eqref{D14}, which arise in the
longitudinal-projection calculations, are understood as oscillatory Fourier integrals, or
equivalently through their distributional Fourier interpretation, rather than as absolutely
convergent integrals at spatial infinity. This is consistent with the Fourier interpretation of the periodic
Newtonian-potential formulas discussed in Section~\ref{sec:reformulation}.
\begin{eqnarray}
\label{D1}
\int\limits_{\mathbb{R}^1 } {{e^{ - \frac{{{{\left( {x - u} \right)}^2}}}{{4\tau }}}}} du = 2\sqrt {\pi \tau }
\end{eqnarray}
\begin{eqnarray}
\label{D2}
 \int\limits_{\mathbb{R}^1 }  {\sin \left( {\alpha u} \right){e^{ - \frac{{{{\left( {x - u} \right)}^2}}}{{4\tau }}}}} du = 2\sqrt {\pi \tau } {e^{ - {\alpha ^2}\tau }}\sin \left( {\alpha x} \right)
\end{eqnarray}
\begin{eqnarray}
\label{D3}
\int\limits_{\mathbb{R}^1 } {\cos \left( {\alpha u} \right){e^{ - \frac{{{{\left( {x - u} \right)}^2}}}{{4\tau }}}}} du = 2\sqrt {\pi \tau } {e^{ - {\alpha ^2}\tau }}\cos \left( {\alpha x} \right)
\end{eqnarray}
\begin{eqnarray}
\label{D4}
\int\limits_{\mathbb{R}^3 }{\frac{{u_1\cos\{\beta\left(x_1-u_1\right)\}}}{{\left[ u_1^2+u_2^2+u_3^2 \right]^{\frac{3}{2}} }}}\prod\limits_{j = 1}^3 {du_j }=\frac{4\pi}{\beta}\sin\left(\beta x_1\right)
\end{eqnarray}
\begin{eqnarray}
\label{D5}
\int\limits_{\mathbb{R}^3 }{\frac{{u_1\cos\{\beta\left(x_1-u_1\right)\}\cos\{\beta\left(x_2-u_2\right)\}}}{{\left[ u_1^2+u_2^2+u_3^2 \right]^{\frac{3}{2}} }}}\prod\limits_{j = 1}^3 {du_j }=\frac{2\pi}{\beta}\sin\left(\beta x_1\right)\cos\left(\beta x_2\right)\quad
\end{eqnarray}
\begin{eqnarray}
\label{D6}
\int\limits_{\mathbb{R}^3 }{\frac{{u_1\sin\{\beta\left(x_1-u_1\right)\}\cos\{\beta\left(x_2-u_2\right)\}}}{{\left[ u_1^2+u_2^2+u_3^2 \right]^{\frac{3}{2}} }}}\prod\limits_{j = 1}^3 {du_j }=-\frac{2\pi}{\beta}\cos\left(\beta x_1\right)\cos\left(\beta x_2\right)\quad
\end{eqnarray}
\begin{eqnarray}
\label{D7}
\int\limits_{\mathbb{R}^3 } {\frac{{u_1\cos\{\beta\left(x_1-u_1\right)\}\sin\{\beta\left(x_2-u_2\right)\}}}{{\left[ u_1^2+u_2^2+u_3^2 \right]^{\frac{3}{2}} }}}\prod\limits_{j = 1}^3 {du_j }=\frac{2\pi}{\beta}\sin\left(\beta x_1\right)\sin\left(\beta x_2\right)\quad
\end{eqnarray}
\begin{eqnarray}
\label{D8}
\int\limits_{\mathbb{R}^3 }{\frac{{u_1\sin\{\beta\left(x_1-u_1\right)\}\sin\{\beta\left(x_2-u_2\right)\}}}{{\left[ u_1^2+u_2^2+u_3^2 \right]^{\frac{3}{2}} }}}\prod\limits_{j = 1}^3 {du_j }=-\frac{2\pi}{\beta}\cos\left(\beta x_1\right)\sin\left(\beta x_2\right)\quad
\end{eqnarray}
\begin{eqnarray}
\label{D9}
\int\limits_{\mathbb{R}^4 }{\frac{{u_1\sin\{\beta\left(x_1-u_1\right)\}\sin\{\beta\left(x_3-u_3\right)\}}}{{\left[ u_1^2+u_2^2+u_3^2+u_4^2 \right]^2 }}}\prod\limits_{j = 1}^4 {du_j }=-\frac{\pi^2}{\beta}\cos\left(\beta x_1\right)\sin\left(\beta x_3\right)\quad
\end{eqnarray}
\begin{eqnarray}
\label{D10}
\int\limits_{\mathbb{R}^6 } {\frac{{u_1\sin\{\beta\left(x_1-u_1\right)\}\sin\{\beta\left(x_2-u_2\right)\}}}{{\left[ u_1^2+u_2^2+u_3^2+u_4^2+u_5^2+u_6^2 \right]^3 }}}\prod\limits_{j = 1}^6 {du_j }=-\frac{\pi^3}{2\beta}\cos\left(\beta x_1\right)\sin\left(\beta x_2\right)\quad
\end{eqnarray}
\begin{eqnarray}
\label{D11}
\int\limits_{\mathbb{R}^6 } {\frac{{u_1\sin\{\beta\left(x_1-u_1\right)\}\sin\{\beta\left(x_2-u_2\right)\}\sin\{\beta\left(x_3-u_3\right)\}\sin\{\beta\left(x_4-u_4\right)\}}}{{\left[ u_1^2+u_2^2+u_3^2+u_4^2+u_5^2+u_6^2 \right]^3 }}}\prod\limits_{j = 1}^6 {du_j }\nonumber\\
=-\frac{\pi^3}{4\beta}\cos\left(\beta x_1\right)\sin\left(\beta x_2\right)\sin\left(\beta x_3\right)\sin\left(\beta x_4\right)\qquad
\end{eqnarray}
\begin{eqnarray}
\label{D12}
\int\limits_0^\infty  {\sin \left( {\alpha u} \right)\left({e^{ - \frac{{{{\left( {x - u} \right)}^2}}}{{4\tau }}}-e^{ - \frac{{{{\left( {x + u} \right)}^2}}}{{4\tau }}}}\right)} du = 2\sqrt {\pi \tau } {e^{ - {\alpha ^2}\tau }}\sin \left( {\alpha x} \right)
\end{eqnarray}
\begin{eqnarray}
\label{D13}
\int\limits_0^\infty  {\cos \left( {\alpha u} \right)\left({e^{ - \frac{{{{\left( {x - u} \right)}^2}}}{{4\tau }}}+e^{ - \frac{{{{\left( {x + u} \right)}^2}}}{{4\tau }}}}\right)} du = 2\sqrt {\pi \tau } {e^{ - {\alpha ^2}\tau }}\cos \left( {\alpha x} \right)
\end{eqnarray}
\begin{eqnarray}
\label{D14}
\int\limits_{\mathbb{R}^3 }{\frac{{u_1\cos\{\gamma\left(x_1-u_1\right)\}\cos\{\beta\left(x_2-u_2\right)\}}}{{\left[ u_1^2+u_2^2+u_3^2 \right]^{\frac{3}{2}} }}}\prod\limits_{j = 1}^3 {du_j }=\frac{4\pi\gamma}{\gamma^2+\beta^2}\sin\left(\gamma x_1\right)\cos\left(\beta x_2\right)\quad
\end{eqnarray}
where $\tau>0$ and $\alpha$, $\beta$ and $\gamma$ are real constants, with the parameters chosen so that the denominators on the right-hand sides are non-zero.
\bibliographystyle{jfm}
\bibliography{NS}
\end{document}